\documentclass[10pt, oneside]{amsart}
\usepackage[lmargin=1in,rmargin=1in, bmargin=1in, tmargin=1in]{geometry}
\usepackage{amsmath,amsthm,amssymb}
\usepackage{tikz-cd}
\usepackage{tikz}
\usepackage{todonotes}
\usepackage{mathrsfs}
\usepackage{extarrows}
\usepackage{graphicx}
\usepackage{mathtools}
\usepackage{adjustbox}
\usepackage{colonequals}
\usepackage{stmaryrd}

\usepackage[pagebackref, hidelinks]{hyperref}
\usepackage{IEEEtrantools}
\usepackage[utf8]{inputenc}
\usepackage[english]{babel}
\usepackage{comment}
\usepackage{csquotes}
\usepackage[shortlabels]{enumitem}
\usepackage{bbm}

\setcounter{tocdepth}{1}

\newcommand{\QQ}{\mathbb{Q}}
\newcommand{\defeq}{\colonequals}
\renewcommand{\tilde}{\widetilde}  
\DeclareMathOperator{\Gal}{Gal}

\numberwithin{equation}{subsection}

\newtheorem{theorem}[equation]{Theorem}
\newtheorem{proposition}[equation]{Proposition}
\newtheorem{lemma}[equation]{Lemma}
\newtheorem{corollary}[equation]{Corollary}
\newtheorem*{corollary*}{Corollary}

\newtheorem{assumption}[equation]{Assumption}

\newcounter{alphalabels}

\newtheorem{theoremx}[alphalabels]{Theorem}

\theoremstyle{definition}

\newtheorem{definition}[equation]{Definition}
\newtheorem{notation}[equation]{Notation}
\newtheorem{convention}[equation]{Convention}
\newtheorem{situation}[equation]{Situation}

\theoremstyle{remark}
\newtheorem{remark}[equation]{Remark}
\newtheorem{example}[equation]{Example}

\DeclareFontFamily{U}{wncy}{}
\DeclareFontShape{U}{wncy}{m}{n}{<->wncyr10}{}
\DeclareSymbolFont{mcy}{U}{wncy}{m}{n}
\DeclareMathSymbol{\sha}{\mathord}{mcy}{"58}

\newcommand{\ide}[1]{\mathfrak{#1}}
\newcommand{\mbb}[1]{\mathbb{#1}}

\newcommand{\opn}[1]{\operatorname{#1}}
\newcommand{\hatot}{\hat{\otimes}}
\newcommand{\mbf}[1]{\mathbf{#1}}
\newcommand{\tbyt}[4]{\left( \begin{array}{cc} #1 & #2 \\ #3 & #4 \end{array} \right)}


\newcommand{\Addresses}{{
  \bigskip
  \footnotesize

  (Graham) \textsc{Mathematical Institute, University of Oxford, Woodstock Road, Oxford OX2 6GG, United Kingdom}\par\nopagebreak
  \textit{E-mail address}: \texttt{andrew.graham@maths.ox.ac.uk}

  \medskip

  (Rockwood) \textsc{Department of Mathematics, King's College London, Strand, London WC2R 2LS, United Kingdom}\par\nopagebreak
  \textit{E-mail address}: \texttt{robert.rockwood@kcl.ac.uk}

}}

\newcommand{\Qpb}{\overline{\QQ}_p}


\title[Nearly higher Coleman theory and p-adic L-functions]{Nearly higher Coleman theory and p-adic L-functions for $\mathrm{GSp}(4) \times \mathrm{GL}(2)$ and $\mathrm{GSp}(4) \times \mathrm{GL}(2) \times \mathrm{GL}(2)$}
\author{Andrew Graham and Rob Rockwood}
\date{}

\begin{document}
\begin{abstract}
    We construct four-variable $p$-adic $L$-functions for the spin Galois representation of a Siegel modular form of genus 2 twisted by the Galois representation of a cuspidal modular form as the modular forms vary in Coleman families. The main ingredient is the construction of a space of nearly overconvergent modular forms in the coherent cohomology of the Siegel threefold, extending the spaces of overconvergent modular forms appearing in higher Coleman theory. In addition to this, we construct $p$-adic distributions interpolating the Gan--Gross--Prasad automorphic periods for $(\mathrm{GSpin}(5), \mathrm{GSpin}(4))$ which, conditional on the local and global Gan--Gross--Prasad conjectures for this pair of groups, provides a construction of ``square-root'' $p$-adic $L$-functions for $\mathrm{GSp}(4) \times \mathrm{GL}(2) \times \mathrm{GL}(2)$ as the automorphic forms vary in Coleman families.
\end{abstract}

\maketitle

\tableofcontents

\section{Introduction}

Over the past 30 years, there has been an extensive programme of study on Euler systems associated with triples of modular forms, including the Beilinson--Kato setting \cite{KatoES} (when two of the modular forms are Eisenstein series), the Beilinson--Flach setting \cite{BDRi, BDRii, BeilinsonFlach, KLZ2015, lz-coleman} (when one of the modular forms is an Eisenstein series), and the Gross--Kudla--Schoen setting \cite{darmon2014diagonal, darmon2017diagonal} (when all three modular forms are cuspidal). All three of these cases constitute a ``trilogy of Euler systems'' in the sense of \cite{trilogy}, and have led to several results towards the Bloch--Kato and Birch--Swinnerton-Dyer conjectures for modular forms, convolutions of modular forms, and elliptic curves twisted by Artin representations (as well as many other variants). Moreover, the theory of $p$-adic $L$-functions in this trilogy is incredibly rich, with different behaviour and constructions depending on many factors such as: signs of functional equations of $L$-functions, regions of interpolations, and slope behaviour of the modular forms at the prime $p$. This latter factor (that is, the availability of constructions of Euler systems and $p$-adic $L$-functions as the modular forms vary in Coleman families) is particularly important if one wants to understand the Bloch--Kato and Birch--Swinnerton-Dyer conjectures for every prime number $p$. 

More recently, there has been a substantial amount of work on generalising this trilogy of Euler systems to triples of automorphic representations of $\opn{GSp}_4$, $\opn{GL}_2$, and $\opn{GL}_2$ respectively. For example, if both of the $\opn{GL}_2$-representations are Eisenstein series then one obtains the Lemma--Flach Euler system in \cite{LSZ17}, and if one of the $\opn{GL}_2$-representations is cuspidal and the other is an Eisenstein series, then an Euler system has been constructed in \cite{hsu2020euler}. If one imposes a suitable ordinarity assumption on the automorphic representations, then these Euler systems can be used to prove cases of the Bloch--Kato conjecture \cite{LZBK20, LZBK21} and the Birch--Swinnerton-Dyer conjecture for abelian surfaces \cite{LZmodularabelian}, however the analogous results in the non-ordinary setting have not yet been established. 

One of the reasons why the non-ordinary setting is much harder for this trilogy (in contrast with the $\opn{GL}_2 \times \opn{GL}_2 \times \opn{GL}_2$ trilogy) is that one is often required to work with cohomology classes in higher degrees of coherent cohomology of Siegel threefolds (higher Coleman theory \cite{BoxerPilloni}) in order to construct the $p$-adic $L$-functions and establish the explicit reciprocity laws. Since families of \emph{real-analytic} Eisenstein series and Maass--Shimura differential operators naturally appear in these constructions, it is therefore necessary to have a good theory of nearly overconvergent automorphic forms in higher degrees of coherent cohomology. In addition to this, the methods for constructing the $p$-adic $L$-functions and Euler systems depend very much on the desired region of interpolation (partly because there is a smaller amount of symmetry for this trilogy); so far there is only one known Euler system construction in region (e) of \cite[Figure 2]{LZpadiclfunctionsdiagonalcycles}, which should be related to four different $p$-adic $L$-functions in regions (c), (d), (d'), (f) of the same figure (see below for the inequalities defining region (f)). 

The purpose of this article is to study the construction of $p$-adic $L$-functions in region (f) for the $\opn{GSp}_4 \times \opn{GL}_2 \times \opn{GL}_2$ trilogy in the non-ordinary setting. More precisely, in this article we construct $p$-adic $L$-functions for $\opn{GSp}_4 \times \opn{GL}_2$ and $\opn{GSp}_4 \times \opn{GL}_2 \times \opn{GL}_2$ in region (f) as the automorphic representations vary in Coleman families (with as much finite-slope variation as possible). The main novelty in the construction of these $p$-adic $L$-functions is a theory of nearly overconvergent automorphic forms for $\opn{GSp}_4$ in higher degrees of coherent cohomology, extending the spaces of overconvergent automorphic forms in higher Coleman theory. This construction is inspired by the theory of ``nearly sheaves'' in \cite{LPSZ} and recent work of Pilloni, Rodrigues, and the first named author \cite{DiffOps} on the $p$-adic interpolation of Maass--Shimura operators on nearly overconvergent modular forms. In fact, this latter work will also be a key ingredient in this article. In particular, we note that our construction of $p$-adic $L$-functions recovers \emph{all} other previous constructions of $p$-adic $L$-functions for $\opn{GSp}_4 \times \opn{GL}_2$ in region (f). 

In a sequel to this paper \cite{PartII-BK}, we will also show that the Euler system classes in \cite{LSZ17, hsu2020euler} in region (e) vary in Coleman families (with the maximal amount of variation), and prove an explicit reciprocity law relating the $p$-adic $L$-functions in this article to these Euler system classes. As a consequence, it will be possible to establish new cases of the Bloch--Kato conjecture for automorphic representations of $\opn{GSp}_4 \times \opn{GL}_2$ in the non-ordinary setting.

\begin{remark}
    To the best of the authors' knowledge, there are three other constructions of $p$-adic $L$-functions for $\opn{GSp}_4 \times \opn{GL}_2$ in regions different from (f). More precisely, there are constructions of three-variable $p$-adic $L$-functions in Coleman families for one half of region (d) in \cite{LRregiond}, and four-variable $p$-adic functions in Hida families for the whole region in \cite{LiuRegiondI, LiuRegiondII}. Furthermore, it is expected that the construction of a $p$-adic $L$-function in region (c) will appear in forthcoming work of Andreatta, Bertolini, Seveso, and Venerucci. In all of these cases, we are optimistic that (analogous versions of) the methods in the article can be employed to remove (some of) the restrictions and assumptions in these works (such as extending \cite{LRregiond} to a four-variable $p$-adic $L$-function, and extending \cite{LiuRegiondI, LiuRegiondII} to incorporate Coleman families).
\end{remark}

\subsection{Statement of main results}

Let us now describe the main results of this article. For simplicity of exposition, we suppose that all automorphic representations in this introduction are assumed to be unitary and we parameterise the weights of the automorphic representations in a slightly different way than from the rest of the article (see Remark \ref{IntroRemarkOnNormalisations}).

\subsubsection{$p$-adic $L$-functions for $\opn{GSp}_4 \times \opn{GL}_2$} \label{PAdicLsForCaseAIntroSec}

Let $\pi$ be a unitary discrete series cuspidal automorphic representation of $\opn{GSp}_4(\mbb{A})$ of weight $(k_1, k_2)$ (with $k_1 \geq k_2 \geq 3$) which is globally generic and non-CAP. We assume that $\pi$ is either non-endoscopic or of Yoshida-type. By the results of \cite{okazaki2019localarxiv}, $\pi$ is necessarily quasi-paramodular of level $(N, M)$ with $M^2 | N$, and the central character $\chi_{\pi}$ of $\pi$ is a Dirichlet character of conductor $M$. Let $\sigma$ denote the unitary automorphic representation of $\opn{GL}_2(\mbb{A})$ associated with a cuspidal newform of weight $c \geq 1$, level $\Gamma_1(N')$, and nebentypus $\chi_{\sigma}$. Let $r \colon {^L \opn{GSp}_4} \times {^L \opn{GL}_2} \to \opn{GL}_8(\mbb{C})$ denote the $8$-dimensional representation given by the tensor product of the $4$-dimensional spin representation of ${^L \opn{GSp}_4}$ and the standard representation of ${^L\opn{GL}_2}$. Fix an odd prime number $p \nmid NN'$. We are interested in interpolating special values of the $L$-function
\[
L(\pi \times \sigma, \chi, s) \defeq L((\pi \boxtimes \sigma) \otimes \chi, s; r)
\]
associated with the representation $r$, as $\chi$ varies through Dirichlet characters of $p$-th power conductor and $\pi$ and $\sigma$ vary in Coleman families. More precisely, we assume that $c \leq k_1 - k_2 + 1$ (so we are in region (f)) which implies that the critical values of the $L$-function $L(\pi \times \sigma, \chi, s)$ occur for $s = j - \tfrac{w}{2}$ with $j$ in the set:
\[
\opn{Crit}(\pi \times \sigma) \defeq [k_2+c-2, k_1-1] \cap \mbb{Z} 
\]
where $w \defeq k_1+k_2+c-4$.

To be able to $p$-adically interpolate these critical values, we need to impose some assumptions on the behaviour of the automorphic representations $\pi$ and $\sigma$ at the prime $p$. Recall that $p \nmid NN'$, so the local components $\pi_p$ and $\sigma_p$ are unramified. 
\begin{itemize}
    \item Let $P(X)$ denote the (degree $4$) Hecke polynomial at $p$ satisfying:
    \[
    P(p^{-s}) = L(\pi_p, s - \tfrac{w}{2})^{-1}
    \]
    where the right-hand side denotes the spin $L$-factor. This polynomial has coefficients in a number field, and we let $R \defeq \{ \alpha, \beta, \gamma, \delta \}$ denote the \emph{inverses} of the roots of $P(X)$ in an algebraic closure $\Qpb$ of $\mbb{Q}_p$, ordered so that $v_p(\alpha) \leq v_p(\beta) \leq v_p(\gamma) \leq v_p(\delta)$ and $\alpha \delta = \beta \gamma$ (where $v_p$ denotes the $p$-adic valuation normalised so that $v_p(p) = 1$). We assume that $\pi_p$ is $p$-regular (the roots of $P(X)$ are distinct) and that $\pi_p$ admits a ``small slope'' $p$-stabilisation, i.e., one has
    \[
    v_p(\alpha) < k_2-2, \quad \quad v_p(\alpha \beta) < k_1-1 .
    \]
    Under this assumption, we fix a small slope $p$-stabilisation $\tilde{\pi}$ of $\pi$, which is the data of an ordered pair $(\alpha_1, \alpha_2) \in (R \times R) - \{(\alpha, \delta), (\delta, \alpha), (\beta, \gamma), (\gamma, \beta) \}$ satisfying $v_p(\alpha_1) < k_2-2$ and $v_p(\alpha_1\alpha_2) < k_1 - 1$.
    \item We assume that $\sigma_p$ is $p$-regular, i.e., the roots of the Hecke polynomial at $p$
    \[
    Q(X) = 1 - a_p X + p^{c-1}\chi_{\sigma}(p)X^2
    \]
    are distinct. If $c=1$, then we assume that $\sigma$ does not admit real multiplication by a quadratic field in which $p$ splits. If $c \geq 2$, then we assume that the $p$-adic Galois representation associated with $\sigma$ does not split as a direct sum of one-dimensional representations when restricted to the decomposition group at $p$.\footnote{Both of these conditions imply that the weight map is \'{e}tale at the corresponding point on the Buzzard--Coleman--Mazur eigencurve.} In either case, we fix a ``noble'' $p$-stabilisation $\tilde{\sigma}$ of $\sigma$, which is the choice of an inverse root of $Q(X)$.
\end{itemize}

Let $\mathcal{W}$ denote the weight space parameterising continuous characters of $\mbb{Z}_p^{\times}$. Under the above assumptions, the weight map is \'{e}tale at the points of the eigenvarieties of $\opn{GSp}_4$ and $\opn{GL}_2$ corresponding to $\tilde{\pi}$ and $\tilde{\sigma}$ respectively. In particular, there exist Coleman families $\underline{\pi}$ and $\underline{\sigma}$ over open affinoid subspaces $U \subset \mathcal{W}^2$ and $V \subset \mathcal{W}$ respectively, passing through the points $\tilde{\pi}$ and $\tilde{\sigma}$. Moreover, there exists a Zariski dense subset $\Upsilon(\underline{\pi}) \subset U$ (resp. $\Upsilon(\underline{\sigma}) \subset V$) of classical weights\footnote{Points of the form $(k_1, k_2) \in \mbb{Z}^2$ (resp. $c \in \mbb{Z}$) satisfying $k_1 \geq k_2 \geq 3$ (resp. $c \geq 1$).} such that the specialisation $\underline{\pi}_{k_x}$ (resp. $\underline{\sigma}_{c_y}$) of $\underline{\pi}$ (resp. $\underline{\sigma}$) at the point $k_x = (k_{1, x}, k_{2, x}) \in \Upsilon(\underline{\pi})$ (resp. $c_y \in \Upsilon(\underline{\sigma})$) corresponds to a choice of small slope (resp. noble) $p$-stabilisation in a cuspidal automorphic representation of $\opn{GSp}_4(\mbb{A})$ (resp. $\opn{GL}_2(\mbb{A})$) satisfying the assumptions above. We refer the reader to \S \ref{ColemanFamiliesForBothSection} for more precise details. 

The first result we prove concerns the construction of four-variable $p$-adic $L$-functions associated with $\underline{\pi}$ and $\underline{\sigma}$. Let $\Upsilon(\underline{\pi}, \underline{\sigma}) \subset \Upsilon(\underline{\pi}) \times \Upsilon(\underline{\sigma})$ denote the subset of points $(k_x, c_y)$ satisfying $c_y \leq k_{1, x}-k_{2, x}+1$, which is Zariski dense in $U \times V$. Let $\mathscr{O}_{U \times V}$ denote the global sections of $U \times V$.

\begin{theoremx} \label{ThmAIntro}
    There exists a locally analytic distribution $\mathscr{L}_p \in \mathscr{D}^{\opn{la}}(\mbb{Z}_p^{\times}, \mathscr{O}_{U \times V})$ such that for all $z \defeq (k_x, c_y) \in \Upsilon(\underline{\pi}, \underline{\sigma})$, all integers $j \in \opn{Crit}(\underline{\pi}_{k_x}, \underline{\sigma}_{c_y})$, and all Dirichlet characters $\chi$ of conductor $p^{\beta}$ ($\beta \geq 0$), one has
    \[
    \mathscr{L}_p(k_x, c_y)(j+\chi) = \mathcal{Z}_S \cdot \mathcal{E}_p(\underline{\pi}_{k_x} \times \underline{\sigma}_{c_y}, \chi^{-1}, j-\tfrac{w}{2}) \cdot \frac{\Lambda^{\{p\}}(\underline{\pi}_{k_x} \times \underline{\sigma}_{c_y}, \chi^{-1}, j-\tfrac{w}{2})}{\Omega_x}
    \]
    where:
    \begin{itemize}
        \item $\Lambda^{\{p\}}(\cdots)$ denotes the completed $L$-function with the Euler factor at $p$ removed;
        \item $\Omega_x \in \mbb{C}^{\times}$ is a scalar independent of $j$, $\chi$, and $c_y$;
        \item $S$ denotes the set of primes dividing $NN'$, and $\mathcal{Z}_S = \mathcal{Z}_S(k_x, c_y; j+\chi)$ is a product of ratios of local zeta integrals and $L$-factors for primes in $S$ (see \S \ref{CaseALfunctionAndPeriodSSSec});
        \item $\mathcal{E}_p(\underline{\pi}_{k_x} \times \underline{\sigma}_{c_y}, \chi^{-1}, j-\tfrac{w}{2})$ denotes the modified $L$-factor at $p$ given by
        \[
        \prod_{a, b=1}^2 \frac{(1-p^{j-1}\alpha_{a, x}^{-1} \mu_{b,y}^{-1})}{(1-p^{-j}\alpha_{a, x} \mu_{b,y})}
        \]
        if $\beta = 0$, and given by
        \[
        G(\chi^{-1})^{-4} p^{4 \beta j} (\alpha_{1, x}\alpha_{2, x}\mu_{1, y} \mu_{2, y})^{-2\beta} 
        \]
        if $\beta \geq 1$. Here $(\alpha_{1, x}, \alpha_{2, x})$ denotes the $p$-stabilisation corresponding to $\underline{\pi}_{k_x}$, $(\mu_{1, y}, \mu_{2, y})$ are the inverses of the roots of the Hecke polynomial of $\underline{\sigma}_{c_y}$ at $p$, and $G(\chi^{-1})$ denotes the Gauss sum of $\chi^{-1}$. 
    \end{itemize}
    Moreover, the three-variable $p$-adic $L$-function on $U \times V$ given by $(k_x, c_y) \mapsto \mathscr{L}_p(k_x, c_y)(k_x-1)$ recovers the $p$-adic $L$-function in \cite{LZBK21}; and the one-variable $p$-adic $L$-function $\mathscr{L}_p(k, c) \in \mathscr{D}^{\opn{la}}(\mbb{Z}_p^{\times}, \mbb{C}_p)$ recovers the $p$-adic $L$-function in \cite{LPSZ} if $\pi_p$ is Klingen-ordinary. 
\end{theoremx}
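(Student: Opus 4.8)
The plan is to realise $\mathscr{L}_p$ as a cup-product pairing in the nearly overconvergent coherent cohomology of the Siegel threefold and then to verify the interpolation formula one classical specialisation at a time. First I would set up the geometric side: let $\mathrm{Sh}$ denote the Siegel threefold for $\opn{GSp}_4$ at the relevant tame level, and recall from the body of the paper the sheaves of nearly overconvergent automorphic forms on the overconvergent neighbourhoods of its strata, which extend the higher-Coleman-theory sheaves of \cite{BoxerPilloni} and carry a $p$-adic avatar of the Maass--Shimura operator built via the nearly-sheaf formalism of \cite{LPSZ} and the interpolation results of \cite{DiffOps}. The small-slope $p$-stabilisation $\tilde{\pi}$ together with its finite-slope projector produces from the Coleman family $\underline{\pi}$ a class in the degree $e$ cohomology prescribed by the slope inequalities $v_p(\alpha)<k_2-2$, $v_p(\alpha\beta)<k_1-1$ (in the Klingen-ordinary case this is the degree appearing in \cite{LPSZ}). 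On the auxiliary side we fix the branching datum, a subgroup $H \subset \opn{GSp}_4 \times \opn{GL}_2$ --- the group-theoretic shadow of the $(\opn{GSpin}_5, \opn{GSpin}_4)$ Gan--Gross--Prasad pair --- underlying the integral representation of $L(\pi \times \sigma, \chi, s)$ used in \cite{LPSZ, LZBK21}.

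Next I would construct the Eisenstein input and form the pairing. The idea is to $p$-adically interpolate the Siegel-type sections appearing in that integral representation, over the cyclotomic variable (the argument $j+\chi$) and the $\opn{GL}_2$-weight variable $c_y$, obtaining a nearly overconvergent Eisenstein class; the $p$-adic interpolation of the relevant archimedean differential operators is exactly what \cite{DiffOps} supplies. Restricting this class along $H$, cupping it with the $\underline{\pi}$-class and with the Coleman family $\underline{\sigma}$ pulled back from the modular curve, and applying the trace/evaluation map yields an element of $\mathscr{O}_{U\times V} \hatot \mathscr{D}^{\opn{la}}(\mbb{Z}_p^{\times})$; after dividing by the family of periods coming from the unit-root normalisation of the $\underline{\pi}$-eigenclass, this is the candidate $\mathscr{L}_p$.

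To verify the interpolation property, specialise at $z=(k_x,c_y)\in\Upsilon(\underline{\pi},\underline{\sigma})$ with $j\in\opn{Crit}(\underline{\pi}_{k_x},\underline{\sigma}_{c_y})$. A classicality theorem for the nearly overconvergent cohomology in degree $e$, together with the identification of the $p$-adic Maass--Shimura operator with its archimedean counterpart at that weight, shows that the specialised pairing equals the global zeta integral attached to a classical nearly holomorphic coherent cohomology class. Unfolding this integral expresses it as $\Lambda^{\{p\}}(\underline{\pi}_{k_x}\times\underline{\sigma}_{c_y},\chi^{-1},j-\tfrac{w}{2})$ times a product of local integrals: the integral at $p$, evaluated on the chosen $p$-stabilisations $(\alpha_{1,x},\alpha_{2,x})$ and $(\mu_{1,y},\mu_{2,y})$, gives $\mathcal{E}_p$ in the two displayed shapes --- the product over $a,b\in\{1,2\}$ when $\chi$ is unramified, and the Gauss-sum times $p$-power expression when $\beta\ge1$, which is the generic local computation for a highly ramified twist; the integrals at the primes of $S$ are packaged as $\mathcal{Z}_S$; and the Petersson-type normalisation supplies $\Omega_x\in\mbb{C}^{\times}$, visibly independent of $j$, $\chi$, $c_y$. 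Zariski density of $\Upsilon(\underline{\pi},\underline{\sigma})$ in $U\times V$ then characterises $\mathscr{L}_p$ uniquely.

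Finally, the two recovery statements. At the point $j=k_x-1$ (the top of the critical range) the nearly overconvergent class is honestly overconvergent --- the differential operator acts trivially --- and the pairing collapses to the coherent construction of \cite{LZBK21}, so one only needs to check that the normalisations match. When $\pi_p$ is Klingen-ordinary the cohomological degree $e$ is the one used in \cite{LPSZ}, the construction becomes a pairing of a cusp form on the Siegel threefold against an Eisenstein series and a modular form, and agreement with \cite{LPSZ} follows by comparing the two doubling integrals. I expect the real obstacle to be the geometric input --- constructing the nearly overconvergent sheaves in higher degree with a well-behaved $p$-adic differential operator, and proving the classicality comparison invoked above --- rather than the (delicate but essentially representation-theoretic) local zeta integral computation at $p$.
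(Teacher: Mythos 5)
Your proposal follows essentially the same route as the paper: a nearly overconvergent trilinear cup-product pairing on the Siegel threefold in degree $2$ (built from reductions of structure of $P_{G,\opn{dR}}$ over strict neighbourhoods of the relevant stratum), with the Katz Eisenstein family shown to be nearly overconvergent via \cite{DiffOps}, classicality in higher Coleman theory to identify the specialisations with classical nearly holomorphic classes, and the unfolding of the global integral together with the Iwahori-level local zeta computation at $p$ producing $\mathcal{E}_p$, $\mathcal{Z}_S$ and $\Omega_x$, with Zariski density of $\Upsilon(\underline{\pi},\underline{\sigma})$ pinning down $\mathscr{L}_p$. The only slips are attributive rather than substantive: for $\opn{GSp}_4\times\opn{GL}_2$ the integral representation is Novodvorsky's, with an Eisenstein series on one $\opn{GL}_2$-factor of $H=\opn{GL}_2\times_{\mathbb{G}_m}\opn{GL}_2\hookrightarrow\opn{GSp}_4$ (the Gan--Gross--Prasad period is reserved for the triple-product case), and the normalisation comes from choosing an $\mathscr{O}_U$-basis of the rank-one family eigenspace rather than dividing by a family of periods, the complex period $\Omega_x$ entering only pointwise through the archimedean comparison.
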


\begin{remark} \label{IntroRemarkOnNormalisations}
    Our notation and normalisations in Theorem \ref{ThmAIntro} are slightly different from the main body of the article. More precisely, the parameters for the automorphic representations for $\opn{GSp}_4(\mbb{A})$ and $\opn{GL}_2(\mbb{A})$ will be $r=(r_1, r_2)$ and $t_2$ respectively, satisfying $(k_1, k_2) = (r_1+3, r_2+3)$ and $c = t_2+1$. Furthermore, we have shifted the variable in $\mathcal{W}$ in the interpolation formula by $r_2+t_2+2$; so in the main body of the article, we construct a $p$-adic $L$-function which satisfies the interpolation property
    \[
    \mathscr{L}_p(r_x, t_{2,y})(j + \chi) = (\star) \cdot \Lambda^{\{p\}}(\underline{\pi}_{k_x} \times \underline{\sigma}_{c_y}, \chi^{-1}, j + \tfrac{1-t_1}{2}), \quad \quad t_1 \defeq r_1-r_2-t_2,
    \]
    for $0 \leq j \leq t_1$. Finally, automorphic representation with lower-case letters will be normalised so that they are ``C-arithmetic'' (i.e., the corresponding Hecke eigensystem can be defined over a number field). We will denote the unitary normalisations of these automorphic representations with upper-case letters.
\end{remark}

\begin{remark}
    The $p$-adic $L$-function in Theorem \ref{ThmAIntro} depends on a choice of test data at primes in $S$, and it is known in many cases (and expected in general) that one can choose this test data such that $\mathcal{Z}_S(k, c; j) \neq 0$ for all $j \in \opn{Crit}(\pi, \sigma)$. For example, it is known if the central character of $\pi$ is a square (see Remark \ref{ZetaSgeneratedFractionalRem}). This property will suffice for the applications to the Bloch--Kato conjecture in \cite{PartII-BK}.

    Guaranteeing that $\mathcal{Z}_S(k_x, c_y; j) \neq 0$ as $(k_x, c_y)$ varies through $\Upsilon(\underline{\pi}, \underline{\sigma})$ is a much harder problem however. If we know that $\mathscr{L}_p(k,c) \neq 0$ and $v_p(\alpha_1 \alpha_2) < k_1 - c -2$, then after possibly shrinking $U \times V$ and using Theorem \ref{ThmBTemperedIntro} below, one can deduce that: for any $(k_x, c_y) \in \Upsilon(\underline{\pi}, \underline{\sigma})$, one has $\mathscr{L}_p(k_x, c_y)(j+\chi) \neq 0$ (and hence $\mathcal{Z}_S(k_x, c_y; j+\chi) \neq 0$) for all but finitely many $(j, \chi)$ with $j \in \opn{Crit}(\underline{\pi}_{k_x}, \underline{\sigma}_{c_y})$ and $\chi$ finite-order.
\end{remark}

As mentioned above (and explained in \S \ref{MethodOfProofIntroSec} below), the strategy for proving Theorem \ref{ThmAIntro} is to generalise the method in \cite{LZBK21} to incorporate families of nearly overconvergent automorphic forms in higher Coleman theory. Since this is manifestly a theory with $p$ inverted, we automatically lose control of the growth of $\mathscr{L}_p$ over the cyclotomic tower. On the other hand, the recent work of Boxer and Pilloni \cite{HHTBoxerPilloni} on \emph{higher Hida theory} for $\opn{GSp}_{2g}$ provides us with a natural lattice inside the spaces of overconvergent automorphic forms in higher Coleman theory. We exploit this property to show that the $p$-adic $L$-function in Theorem \ref{ThmAIntro} is tempered.

\begin{theoremx} \label{ThmBTemperedIntro}
    With notation as above, let $L_p(\pi \times \sigma, -) \in \mathscr{D}^{\opn{la}}(\mbb{Z}_p^{\times}, \mbb{C}_p)$ denote the specialisation of $\mathscr{L}_p$ at the point $((k_1, k_2), c)$. Let $h \in \mbb{Q}$ be such that $v_p(\alpha_1 \alpha_2) = h + k_2 - 2$. Then $L_p(\pi \times \sigma, -)$ is tempered with growth of order $\leq h+2$ in the sense of \cite[Definition 3.10]{barrera2021p}. In particular, if $h < k_1-k_2-c$ then $L_p(\pi \times \sigma, -)$ is uniquely determined by its interpolation property.    
\end{theoremx}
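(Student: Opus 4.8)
The plan is to bound the moments of $L_p(\pi \times \sigma, -)$ over the cyclotomic tower by confining the coherent cohomology classes that compute it to the integral lattice coming from higher Hida theory. Recall the shape of the construction: following \cite{LZBK21}, but now with families of nearly overconvergent forms in higher Coleman theory, $\mathscr{L}_p$ is realised as a cup-product/trace pairing of a nearly overconvergent higher-Coleman eigenclass $\eta_{\tilde{\pi}}$ attached to the small-slope $p$-stabilisation $\tilde{\pi}$ against the pushforward --- along the relevant embedding of Shimura varieties --- of a nearly holomorphic Eisenstein-type class built from $\tilde{\sigma}$ together with a universal cyclotomic twist. Specialising at the classical point $((k_1,k_2),c)$, the value of $L_p(\pi\times\sigma,-)$ against a locally analytic function supported on $a + p^\beta\mbb{Z}_p$ and equal to $(x-a)^j$ there is given by this pairing after restricting the level at $p$ to $p^\beta$ and applying the $j$-th power of the $p$-adically interpolated Maass--Shimura operator.

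The first step is to invoke higher Hida theory \cite{HHTBoxerPilloni}: inside the relevant degree of higher Coleman cohomology at the fixed weight $(k_1,k_2)$ there is a Hecke-equivariant, $U_p$-stable $\mathcal{O}$-lattice $M$, coming from the integral higher Hida complex, on which the small-slope projector --- being a polynomial in $U_p$ whose $p$-adic denominators are bounded in terms of $v_p(\alpha_1\alpha_2)$ --- acts with image in $p^{-N}M$, where $N$ depends only on $v_p(\alpha_1\alpha_2) = h + k_2 - 2$. Since the class entering the pairing lies in the small-slope part, it admits a representative in $p^{-N}M$, uniformly in $\beta$ and $j$. All the remaining inputs --- the class of $\tilde{\sigma}$, the trace and restriction maps, the unit-root splittings, and the $p$-adic Maass--Shimura operators of \cite{DiffOps} --- are integral, or have denominators bounded independently of $\beta$ and $j$, by the corresponding statements on modular curves.

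The second step is the valuation bookkeeping. Restricting the level at $p$ to $p^\beta$ introduces a factor behaving like $(\alpha_1\alpha_2)^{-\beta}$, of valuation $-\beta(h + k_2 - 2)$; the critical twist contributes a normalisation that is affine in $\beta$ and $j$ and comes from the de Rham comparison, the algebraicity factors, and the interpolated Maass--Shimura operators, whose net effect --- encoded in the weight shift $r_2 + t_2 + 2$ of Remark \ref{IntroRemarkOnNormalisations} --- is to replace $h + k_2 - 2$ by $h + 2$. Together with the first step this yields
\[
v_p\!\left( \int_{a + p^\beta \mbb{Z}_p} (x - a)^j \, dL_p(\pi\times\sigma,-) \right) \;\geq\; \beta\big(j - (h + 2)\big) + O(1)
\]
uniformly in $a$, in $\beta \geq 0$ and in $j \geq 0$, which is precisely the assertion that $L_p(\pi\times\sigma,-)$ is tempered of growth order $\leq h + 2$ in the sense of \cite[Definition 3.10]{barrera2021p}. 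For the remaining claim, a tempered distribution of growth order $\leq h+2$ on $\mbb{Z}_p^\times$ is determined by finitely many of the moments $\int x^j\chi(x)\,dL_p$ ($j$ bounded in terms of $h$, $\chi$ of finite order); since $\#\opn{Crit}(\pi\times\sigma) = k_1 - k_2 - c + 2$, the hypothesis $h < k_1 - k_2 - c$ ensures that all of these are prescribed by the interpolation formula of Theorem \ref{ThmAIntro}, whence $L_p(\pi\times\sigma,-)$ is unique.

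I expect the main obstacle to be the interface between these two steps: making the ``bounded denominators'' of the first step completely explicit --- in particular controlling the denominators introduced when projecting a finite-slope, non-ordinary higher-Coleman class into the integral higher-Hida complex --- and then checking that the resulting growth rate genuinely collapses to $h + 2$ rather than $h + k_2 - 2$. This demands a construction-wide tracking of the valuations contributed by the unit-root splitting, the $\opn{GL}_2$-side Eisenstein class, and the interpolated Maass--Shimura operators, all referred to the lattice $M$ of \cite{HHTBoxerPilloni}.
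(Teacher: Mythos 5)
Your overall strategy --- control denominators via the integral structures of higher Hida theory and then do valuation bookkeeping --- is the right one, and you correctly flag the interface between the two steps as the crux. But your second step misidentifies where the growth actually comes from, and the part you leave as an assertion is precisely the content of the paper's argument. In the paper the level at $p$ is held fixed at $\beta=1$ throughout the temperedness proof; the parameter against which growth is measured is the analyticity degree $n$ of the test function (restriction of the distribution to the affinoids $\mathcal{W}_n$ of $(n-1)$-accessible weights). Pairing with an $n$-analytic cyclotomic twist forces one to lift the $\opn{GSp}_4$ class $\eta$ to an $n$-analytic nearly overconvergent class $\eta^{(n)}$, and the mechanism controlling its denominators is that the \emph{optimally normalised} Klingen operator factors through the change-of-analyticity maps $\rho_{m,m+1}\colon \mathscr{N}^{\dagger,m\opn{-an},\circ}_{G,\kappa_G^*}\to\mathscr{N}^{\dagger,(m+1)\opn{-an},\circ}_{G,\kappa_G^*}$ integrally (Lemma \ref{FactorisationOfKlingenUpLemma}, resting on the comparison with the higher Hida lattice in \cite{HHTBoxerPilloni}). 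Since $\eta^{(n)}$ is an eigenvector for $s_{\opn{Kl}}$ with eigenvalue $\alpha_{\opn{Kl}}$ of valuation exactly $h$ (the normalisation $U_{p,\opn{Kl}}^{\circ}=p^{-r_2}U_{p,\opn{Kl}}$ is what removes the $k_2-2$, not any ``weight shift'' in the interpolation variable), one gets $p^{M_5}\alpha_{\opn{Kl}}^{n}\eta^{(n)}$ integral, i.e. denominators growing like $p^{hn}$. Your heuristic that a factor $(\alpha_1\alpha_2)^{-\beta}$ from level-deepening is ``collapsed'' from $h+k_2-2$ to $h+2$ by de Rham/algebraicity normalisations is not a proof and does not match this mechanism.

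The second misattribution concerns the $+2$. It does not come from the $\opn{GSp}_4$ side or from any normalisation; it comes from the integrality defect of the nearly overconvergent Eisenstein family. The $\star$-action of an $\varepsilon$-analytic function on integral nearly overconvergent forms introduces a denominator $p^{2[\upsilon(\varepsilon)]+O(1)}$ (Proposition \ref{CepsilonIntegralityAppendixProp}), so the $n$-analytic Eisenstein class lands only in $p^{-2n-M_2}\mathscr{N}^{\dagger,(n+M_1)\opn{-an},\circ}$ (Lemma \ref{M1M2EisLemma}); this is exactly the $+2$, and is why the authors state the bound is suboptimal pending better integrality of the Eisenstein distribution. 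Finally, your first step also needs the integrality of the trilinear pairing itself on integral classes, which in the paper is Lemma \ref{QnIntegralityLemma} via the trace map on a smooth weak formal model of the ordinary dagger space --- another ingredient your sketch takes for granted. The uniqueness claim at the end is fine and matches the paper's appeal to \cite[Proposition I.4.5]{colmez1998theorie}.
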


Unfortunately, this is not the most optimal result one expects: it should be the case that $L_p(\pi \times \sigma, -)$ is tempered with growth of order $\leq h$ so that, in particular, if $\pi_p$ is Klingen-ordinary (but not necessarily Siegel-ordinary), then $L_p(\pi \times \sigma, -)$ is a $p$-adic measure. The slightly sub-optimal bound of $h+2$ in Theorem \ref{ThmBTemperedIntro} arises from our lack of knowledge of the integrality properties of the nearly overconvergent family of Eisenstein series; we hope to improve this result in the near future.

\subsubsection{$p$-adic $L$-functions for $\opn{GSp}_4 \times \opn{GL}_2 \times \opn{GL}_2$}

We also obtain results towards $p$-adic $L$-functions for $\opn{GSp}_4 \times \opn{GL}_2 \times \opn{GL}_2$. More precisely, let $\pi$ be a cuspidal automorphic representation of weight $(k_1, k_2)$ and quasi-paramodular level $(N, M)$ satisfying the assumptions in \S \ref{PAdicLsForCaseAIntroSec} above (including the existence of a small slope $p$-stabilisation, which we fix). For $i=1, 2$, let $\sigma_i$ be cuspidal automorphic representations of $\opn{GL}_2(\mbb{A})$ of weights $c_i \geq 1$ and levels $\Gamma_1(N_i)$, satisfying the assumptions in \S \ref{PAdicLsForCaseAIntroSec}. We assume that $\chi_{\pi} \chi_{\sigma_1} \chi_{\sigma_2}=1$, and that $c_1+c_2 \leq k_1-k_2+2$ (so we are in region (f) of \cite[Figure 2]{LZpadiclfunctionsdiagonalcycles}).

We are interested in the ``triple-product'' $L$-function $L(\pi \times \sigma_1 \times \sigma_2, s)$ associated with the automorphic representation $\pi \boxtimes \sigma_1 \boxtimes \sigma_2$ and the $16$-dimensional representation of the dual group ${^L\opn{GSp}_4} \times {^L\opn{GL}_2} \times {^L\opn{GL}_2}$ obtained as the tensor product of the spin and standard representations. If we assume that the local signs for this $L$-function at all finite places are $+1$, then the (global) sign of the functional equation is $+1$ and one is led to study the central critical $L$-values $L(\pi \times \sigma_1 \times \sigma_2, 1/2)$ as $\pi, \sigma_1, \sigma_2$ vary in Coleman families.

Let $S$ denote the set of primes dividing $NN_1N_2$, and set $H = \opn{GL}_2 \times_{\mbb{G}_m} \opn{GL}_2 \hookrightarrow \opn{GSp}_4 =: G$. In this case, the Gan--Gross--Prasad conjecture for general spin groups \cite{emory2020global} predicts a relation between these central critical $L$-values and automorphic periods for $\pi \boxtimes \sigma_1 \boxtimes \sigma_2$. If we let $\varphi^{\opn{sph}} \in \pi$, and $\lambda_{i}^{\opn{sph}} \in \sigma_i$ denote the Whittaker new vectors, with $\varphi^{\opn{sph}}$ and  $\lambda_i^{\opn{sph}}$ specific vectors in the minimal $K_{\infty}$-type (depending on the weights $((k_1, k_2), c_1, c_2)$ -- see \S \ref{CaseBLfunctionAndPeriodSSec}), then the Gan--Gross--Prasad conjecture implies that $L(\pi \times \sigma_1 \times \sigma_2, 1/2) \neq 0$ if and only if there exists a pair $\gamma_S = (\gamma_{0, S}, \gamma_{1, S}) \in \opn{GSp}_4(\mbb{Q}_S) \times \opn{GL}_2(\mbb{Q}_S)$ such that
\[
\mathscr{P}^{\opn{sph}}(k, c_1, c_2; \gamma_S) \defeq \int_{H(\mbb{Q})Z_G(\mbb{A})\backslash H(\mbb{A})} \varphi^{\opn{sph}}(h \gamma_{0, S}) \left(\delta^{m}\lambda_{1}^{\opn{sph}}\right)(h_1 \gamma_{1, S}) \lambda_2^{\opn{sph}}(h_2) dh \; \neq \; 0 .
\]
Here $\delta$ denotes the Maass--Shimura differential operator, $(h_1, h_2) \in H(\mbb{A})$ is a general element, and $m = (k_1-k_2-c_1-c_2+2)/2$ (which is an integer by the condition on the central characters). Since the orthogonal Gan--Gross--Prasad conjecture is still open in general, we will therefore study the $p$-adic variation of the periods $\mathscr{P}^{\opn{sph}}(\cdots)$.

Let $\underline{\pi}$, $\underline{\sigma}_1$, $\underline{\sigma}_2$ be Coleman families over $U \subset \mathcal{W}^2$, $V_1 \subset \mathcal{W}$, $V_2 \subset \mathcal{W}$ passing through the fixed $p$-stabilisations $\tilde{\pi}$, $\tilde{\sigma}_1$, $\tilde{\sigma}_2$ respectively. Let $\Upsilon(\underline{\pi}) \subset U$, $\Upsilon(\underline{\sigma}_1) \subset V_1$, $\Upsilon(\underline{\sigma}_2) \subset V_2$ denote the Zariski dense subsets as in \S \ref{PAdicLsForCaseAIntroSec}, and set
\[
\Upsilon(\underline{\pi}, \underline{\sigma}_1, \underline{\sigma}_2) \defeq \{ (k_x, c_{y_1}, c_{y_2}) \in \Upsilon(\underline{\pi}) \times \Upsilon(\underline{\sigma}_1) \times \Upsilon(\underline{\sigma}_2) : c_{y_1} + c_{y_2} \leq k_{1, x} - k_{2, x} +2 \}
\]
which is Zariski dense in $U \times V_1 \times V_2$. Let $\mathscr{O}_{U \times V_1 \times V_2}$ denote the global sections of $U \times V_1 \times V_2$.

\begin{theoremx} \label{ThmCCaseBIntro}
    There exists a rigid-analytic function $\mathscr{L}_p \in \mathscr{O}_{U \times V_1 \times V_2}$ such that: for all $(k_x, c_{y_1}, c_{y_2}) \in \Upsilon(\underline{\pi}, \underline{\sigma}_1, \underline{\sigma}_2)$, one has 
    \[
    \mathscr{L}_p(k_x, c_{y_1}, c_{y_2}) = \mathcal{E}_p(\underline{\pi}_{k_x} \times \underline{\sigma}_{c_{y_1}} \times \underline{\sigma}_{c_{y_2}}, 1/2) \cdot \frac{\mathscr{P}^{\opn{sph}}(k_x, c_{y_1}, c_{y_2}; \gamma_S)}{\Omega_x}
    \]
    where:
    \begin{itemize}
        \item $\Omega_x \in \mbb{C}^{\times}$ is a scalar independent of $\underline{\sigma}_{c_{y_1}} \boxtimes \underline{\sigma}_{c_{y_2}}$ and $\gamma_S$;
        \item $\mathcal{E}_p(\underline{\pi}_{k_x} \times \underline{\sigma}_{c_{y_1}} \times \underline{\sigma}_{c_{y_2}}, 1/2)$ denotes the $p$-adic multiplier 
        \[
        \mathcal{E}_p(\underline{\pi}_{k_x} \times \underline{\sigma}_{c_{y_1}} \times \underline{\sigma}_{c_{y_2}}, 1/2) = \prod_{i, j, k}^2 \left( 1 - \frac{p^{-1/2}}{\alpha_{i,x}\mu_{j, y_1}\mu_{k, y_2}} \right)
        \]
        where $(\alpha_{1, x}, \alpha_{2, x})$ denotes the $p$-stabilisation corresponding to $\underline{\pi}_{k_x}$, and $(\mu_{1, y_a}, \mu_{2, y_a})$ are the inverses of the roots of the Hecke polynomial of $\underline{\sigma}_{c_{y_a}}$ at $p$ for $a=1,2$.
    \end{itemize}
    Moreover, if $\underline{\sigma}_1$ and $\underline{\sigma}_2$ are Hida families, then the two-variable slice $\mathscr{L}_p(k, -, -) \in \mathscr{O}_{V_1 \times V_2}$ coincides with the $p$-adic $L$-function in \cite[\S 5]{LZpadiclfunctionsdiagonalcycles} (after restricting the latter measure to $V_1 \times V_2 \subset \mathcal{W} \times \mathcal{W}$).
\end{theoremx}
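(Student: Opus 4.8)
The plan is to realise the automorphic period $\mathscr{P}^{\opn{sph}}$ as a cup-product pairing in the higher coherent cohomology of the Siegel threefold restricted along the Gan--Gross--Prasad subvariety, and then to $p$-adically interpolate its three inputs along the Coleman families. Write $G = \opn{GSp}_4 \cong \opn{GSpin}_5$ and $H = \opn{GL}_2 \times_{\mbb{G}_m} \opn{GL}_2 \cong \opn{GSpin}_4$, and let $\iota \colon S_H \hookrightarrow S_G$ be the induced codimension-one morphism of Shimura varieties, where $S_G$ is the Siegel threefold and $S_H$ is (roughly) a product of two modular curves. A small-slope $p$-stabilised Siegel eigenform in the weight-$(k_1, k_2)$ part of $\pi$ gives a class in the relevant degree of the higher Coleman cohomology of $S_G$ --- which is forced to be $\dim S_H = 2$ in order for the pairing below to be of Serre-duality type. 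To access the piece of the restriction of this class to $S_H$ that pairs against $\lambda_1^{\opn{sph}}$ and $\lambda_2^{\opn{sph}}$, one first passes to the \emph{nearly} overconvergent version of the relevant automorphic vector bundle --- the analogue on $S_G$ of the ``nearly sheaves'' of \cite{LPSZ} --- whose restriction to $S_H$ splits; the branching law then exposes a summand of the correct weight which pairs, via cup product and the trace map of $S_H$, against $(\delta^m \lambda_1^{\opn{sph}}) \boxtimes \lambda_2^{\opn{sph}}$. Here $m = (k_1 - k_2 - c_1 - c_2 + 2)/2$ is $\geq 0$ precisely because $(k_1, k_2, c_1, c_2)$ lies in region (f), and is an integer by the hypothesis $\chi_\pi \chi_{\sigma_1} \chi_{\sigma_2} = 1$. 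Unwinding this pairing recovers a nonzero multiple of $\mathscr{P}^{\opn{sph}}(k, c_1, c_2; \gamma_S)$, the translate $\gamma_S$ being encoded by a prime-to-$p$ level modification on $S_H$.

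Next I would replace each input by its family version. The Coleman family $\underline{\pi}$, which exists by the hypotheses of \S\ref{PAdicLsForCaseAIntroSec} (the weight map being \'{e}tale at $\tilde{\pi}$), yields --- via the theory of nearly overconvergent forms extending higher Coleman theory \cite{BoxerPilloni}, developed in this article, together with the small-slope overconvergent projector --- a canonical class over $U$ in the relevant degree; the family $\underline{\sigma}_2$ yields an overconvergent eigenform over $V_2$; and applying the $p$-adically interpolated Maass--Shimura operator of \cite{DiffOps}, with exponent the rigid-analytic function $(k_1 - k_2 - c_1 - c_2 + 2)/2$ on $U \times V_1 \times V_2$, to the overconvergent eigenform attached to $\underline{\sigma}_1$ produces a nearly overconvergent family over $V_1$ (extended over $U \times V_1$). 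The splitting of the nearly sheaf, the branching map, and the trace all extend to overconvergent and nearly overconvergent cohomology over the relevant affinoids, and $\mathscr{L}_p \in \mathscr{O}_{U \times V_1 \times V_2}$ is then defined as the resulting cup-product pairing. There is no cyclotomic variable in this construction --- consistent with evaluating the $L$-function only at the single central point $s = 1/2$ --- which is why $\mathscr{L}_p$ is a rigid-analytic function rather than a distribution as in Theorem \ref{ThmAIntro}.

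For the interpolation formula, specialise $\mathscr{L}_p$ at a point $z = (k_x, c_{y_1}, c_{y_2}) \in \Upsilon(\underline{\pi}, \underline{\sigma}_1, \underline{\sigma}_2)$. The overconvergent projection of the classical cohomology class attached to $\underline{\pi}_{k_x}$ recovers the $p$-stabilised eigenclass up to an explicit local factor at $p$, and unwinding the cup product over $S_H$ presents $\mathscr{L}_p(z)$ as a product of three quantities: the comparison scalar $\Omega_x$ between a rational (coherent / $q$-expansion) normalisation and the unitary normalisation of a generator of the one-dimensional $\tilde{\pi}$-isotypic part of the degree-two coherent cohomology of $S_G$ --- this depends only on $\pi$ and $\tilde{\pi}$, hence is independent of $\underline{\sigma}_{c_{y_1}} \boxtimes \underline{\sigma}_{c_{y_2}}$ and $\gamma_S$; the classical period $\mathscr{P}^{\opn{sph}}(k_x, c_{y_1}, c_{y_2}; \gamma_S)$; and a local branching integral at $p$ evaluating the $p$-stabilised test vectors against the spherical vectors. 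A direct unramified computation --- of the same shape as the local zeta-integral computations in \cite{LZBK21, LPSZ} for $\opn{GSp}_4 \times \opn{GL}_2$, now with an additional $\opn{GL}_2$ factor --- shows that the last quantity equals the $p$-adic multiplier $\mathcal{E}_p(\underline{\pi}_{k_x} \times \underline{\sigma}_{c_{y_1}} \times \underline{\sigma}_{c_{y_2}}, 1/2)$ of the statement, completing the interpolation formula.

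Finally, if $\underline{\sigma}_1$ and $\underline{\sigma}_2$ are Hida families then higher Coleman theory restricts to higher Hida theory \cite{HHTBoxerPilloni}, and the nearly overconvergent Maass--Shimura family degenerates to the ordinary nearly holomorphic family used in \cite[\S 5]{LZpadiclfunctionsdiagonalcycles}; one then checks that the two constructions agree input-by-input (same ordinary projections, same branching map, same pairing over $S_H$), or, more softly, that both are rigid-analytic functions on $V_1 \times V_2$ satisfying the same interpolation formula on the Zariski-dense set $\Upsilon$ and hence coincide. I expect the main obstacle to be making the $p$-adically interpolated Maass--Shimura operator interact correctly with the branching map in the nearly overconvergent setting: one must control the ``nearly'' filtration (and the differential-operator direction) under restriction to $S_H$ so that the cup-product pairing is well-defined over all of $U \times V_1 \times V_2$, verify that the exponent $m$ --- a half-integral combination of all three weights --- varies $p$-adically compatibly with the splitting of the nearly sheaf, and confirm that the classical specialisation matches the integral defining $\mathscr{P}^{\opn{sph}}$. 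A secondary technical point is the unramified computation at $p$ pinning down $\mathcal{E}_p$, which should however follow the template already established for $\opn{GSp}_4 \times \opn{GL}_2$.
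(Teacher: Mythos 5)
Your proposal follows essentially the same route as the paper's proof (Theorem \ref{FourVarCaseBTheorem} combined with Proposition \ref{TripleProdPalgWithSpherPeriodProp}): the period is realised as the trilinear cup-product pairing $\mathcal{P}^{\opn{alg}}$ over $X_H \hookrightarrow X_G$, the $\opn{GSp}_4$ input is a nearly overconvergent family lifting the higher Coleman eigenclass, the $\opn{GL}_2$ inputs are handled via the $\star$-action of \cite{DiffOps} with a square root of $r_1-r_2-d_1-d_2$ as exponent, and the Euler factor comes from a local computation at $p$ against the unramified Gan--Gross--Prasad functional. The one point you elide is that \emph{both} $\opn{GL}_2$ inputs must be $p$-depleted: for $\underline{\sigma}_1$ this is automatic (the fractional power of $\nabla$ is $(\rho 1_{\mbb{Z}_p^{\times}})\star-$, which kills the non-depleted part), but for $\underline{\sigma}_2$ it is a genuine choice, and without it the local integral at $p$ does not produce the stated denominator-free multiplier $\prod_{i,j,k}\bigl(1 - p^{-1/2}\alpha_{i,x}^{-1}\mu_{j,y_1}^{-1}\mu_{k,y_2}^{-1}\bigr)$.
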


\begin{remark}
    As in Remark \ref{IntroRemarkOnNormalisations}, the normalisations of the automorphic representations and weights in Theorem \ref{ThmCCaseBIntro} differ from the conventions in the main body of the article.
\end{remark}

\subsection{Method of proof} \label{MethodOfProofIntroSec}

Let us now sketch the main ingredients that go into the proofs of Theorem \ref{ThmAIntro}, Theorem \ref{ThmBTemperedIntro}, and Theorem \ref{ThmCCaseBIntro}. As in \cite{LPSZ}, the starting point for Theorem \ref{ThmAIntro} is an integral formula for the critical $L$-values due to Novodvorsky \cite{Novodvorsky}, which involves an automorphic period of the shape:
\begin{equation} \label{AutPerInIntroSec}
\int_{H(\mbb{Q})Z_G(\mbb{A})\backslash H(\mbb{A})} \varphi(h) \opn{Eis}(h_1) \lambda(h_2) dh
\end{equation}
where $\varphi$ and $\lambda$ are cuspidal automorphic forms for $\opn{GSp}_4(\mbb{A})$ and $\opn{GL}_2(\mbb{A})$ respectively, and $\opn{Eis}$ is a certain Eisenstein series. For this integral to be non-zero, we must assume that $\varphi$ is generic; and to obtain all critical $L$-values in general, it is necessary to work with real-analytic (but not necessarily holomorphic) Eisenstein series. To be able to interpret this automorphic period as a pairing in coherent cohomology, one therefore needs to not only view $\varphi$ as a $\opn{H}^2$ coherent cohomology class for the Siegel threefold, but also lift this cohomology class to a degree $2$ ``nearly holomorphic class'' (in order to pair with the real-analytic Eisenstein series). This is precisely the strategy in \cite{LPSZ} -- roughly, one can interpret the period in (\ref{AutPerInIntroSec}) as the value of a pairing 
\begin{equation} \label{TrilinearPerInIntro}
\opn{H}^2\left( X_G, \mathcal{N}^{\opn{hol}}_G(-D) \right) \otimes \opn{H}^0\left( X_{\opn{GL}_2}, \mathcal{N}^{\opn{hol}}_{\opn{GL}_2} \right) \otimes \opn{H}^0\left( X_{\opn{GL}_2}, \mathcal{M}^{\opn{hol}}_{\opn{GL}_2} \right) \to \mbb{C}
\end{equation}
where $\mathcal{N}^{\opn{hol}}_G(-D)$ denotes a certain sheaf of cuspidal nearly holomorphic forms on the Siegel threefold $X_G$, and $\mathcal{N}^{\opn{hol}}_{\opn{GL}_2}$ (resp. $\mathcal{M}^{\opn{hol}}_{\opn{GL}_2}$) is a certain sheaf of nearly holomorphic (resp. holomorphic) modular forms on the modular curve $X_{\opn{GL}_2}$. For this pairing to make sense, the weights of the (Siegel) modular forms must be compatible in an appropriate sense; this more or less amounts to the conditions defining the interpolation region (f). 

The strategy for constructing the $p$-adic $L$-function in Theorem \ref{ThmAIntro} is to then $p$-adically interpolate the pairing (and its inputs) in (\ref{TrilinearPerInIntro}). Under a \emph{Klingen-ordinarity} assumption on the Siegel modular form, this is accomplished in \cite{LPSZ} since one can work with higher Hida theory groups of $p$-adic modular forms. In the non-ordinarity setting however, one is forced to construct a $p$-adic version of (\ref{TrilinearPerInIntro}) involving spaces of \emph{nearly overconvergent} (Siegel) modular forms. This presents two complications:
\begin{itemize}
    \item Can one construct a nearly overconvergent family of Eisenstein series which $p$-adically interpolates the real-analytic Eisenstein series appearing in Novodvorsky's integral above?
    \item Can one define spaces of nearly overconvergent Siegel modular forms in $\opn{H}^2$ coherent cohomology of the Siegel threefold, extending the spaces of overconvergent forms appearing in Higher Coleman Theory \cite{BoxerPilloni} and the space of nearly holomorphic forms above?
\end{itemize}
If one can answer both of these questions in the positive, then it is possible (following a similar strategy as in \cite{LZBK21}) to construct the $p$-adic $L$-function in Theorem \ref{ThmAIntro}. 

For the first question, one can show using recent work of Pilloni, Rodrigues, and the first named author \cite{DiffOps} that the Eisenstein measure of Katz is nearly overconvergent. Here the space of nearly overconvergent modular forms $\mathscr{N}^{\dagger}_{\opn{GL}_2}$ is in the sense of \emph{op.cit.}, that is, overconvergent functions on the Katz Igusa tower inside the torsor $P_{\opn{GL}_2,\opn{dR}}$ parameterising trivialisations of the first de Rham homology of the universal elliptic curve respecting the Hodge filtration. To address the second question, we therefore need to construct a space of nearly overconvergent Siegel modular forms which ``pairs'' with $\mathscr{N}^{\dagger}_{\opn{GL}_2}$. The construction of this space is inspired by the theory of ``nearly sheaves'' in \cite{LPSZ} and takes place in \S \ref{padicRepTheorySubSec}, \S \ref{CohomologySubSec} -- one key ingredient in defining this space is the existence of reductions of structure of the de Rham torsor $P_{G, \opn{dR}}$ (see Proposition \ref{PGdrnReductionProposition}). In addition to this, the ideas involved in both of these aspects apply equally well to the triple product setting, leading to the construction of the $p$-adic $L$-function in Theorem \ref{ThmCCaseBIntro}.

This leaves the question of temperedness in Theorem \ref{ThmBTemperedIntro}. For this, we carefully study the integrality properties of the nearly overconvergent family of Eisenstein series, and the growth of the denominators of the cohomology classes associated with the Siegel modular form as one deepens the level at $p$. By the optimal slope bounds for the Hecke operators at $p$ in \cite{HHTBoxerPilloni}, it turns out that the latter denominators are precisely controlled by the $p$-adic valuation of the Klingen $U_p$-eigenvalue, which explains the presence of $h = v_p(\alpha_1 \alpha_2) - k_2+2$ in Theorem \ref{ThmBTemperedIntro}. Unfortunately our result is not the most optimal growth one expects -- this is related to our knowledge of the integrality of the nearly overconvergent family of Eisenstein series. If one can prove that this ``Eisenstein distribution'' is in fact a measure (which we believe should be the case), then one would be able to establish the optimal growth bounds for the $p$-adic $L$-function $L_p(\pi \times \sigma, -)$. We hope to revisit this question in future work.

\subsection{Acknowledgements}

The authors would like to thank David Loeffler and Sarah Zerbes for helpful discussions surrounding this article. Part of this work was completed at Max-Planck-Institut f\"{u}r Mathematik in April 2024, and the authors thank the institute for their hospitality. This research was (partly) funded by UK Research and Innovation grant MR/V021931/1. For the purpose of Open Access, the authors have applied a CC BY public copyright licence to any Author Accepted Manuscript (AAM) version arising from this submission.

\subsection{Notation}

We fix the following notations and conventions throughout the article:
\begin{itemize}
    \item We fix a prime number $p > 2$ throughout.
    \item If $\chi$ is a Dirichlet character, then we let $\widehat{\chi} \colon \mbb{A}^{\times} \to \mbb{C}^{\times}$ denote the associated algebraic Hecke character. In particular, for all but finitely many primes $\ell$, one has $\widehat{\chi}(\varpi_{\ell}) = \chi(\ell)$ for any choice of uniformiser $\varpi_{\ell} \in \mbb{Z}_{\ell}$. If the conductor of $\chi$ is a power of $p$, then we often let $c(\chi) \geq 0$ denote the integer such that $p^{c(\chi)}$ is the conductor of $\chi$. Note that, in this setting, one has $\widehat{\chi}|_{\mbb{Z}_p^{\times}} = \chi^{-1}$.
    \item If $A$ is a Banach $\mbb{Q}_p$-algebra and $f \colon \mbb{Z}_p \to A$ a continuous function, then we say $f$ is $r$-analytic if $z \mapsto f(a + p^r z)$ is given by a power series in $z$ for any $a \in \mbb{Z}_p$. For any compact open subset $U \subset \mbb{Z}_p$ and continuous function $f \colon U \to A$, we say $f$ is $r$-analytic if its extension-by-zero to $\mbb{Z}_p$ is $r$-analytic. For example, if $f \colon \mbb{Z}_p^{\times} \to A^{\times}$ is a continuous character, then $f$ is $r$-analytic (for $r \geq 1$) if $z \mapsto f(a + p^r z)$ is given by a power series in $z$ for any $a \in \mbb{Z}_p^{\times}$ (this convention occasionally differs from the literature). This definition extends to continuous functions on finite products of $\mbb{Z}_p^{\times}$ by requiring that it is $r$-analytic on each factor.
    \item We let $|\!| - |\!| \colon \mbb{A} \to \mbb{C}$ denote the adelic norm. 
    \item We say a compact open subgroup $K \subset G(\mbb{A}_f)$ is sufficiently small (or neat) if it satisfies the conditions in \cite[Definition B.6]{ACES}.
    \item If $\mathcal{X}$ is an adic space over $\opn{Spa}(\mbb{Q}_p, \mbb{Z}_p)$ and $U \subset \mathcal{X}$ is an open subspace, then we say that an open subspace $V \subset \mathcal{X}$ is a strict neighbourhood of $U$ in $\mathcal{X}$ if $V$ contains the closure (as topological spaces) of $U$ in $\mathcal{X}$. We will sometimes denote this by $U \Subset V$.
    \item If $X$ is a $p$-adic manifold, then we let $C_c^{\infty}(X)$ denote the space of Schwartz functions -- that is, locally constant and compactly supported functions $X \to \mbb{C}$. If $Y \subset X$ is a compact open subspace, then we let $\opn{ch}(Y) \in C_c^{\infty}(X)$ denote the characteristic function of $Y$.
\end{itemize}

\section{Groups and representations}

In this section we introduce the reductive groups underlying the various Shimura varieties we will consider in this article. In particular, we will discuss the various branching laws between these groups that will be needed for the construction of the $p$-adic $L$-functions.

\subsection{Algebraic groups} \label{AlgGroupsSubSec}

Throughout the whole article, we take $G \defeq \opn{GSp}_4$ to be the general symplectic group in four variables. Explicitly, we view
\[
G = \left\{ g \in \opn{GL}_4 : g^t J g = s(g) \cdot J \text{ for some } s(g) \in \mbb{G}_m \right\}, \quad J \defeq \left( \begin{smallmatrix} 0 & 0 & 0 & 1 \\ 0 & 0 & 1 & 0 \\ 0 & -1 & 0 & 0 \\ -1 & 0 & 0 & 0 \end{smallmatrix} \right),
\]
where $s \colon G \to \mbb{G}_m$ denotes the similitude character. We let $B_G \subset G$ denote the standard upper-triangular Borel subgroup, and $T \subset B_G$ the diagonal torus. In particular, we view elements of $T$ as diagonal matrices $\opn{diag}(z_1, z_2, sz_2^{-1}, sz_1^{-1})$ for $z_1, z_2, s \in \mbb{G}_m$. Via this presentation, we can identify algebraic characters $\kappa \in X^*(T)$ of the torus $T$ with tuples of integers $(r_1, r_2; c)$ satisfying
\[
\kappa(\opn{diag}(z_1, z_2, sz_2^{-1}, sz_1^{-1})) = z_1^{r_1} z_2^{r_2} s^c .
\]

\begin{remark}
    This presentation of $T$ and identification of $X^*(T)$ with tuples of integers is consistent with \cite{DiffOps}, however note that this convention differs from \cite{LSZ17, LPSZ, LZBK21}. We prefer to use the convention above since one is not required to fix a ``square-root'' later on when discussing $p$-adic characters of $T(\mbb{Z}_p)$. This downside to this choice is that the action of the Weyl group on $X^*(T)$ has a more complicated description; for the convenience of the reader, we explicitly describe this Weyl group action in Remark \ref{rem:ActionOfWeylGroup} below.
\end{remark}

Let $H \defeq \opn{GL}_2 \times_{\mbb{G}_m} \opn{GL}_2$ where the fibre product is over the determinant map. One has a natural embedding $\iota \colon H \hookrightarrow G$ given by
\[
\iota\left( \left( \begin{smallmatrix} a & b \\ c & d \end{smallmatrix} \right), \left( \begin{smallmatrix} a' & b' \\ c' & d' \end{smallmatrix} \right) \right) = \left( \begin{smallmatrix} a &  &  & b \\  & a' & b' &  \\  & c' & d' &  \\ c &  &  & d \end{smallmatrix} \right) .
\]
We let $B_H = B_G \cap H$ denote the standard upper-triangular Borel subgroup of $H$. Finally, we let $B_{\opn{GL}_2}$ denote the upper-triangular Borel subgroup of $\opn{GL}_2$, and let $T_{\opn{GL}_2}$ denote the diagonal torus (which we identify with matrices of the form $\opn{diag}(z, s z^{-1})$). Algebraic characters of $T_{\opn{GL}_2}$ will be denoted by tuples of integers $(r; c)$ corresponding to the character $\opn{diag}(z, s z^{-1}) \mapsto z^r s^c$.

\subsubsection{Parabolic subgroups and Weyl groups}

We let $P_G \subset G$ denote the upper-triangular Siegel parabolic subgroup, i.e., $P_G$ is the subgroup of matrices of the form $\left( \begin{smallmatrix} * & * & * & * \\ * & * & * & * \\ & & * & * \\ & & * & * \end{smallmatrix}  \right)$. We denote the standard block diagonal Levi subgroup of $P_G$ by $M_G$. For consistency, we also set $P_H = B_H$, $M_H = T$ and $P_{\opn{GL}_2} = B_{\opn{GL}_2}$, $M_{\opn{GL}_2} = T_{\opn{GL}_2}$. We let $B_{M_G}$ denote the upper-triangular Borel subgroup of $M_G$. We will write $\overline{(-)}$ to mean the opposite subgroup with respect to the fixed choices of maximal tori.

For any split reductive group $\mathscr{G}$, we let $W_{\mathscr{G}}$ denote its Weyl group. We let ${^MW_G} \subset W_G$ denote the set of minimal length representatives of the quotient $W_{M_G} \backslash W_G$ -- we can write ${^M W_G} = \{ w_0, w_1, w_2, w_3 \}$ where the length of $w_i$ is $i$. An explicit description of these elements is given in \cite[\S 2.1]{LZBK21}; in particular, we take 
\[
w_1 = \left( \begin{smallmatrix} 1 & & & \\ & & 1 & \\ & -1 & &  \\ & & & 1 \end{smallmatrix} \right) \in G .
\]
The action of $W_G$ on $X^*(T)$ is given by the formula: $(w \cdot \kappa)(-) = \kappa(w^{-1} \cdot - \cdot w)$ for $w \in W_G$ and $\kappa \in X^*(T)$. If we let $\rho_G = (2, 1; -3/2)$ denote the half-sum of the positive roots of $G$ (with respect to the choice $(B_G, T)$), then we will also use the notation $w \star \kappa \defeq w \cdot (\kappa + \rho_G) - \rho_G$ for the ``shifted action'' of the Weyl group on algebraic characters.

\begin{remark} \label{rem:ActionOfWeylGroup}
    Let $\kappa = (r_1, r_2; c) \in X^*(T)$. Then the action of ${^MW_G}$ is described as 
    \[
    w_1 \cdot (r_1, r_2; c) = (r_1, -r_2; c+r_2), \quad w_2 \cdot (r_1, r_2; c) = (r_2, -r_1; c+r_1), \quad w_3 \cdot (r_1, r_2; c) = (-r_2, -r_1; c+r_1+r_2)
    \]
    and $w_0$ is just the identity. Furthermore, if $s_1, s_2 \in W_G$ denote the simple reflections defined in \cite[\S 2.1]{LZBK21}, then we have $s_1 \cdot (r_1, r_2; c) = (r_1, -r_2; c+r_2)$ and $s_2 \cdot (r_1, r_2; c) = (r_2, r_1; c)$. Finally the longest Weyl element $w_G^{\opn{max}} \in W_G$ acts as $w_G^{\opn{max}} \cdot (r_1, r_2; c) = (-r_1, -r_2; c+r_1+r_2)$.
\end{remark}

\subsubsection{Open orbit representatives}

Throughout the article we set
\[
\gamma = \left( \begin{smallmatrix} 1 & & & \\ 1 & 1 & & \\ & & 1 & \\ & & -1 & 1 \end{smallmatrix} \right) \in M_G, \quad \quad \hat{\gamma} \defeq \gamma w_1 = \left( \begin{smallmatrix} 1 & & & \\ 1 & & 1 & \\ & -1 &  & \\ & 1 & & 1 \end{smallmatrix} \right) \in G . 
\]
Note that $\gamma$ (resp. $\hat{\gamma}$) is a representative for the Zariski open orbit of $M_H$ (resp. $H$) on the flag variety $M_G/B_{M_G}$ (resp. $G/B_G$). Such open orbits exist (and are unique) because the pairs of groups $(G, H)$ and $(M_G, M_H)$ are \emph{spherical}.

\subsection{Algebraic representation theory}

For one of the groups $\mathscr{G} = G, H, M_G, M_H$ and a $\mathscr{G}$-dominant weight $\lambda = (\lambda_1, \lambda_2; c)$, we will write $V_{\mathscr{G}}(\lambda) = V_{\mathscr{G}}(\lambda_1, \lambda_2; c)$ to be the algebraic representation of $\mathscr{G}$ of highest weight $\lambda$. In fact, we will fix an explicit presentation of this representation, namely we will take
\begin{align*} 
V_{G}(\lambda_1, \lambda_2; c) &\defeq \left\{ f \colon G \to \mbb{A}^1 : f(- \cdot b) = (w_1 w_G^{\opn{max}} \lambda)(b^{-1}) f(-) \text{ for all } b \in w_1 B_G w_1^{-1}  \right\} \\
V_{H}(\lambda_1, \lambda_2; c) &\defeq \left\{ f \colon H \to \mbb{A}^1 : f(- \cdot b) = (w_H^{\opn{max}} \lambda)(b^{-1}) f(-) \text{ for all } b \in B_H   \right\} \\
V_{M_G}(\lambda_1, \lambda_2; c) &\defeq \left\{ f \colon M_G \to \mbb{A}^1 : f(- \cdot b) = ( w_{M_G}^{\opn{max}} \lambda)(b^{-1}) f(-) \text{ for all } b \in B_{M_G}  \right\} \\
V_{M_H}(\lambda_1, \lambda_2; c) &\defeq \left\{ f \colon M_H \to \mbb{A}^1 : f(- \cdot b) = (w_{M_H}^{\opn{max}} \lambda)(b^{-1}) f(-) \text{ for all } b \in B_{M_H}  \right\}
\end{align*}
in the cases $\mathscr{G} = G, H, M_G, M_H$ respectively. Here $w_{\mathscr{G}}^{\opn{max}}$ denotes the longest Weyl element of $\mathscr{G}$. Note that $V_{M_H}(\lambda_1, \lambda_2; c)$ is simply the character $\lambda$ of $T$.

Similarly, for a $\opn{GL}_2$-dominant weight $\lambda = (r; c)$, we let
\[
V_{\opn{GL}_2}(\lambda) = V_{\opn{GL}_2}(r; c) \defeq \left\{ f \colon \opn{GL}_2 \to \mbb{A}^1 : f(- \cdot b) = (w_{\opn{GL}_2}^{\opn{max}} \lambda)(b^{-1}) f(-) \text{ for all } b \in B_{\opn{GL}_2}   \right\}
\]
denote the algebraic representation of $\opn{GL}_2$ of highest weight $\lambda$. Note that $V_{\opn{GL}_2}(r; c) \cong \opn{Sym}^r V_{\opn{GL}_2}(1;0) \otimes \opn{det}^c$, and $V_{\opn{GL}_2}(1;0)$ is isomorphic to the standard representation of $\opn{GL}_2$. Furthermore, one can easily check that we have a canonical isomorphism
\[
V_H(\lambda_1, \lambda_2; c) \cong V_{\opn{GL}_2}(\lambda_1; c_1) \boxtimes V_{\opn{GL}_2}(\lambda_2; c_2)
\]
for any choice of integers $c_1, c_2 \in \mbb{Z}$ satisfying $c_1 + c_2 = c$.

\subsubsection{Algebraic nearly representations} \label{algnearlyrepsSSec}

In this section, we will introduce certain algebraic representations which underlie the ``nearly sheaves'' introduced in \cite[\S 6.1]{LPSZ}. However, note that (once again) our conventions are slightly different from those in \emph{loc.cit.} as we work with lower-triangular parabolic subgroups.

\begin{notation}
    For any finite-dimensional representation $W$ of $\overline{P}_G$ (resp. $\overline{P}_H$), we let $\opn{Fil}_iW \subset W$ denote the subspace spanned by vectors $w \in W$ which satisfy 
\[
\left(\begin{smallmatrix} x & & & \\ & x & & \\ & & 1 & \\ & & & 1 \end{smallmatrix}\right) \cdot w = x^{j}w, \quad \quad x \in \mbb{G}_m
\]
for some $j \leq i$. This defines an ascending filtration stable under the action of $\overline{P}_G$ (resp. $\overline{P}_H$).
\end{notation}

To keep track of the various weights we consider, it will often be helpful to place ourselves in the following situation:

\begin{situation} \label{situation:Weights}
    In this situation, we will consider the following weights:
\begin{itemize}
    \item $\nu_G = (r_1, r_2; -(r_1+r_2))$ will be a $G$-dominant weight, i.e. $r_1 \geq r_2 \geq 0$.
    \item $\nu_H = (t_1, t_2; -r_1)$ will be a $H$-dominant weight (so $t_1 \geq 0$ and $t_2 \geq 0$) with the ``small'' condition that $t_1 + t_2 = r_1 - r_2$.
    \item $\kappa_G = -(w_{M_G}^{\opn{max}})^{-1} \cdot \left( w_1 \star (-w_G^{\opn{max}}\nu_G) \right) = (r_2 +2, -r_1; -r_2-1)$ will be a $M_G$-dominant weight.
    \item $\kappa_H = w_H^{\opn{max}}\nu_H + \rho_H = (1-t_1, 1-t_2; -r_2-1)$ will be a $M_H$-dominant weight, where $\rho_H = (1, 1; -1)$ denotes the half-sum of the positive roots of $H$.
\end{itemize}
We will also fix integers $\xi_1, \xi_2$ such that $\xi_1 + \xi_2 = 1-r_2$, and set $\zeta_{H_i} = (-1-t_i; \xi_i)$ for $i=1, 2$.
\end{situation}

We now introduce the nearly representations. 

\begin{definition}
    Suppose we are in Situation \ref{situation:Weights}. We let
    \begin{align*}
        \mbb{I}_{\overline{P}_G}(\kappa_G)^{\opn{nearly}} &\defeq \left( V_G(\nu_G) / \opn{Fil}_{-(r_1+1)} V_G(\nu_G) \right) \otimes V_{M_G}(-(w_{M_G}^{\opn{max}})^{-1} \cdot (w_1 \star \mbf{1})) \\
        \mbb{I}_{\overline{P}_H}(\kappa_H)^{\opn{nearly}} &\defeq V_H(\nu_H) \otimes V_{M_H}(\rho_H) \\
        \mbb{I}_{\overline{P}_{\opn{GL}_2}}(\zeta_{H_i})^{\opn{nearly}} &\defeq V_{\opn{GL}_2}(t_i; 0) \otimes V_{T_{\opn{GL}_2}}(-1; \xi_i) \quad \quad (i=1,2)
    \end{align*}
    where $\mbf{1}$ denotes the trivial character, which are algebraic representations of $\overline{P}_G$, $\overline{P}_H$ and $\overline{P}_{\opn{GL}_2}$ respectively. Explicitly, one has $-(w_{M_G}^{\opn{max}})^{-1} \cdot (w_1 \star \mbf{1}) = (2, 0;-1)$. We let 
    \begin{align*}
        \mathcal{F}_0 \left(\mbb{I}_{\overline{P}_G}(\kappa_G)^{\opn{nearly}}\right) &\defeq \left( \opn{Fil}_{-r_1}V_G(\nu_G) / \opn{Fil}_{-(r_1+1)} V_G(\nu_G) \right) \otimes V_{M_G}(-(w_{M_G}^{\opn{max}})^{-1} \cdot (w_1 \star \mbf{1})) \subset \mbb{I}_{\overline{P}_G}(\kappa_G)^{\opn{nearly}} \\
        \mathcal{F}_0 \left(\mbb{I}_{\overline{P}_H}(\kappa_H)^{\opn{nearly}}\right) &\defeq \opn{Fil}_{-r_1}V_H(\nu_H) \otimes V_{M_H}(\rho_H) \subset \mbb{I}_{\overline{P}_H}(\kappa_H)^{\opn{nearly}} \\
        \mathcal{F}_0 \left(\mbb{I}_{\overline{P}_{\opn{GL}_2}}(\zeta_{H_i})^{\opn{nearly}}\right) &\defeq \opn{Fil}_{0}V_{\opn{GL}_2}(t_i; 0) \otimes V_{T_{\opn{GL}_2}}(-1; \xi_i) \quad \quad (i=1,2)
    \end{align*}
    which are subrepresentations factoring through the projections $\overline{P}_G \to M_G$, $\overline{P}_H \to M_H$, and $\overline{P}_{\opn{GL}_2} \to T_{\opn{GL}_2}$ respectively. Note that $\mathcal{F}_0 \left(\mbb{I}_{\overline{P}_H}(\kappa_H)^{\opn{nearly}}\right)$ and $\mathcal{F}_0 \left(\mbb{I}_{\overline{P}_{\opn{GL}_2}}(\zeta_{H_i})^{\opn{nearly}}\right)$ are the lowest weight subspaces isomorphic to $V_{M_H}(\kappa_H)$ and $V_{T_{\opn{GL}_2}}(\zeta_{H_i})$ respectively. Furthermore, $\mathcal{F}_0 \left(\mbb{I}_{\overline{P}_G}(\kappa_G)^{\opn{nearly}}\right)$ contains $V_{M_G}(r_2, -r_1;-r_2) \otimes V_{M_G}(2, 0;-1)$ with multiplicity one, and the latter representation contains $V_{M_G}(\kappa_G)$ as a direct summand with multiplicity one. We therefore have a $M_G$-equivariant surjection $\mathcal{F}_0 \left(\mbb{I}_{\overline{P}_G}(\kappa_G)^{\opn{nearly}}\right) \twoheadrightarrow V_{M_G}(\kappa_G)$ factoring through $V_{M_G}(r_2, -r_1;-r_2) \otimes V_{M_G}(2, 0;-1)$, and any such surjection (satisfying this factorisation property) is unique up to scaling. 
\end{definition}

\begin{remark} \label{DecompositionOfNearlyIntoFactorsRem}
    One can easily verify that 
    \[
    \mbb{I}_{\overline{P}_H}(\kappa_H)^{\opn{nearly}} \otimes V_{M_H}(-2\rho_H) \cong \mbb{I}_{\overline{P}_{\opn{GL}_2}}(\zeta_{H_1})^{\opn{nearly}} \boxtimes \mbb{I}_{\overline{P}_{\opn{GL}_2}}(\zeta_{H_2})^{\opn{nearly}} .
    \]
\end{remark}

We have the following branching result.

\begin{proposition} \label{GHBranchingProp}
    Suppose that we are in Situation \ref{situation:Weights}. Then there is a unique (up to scaling) non-zero map
    \begin{equation} \label{HtoGBranchingEqn}
    V_H(t_1, t_2; -r_1) \hookrightarrow V_G(r_1, r_2; -(r_1+r_2))
    \end{equation}
    which is equivariant for the action of $H$ through the embedding $\gamma^{-1} H \gamma \subset G$.
\end{proposition}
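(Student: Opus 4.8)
The plan is to establish that the $\mbb{Q}$-vector space $\opn{Hom}_H\big(V_H(\nu_H),\, V_G(\nu_G)\big)$, with $H$ acting on the target via $\gamma^{-1}H\gamma \hookrightarrow G$, is one-dimensional; \eqref{HtoGBranchingEqn} is then any nonzero element of it. The first step is a purely formal reduction: conjugation by $\gamma$ identifies $\gamma^{-1}H\gamma$ with $H$ inside $G$, so the above Hom space is abstractly isomorphic to the ordinary restriction multiplicity space $\opn{Hom}_H\big(V_H(\nu_H), V_G(\nu_G)|_H\big)$. The element $\gamma$ — a representative of the open $H$-orbit on $G/B_G$ through $\hat\gamma = \gamma w_1$ — is only needed to write down an explicit model of the map (which later sections will use), not for the dimension count. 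So it remains to compute the branching multiplicity for the spherical pair $(G,H) = (\opn{GSpin}_5, \opn{GSpin}_4)$, in the spirit of the branching computations of \cite{LSZ17, LPSZ}.

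\emph{Uniqueness.} For the bound ``$\leq 1$'' I would invoke multiplicity-freeness of $\opn{Spin}_5 \downarrow \opn{Spin}_4$: under the exceptional isogenies $\opn{Sp}_4 \cong \opn{Spin}_5$ and $\opn{SL}_2\times\opn{SL}_2 \cong \opn{Spin}_4$ — for which $\iota$ restricts to the standard embedding $\opn{Spin}_4 \hookrightarrow \opn{Spin}_5$ — this is the classical multiplicity-free branching $\opn{SO}_5 \downarrow \opn{SO}_4$ (Gelfand--Tsetlin, extended to the spin covers so that half-integral weights are allowed), equivalently the finite-dimensional Gan--Gross--Prasad branching law for this pair. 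This gives uniqueness up to scaling of any nonzero map.

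\emph{Existence.} I would first dispose of the similitude data: the identity component of the centre of $H$ is the diagonal $\mbb{G}_m = \{zI_4\}$, which coincides with $Z_G$, and a short weight computation shows $zI_4$ acts by $z^{-(r_1+r_2)}$ on $V_G(r_1,r_2;-(r_1+r_2))$ and by $z^{t_1+t_2-2r_1}$ on $V_H(t_1,t_2;-r_1)$ — these agree exactly because $t_1+t_2 = r_1-r_2$, so the choice of $c$-components in Situation~\ref{situation:Weights} is forced. Hence the occurrence of $V_H(\nu_H)$ in $V_G(\nu_G)|_H$ is equivalent to that of $V_{\opn{SL}_2\times\opn{SL}_2}(t_1,t_2)$ in $V_{\opn{Sp}_4}(r_1,r_2)|_{\opn{SL}_2\times\opn{SL}_2}$. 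Passing to $\opn{SO}_5$/$\opn{SO}_4$ highest-weight labels, $(r_1,r_2)$ corresponds to $(\ell_1,\ell_2) = (\tfrac{r_1+r_2}{2},\tfrac{r_1-r_2}{2})$ and $(t_1,t_2)$ to $(m_1,m_2) = (\tfrac{t_1+t_2}{2},\tfrac{t_1-t_2}{2})$, and the branching criterion is the ``betweenness'' chain $\ell_1 \geq m_1 \geq \ell_2 \geq |m_2|$: the outer inequalities are $r_2 \geq 0$ and $r_1-r_2 \geq |t_1-t_2|$ (automatic from $G$- and $H$-dominance), and the middle relation is the equality $m_1 = \ell_2$, i.e.\ precisely the smallness hypothesis $t_1+t_2 = r_1-r_2$. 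So $V_H(\nu_H)$ does occur, with multiplicity exactly one, which would finish the argument.

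Finally, I would keep in mind the more hands-on route through Borel--Weil, since it produces the map explicitly: realizing $V_G(\nu_G)$ and $V_H(\nu_H)$ through their function-space models on $G$ and $H$, a highest-weight vector of $V_H(\nu_H)$ must go to a function on $G$ with prescribed left-$\gamma^{-1}B_H\gamma$ and right-$w_1B_Gw_1^{-1}$ transformation behaviour, hence is determined by its value at $\hat\gamma = \gamma w_1$ (uniqueness), and one constructs it by checking that $\opn{Stab}_H(\hat\gamma)$ acts by matching characters on the two sides and that the resulting section over the dense orbit extends regularly across the flag variety. I expect the genuine obstacle to lie in this last verification: reconciling the paper's nonstandard conventions (the $w_1$-twisted Borel, the lower-triangular parabolics, the identification $V_H \cong V_{\opn{GL}_2}\boxtimes V_{\opn{GL}_2}$), and confirming that the stabilizer compatibility at $\hat\gamma$ lands in the non-degenerate range — which, as the existence analysis shows, is cut out exactly by $t_1+t_2 = r_1-r_2$, so the smallness hypothesis is doing real work and cannot be removed. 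The ``$\leq 1$'' half, by contrast, is essentially free from sphericity.
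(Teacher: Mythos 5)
Your proposal is correct and is essentially the paper's argument: the paper simply records that $t_1+t_2=r_1-r_2$ and $|t_1-t_2|\leq r_1-r_2$ and then cites \cite[Proposition 4.3.1]{LSZ17}, which is precisely the classical $\opn{SO}_5\downarrow\opn{SO}_4$ betweenness branching law (with the similitude/central-character bookkeeping) that you work out explicitly. Your unwinding of the citation — the conjugation-by-$\gamma$ reduction, the central-character match forcing $t_1+t_2=r_1-r_2$, and the multiplicity-one count — is a faithful expansion of the same route.
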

\begin{proof}
    Recall that $t_1 + t_2 = r_1 - r_2$ and note that $| t_1 - t_2 | \leq r_1 - r_2$ is always satisfied because $t_1, t_2 \geq 0$. The result now follows from \cite[Proposition 4.3.1]{LSZ17}.
\end{proof}

We will fix a certain normalisation of the branching map (\ref{HtoGBranchingEqn}). We first have to make some choices. For any $H$-dominant weight $\lambda = (\lambda_1, \lambda_2; c)$, we let $w_{\lambda}^{\opn{hw}} \in V_H(\lambda_1, \lambda_2; c)$ denote the highest weight vector given by
\[
w_{\lambda}^{\opn{hw}}\left( \tbyt{x}{y}{w}{z} , \tbyt{x'}{y'}{w'}{z'} \right) = (-1)^{\lambda_1+\lambda_2}w^{\lambda_1} (w')^{\lambda_2} d^{-(c+\lambda_1+\lambda_2)}
\]
where $d = xz - wy = x'z' - w'y'$. It is the unique highest weight vector which is $1$ on the matrix $J$ (which is a representative for the longest Weyl element of $H$).

Similarly, for any $G$-dominant weight $\lambda = (\lambda_1, \lambda_2;c)$, we let $v_{\lambda}^{\opn{hw}} \in V_G(\lambda_1, \lambda_2; c)$ denote the unique highest weight vector such that $v_{\lambda}^{\opn{hw}}(J w_1^{-1}) = 1$ ($J$ is also a representative of the longest Weyl element of $G$). We fix a normalisation of the map (\ref{HtoGBranchingEqn}), which we will denote by $\opn{br} \colon V_H(t_1, t_2; -r_1) \hookrightarrow V_G(r_1, r_2; -(r_1+r_2))$, by sending $w_{(t_1, t_2; -r_1)}^{\opn{hw}}$ to the element
\[
\opn{br}(w_{(t_1, t_2; -r_1)}^{\opn{hw}}) = (-1)^{r_1-1}2^{-r_2} \gamma^{-1} \cdot \left[ ( v_{(1, 0;-1)}^{\opn{hw}} )^{t_1} \cdot ( X_{2, 1} \star v_{(1, 0;-1)}^{\opn{hw}} )^{t_2} \cdot ( Z \star v_{(1, 1; -2)}^{\opn{hw}} )^{r_2} \right]
\]
where the product is given by the Cartan product. Here
\[
X_{2, 1} = \left( \begin{smallmatrix} 0 & 0 & 0 & 0 \\ 1 & 0 & 0 & 0 \\ 0 & 0 & 0 & 0 \\ 0 & 0 & -1 & 0 \end{smallmatrix} \right) \text{ and } Z = \left( \begin{smallmatrix} 0 & 0 & 0 & 0 \\ 0 & 0 & 0 & 0 \\ 1 & 0 & 0 & 0 \\ 0 & 1 & 0 & 0 \end{smallmatrix} \right)
\]
are the elements of $\opn{Lie}(G)$ introduced in \cite[\S 4.3]{LSZ17}. One can easily check that the image of $\opn{br}$ intersects trivially with $\opn{Fil}_{-(r_1+1)} V_G(r_1, r_2; -(r_1+r_2))$, so we obtain an injective morphism:
\[
\opn{br} \colon V_H(t_1, t_2; -r_1) \hookrightarrow V_G(r_1, r_2; -(r_1+r_2))/\opn{Fil}_{-(r_1+1)} V_G(r_1, r_2; -(r_1+r_2)) .
\]
We also have a natural branching map
\begin{align*}
    \opn{br}' \colon V_{M_H}(1, 1; -1) &\hookrightarrow V_{M_G}(2, 0; -1) \\ \chi_{(1, 1;-1)} &\mapsto \left( x = \left( \begin{smallmatrix} x_{11} & x_{12} & 0 & 0 \\ x_{21} & x_{22} & 0 & 0 \\ 0 & 0 & x_{33} & x_{34} \\ 0 & 0 & x_{43} & x_{44} \end{smallmatrix} \right) \mapsto s(x)^{-1} x_{33} (x_{43} - x_{33}) \right)
\end{align*}
which is equivariant for the action of $M_H$ through the embedding $\gamma^{-1} M_H \gamma \subset M_G$, where $\chi_{(1, 1; -1)}$ denotes the character associated with the weight $(1, 1; -1)$ and $s \colon G \to \mbb{G}_m$ denotes the similitude character. Combining these two branching laws, we therefore obtain a morphism
\[
\opn{br}^{\opn{nearly}} = \opn{br} \otimes \opn{br}' \colon \mbb{I}_{\overline{P}_H}(\kappa_H)^{\opn{nearly}} \hookrightarrow \mbb{I}_{\overline{P}_G}(\kappa_G)^{\opn{nearly}}
\]
which is equivariant for the action of $\overline{P}_H$ through the embedding $\gamma^{-1} \overline{P}_H \gamma \subset \overline{P}_G$.

We have the following lemma.

\begin{lemma} \label{explicitBranchingUnipLemma}
    Suppose that we are in Situation \ref{situation:Weights}. Then one has
    \[
    w_{(t_1, t_2; -r_1)}^{\opn{hw}}\left( \left( \begin{smallmatrix} 1 & 0 \\ v & 1 \end{smallmatrix} \right), \left( \begin{smallmatrix} 1 & 0 \\ u & 1 \end{smallmatrix} \right) \right) = (-1)^{t_1+t_2} v^{t_1} u^{t_2}
    \]
    and
    \[
    \opn{br}(w_{(t_1, t_2; -r_1)}^{\opn{hw}})\left( \begin{smallmatrix} 1 & 0 & 0 & 0 \\ z & 1 & 0 & 0 \\ a & 0 & 1 & 0 \\ b & a & -z & 1 \end{smallmatrix} \right) = (-1)^{t_1+t_2-1} (b-a)^{t_1} a^{t_2} (1+z)^{r_2} .
    \]
\end{lemma}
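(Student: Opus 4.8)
The plan is to compute both formulas directly from the explicit definitions of $w_{(t_1,t_2;-r_1)}^{\opn{hw}}$ and $\opn{br}$ given immediately before the lemma. The first formula is immediate: by definition
\[
w_{\lambda}^{\opn{hw}}\left( \tbyt{x}{y}{w}{z}, \tbyt{x'}{y'}{w'}{z'} \right) = (-1)^{\lambda_1+\lambda_2} w^{\lambda_1}(w')^{\lambda_2} d^{-(c+\lambda_1+\lambda_2)}
\]
with $d = xz-wy$, and substituting the lower-triangular unipotent matrices $\left(\begin{smallmatrix}1&0\\v&1\end{smallmatrix}\right)$, $\left(\begin{smallmatrix}1&0\\u&1\end{smallmatrix}\right)$ gives $w = v$, $w' = u$, $d = 1$, so the value is $(-1)^{t_1+t_2} v^{t_1} u^{t_2}$ regardless of $c = -r_1$. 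So the real content is the second displayed formula.

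For the second formula, I would unwind the definition
\[
\opn{br}(w_{(t_1,t_2;-r_1)}^{\opn{hw}}) = (-1)^{r_1-1} 2^{-r_2}\, \gamma^{-1}\cdot\left[ (v_{(1,0;-1)}^{\opn{hw}})^{t_1} \cdot (X_{2,1}\star v_{(1,0;-1)}^{\opn{hw}})^{t_2} \cdot (Z\star v_{(1,1;-2)}^{\opn{hw}})^{r_2} \right],
\]
the product being the Cartan product inside the relevant tensor/highest-weight representations. Evaluating $(\gamma^{-1}\cdot F)(g) = F(\gamma^{-1} g)$ at $g = \left(\begin{smallmatrix}1&0&0&0\\z&1&0&0\\a&0&1&0\\b&a&-z&1\end{smallmatrix}\right)$, the key step is to record the action of $\gamma^{-1} = \left(\begin{smallmatrix}1&&&\\-1&1&&\\&&1&\\&&1&1\end{smallmatrix}\right)$ on $g$ and then to compute the values of the three elementary highest-weight vectors $v_{(1,0;-1)}^{\opn{hw}}$, $X_{2,1}\star v_{(1,0;-1)}^{\opn{hw}}$, $Z\star v_{(1,1;-2)}^{\opn{hw}}$ on the resulting matrix. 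Since $v_{(1,0;-1)}^{\opn{hw}}$ lives in (a twist of) the standard $4$-dimensional representation and $v_{(1,1;-2)}^{\opn{hw}}$ in a twist of $\wedge^2_0$, these are concrete polynomial functions of matrix entries; applying the Lie-algebra elements $X_{2,1}$ and $Z$ (acting on the left, as in \cite[\S 4.3]{LSZ17}) shifts which entries appear. I would assemble these, take the product of the three powers, multiply by the normalising constant $(-1)^{r_1-1} 2^{-r_2}$, and check the powers of $2$ and the signs cancel to leave $(-1)^{t_1+t_2-1}(b-a)^{t_1} a^{t_2}(1+z)^{r_2}$. As a sanity check one can specialise $t_2 = r_2 = 0$ to reduce to the single vector $v_{(1,0;-1)}^{\opn{hw}}$ and verify the $(b-a)^{t_1}$ factor, and separately check the $(1+z)^{r_2}$ factor by specialising $t_1=t_2=0$; these two degenerate cases pin down both the exponents and the constant.

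The main obstacle will be bookkeeping rather than conceptual: one must be careful about (i) the precise conventions for the Cartan product and for how $X_{2,1}\star(-)$ and $Z\star(-)$ act (left-invariant vs. right-invariant differentiation, matching \cite{LSZ17}), and (ii) tracking the factor of $2^{-r_2}$, which presumably arises because $Z\star v_{(1,1;-2)}^{\opn{hw}}$ evaluated on the relevant matrix produces a factor of $2$ per copy, combining with $\gamma^{-1}$; verifying this cancellation is the delicate point. Once the action of $\gamma^{-1}$ on the test matrix and the three building-block functions are written out explicitly, the remaining computation is a routine multiplication of monomials.
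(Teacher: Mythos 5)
Your plan coincides with the paper's own proof, which is exactly this direct computation: the paper records the closed form
\[
\gamma \cdot \opn{br}(w_{(t_1, t_2; -r_1)}^{\opn{hw}})((x_{ij})) = (-1)^{t_1+t_2-1} 2^{-r_2} x_{41}^{t_1} x_{31}^{t_2} ( x_{21} x_{33} + x_{41}x_{13} - x_{11}x_{43} - x_{31}x_{23})^{r_2}
\]
for a general $(x_{ij}) \in G$ and then substitutes. One caution: under the paper's left-translation convention $(p \cdot f)(-) = f(p^{-1} \cdot -)$ one has $(\gamma^{-1} \cdot F)(g) = F(\gamma g)$, not $F(\gamma^{-1} g)$ as you wrote; it is the evaluation at $\gamma g$ that yields $x_{21}x_{33} - x_{11}x_{43} = 2(1+z)$ and hence the factor $2^{r_2}$ cancelling the normalisation $2^{-r_2}$, whereas your version would produce $(z-1)^{r_2}$ --- a discrepancy your proposed $t_1 = t_2 = 0$ sanity check would indeed catch.
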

\begin{proof}
    An explicit, but lengthy, calculation shows that
    \[
    \gamma \cdot \opn{br}(w_{(t_1, t_2; -r_1)}^{\opn{hw}})((x_{ij})) = (-1)^{t_1+t_2-1} 2^{-r_2} x_{41}^{t_1} x_{31}^{t_2} ( x_{21} x_{33} + x_{41}x_{13} - x_{11}x_{43} - x_{31}x_{23})^{r_2}
    \]
    for any matrix $(x_{ij}) \in G$. The result follows.
\end{proof}

We have a commutative diagram
\begin{equation} \label{brNearlyCommWithAlgDiagram}
\begin{tikzcd}
\mbb{I}_{\overline{P}_H}(\kappa_H)^{\opn{nearly}} \arrow[r, "\opn{br}^{\opn{nearly}}", hook]                                                                           & \mbb{I}_{\overline{P}_G}(\kappa_G)^{\opn{nearly}}                                                                 \\
\mathcal{F}_0 \left(\mbb{I}_{\overline{P}_H}(\kappa_H)^{\opn{nearly}}\right) \arrow[u, hook] \arrow[d, equals] \arrow[r, "\opn{br}^{\opn{nearly}}", hook] & \mathcal{F}_0 \left(\mbb{I}_{\overline{P}_G}(\kappa_G)^{\opn{nearly}}\right) \arrow[d, two heads] \arrow[u, hook] \\
{V_{M_H}(1-t_1, 1-t_2; -r_2-1)} \arrow[r, "\opn{br}_M" , hook]                                                                                                                    & {V_{M_G}(r_2+2, -r_1; -r_2-1)}                                                                               
\end{tikzcd}
\end{equation}
where the bottom horizontal map takes the character $\kappa_H$ to the unique function satisfying 
\[
\left( \begin{smallmatrix} 1 & & & \\ z & 1 & & \\ & & 1 & \\ & & -z & 1\end{smallmatrix} \right) \mapsto (1+z)^{r_1+1-t_1}
\]
and the bottom right surjection is normalised so that the diagram is commutative (and factors through $V_{M_G}(r_2, -r_1;-r_2) \otimes V_{M_G}(2, 0;-1)$).

\subsubsection{} \label{SplttingsOfMGReps}

For convenience later on, we fix the following notation. Let $s_1$, $s_2$ be $M_G$-equivariant splittings 
\[
\begin{tikzcd}
{V_{M_G}(r_2+2, -r_1; -r_2-1)} \arrow[r, "s_1", dotted, bend left] & \mathcal{F}_0 \left(\mbb{I}_{\overline{P}_G}(\kappa_G)^{\opn{nearly}}\right) \arrow[r, "f_2"] \arrow[l, "f_1"'] & \mbb{I}_{\overline{P}_G}(\kappa_G)^{\opn{nearly}} \arrow[l, "s_2"', dotted, bend right]
\end{tikzcd}
\]
where $f_1$ and $f_2$ are the morphisms in the right-hand column of (\ref{brNearlyCommWithAlgDiagram}). The splitting $s_2$ is automatically unique, and we take $s_1$ to be the splitting which factors through $V_{M_G}(r_2, -r_1;-r_2) \otimes V_{M_G}(2, 0;-1)$ (which is necessarily unique). We set $\jmath_{M_G} = f_2 \circ s_1$ and $\opn{spl}_{M_G} = f_1 \circ s_2$. Note that $\opn{spl}_{M_G} \circ \; \jmath_{M_G} = \opn{id}$.

\begin{notation} \label{NotationForSplittingsOfReps}
    We set $w_{\kappa_H} \defeq \gamma \cdot \opn{br}_M(\kappa_H)$, where $\kappa_H \in V_{M_H}(\kappa_H)$ denotes the canonical character. If $\mbb{D}_{\overline{P}_{\opn{GL}_2}}(\zeta_{H_i}^*)^{\opn{nearly}}$ denotes the linear dual of $\mbb{I}_{\overline{P}_{\opn{GL}_2}}(\zeta_{H_i})^{\opn{nearly}}$, then we let $v_{\zeta_{H_i}}^* \in \mbb{D}_{\overline{P}_{\opn{GL}_2}}(\zeta_{H_i}^*)^{\opn{nearly}}$ denote the image of $1$ under the unique $M_{\opn{GL}_2}$-equivariant morphism $V_{M_{\opn{GL}_2}}(\zeta_{H_i})^* \hookrightarrow \mbb{D}_{\overline{P}_{\opn{GL}_2}}(\zeta_{H_i}^*)^{\opn{nearly}}$ which splits the natural projection $\mbb{D}_{\overline{P}_{\opn{GL}_2}}(\zeta_{H_i}^*)^{\opn{nearly}} \to V_{M_{\opn{GL}_2}}(\zeta_{H_i})^*$.
\end{notation}

\subsection{\texorpdfstring{$p$}{p}-adic representation theory} \label{padicRepTheorySubSec}

In this section we introduce $p$-adic versions of the algebraic nearly representations defined in the previous section. For this, we now consider all of our groups as algebraic groups over $\mbb{Q}_p$ (or $\mbb{Z}_p$) but often omit this from the notation. Fix an integer $\beta \geq 1$ throughout.

Let $\overline{\mathcal{P}}_G$ denote the adic generic fibre of the formal completion of $\overline{P}_{G, \mbb{Z}_p}$ along the special fibre. We define $\overline{\mathcal{P}}_H$ and $\overline{\mathcal{P}}_{\opn{GL}_2}$ similarly. 

\begin{definition}
    Let $n \geq 1$ and let $\overline{\mathcal{P}}_{G, n} \subset \overline{\mathcal{P}}_G$ (resp. $\overline{\mathcal{P}}_{H, n} \subset \overline{\mathcal{P}}_H$, resp. $\overline{\mathcal{P}}_{\opn{GL}_2, n} \subset \overline{\mathcal{P}}_{\opn{GL}_2}$) denote the subgroup of elements which are congruent to the identity modulo $p^n$. Let $M_{G, \opn{Iw}}(p^{\beta})$ denote the depth $p^{\beta}$ upper triangular Iwahori subgroup in $M_G(\mbb{Z}_p)$, and let $T_{\diamondsuit}(p^{\beta}) \subset T(\mbb{Z}_p) = M_H(\mbb{Z}_p)$ denote the subgroup of elements
    \[
    \left( \begin{smallmatrix}%
   t_1 & & & \\ & t_2 & & \\ & & \nu t_2^{-1} & \\ & & & \nu t_1^{-1}    
\end{smallmatrix} \right) \in T(\mbb{Z}_p)
    \]
    with $t_1 \equiv t_2$ modulo $p^{\beta}$. Note that $T_{\diamondsuit}(p^{\beta}) = \gamma M_{G, \opn{Iw}}(p^{\beta}) \gamma^{-1} \cap M_H(\mbb{Q}_p)$. 

    Finally, we let $\overline{\mathcal{P}}_{G,n,\beta}^{\square} \defeq \overline{\mathcal{P}}_{G, n} M_{G, \opn{Iw}}(p^{\beta})$, $\overline{\mathcal{P}}^{\diamondsuit}_{H, n, \beta} \defeq \overline{\mathcal{P}}_{H, n} T_{\diamondsuit}(p^{\beta})$, and $\overline{\mathcal{P}}_{\opn{GL}_2, n}^{\square} \defeq \overline{\mathcal{P}}_{\opn{GL}_2, n} T_{\opn{GL}_2}(\mbb{Z}_p)$. Clearly we have $\gamma^{-1} \overline{\mathcal{P}}^{\diamondsuit}_{H, n, \beta} \gamma \subset \overline{\mathcal{P}}_{G,n,\beta}^{\square}$ and the image of $\overline{\mathcal{P}}^{\diamondsuit}_{H, n, \beta}$ under projection to either $\opn{GL}_2$-factor is equal to $\overline{\mathcal{P}}_{\opn{GL}_2, n}^{\square}$.
\end{definition}

We have the following useful lemma.

\begin{lemma} \label{PartialFactorisationLemma}
    Any element $g \in \overline{\mathcal{P}}_{G,n,\beta}^{\square}$ can be uniquely factored as
    \[
    g = \left( \begin{smallmatrix} 1 & & & \\ z & 1 & & \\ a & & 1 & \\ b & a & -z & 1 \end{smallmatrix} \right) \cdot x
    \]
    where:
    \begin{itemize}
        \item $x \in w_1 B_G^{\opn{an}} w_1^{-1} \cap \overline{\mathcal{P}}_{G,n,\beta}^{\square}$
        \item $z \in (p^{\beta} \mbb{Z}_p) + \mathcal{B}_n$, and $a, b \in \mathcal{B}_n$, where $\mathcal{B}_n$ denotes the rigid ball of radius $p^{-n}$.
    \end{itemize}
\end{lemma}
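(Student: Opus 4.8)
The plan is to exhibit the factorisation explicitly by working inside $G$ as matrices and using that $\overline{\mathcal{P}}_G$ is the Siegel parabolic (lower-triangular), so that its big cell relative to $w_1 B_G w_1^{-1}$ is parameterised by the $3$-dimensional unipotent radical $\overline{N}_G$. First I would observe that the displayed matrix
\[
u(z,a,b) \defeq \left( \begin{smallmatrix} 1 & & & \\ z & 1 & & \\ a & & 1 & \\ b & a & -z & 1 \end{smallmatrix} \right)
\]
is a general element of $\overline{N}_{P_G}$, the opposite unipotent radical of the Siegel parabolic: indeed the condition $g^t J g = s(g) J$ forces the lower-left block to be symmetric after conjugating by the anti-diagonal form $J$, which is exactly the relation among the four entries $z, a, a, b$ appearing here. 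So $\overline{N}_{P_G}$ is three-dimensional with coordinates $(z,a,b)$, and $\overline{N}_{P_G} \times (w_1 B_G w_1^{-1}) \to G$ is an open immersion onto the big cell (note $w_1 B_G w_1^{-1}$ is a Borel containing $M_G$ and $\overline{N}_{P_G}$ is complementary to it). This gives the \emph{uniqueness} of the factorisation $g = u(z,a,b)\, x$ over $\overline{\mathcal{P}}_G$, and it remains to check that when $g \in \overline{\mathcal{P}}_{G,n,\beta}^{\square}$ the factors lie in the claimed congruence subsets.

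For \emph{existence} and the congruence conditions, I would argue as follows. Write $g = g_0 \cdot m$ with $g_0 \in \overline{\mathcal{P}}_{G,n}$ (congruent to the identity mod $p^n$) and $m \in M_{G,\opn{Iw}}(p^{\beta})$, using the definition $\overline{\mathcal{P}}_{G,n,\beta}^{\square} = \overline{\mathcal{P}}_{G,n} M_{G,\opn{Iw}}(p^{\beta})$. For the first factor $g_0$: since $g_0 \equiv 1 \pmod{p^n}$, its big-cell coordinates $(z_0, a_0, b_0)$ all lie in $p^n \mbb{Z}_p \subset \mathcal{B}_n$ and the corresponding $x_0 \in w_1 B_G^{\opn{an}} w_1^{-1}$ is also $\equiv 1 \pmod {p^n}$, hence lies in $\overline{\mathcal{P}}_{G,n,\beta}^{\square}$. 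For $m \in M_{G,\opn{Iw}}(p^{\beta})$: by definition $M_{G,\opn{Iw}}(p^{\beta})$ is the depth-$p^\beta$ Iwahori inside $M_G(\mbb{Z}_p) \cong (\opn{GL}_2 \times \mbb{G}_m)(\mbb{Z}_p)$ (upper-triangular mod $p^{\beta}$), and crucially its \emph{lower-left} entries (the $x_{21}$-type entry of the $\opn{GL}_2$-block, which in $G$-coordinates is the entry I have labelled $z$) lie in $p^{\beta}\mbb{Z}_p$. Writing $m$ in the big cell gives $m = u(z_m, 0, 0)\cdot x_m$ with $z_m \in p^{\beta}\mbb{Z}_p$ (the $a$- and $b$-coordinates vanish since $m$ is block-diagonal of parabolic type — the Siegel-parabolic unipotent part of $m$ is trivial) and $x_m \in w_1 B_G w_1^{-1} \cap M_{G,\opn{Iw}}(p^{\beta}) \subset \overline{\mathcal{P}}_{G,n,\beta}^{\square}$. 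Finally I would multiply: $g = g_0 m = u(z_0,a_0,b_0)\,x_0\,u(z_m,0,0)\,x_m$, and push $u(z_m,0,0)$ leftward past $x_0 \in w_1 B_G w_1^{-1}$. Conjugating $u(z_m,0,0)$ by the element $x_0$ (which is close to the identity mod $p^n$ and lies in a group normalising nothing in particular, but lies in the \emph{open} Borel) produces an element of $\overline{N}_{P_G}$ of the shape $u(z', a', b')$ with $z' \in z_m + p^n\mbb{Z}_p \cdot(\text{stuff}) \subset p^{\beta}\mbb{Z}_p + \mathcal{B}_n$ and $a', b' \in \mathcal{B}_n$ (the $\mathcal{B}_n$-sized corrections coming from $x_0 \equiv 1 \pmod{p^n}$), plus a left-over $w_1 B_G w_1^{-1}$-factor; collecting the two $w_1 B_G w_1^{-1}$-factors into a single $x \in w_1 B_G^{\opn{an}} w_1^{-1} \cap \overline{\mathcal{P}}_{G,n,\beta}^{\square}$ completes the proof.

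\textbf{Main obstacle.} The routine but genuinely fiddly point is the last step: controlling the big-cell coordinates of a product/conjugate of big-cell elements. One must check that conjugating the mod-$p^{\beta}$ lower-unipotent $u(z_m,0,0)$ by $x_0 \in w_1 B_G w_1^{-1}$ with $x_0 \equiv 1 \pmod{p^n}$ does not destroy the ``$z$-coordinate lies in $p^{\beta}\mbb{Z}_p$ up to a ball of radius $p^{-n}$'' property and keeps $a, b$ inside $\mathcal{B}_n$ — i.e. that the only way the $a,b$-coordinates get populated is through the $p^{-n}$-small perturbation coming from $x_0$, not through an uncontrolled contribution of $z_m$. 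This reduces to an explicit computation of the commutator $[u(z_m,0,0), \text{upper-triangular}]$ in $\opn{GSp}_4$ and noting the bracket lands in $p^{\beta}\cdot(\text{mod }p^n\text{ small})$, which I would carry out in coordinates (and is essentially forced by the structure constants of $\opn{Lie}(\opn{GSp}_4)$). I expect no conceptual difficulty here, only careful bookkeeping, and I would phrase it uniformly via the big-cell product map being an isomorphism of the relevant rigid-analytic subgroups.
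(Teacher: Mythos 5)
Your setup contains a concrete structural error. The matrix $u(z,a,b) = \left( \begin{smallmatrix} 1 & & & \\ z & 1 & & \\ a & & 1 & \\ b & a & -z & 1 \end{smallmatrix} \right)$ is \emph{not} a general element of the unipotent radical $\overline{N}_{P_G}$ of the opposite Siegel parabolic: the entries $z$ at $(2,1)$ and $-z$ at $(4,3)$ sit inside the Levi blocks, so $\{u(z,a,b)\}$ is the (Heisenberg-type) unipotent radical of the opposite \emph{Klingen} parabolic. The genuine $\overline{N}_{P_G}$ is parameterised by the lower-left block $\left(\begin{smallmatrix} a & c \\ b & a \end{smallmatrix}\right)$, with a third coordinate $c$ at position $(3,2)$. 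This misidentification sinks your uniqueness argument as stated: $w_1 B_G w_1^{-1}$ does not contain $M_G$ (it contains the torus and the $(1,2)$-, $(3,2)$-, $(1,3)$-, $(1,4)$-type root subgroups, but not the $(2,1)$-root of the Levi); moreover the actual $\overline{N}_{P_G}$ meets $w_1 B_G w_1^{-1}$ in the $(3,2)$-root subgroup, so the product map $\overline{N}_{P_G} \times (w_1 B_G w_1^{-1}) \to G$ is not injective, and in any case $3+7=10 < 11 = \dim G$, so it cannot be an open immersion onto a big cell. The correct complementarity lives inside $\overline{P}_G$: the opposite Klingen unipotent $\{u(z,a,b)\}$ (dimension $3$, roots at $(2,1)$, $(3,1)$, $(4,1)$) and $w_1 B_G^{\opn{an}} w_1^{-1} \cap \overline{\mathcal{P}}_{G,n,\beta}^{\square}$ (torus plus the $(1,2)$- and $(3,2)$-roots) intersect trivially and have complementary root spaces in $\opn{Lie}\overline{P}_G$ ($3+5=8$); this trivial intersection is exactly what the paper invokes for uniqueness.

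Once that is repaired, your existence argument is essentially the paper's, packaged through an extra conjugation. The paper applies the Iwahori factorisation directly to $\overline{\mathcal{P}}_{G,n,\beta}^{\square}$ to write $g$ as a product of the $(a,b)$-part, a $(3,2)$-root element with coordinate $c \in \mathcal{B}_n$, the $(2,1)$-Levi element with $z \in p^{\beta}\mbb{Z}_p + \mathcal{B}_n$, and an element of $w_1 B_G^{\opn{an}} w_1^{-1} \cap \overline{\mathcal{P}}_{G,n,\beta}^{\square}$; the "careful bookkeeping" you defer is then a single explicit identity, commuting the $(3,2)$-element past the $(2,1)$-element, which populates the $a$-positions by $cz \in \mathcal{B}_n$ and leaves the $(3,2)$-element (which does lie in $w_1 B_G w_1^{-1}$) to be absorbed into $x$. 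Your variant, conjugating $u(z_m,0,0)$ by a general $x_0$, needs the same computation for the whole group $w_1 B_G^{\opn{an}} w_1^{-1} \cap \overline{\mathcal{P}}_{G,n,\beta}^{\square}$ rather than one root subgroup; your size estimates for the corrections are right, but the proposal cannot stand as written until the identification of the unipotent group and the uniqueness argument are corrected.
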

\begin{proof}
    Let $\mathcal{M}_{G, n, \beta}^{\square} = \mathcal{M}_{G, n} M_{G, \opn{Iw}}(p^{\beta})$. Then, using the Iwahori factorisation for this group, we can factor $g$ as
    \[
    \left( \begin{smallmatrix} 1 & & & \\  & 1 & & \\ a & & 1 & \\ b & a &  & 1 \end{smallmatrix} \right) \cdot \left( \begin{smallmatrix} 1 & & & \\  & 1 & & \\  & c & 1 & \\  &  &  & 1 \end{smallmatrix} \right) \cdot \left( \begin{smallmatrix} 1 & & & \\ z & 1 & & \\  & & 1 & \\  &  & -z & 1 \end{smallmatrix} \right) \cdot x'
    \]
    for some $c \in \mathcal{B}_n$ and $x' \in w_1 B_G^{\opn{an}} w_1^{-1} \cap \overline{\mathcal{P}}_{G,n,\beta}^{\square}$, where $a, b, z$ satisfy the same conditions as in the statement of the lemma. The desired factorisation now follows from the computation
    \[
    \left( \begin{smallmatrix} 1 & & & \\  & 1 & & \\  & c & 1 & \\  &  &  & 1 \end{smallmatrix} \right) \cdot \left( \begin{smallmatrix} 1 & & & \\ z & 1 & & \\  & & 1 & \\  &  & -z & 1 \end{smallmatrix} \right) = \left( \begin{smallmatrix} 1 & & & \\ z & 1 & & \\ cz & & 1 & \\  & cz & -z & 1 \end{smallmatrix} \right) \cdot \left( \begin{smallmatrix} 1 & & & \\  & 1 & & \\  & c & 1 & \\  &  &  & 1 \end{smallmatrix} \right) .
    \]
    The factorisation is unique because the group $w_1 B_G^{\opn{an}} w_1^{-1} \cap \overline{\mathcal{P}}_{G,n,\beta}^{\square}$ has trivial intersection with the group of matrices of the form $\left( \begin{smallmatrix} 1 & & & \\ z & 1 & & \\ a & & 1 & \\ b & a & -z & 1 \end{smallmatrix} \right)$ as in the statement of the lemma.
\end{proof}

\subsubsection{\texorpdfstring{$p$}{p}-adic nearly representations for \texorpdfstring{$G$}{G}}

Let $(A, A^+)$ be a complete Tate affinoid pair over $(\mbb{Q}_p, \mbb{Z}_p)$. Let $n \geq 1$ be an integer and let $\kappa_G \colon T(\mbb{Z}_p) \to A^{\times}$ be a $n$-analytic character; this can be viewed as a tuple $(\kappa_1, \kappa_2; \omega)$, where $\kappa_i \colon \mbb{Z}_p^{\times} \to A^{\times}$ and $\omega \colon \mbb{Z}_p^{\times} \to A^{\times}$ are $n$-analytic characters such that
\[
\kappa_G \left( \begin{smallmatrix} t_1 & & & \\ & t_2 & & \\ & & \nu t_2^{-1} & \\ & & & \nu t_1^{-1} \end{smallmatrix} \right) = \kappa_1(t_1) \kappa_2(t_2) \omega(\nu) .
\]

\begin{definition} \label{DefinitionOfIGpadicnearlyrep}
    Let $n \geq 1$. We let 
    \[
    \mbb{I}_{\overline{\mathcal{P}}_{G,n,\beta}^{\square}}(\kappa_G) \defeq \left\{ f \colon \overline{\mathcal{P}}_{G,n,\beta}^{\square} \to \mbb{A}^{1, \opn{an}}_A : f(- \cdot b) = (w_{M_G}^{\opn{max}} \kappa_G)(b^{-1}) f(-) \text{ for all } b \in \overline{\mathcal{P}}_{G,n,\beta}^{\square} \cap w_1 B_G^{\opn{an}} w_1^{-1} \right\} .
    \]
    This is a projective Banach $A$-module (Lemma \ref{PartialFactorisationLemma}) and comes equipped with an action of $\overline{\mathcal{P}}_{G,n,\beta}^{\square}$ via
    \[
    (p \cdot f)(-) = f(p^{-1} \cdot -), \quad \quad p \in \overline{\mathcal{P}}_{G,n,\beta}^{\square} .
    \]
    It also comes equipped with an action of $w_1 T^{G, +} w_1^{-1} \subset T(\mbb{Q}_p)$ (the submonoid of elements $t \in T(\mbb{Q}_p)$ which satisfy $v_p(\alpha(w_1^{-1} t w_1)) \geq 0$ for every positive root $\alpha$ of $G$) by the formula:
    \[
    (t \cdot f)(n x) = f((t^{-1}  n  t) x), \text{ for any } n = \left( \begin{smallmatrix} 1 & & & \\ z & 1 & & \\ a & & 1 & \\ b & a & -z & 1 \end{smallmatrix} \right), x \in w_1 B_G^{\opn{an}} w_1^{-1} \cap \overline{\mathcal{P}}_{G,n,\beta}^{\square} 
    \]
    where $a, b, z$ satisfy the conditions in the statement of Lemma \ref{PartialFactorisationLemma}.
\end{definition}

Note that, if $\kappa_G \in X^*(T)$ is as in Situation \ref{situation:Weights}, then we have a natural $\overline{\mathcal{P}}_{G,n,\beta}^{\square}$-equivariant map
\[
\mbb{I}_{\overline{P}_G}(\kappa_G)^{\opn{nearly}} \to \mbb{I}_{\overline{\mathcal{P}}_{G,n,\beta}^{\square}}(\kappa_G)
\]
given by restricting a function on $G$ to $\overline{\mathcal{P}}_{G,n,\beta}^{\square}$ and inflating a function on $M_G$ to $\overline{\mathcal{P}}_{G,n,\beta}^{\square}$. Such a map is well-defined because $-w_{M_G}^{\opn{max}}\kappa_G = w_1 \cdot (-w_G^{\opn{max}} \nu_G + \rho_G) - \rho_G$, where $\rho_G$ denotes the half-sum of the positive roots (note that $w_1 \cdot \rho_G - \rho_G = -w_{M_G}^{\opn{max}} \cdot (2, 0; -1)$).

We also introduce a version of this representation for the Levi subgroup.

\begin{definition}
    Let $n \geq 1$ and $\kappa_G \colon T(\mbb{Z}_p) \to A^{\times}$ a $n$-analytic character. Let $\mathcal{M}_{G, n, \beta}^{\square} = \mathcal{M}_{G, n} M_{G, \opn{Iw}}(p^{\beta})$. We define
    \[
    \mbb{I}_{\mathcal{M}_{G,n,\beta}^{\square}}(\kappa_G) \defeq \left\{ f \colon \mathcal{M}_{G,n,\beta}^{\square} \to \mbb{A}^{1, \opn{an}}_A : f(- \cdot b) = (w_{M_G}^{\opn{max}} \kappa_G)(b^{-1}) f(-) \text{ for all } b \in \mathcal{M}_{G,n,\beta}^{\square} \cap B_{M_G}^{\opn{an}} \right\}
    \]
    (note that $w_1 B_G^{\opn{an}} w_1^{-1} \cap M_G^{\opn{an}} = B_{M_G}^{\opn{an}}$). This comes equipped with actions of $\mathcal{M}_{G, n, \beta}^{\square}$ and $T^{M_G, +}$ by the same formulae above. We have an obvious inclusion 
    \begin{equation} \label{IMGpadicToIPGEqn}
    \mbb{I}_{\mathcal{M}_{G,n,\beta}^{\square}}(\kappa_G) \hookrightarrow \mbb{I}_{\overline{\mathcal{P}}_{G,n,\beta}^{\square}}(\kappa_G)
    \end{equation}
    induced from the map $\overline{\mathcal{P}}_{G,n,\beta}^{\square} \twoheadrightarrow \mathcal{M}_{G,n,\beta}^{\square}$.
\end{definition}

\subsubsection{\texorpdfstring{$p$}{p}-adic nearly representations for \texorpdfstring{$H$}{H}}

We now consider the analogous constructions for the group $H$. Let $\kappa_H \colon T(\mbb{Z}_p) \to A^{\times}$ be a $n$-analytic character. Then we define:
\[
\mbb{I}_{\overline{\mathcal{P}}_{H, n, \beta}^{\diamondsuit}}(\kappa_H) \defeq \left\{ f \colon \overline{\mathcal{P}}_{H, n, \beta}^{\diamondsuit} \to \mbb{A}^{1, \opn{an}}_A : f(- \cdot b) = (w_{M_H}^{\opn{max}} \kappa_H)(b^{-1}) f(-) \text{ for all } b \in \overline{\mathcal{P}}_{H, n, \beta}^{\diamondsuit} \cap B_H^{\opn{an}} \right\} 
\]
which is a projective Banach $A$-module and comes equipped with actions of $\overline{\mathcal{P}}_{H, n, \beta}^{\diamondsuit}$ and $T^{H, +}$. We also define $\mbb{I}_{\mathcal{M}_{H, n, \beta}^{\diamondsuit}}(\kappa_H)$ similarly, where $\mathcal{M}_{H, n, \beta}^{\diamondsuit} \defeq \mathcal{M}_{H, n} T_{\diamondsuit}(p^{\beta})$. Note that this latter representation is simply a line, and has a canonical basis given by the character $\kappa_H$. As above, we have a natural $\overline{\mathcal{P}}_{H, n, \beta}^{\diamondsuit}$-equivariant map $\mbb{I}_{\overline{P}_{H}}(\kappa_H)^{\opn{nearly}} \to \mbb{I}_{\overline{\mathcal{P}}_{H, n, \beta}^{\diamondsuit}}(\kappa_H)$ when $\kappa_H$ is as in Situation \ref{situation:Weights}, which is well-defined because $w^{\opn{max}}_{M_H} \kappa_H = \kappa_H = w_H^{\opn{max}} \nu_H + \rho_H = w_H^{\opn{max}} \nu_H + w_{M_H}^{\opn{max}}\rho_H$.

\subsubsection{\texorpdfstring{$p$}{p}-adic nearly representations for \texorpdfstring{$\opn{GL}_2$}{GL(2)}}

Let $\zeta \colon T_{\opn{GL}_2}(\mbb{Z}_p) \to A^{\times}$ be a $n$-analytic character. We define:
\[
\mbb{I}_{\overline{\mathcal{P}}_{\opn{GL}_2, n}^{\square}}(\zeta) \defeq \left\{ f \colon \overline{\mathcal{P}}_{\opn{GL}_2, n}^{\square} \to \mbb{A}^{1, \opn{an}}_A : f(- \cdot b) = (w_{M_{\opn{GL}_2}}^{\opn{max}} \zeta)(b^{-1}) f(-) \text{ for all } b \in \overline{\mathcal{P}}_{\opn{GL}_2, n}^{\square} \cap B_{\opn{GL}_2}^{\opn{an}} \right\} .
\]
If $\zeta = (-1 - t; \xi)$ is algebraic with $t \geq 0$, then we have a natural $\overline{\mathcal{P}}_{\opn{GL}_2, n}^{\square}$-equivariant map $\mbb{I}_{\overline{P}_{\opn{GL}_2}}(\zeta)^{\opn{nearly}} \to \mbb{I}_{\overline{\mathcal{P}}_{\opn{GL}_2, n}^{\square}}(\zeta)$.

\begin{remark} \label{DecompOfDDaggerIntoFactorsRem}
    We have the following analogue of Remark \ref{DecompositionOfNearlyIntoFactorsRem}, namely if $\kappa_H \colon T(\mbb{Z}_p) \to A^{\times}$ is a $n$-analytic character and we view $\kappa_H - 2\rho_H$ as a pair of $n$-analytic characters $(\zeta_{H_1}, \zeta_{H_2}) \colon T_{\opn{GL}_2}(\mbb{Z}_p) \times_{\mbb{Z}_p^{\times}} T_{\opn{GL}_2}(\mbb{Z}_p) \to A^{\times}$ via the identification $T(\mbb{Z}_p) = T_{\opn{GL}_2}(\mbb{Z}_p) \times_{\mbb{Z}_p^{\times}} T_{\opn{GL}_2}(\mbb{Z}_p)$, then 
    \[
    \mbb{I}_{\overline{\mathcal{P}}_{H, n, \beta}^{\diamondsuit}}(\kappa_H) \otimes V_{M_H}(-2\rho_H) \cong \mbb{I}_{\overline{\mathcal{P}}_{\opn{GL}_2, n}^{\square}}(\zeta_{H_1}) \; \widehat{\boxtimes} \; \mbb{I}_{\overline{\mathcal{P}}_{\opn{GL}_2, n}^{\square}}(\zeta_{H_2}) .
    \]
\end{remark}

\subsubsection{A \texorpdfstring{$p$}{p}-adic branching result}

We consider the following morphism.

\begin{proposition} \label{DefPropOfbrnan}
    Let $n \geq 1$ and let $\kappa_G \colon T(\mbb{Z}_p) \to A^{\times}$ be a $n$-analytic character. Let $\lambda \colon \mbb{Z}_p^{\times} \to A^{\times}$ be a $n$-analytic character and set $\kappa_H = w_{M_G}^{\opn{max}} \kappa_G + (\lambda, -\lambda; 0)$, i.e.,
    \[
    \kappa_H\left( \begin{smallmatrix} t_1 & & & \\ & t_2 & & \\ & & \nu t_1^{-1} & \\ & & & \nu t_2^{-1} \end{smallmatrix} \right) = \kappa_1(t_2) \kappa_2(t_1) \omega(\nu) \lambda(t_1 t_2^{-1}) .
    \]
    Consider the following $A$-linear map
    \[
    \opn{br}^{n\opn{-an}} \colon \mbb{I}_{\overline{\mathcal{P}}_{H, n, \beta}^{\diamondsuit}}(\kappa_H) \to \mbb{I}_{\overline{\mathcal{P}}_{G, n, \beta}^{\square}}(\kappa_G)
    \]
    uniquely determined by the property
    \[
    \opn{br}^{n\opn{-an}}(G)\left( \begin{smallmatrix} 1 & & & \\ z & 1 & & \\ a & & 1 & \\ b & a & -z & 1 \end{smallmatrix} \right) = \lambda(1+z) G \left( \left( \begin{smallmatrix} 1 & \\ b-a & 1\end{smallmatrix} \right), \left( \begin{smallmatrix} 1 & \\ \frac{a}{1+z} & 1 \end{smallmatrix} \right) \right)
    \]
    (which is well-defined because $1+z \in 1 + (p^{\beta} \mbb{Z}_p)+\mathcal{B}_n \subset \mbb{Z}_p^{\times}(1 + \mathcal{B}_n)$). Then $\opn{br}^{n\opn{-an}}$ is equivariant for the action of $\overline{\mathcal{P}}_{H, n, \beta}^{\diamondsuit}$ through the embedding $\gamma^{-1} \overline{\mathcal{P}}_{H, n, \beta}^{\diamondsuit} \gamma \subset \overline{\mathcal{P}}_{G, n, \beta}^{\square}$.
\end{proposition}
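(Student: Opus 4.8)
The plan is to pin the map down on the subgroup $N^{-} \defeq \{ n_G(z,a,b) \}$ of matrices $\left( \begin{smallmatrix} 1 & & & \\ z & 1 & & \\ a & & 1 & \\ b & a & -z & 1 \end{smallmatrix} \right)$ with $z \in p^{\beta}\mbb{Z}_p + \mathcal{B}_n$ and $a,b \in \mathcal{B}_n$, and then to use Lemma~\ref{PartialFactorisationLemma} throughout. By that lemma, restriction to $N^{-}$ identifies $\mbb{I}_{\overline{\mathcal{P}}_{G,n,\beta}^{\square}}(\kappa_G)$ with the $A$-valued analytic functions on $N^{-}$; hence there is at most one $A$-linear map with the stated property, and to construct it I would only check that $(z,a,b) \mapsto \lambda(1+z)\, G\!\left( \left( \begin{smallmatrix} 1 & \\ b-a & 1\end{smallmatrix} \right), \left( \begin{smallmatrix} 1 & \\ a/(1+z) & 1 \end{smallmatrix} \right) \right)$ is such a function. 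This is straightforward: $b-a \in \mathcal{B}_n$ and $a/(1+z) \in \mathcal{B}_n$ (using $1+z \in \mbb{Z}_p^{\times}(1+\mathcal{B}_n)$), so the displayed point lies in $\overline{\mathcal{P}}_{H,n,\beta}^{\diamondsuit}$ and $G$ may be evaluated there; $\lambda(1+z)$ makes sense since $\lambda$ is $n$-analytic; and the assignment is rigid-analytic in $(z,a,b)$. The unique $w_{M_G}^{\opn{max}}\kappa_G$-equivariant extension to $\overline{\mathcal{P}}_{G,n,\beta}^{\square}$ exists and is analytic because the factorisation of Lemma~\ref{PartialFactorisationLemma} is given by rational functions of the matrix entries, so that $\overline{\mathcal{P}}_{G,n,\beta}^{\square} \cong N^{-} \times \bigl( w_1 B_G^{\opn{an}} w_1^{-1} \cap \overline{\mathcal{P}}_{G,n,\beta}^{\square}\bigr)$ as rigid spaces; $A$-linearity in $G$ is clear.

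For equivariance I would first note that both $\opn{br}^{n\opn{-an}}(p_H \cdot f)$ and $(\gamma^{-1}p_H\gamma)\cdot \opn{br}^{n\opn{-an}}(f)$ lie in $\mbb{I}_{\overline{\mathcal{P}}_{G,n,\beta}^{\square}}(\kappa_G)$, so by the previous paragraph it suffices to compare them after restriction to $N^{-}$; and since equivariance for a product of group elements follows from equivariance for each factor, $\opn{br}^{n\opn{-an}}$ is bounded and the actions are continuous, and $\overline{\mathcal{P}}_{H,n,\beta}^{\diamondsuit}$ lies in the lower Borel of $H$ and is hence topologically generated by the lower unipotents $n_H(v,u) = \left(\left( \begin{smallmatrix} 1 & \\ v & 1\end{smallmatrix} \right),\left( \begin{smallmatrix} 1 & \\ u & 1\end{smallmatrix} \right)\right)$ ($v,u \in \mathcal{B}_n$) together with its torus part, it is enough to treat these two kinds of element. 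For $n_H(v,u)$ one computes $\gamma^{-1}\iota(n_H(-v,-u))\gamma \cdot n_G(z,a,b)$ and re-factors it via Lemma~\ref{PartialFactorisationLemma} as $n_G(z',a',b')\cdot x'$; one finds $z'=z$, $a'=a-u(1+z)$, $b'=b-v-u(1+z)$, and $x' = \iota\!\left( \left( \begin{smallmatrix} 1 & \\ & 1\end{smallmatrix} \right), \left( \begin{smallmatrix} 1 & \\ -u & 1 \end{smallmatrix} \right)\right)$, which is unipotent, so the $w_{M_G}^{\opn{max}}\kappa_G$-factor is trivial. Since $b'-a' = (b-a)-v$, $a'/(1+z') = a/(1+z)-u$, and $n_H(-v,-u)\,n_H(b-a, a/(1+z)) = n_H((b-a)-v, a/(1+z)-u)$, both sides of the equivariance identity, evaluated at $n_G(z,a,b)$, come out equal to $\lambda(1+z)\,f(n_H((b-a)-v, a/(1+z)-u))$.

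The torus case is where the real content — and the main obstacle — lies, although it is ultimately a bounded computation. For $t = \opn{diag}(t_1,t_2,\nu t_2^{-1}, \nu t_1^{-1})$ in the torus part of $\overline{\mathcal{P}}_{H,n,\beta}^{\diamondsuit}$ one has $\gamma^{-1}t\gamma \in \overline{\mathcal{P}}_{G,n,\beta}^{\square}$ — this is exactly the role of the ``diamond'' condition $t_1 \equiv t_2 \bmod p^{\beta}$, which forces the lower-triangular Levi entries of $\gamma^{-1}t\gamma$ into $p^{\beta}\mbb{Z}_p$. Re-factoring $\gamma^{-1}t^{-1}\gamma\cdot n_G(z,a,b) = n_G(z',a',b')\,x'$ via Lemma~\ref{PartialFactorisationLemma} one obtains $1+z' = (t_1/t_2)(1+z)$, $b'-a' = \nu^{-1}t_1^2(b-a)$, $a'/(1+z') = \nu^{-1}t_2^2\cdot a/(1+z)$, and $x'$ with diagonal part $\opn{diag}(t_1^{-1},t_2^{-1},\nu^{-1}t_2,\nu^{-1}t_1)$, whence $(w_{M_G}^{\opn{max}}\kappa_G)((x')^{-1}) = \kappa_1(t_2)\kappa_2(t_1)\omega(\nu)$. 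On the $H$-side, $(t\cdot f)(n_H(\beta_1,\beta_2)) = (w_{M_H}^{\opn{max}}\kappa_H)(t)\, f(n_H(\nu^{-1}t_1^2\beta_1,\nu^{-1}t_2^2\beta_2))$ with $(w_{M_H}^{\opn{max}}\kappa_H)(t) = \kappa_H(t) = \kappa_1(t_2)\kappa_2(t_1)\omega(\nu)\lambda(t_1/t_2)$ (recall $M_H = T$). Feeding these in, the relation $\kappa_H = w_{M_G}^{\opn{max}}\kappa_G + (\lambda,-\lambda;0)$ makes the character factors match, and both sides of the equivariance identity equal $\lambda(1+z)\,\kappa_1(t_2)\kappa_2(t_1)\omega(\nu)\lambda(t_1/t_2)\, f(n_H(\nu^{-1}t_1^2(b-a), \nu^{-1}t_2^2\cdot a/(1+z)))$. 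The only difficulty I anticipate is the careful bookkeeping here: tracking the re-factorisation of $\gamma^{-1}t^{-1}\gamma\cdot n_G(z,a,b)$ and the diagonal part of $x'$, and verifying that the four characters $\kappa_1,\kappa_2,\omega,\lambda$ assemble exactly as required; everything else is formal. Should one prefer to avoid these matrix computations, an alternative would be to deduce equivariance from the algebraic branching maps of \eqref{brNearlyCommWithAlgDiagram}, which are equivariant, together with Lemma~\ref{explicitBranchingUnipLemma} identifying the formula for $\opn{br}^{n\opn{-an}}$ with those maps on algebraic weights, and Zariski density of algebraic weights in the space of $n$-analytic characters.
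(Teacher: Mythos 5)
Your proposal is correct and follows essentially the same route as the paper: the paper's proof likewise reduces equivariance to the lower unipotents $n_H(v,u)$ and the torus elements of $\overline{\mathcal{P}}_{H,n,\beta}^{\diamondsuit}$, computes the refactorisations $\gamma^{-1}h^{-1}\gamma\cdot n_G(z,a,b) = n_G(z',a',b')\,x'$ with exactly the values of $z',a',b',x'$ you obtain, and leaves the final assembly of the character factors to the reader. Your additional remarks on uniqueness via Lemma~\ref{PartialFactorisationLemma} and the explicit matching of $\kappa_H = w_{M_G}^{\opn{max}}\kappa_G + (\lambda,-\lambda;0)$ in the torus case are accurate elaborations of steps the paper treats as immediate.
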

\begin{proof}
    We first compute the actions of $\overline{\mathcal{P}}_{H, n, \beta}^{\diamondsuit}$ and $\gamma^{-1} \overline{\mathcal{P}}_{H, n, \beta}^{\diamondsuit} \gamma$ on $\mbb{I}_{\overline{\mathcal{P}}_{H, n, \beta}^{\diamondsuit}}(\kappa_H)$ and $\mbb{I}_{\overline{\mathcal{P}}_{G, n, \beta}^{\square}}(\kappa_G)$ respectively. More precisely, let 
    \[
    h = \left( \begin{smallmatrix} t_1 & & & \\ & t_2 & & \\ & & \nu t_2^{-1} & \\ & & & \nu t_1^{-1} \end{smallmatrix} \right) \in \overline{\mathcal{P}}_{H, n, \beta}^{\diamondsuit} .
    \]
    Then one can compute that:
    \begin{align*}
        \gamma^{-1} h^{-1} \gamma \cdot \left( \begin{smallmatrix} 1 & & & \\ z & 1 & & \\ a & & 1 & \\ b & a & -z & 1 \end{smallmatrix} \right) &= \left( \begin{smallmatrix} t_1^{-1} & & & \\ t_2^{-1} - t_1^{-1} & t_2^{-1} & & \\ & & \nu^{-1} t_2 & \\ & & \nu^{-1}(t_2-t_1) & \nu^{-1} t_1 \end{smallmatrix} \right) \cdot \left( \begin{smallmatrix} 1 & & & \\ z & 1 & & \\ a & & 1 & \\ b & a & -z & 1 \end{smallmatrix} \right) \\
         &= \left( \begin{smallmatrix} t_1^{-1} & & & \\ t_2^{-1}(1+z) - t_1^{-1} & t_2^{-1} & & \\ \nu^{-1}t_2 a & & \nu^{-1} t_2 & \\ \nu^{-1}t_1 b + \nu^{-1}(t_2 - t_1) a & \nu^{-1}t_1 a & \nu^{-1}(t_2-t_1) - z\nu^{-1} t_1 & \nu^{-1} t_1 \end{smallmatrix} \right) \\
         &= \left( \begin{smallmatrix} 1 & & & \\ (t_1t_2^{-1})(1+z) - 1 & 1 & & \\ \nu^{-1}t_1 t_2 a & & 1 & \\ \nu^{-1}t_1t_2 a + \nu^{-1} t_1^2 (b-a) & \nu^{-1}t_1 t_2 a & 1- (t_1t_2^{-1})(1+z) & 1 \end{smallmatrix} \right) \cdot \left( \begin{smallmatrix} t_1^{-1} & & & \\ & t_2^{-1} & & \\ & & \nu^{-1} t_2 & \\ & & & \nu^{-1} t_1 \end{smallmatrix} \right) .
    \end{align*}
    Therefore, we see that for any $G \in \mbb{I}_{\overline{\mathcal{P}}_{H, n, \beta}^{\diamondsuit}}(\kappa_H)$ and $F \in \mbb{I}_{\overline{\mathcal{P}}_{G, n, \beta}^{\square}}(\kappa_G)$:
    \begin{align*}
        (h \cdot G)\left( \left( \begin{smallmatrix} 1 & \\ Y & 1 \end{smallmatrix} \right), \left( \begin{smallmatrix} 1 & \\ X & 1 \end{smallmatrix} \right) \right) &= \kappa_H(h) G\left( \left( \begin{smallmatrix} 1 & \\ \nu^{-1}t_1^2 Y & 1 \end{smallmatrix} \right), \left( \begin{smallmatrix} 1 & \\ \nu^{-1}t_2^2 X & 1 \end{smallmatrix} \right) \right) \\
        ((\gamma^{-1}h \gamma) \cdot F)\left( \begin{smallmatrix} 1 & & & \\ z & 1 & & \\ a & & 1 & \\ b & a & -z & 1 \end{smallmatrix} \right) &= (w_{M_G}^{\opn{max}} \kappa_G)(h) F\left( \begin{smallmatrix} 1 & & & \\ (t_1t_2^{-1})(1+z) - 1 & 1 & & \\ \nu^{-1}t_1 t_2 a & & 1 & \\ \nu^{-1}t_1 t_2 a + \nu^{-1} t_1^2 (b-a) & \nu^{-1}t_1 t_2 a & 1-(t_1t_2^{-1})(1+z) & 1 \end{smallmatrix} \right) .
    \end{align*}
    Similarly, suppose that 
    \[
    h = \left( \begin{smallmatrix} 1 & & & \\ & 1 & & \\ & u & 1 & & \\ v & & & 1 \end{smallmatrix} \right) \in \overline{\mathcal{P}}_{H, n, \beta}^{\diamondsuit} .
    \]
    Then one can compute that
    \[
    \gamma^{-1} h^{-1} \gamma \cdot \left( \begin{smallmatrix} 1 & & & \\ z & 1 & & \\ a & & 1 & \\ b & a & -z & 1 \end{smallmatrix} \right) = \left( \begin{smallmatrix} 1 & & & \\ z & 1 & & \\ a - u(1+z) & & 1 & \\ b - v - u(1+z) & a - u(1+z) & -z & 1 \end{smallmatrix} \right) \cdot \left( \begin{smallmatrix} 1 & & & \\  & 1 & & \\  & -u & 1 & \\  &  &  & 1 \end{smallmatrix} \right)
    \]
    so for any $G \in \mbb{I}_{\overline{\mathcal{P}}_{H, n, \beta}^{\diamondsuit}}(\kappa_H)$ and $F \in \mbb{I}_{\overline{\mathcal{P}}_{G, n, \beta}^{\square}}(\kappa_G)$:
    \begin{align*}
        (h \cdot G)\left( \left( \begin{smallmatrix} 1 & \\ Y & 1 \end{smallmatrix} \right), \left( \begin{smallmatrix} 1 & \\ X & 1 \end{smallmatrix} \right) \right) &= G\left( \left( \begin{smallmatrix} 1 & \\ Y-v & 1 \end{smallmatrix} \right), \left( \begin{smallmatrix} 1 & \\ X - u & 1 \end{smallmatrix} \right) \right) \\
        ((\gamma^{-1}h \gamma) \cdot F)\left( \begin{smallmatrix} 1 & & & \\ z & 1 & & \\ a & & 1 & \\ b & a & -z & 1 \end{smallmatrix} \right) &= F\left( \begin{smallmatrix} 1 & & & \\ z & 1 & & \\ a - u(1+z) & & 1 & \\ b - v - u(1+z) & a - u(1+z) & -z & 1 \end{smallmatrix} \right) .
    \end{align*}
    One can easily check this implies the required equivariance of the morphism $\opn{br}^{n\opn{-an}}$.
\end{proof}

We have the following important compatibility between the branching maps $\opn{br}^{\opn{nearly}}$ and $\opn{br}^{n\opn{-an}}$.

\begin{proposition} \label{BrNearlyBrNanCompatProposition}
    Suppose that $\kappa_G$ and $\kappa_H$ are as in Situation \ref{situation:Weights}. Then we have a commutative diagram
    \[
\begin{tikzcd}
{\mbb{I}_{\overline{\mathcal{P}}_{H, n, \beta}^{\diamondsuit}}(\kappa_H)} \arrow[r, "\opn{br}^{n\opn{-an}}"] & {\mbb{I}_{\overline{\mathcal{P}}_{G, n, \beta}^{\square}}(\kappa_G)} \\
\mbb{I}_{\overline{P}_H}(\kappa_H)^{\opn{nearly}} \arrow[u] \arrow[r, "\opn{br}^{\opn{nearly}}"]             & \mbb{I}_{\overline{P}_G}(\kappa_G)^{\opn{nearly}} \arrow[u]         
\end{tikzcd}
    \]
\end{proposition}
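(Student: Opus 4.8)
The strategy is to reduce, via equivariance, to evaluating the two sides on a single vector, and then to compute directly using the formula defining $\opn{br}^{n\opn{-an}}$ (Proposition~\ref{DefPropOfbrnan}) together with Lemma~\ref{explicitBranchingUnipLemma}. Denote by $\Phi_1, \Phi_2 \colon \mbb{I}_{\overline{P}_H}(\kappa_H)^{\opn{nearly}} \to \mbb{I}_{\overline{\mathcal{P}}_{G, n, \beta}^{\square}}(\kappa_G)$ the two composites around the square ($\Phi_1$ along the left and top edges, $\Phi_2$ along the bottom and right edges); these are $A$-linear. Combining the $\overline{\mathcal{P}}$-equivariance of the two vertical maps, the equivariance of $\opn{br}^{\opn{nearly}}$ through $\gamma^{-1}\overline{P}_H\gamma \subset \overline{P}_G$, the equivariance of $\opn{br}^{n\opn{-an}}$ from Proposition~\ref{DefPropOfbrnan}, and the inclusion $\gamma^{-1}\overline{\mathcal{P}}_{H, n, \beta}^{\diamondsuit}\gamma \subset \overline{\mathcal{P}}_{G, n, \beta}^{\square}$, one sees that both $\Phi_1$ and $\Phi_2$ are equivariant for the action of $\overline{\mathcal{P}}_{H, n, \beta}^{\diamondsuit}$ on the source and of $\overline{\mathcal{P}}_{G, n, \beta}^{\square}$ on the target through $\gamma^{-1}(-)\gamma$. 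Since the lower unipotent subgroup $\overline{N}_H(p^n\mbb{Z}_p) \subseteq \overline{\mathcal{P}}_{H, n, \beta}^{\diamondsuit}$ acts trivially on the line $V_{M_H}(\rho_H)$ while its orbit of the highest weight vector $w_{\nu_H}^{\opn{hw}} \in V_H(\nu_H)$ already $A$-spans $V_H(\nu_H)$ (the module being irreducible with $w_{\nu_H}^{\opn{hw}}$ of highest weight, and $\overline{N}_H(p^n\mbb{Z}_p)$ being Zariski dense in $\overline{N}_H$), it therefore suffices to check that $\Phi_1$ and $\Phi_2$ agree on the single vector $w_{\nu_H}^{\opn{hw}} \otimes \chi_{\rho_H}$, where $\chi_{\rho_H} = \chi_{(1,1;-1)}$ is the canonical generator of $V_{M_H}(\rho_H)$.

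Both $\Phi_i(w_{\nu_H}^{\opn{hw}} \otimes \chi_{\rho_H})$ lie in $\mbb{I}_{\overline{\mathcal{P}}_{G, n, \beta}^{\square}}(\kappa_G)$, i.e.\ are functions on $\overline{\mathcal{P}}_{G, n, \beta}^{\square}$ transforming by the same character under right translation by $w_1 B_G^{\opn{an}} w_1^{-1} \cap \overline{\mathcal{P}}_{G, n, \beta}^{\square}$, so by Lemma~\ref{PartialFactorisationLemma} it is enough to compare their values at the matrices $g_0 = \left( \begin{smallmatrix} 1 & & & \\ z & 1 & & \\ a & & 1 & \\ b & a & -z & 1 \end{smallmatrix} \right)$ occurring there. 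For the left-hand side, the left vertical map sends $w_{\nu_H}^{\opn{hw}} \otimes \chi_{\rho_H}$ to the function $p \mapsto w_{\nu_H}^{\opn{hw}}(p)\,\chi_{\rho_H}(\bar p)$ on $\overline{\mathcal{P}}_{H, n, \beta}^{\diamondsuit}$, so the defining property of $\opn{br}^{n\opn{-an}}$ and the first identity of Lemma~\ref{explicitBranchingUnipLemma} (note that the argument $\left( \left(\begin{smallmatrix} 1 & \\ b-a & 1 \end{smallmatrix}\right), \left(\begin{smallmatrix} 1 & \\ \frac{a}{1+z} & 1 \end{smallmatrix}\right) \right)$ is lower unipotent, so its $\chi_{\rho_H}$-factor is $1$) give
\[
\Phi_1(w_{\nu_H}^{\opn{hw}} \otimes \chi_{\rho_H})(g_0) = \lambda(1+z) \cdot (-1)^{t_1+t_2}(b-a)^{t_1} a^{t_2} (1+z)^{-t_2}.
\]
For the right-hand side, $\opn{br}^{\opn{nearly}}(w_{\nu_H}^{\opn{hw}} \otimes \chi_{\rho_H}) = \opn{br}(w_{\nu_H}^{\opn{hw}}) \otimes \opn{br}'(\chi_{(1,1;-1)})$, and applying the right vertical map and evaluating at $g_0$ yields the product of $\opn{br}(w_{\nu_H}^{\opn{hw}})(g_0)$ with the value of $\opn{br}'(\chi_{(1,1;-1)})$ on the image of $g_0$ in $M_G$; the second identity of Lemma~\ref{explicitBranchingUnipLemma}, together with the explicit formula $\opn{br}'(\chi_{(1,1;-1)}) = \big( x \mapsto s(x)^{-1}x_{33}(x_{43}-x_{33}) \big)$ evaluated on the unipotent matrix $\left(\begin{smallmatrix} 1 & 0 & 0 & 0 \\ z & 1 & 0 & 0 \\ 0 & 0 & 1 & 0 \\ 0 & 0 & -z & 1 \end{smallmatrix}\right)$, gives
\[
\Phi_2(w_{\nu_H}^{\opn{hw}} \otimes \chi_{\rho_H})(g_0) = (-1)^{t_1+t_2-1}(b-a)^{t_1}a^{t_2}(1+z)^{r_2} \cdot \big( -(1+z) \big).
\]

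It remains to identify the scalar $\lambda$. By the relation $\kappa_H = w_{M_G}^{\opn{max}}\kappa_G + (\lambda, -\lambda; 0)$ of Proposition~\ref{DefPropOfbrnan}, plugging in $\kappa_G = (r_2+2, -r_1; -r_2-1)$ and $\kappa_H = (1-t_1, 1-t_2; -r_2-1)$ from Situation~\ref{situation:Weights} and using the small condition $t_1 + t_2 = r_1 - r_2$ (which makes the two resulting expressions $r_1+1-t_1$ and $r_2+1+t_2$ coincide), one finds that $\lambda$ is the algebraic character $u \mapsto u^{r_2+t_2+1}$. Hence $\lambda(1+z)(1+z)^{-t_2} = (1+z)^{r_2+1}$, and the two displayed expressions both equal $(-1)^{t_1+t_2}(b-a)^{t_1}a^{t_2}(1+z)^{r_2+1}$, proving $\Phi_1 = \Phi_2$.

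I expect the only real subtlety to be bookkeeping: one must check that the various equivariance statements (for the two vertical maps, for $\opn{br}^{\opn{nearly}}$, and for $\opn{br}^{n\opn{-an}}$) all refer to the same $\gamma$-conjugation convention, so that the reduction to the vector $w_{\nu_H}^{\opn{hw}} \otimes \chi_{\rho_H}$ is legitimate, and one should keep the normalising signs and similitude factors consistent between the two computations above. One also uses that passing to the quotient by $\opn{Fil}_{-(r_1+1)}V_G(\nu_G)$ in the definition of $\mbb{I}_{\overline{P}_G}(\kappa_G)^{\opn{nearly}}$ is harmless here, since the restriction map to $\overline{\mathcal{P}}_{G, n, \beta}^{\square}$ factors through it; apart from this, all the genuine content is already contained in Lemma~\ref{explicitBranchingUnipLemma} and the formula defining $\opn{br}^{n\opn{-an}}$.
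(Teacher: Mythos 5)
Your proof is correct and follows essentially the same route as the paper's: reduce by equivariance to the single generator $w^{\opn{hw}}_{(t_1,t_2;-r_1)}\chi_{(1,1;-1)}$, then evaluate both composites on the unipotent matrices of Lemma \ref{PartialFactorisationLemma} using Lemma \ref{explicitBranchingUnipLemma} and the defining formula of $\opn{br}^{n\opn{-an}}$, obtaining $(-1)^{t_1+t_2}(b-a)^{t_1}a^{t_2}(1+z)^{r_2+1}$ on both sides. Your explicit identification $\lambda = r_2+t_2+1$ and the Zariski-density phrasing of the generation step are just more detailed versions of what the paper does implicitly.
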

\begin{proof}
    Since all the maps are $\overline{\mathcal{P}}_{H, n, \beta}^{\diamondsuit}$-equivariant and $\mbb{I}_{\overline{P}_H}(\kappa_H)^{\opn{nearly}}$ is generated by the action of $\mathcal{U}(\overline{\mathfrak{p}}_H)$ on $\xi \defeq w^{\opn{hw}}_{(t_1, t_2; -r_1)} \chi_{(1, 1; -1)}$, it suffices to check that $\xi$ is sent to the same element when one traverses the diagram in either direction. Going round the diagram in a clockwise direction, we see that $\xi$ is sent to the function
    \[
    \left( \begin{smallmatrix} 1 & & & \\ z & 1 & & \\ a & & 1 & \\ b & a & -z & 1 \end{smallmatrix} \right) \mapsto (-1)^{t_1+t_2} (1+z)^{r_2+1+t_2} (b-a)^{t_1} \left( \frac{a}{1+z} \right)^{t_2} .
    \]
    On the other hand, the image of $\xi$ going round the diagram in an anticlockwise direction is given by:
    \[
    \left( \begin{smallmatrix} 1 & & & \\ z & 1 & & \\ a & & 1 & \\ b & a & -z & 1 \end{smallmatrix} \right) \mapsto (-1)^{t_1+t_2-1} (b-a)^{t_1} a^{t_2} (1+z)^{r_2} (-1-z)
    \]
    Both of these computations use Lemma \ref{explicitBranchingUnipLemma}. These two expressions are evidently equal.
\end{proof}

\begin{remark}
    The morphism $\opn{br}^{n\opn{-an}}$ clearly restricts to a morphism
    \[
    \opn{br}^{n\opn{-an}} \colon \mbb{I}_{\mathcal{M}_{H, n, \beta}^{\diamondsuit}}(\kappa_H) \to \mbb{I}_{\mathcal{M}_{G, n, \beta}^{\square}}(\kappa_G)
    \]
    which satisfies 
    \[
    \opn{br}^{n\opn{-an}}(\kappa_H)\left( \begin{smallmatrix} 1 & & & \\ z & 1 & & \\  & & 1 & \\  &  & -z & 1 \end{smallmatrix} \right) = \lambda(1+z) .
    \]
    This coincides with the ``kraken'' $\mathcal{H}^{\lambda}$ introduced in \cite[Definition 8.3.1]{LZBK21}.
\end{remark}

\section{Shimura varieties and algebraic periods}

We now introduce the relevant Shimura varieties for $G$, $H$ and $\opn{GL}_2$ and construct an algebraic trilinear period which is closely related to the automorphic periods needed for the $p$-adic $L$-functions.

\subsection{The Shimura varieties}

For $\mathscr{G} \in \{ G, H, \opn{GL}_2 \}$ and a compact open subgroup $K \subset \mathscr{G}(\mbb{A}_f)$ which is sufficiently small, we let $Y_{\mathscr{G}}(K)$ denote the Shimura variety over $\mbb{Q}$ associated with $\mathscr{G}$ of level $K$. We let $X_{\mathscr{G}}(K)$ denote a smooth toroidal compactification of $Y_{\mathscr{G}}(K)$, and we denote the boundary divisor by $D_{\mathscr{G}} = X_{\mathscr{G}}(K) - Y_{\mathscr{G}}(K)$. This toroidal compactification depends on a choice of rational polyhedral cone decomposition (which we will always omit from the notation); following \cite[\S 2.3.3]{LPSZ} we assume that these choices are ``good'' in the sense of \cite[\S 6.1.5]{Pilloni}. When varying the level $K$, these choices of cone decompositions will change, however we can always choose good refinements so that the $\mathscr{G}$-equivariance properties of $Y_{\mathscr{G}}(-)$ extend to $X_{\mathscr{G}}(-)$, as well as the functoriality in $\mathscr{G}$. For $\mathscr{G} = H, \opn{GL}_2$, we may choose the cone decomposition so that the corresponding toroidal compactification agrees with the minimal compactification.

\subsubsection{} \label{LevelSubgroupsSSSec}

We will consider the following level subgroups for these Shimura varieties. Fix a sufficiently small compact open subgroup $K^p \subset G(\mbb{A}_f^p)$.

\begin{definition}
    Let $\beta \geq 1$ be an integer. We let:
    \begin{enumerate}
        \item $K^G_{\opn{Iw}}(p^{\beta}) \subset G(\mbb{Z}_p)$ denote the subgroup of elements which lie in $B_G$ modulo $p^{\beta}$;
        \item $K^H_{\diamondsuit}(p^{\beta}) \subset H(\mbb{Z}_p)$ denote the subgroup of elements $(h_1, h_2) \in H(\mbb{Z}_p)$ which satisfy:
        \[
        (h_1, h_2) \equiv \left( \left( \begin{smallmatrix} x & y \\ 0 & z \end{smallmatrix} \right), \left( \begin{smallmatrix} x & -y \\ 0 & z \end{smallmatrix} \right) \right) \quad \text{ modulo } p^{\beta}
        \]
        for some $x,z \in \mbb{Z}_p^{\times}$ and $y \in \mbb{Z}_p$ (which satisfies $K^H_{\diamondsuit}(p^{\beta}) = \hat{\gamma} K^G_{\opn{Iw}}(p^{\beta}) \hat{\gamma}^{-1} \cap H(\mbb{Q}_p)$);
        \item $K^{\opn{GL}_2}_{\opn{Iw}}(p^{\beta}) \subset \opn{GL}_2(\mbb{Z}_p)$ denote the subgroup of elements which lie in $B_{\opn{GL}_2}$ modulo $p^{\beta}$.
    \end{enumerate}
    Note that if $\opn{pr}_i \colon H \to \opn{GL}_2$ is projection to one of the $\opn{GL}_2$-factors, then $\opn{pr}_i(K^H_{\diamondsuit}(p^{\beta})) = K^{\opn{GL}_2}_{\opn{Iw}}(p^{\beta})$. We set
    \[
    K^G_{\beta} \defeq K^p K^G_{\opn{Iw}}(p^{\beta}), \quad K^H_{\beta} \defeq (K^p \cap H(\mbb{A}_f^p)) K^H_{\diamondsuit}(p^{\beta}), \quad K^{H_i}_{\beta} = \opn{pr}_i (K^p \cap H(\mbb{A}_f^p)) K^{\opn{GL}_2}_{\opn{Iw}}(p^{\beta})
    \]
    for $i=1, 2$ (where $H = H_1 \times_{\mbb{G}_m} H_2$).
\end{definition}

\subsubsection{} Let $\mathscr{G} \in \{ G, H, \opn{GL}_2 \}$. Then $Y_{\mathscr{G}}(K)$ represents a certain moduli problem of principally polarised abelian varieties with extra structure. More precisely, $Y_{G}(K)$ (resp. $Y_{\opn{GL}_2}(K)$) parameterises principally polarised abelian surfaces (resp. elliptic curves) with $K$-level structure, and $Y_{H}(K)$ parameterises pairs of elliptic curves with $K$-level structure. Over $Y_{\mathscr{G}}(K)$, we have a natural $\overline{P}_{\mathscr{G}}$-torsor
\[
P_{\mathscr{G}, \opn{dR}} \to Y_{\mathscr{G}}(K)
\]
which parameterises trivialisations of the first relative de Rham homology of the universal object over $Y_{\mathscr{G}}(K)$ respecting the Hodge filtration. These torsors carry a $\mathscr{G}$-equivariant structure as the level $K$ varies. These torsors (and their equivariant structure) extend to the toroidal compactifications $X_{\mathscr{G}}(K)$ by considering trivialisations of the canonical extension of the first relative de Rham homology of the universal object -- we will continue to denote this torsor by $P_{\mathscr{G}, \opn{dR}} \to X_{\mathscr{G}}(K)$.

\begin{notation}
    For any (not necessarily finite-dimensional) algebraic representation $V$ of $\overline{P}_{\mathscr{G}}$, we let 
    \[
    [V] = \left(\pi_* \mathcal{O}_{P_{\mathscr{G}, \opn{dR}}} \otimes V \right)^{\overline{P}_{\mathscr{G}}}
    \]
    denote the corresponding quasi-coherent sheaf on $X_{\mathscr{G}}(K)$. Here $\pi \colon P_{\mathscr{G}, \opn{dR}} \to X_{\mathscr{G}}(K)$ denotes the structural map. If $V$ is additionally a $(\opn{Lie}\mathscr{G}, \overline{P}_{\mathscr{G}} )$-module, then $[V]$ comes equipped with an integrable connection with logarithmic poles along the boundary divisor $D_{\mathscr{G}}$.
\end{notation}

\subsubsection{} \label{MapsBetweenShVarsSSec}

Since $K^H_{\beta} = \hat{\gamma} K^G_{\beta} \hat{\gamma}^{-1} \cap H(\mbb{A}_f)$, we obtain a finite unramified morphism
\[
\hat{\iota} \colon X_H(K^H_{\beta}) \to X_G(K^G_{\beta})
\]
induced from right-translation by $\hat{\gamma}$ (and the morphism of Shimura data between $H$ and $G$). For any $\beta \geq 1$, the morphism $\hat{\iota}$ is the pullback of the morphism for $\beta = 1$ with respect to the various forgetful maps; more precisely, one has a Cartesian diagram:
\[
\begin{tikzcd}
X_{H}(K^H_{\beta+1}) \arrow[d] \arrow[r, "\hat{\iota}"] & X_{G}(K^G_{\beta+1}) \arrow[d] \\
X_{H}(K^H_{\beta}) \arrow[r, "\hat{\iota}"]             & X_{G}(K^G_{\beta})            
\end{tikzcd}
\]
for any $\beta \geq 1$ (since both vertical maps have degree $p^4 = [K^H_{\diamondsuit}(p^{\beta}) : K^H_{\diamondsuit}(p^{\beta+1})] = [K^G_{\opn{Iw}}(p^{\beta}) : K^G_{\opn{Iw}}(p^{\beta+1})]$). One can easily check that one has a reduction of structure $\hat{\iota}^{-1}P_{G, \opn{dR}} = P_{H, \opn{dR}} \times^{[\overline{P}_H, \gamma]} \overline{P}_{G}$ where the superscript denotes the pushout along the map $\overline{P}_{H} \hookrightarrow \overline{P}_{G}$, $h \mapsto \gamma^{-1} h \gamma$ induced from conjugation by $\gamma^{-1}$ (recall that $\gamma \in M_G$).

Similarly, since $K^{H_i}_{\beta} = \opn{pr}_i(K^H_{\beta})$, we obtain proper morphisms
\[
\opn{pr}_i \colon X_{H}(K^H_{\beta}) \twoheadrightarrow X_{\opn{GL}_2}(K^{H_i}_{\beta})
\]
for $i=1,2$. One has a reduction of structure $\opn{pr}_i^{-1} P_{\opn{GL}_2, \opn{dR}} = P_{\opn{dR}, H} \times^{\overline{P}_H} \overline{P}_{\opn{GL}_2}$ where the pushout is along the projection to the $i$-th factor.

\subsection{Coherent cohomology}

We now introduce some coherent cohomology complexes.

\begin{definition} \label{DefOfNearlyAlgCohComplexes}
    Suppose that we are in Situation \ref{situation:Weights}. Let $\mbb{D}_{\overline{P}_G}(\kappa_G^*)^{\opn{nearly}}$ and $\mbb{D}_{\overline{P}_H}(\kappa_H^*)^{\opn{nearly}}$ denote the linear duals of $\mbb{I}_{\overline{P}_G}(\kappa_G)^{\opn{nearly}}$ and $\mbb{I}_{\overline{P}_H}(\kappa_H)^{\opn{nearly}}$ respectively. We set:
    \begin{align*}
        R\Gamma( K^G_{\beta}, \kappa_G^*; \opn{cusp})^{\opn{nearly}} &\defeq R\Gamma(X_{G}(K^G_{\beta}), [\mbb{D}_{\overline{P}_G}(\kappa_G^*)^{\opn{nearly}}](-D_G) ) \\
        R\Gamma( K^H_{\beta}, \kappa_H^*; \opn{cusp})^{\opn{nearly}} &\defeq R\Gamma(X_{H}(K^H_{\beta}), [\mbb{D}_{\overline{P}_H}(\kappa_H^*)^{\opn{nearly}}](-D_H) ) \\
        R\Gamma( K^H_{\beta}, \kappa_H - 2\rho_H)^{\opn{nearly}} &\defeq R\Gamma(X_{H}(K^H_{\beta}), [\mbb{I}_{\overline{P}_H}(\kappa_H)^{\opn{nearly}} \otimes V_{M_H}(-2\rho_H)] ) \\
        R\Gamma( K^{H_i}_{\beta}, \zeta_{H_i})^{\opn{nearly}} &\defeq R\Gamma(X_{\opn{GL}_2}(K^{H_i}_{\beta}), [\mbb{I}_{\overline{P}_{\opn{GL}_2}}(\zeta_{H_i})^{\opn{nearly}}] ) .
    \end{align*}
    We also use the notation $R\Gamma( K^G_{\beta}, \kappa_G^*; \opn{cusp})$, $R\Gamma( K^H_{\beta}, \kappa_H^*; \opn{cusp})$, $ R\Gamma( K^H_{\beta}, \kappa_H - 2\rho_H)$, $R\Gamma( K^{H_i}_{\beta}, \zeta_{H_i})$ for the cohomology complexes defined in exactly the same way but replacing $\mbb{D}_{\overline{P}_G}(\kappa_G^*)^{\opn{nearly}}$, $\mbb{D}_{\overline{P}_H}(\kappa_H^*)^{\opn{nearly}}$, $\mbb{I}_{\overline{P}_H}(\kappa_H)^{\opn{nearly}}$, $\mbb{I}_{\overline{P}_{\opn{GL}_2}}(\zeta_{H_i})^{\opn{nearly}}$ with $V_{M_G}(\kappa_G)^*$, $V_{M_H}(\kappa_H)^*$, $V_{M_H}(\kappa_H)$, $V_{M_{\opn{GL}_2}}(\zeta_{H_i})$ respectively. 
\end{definition}

The cohomology groups of the complexes in Definition \ref{DefOfNearlyAlgCohComplexes} should be viewed as certain spaces of ``nearly holomorphic'' automorphic forms for the corresponding reductive groups with fixed weight and bounded degree/order (with a cuspidality condition in the first two cases). For example, set 
\[
\mathscr{N}_{H_i, \beta}^{\opn{nhol}} \defeq \opn{H}^0\left( X_{\opn{GL}_2}(K^{H_i}_{\beta}), \pi_*\mathcal{O}_{P_{\opn{GL}_2, \opn{dR}}} \right)
\]
which one should view of as the space of all nearly holomorphic modular forms of level $K^{H_i}_{\beta}$ without a fixed weight. This is naturally a $(\mathfrak{gl}_2, \overline{P}_{\opn{GL}_2})$-module with an ascending filtration 
\[
\opn{Fil}_i\mathscr{N}_{H_i, \beta}^{\opn{nhol}} \defeq \left\{ F \in \mathscr{N}_{H_i, \beta}^{\opn{nhol}} : \left( \begin{smallmatrix} 0 & 0 \\ 1 & 0 \end{smallmatrix} \right)^{i+1} \cdot F = 0 \right\}, \quad \quad i \geq 0  
\]
with $\opn{Fil}_0 \mathscr{N}_{H_i, \beta}^{\opn{nhol}}$ equal to the space of holomorphic modular forms of level $K^{H_i}_{\beta}$. The action of $\left( \begin{smallmatrix} 0 & 1 \\ 0 & 0 \end{smallmatrix} \right)$ is an algebraic interpretation of the Maass--Shimura differential operator and satisfies Griffiths transversality with respect to the above filtration. We then have identifications:
\[
\opn{H}^0(K^{H_i}_{\beta}, \zeta_{H_i})^{\opn{nearly}} = \opn{Hom}_{T_{\opn{GL}_2}}\left( -\zeta_{H_i}, \opn{Fil}_{t_i}\mathscr{N}^{\opn{nhol}}_{H_i, \beta} \right), \quad \quad \opn{H}^0(K^{H_i}_{\beta}, \zeta_{H_i}) = \opn{Hom}_{T_{\opn{GL}_2}}\left( -\zeta_{H_i}, \opn{Fil}_{0}\mathscr{N}^{\opn{nhol}}_{H_i, \beta} \right) .
\]
As explained in \cite[Remark 6.3.2]{DiffOps}, $\opn{H}^0(K^{H_i}_{\beta}, \zeta_{H_i})^{\opn{nearly}}$ (resp. $\opn{H}^0(K^{H_i}_{\beta}, \zeta_{H_i})$) is identified with the usual space of nearly holomorphic forms of weight $1+t_i$, level $K^{H_i}_{\beta}$ and degree $\leq t_i$ (resp. degree $0$) after restricting weights to $\opn{SL}_2$ and identifying the universal elliptic curve over $Y_{\opn{GL}_2}$ with its dual via the principal polarisation.

\subsubsection{} Suppose we are in Situation \ref{situation:Weights}. Then the reduction of structure for the groups $G$, $H$ in \S \ref{MapsBetweenShVarsSSec} and the map $\mbb{D}_{\overline{P}_G}(\kappa_G^*)^{\opn{nearly}} \to \mbb{D}_{\overline{P}_H}(\kappa_H^*)^{\opn{nearly}}$ induced from the dual of $\opn{br}^{\opn{nearly}}$ induces a pullback map 
\[
\hat{\iota}^* \colon R\Gamma( K^G_{\beta}, \kappa_G^*; \opn{cusp})^{\opn{nearly}} \to R\Gamma( K^H_{\beta}, \kappa_H^*; \opn{cusp})^{\opn{nearly}} .
\]
Similarly, the reduction of structure for the groups $H$, $H_i = \opn{GL}_2$ in \S \ref{MapsBetweenShVarsSSec} and the decomposition in Remark \ref{DecompositionOfNearlyIntoFactorsRem} induces an exterior cup product map 
\begin{align*}
    \opn{H}^0( K^{H_1}_{\beta}, \zeta_{H_1})^{\opn{nearly}} \otimes \opn{H}^0( K^{H_2}_{\beta}, \zeta_{H_2})^{\opn{nearly}} &\to \opn{H}^0( K^H_{\beta}, \kappa_H - 2\rho_H)^{\opn{nearly}} \\
    (\omega_1, \omega_2) &\mapsto \omega_1 \sqcup \omega_2 \defeq  \opn{pr}_1^*(\omega_1) \smile \opn{pr}_2^*(\omega_2) .
\end{align*}
Finally, note that for any $i=0, 1, 2$, we have Serre duality pairings
\[
\langle \cdot , \cdot \rangle \colon \opn{H}^i\left( K^H_{\beta}, \kappa_H^*; \opn{cusp} \right)^{\opn{nearly}} \times \opn{H}^{2-i}\left( K^H_{\beta}, \kappa_H - 2\rho_H \right)^{\opn{nearly}} \to \mbb{Q} .
\]
induced from the natural duality pairing $\mbb{D}_{\overline{P}_H}(\kappa_H^*)^{\opn{nearly}} \otimes \mbb{I}_{\overline{P}_H}(\kappa_H)^{\opn{nearly}} \to \mbb{Q}$ and the identification $[V_{M_H}(-2\rho_H)] \cong \Omega^2_{X_H(K^H_{\beta})}(\opn{log}D_H)$. We define the algebraic trilinear period as follows:

\begin{definition}
    Suppose we are in Situation \ref{situation:Weights} and let $L$ be a characteristic zero field. Then we let $\mathcal{P}^{\opn{alg}}_L$ denote the $L$-linear morphism
    \begin{align*}
        \mathcal{P}^{\opn{alg}}_L \colon \opn{H}^2( K^G_{\beta}, \kappa_G^*; \opn{cusp})_L^{\opn{nearly}} \otimes \opn{H}^0( K^{H_1}_{\beta}, \zeta_{H_1})_L^{\opn{nearly}} \otimes \opn{H}^0( K^{H_2}_{\beta}, \zeta_{H_2})_L^{\opn{nearly}} &\to L \\
        (\eta, \omega_1, \omega_2 ) &\mapsto \langle \hat{\iota}^* \eta, \omega_1 \sqcup \omega_2 \rangle .
    \end{align*}
\end{definition}

\subsection{The archimedean period} \label{TheArchimideanPeriodSSec}

In this subsection, we describe the relation between $\mathcal{P}^{\opn{alg}}_{\mbb{C}}$ and certain automorphic periods for automorphic forms of $G$ and $H$. We first describe the relation between the cohomology complexes in Definition \ref{DefOfNearlyAlgCohComplexes} and automorphic forms for the corresponding group. In this subsection, gothic letters will denote the corresponding complexified Lie algebras. 

Let $\mathscr{A}(G)$ denote the space of automorphic forms for $G$ and let $\mathscr{A}_0(G) \subset \mathscr{A}(G)$ denote the subspace of cuspforms. We use similar notation for $H$ and $\opn{GL}_2$. Let $K_{G, \infty} = \mbb{R}^{\times} \cdot U_2(\mbb{R}) \subset G(\mbb{R})$ (resp. $K_{H, \infty} = Z_H(\mbb{R}) \cdot (U_1(\mbb{R}) \times U_1(\mbb{R})) \subset H(\mbb{R})$, resp. $K_{\opn{GL}_2, \infty} = \mbb{R}^{\times} \cdot U_1(\mbb{R}) \subset \opn{GL}_2(\mbb{R})$) denote the standard maximal compact-mod-centre subgroups. For $\mathscr{G} \in \{ G, H, \opn{GL}_2 \}$, we set $K_{\mathscr{G}, \infty}^+ = K_{\mathscr{G}, \infty} \cap \mathscr{G}(\mbb{R})_+$, where $\mathscr{G}(\mbb{R})_+ \subset \mathscr{G}(\mbb{R})$ denotes the connected component of the identity in the $\mbb{R}$-analytic topology. Let $\mu_{\mathscr{G}} \colon \mbb{G}_m \to \mathscr{G}$ denote the standard choices of Hodge cocharacters given by
\[
\mu_G(z) = \left( \begin{smallmatrix} z & & & \\ & z & & \\ & & 1 & \\ & & & 1 \end{smallmatrix} \right), \quad \mu_H(z) = \left( \left( \begin{smallmatrix} z & \\ & 1 \end{smallmatrix} \right), \left( \begin{smallmatrix} z & \\ & 1 \end{smallmatrix} \right) \right), \quad \mu_{\opn{GL}_2}(z) = \left( \begin{smallmatrix} z & \\ & 1 \end{smallmatrix} \right) .
\]
We let $K_{\mathscr{G}, \infty}^+$ act on any algebraic representation of $\overline{P}_{\mathscr{G}}(\mbb{C})$ through the map $K_{\mathscr{G}, \infty}^+ \hookrightarrow \overline{P}_{\mathscr{G}}(\mbb{C})$ given by conjugation by an element in $\mathscr{G}(\mbb{C})$ which transforms the Hodge cocharacter associated with $K_{\mathscr{G}, \infty}^+$ into the standard Hodge cocharacter $\mu_{\mathscr{G}}$.

\begin{proposition}[Harris, Su] \label{HarrisSuProposition}
    There are identifications
    \begin{align}
        \opn{H}^2( K^G_{\beta}, \kappa_G^*; \opn{cusp})_{\mbb{C}}^{\opn{nearly}} &= \opn{H}^2_{(\overline{\mathfrak{p}}_G, K_{G,\infty}^+)}\left( \mathscr{A}_0(G)^{K^G_{\beta}} \otimes \mbb{D}_{\overline{P}_G}(\kappa_G^*)^{\opn{nearly}} \right) \label{LieAlgGEqn} \\
        \opn{H}^0( K^{H_i}_{\beta}, \zeta_{H_i})_{\mbb{C}}^{\opn{nearly}} &= \opn{H}^0_{(\overline{\mathfrak{p}}_{\opn{GL}_2}, K_{\opn{GL}_2, \infty}^+)}\left( \mathscr{A}(\opn{GL}_2)^{K^{H_i}_{\beta}} \otimes \mbb{I}_{\overline{P}_{\opn{GL}_2}}(\zeta_{H_i})^{\opn{nearly}} \right) \label{LieAlgHiEqn}
    \end{align}
    where the right-hand side denotes the relative Lie algebra cohomology.
\end{proposition}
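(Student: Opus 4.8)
The plan is to deduce both identifications from the general comparison between the coherent cohomology of automorphic vector bundles on toroidal compactifications of Shimura varieties and the relative Lie algebra cohomology of spaces of automorphic forms, due to Harris and (for the cuspidal/interior variant over toroidal compactifications) to Su; this is the same mechanism underlying the analogous statements in \cite{LPSZ}. The crucial observation is that, although $\mbb{D}_{\overline{P}_G}(\kappa_G^*)^{\opn{nearly}}$ and $\mbb{I}_{\overline{P}_{\opn{GL}_2}}(\zeta_{H_i})^{\opn{nearly}}$ arise from the ``nearly'' formalism, they are honest finite-dimensional algebraic representations of $\overline{P}_G$ and $\overline{P}_{\opn{GL}_2}$; hence $[\mbb{D}_{\overline{P}_G}(\kappa_G^*)^{\opn{nearly}}]$ and $[\mbb{I}_{\overline{P}_{\opn{GL}_2}}(\zeta_{H_i})^{\opn{nearly}}]$ are canonical extensions of genuine automorphic vector bundles on $X_G(K^G_\beta)$ and $X_{\opn{GL}_2}(K^{H_i}_\beta)$, the twist by $(-D_G)$ produces the subcanonical extension, and the abstract comparison theorem applies without modification. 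The proof is therefore a matter of matching conventions -- lower-triangular parabolics, the normalisation of the $K_\infty^+$-action via the Hodge cocharacter, neatness of $K^p$, and goodness of the cone decompositions, all already in force -- and of checking the hypotheses of the cited theorems.

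I would first dispatch the $\opn{GL}_2$ identification, which is essentially elementary because the cohomological degree is $0$ and there is no cuspidal twist. Over $\mbb{C}$, a global section of $[\mbb{I}_{\overline{P}_{\opn{GL}_2}}(\zeta_{H_i})^{\opn{nearly}}]$ is by definition a $\overline{P}_{\opn{GL}_2}$-equivariant function on $P_{\opn{GL}_2,\opn{dR}}$; pulling back along the complex uniformisation and identifying the Dolbeault complex of this bundle with the relative Chevalley--Eilenberg complex for the pair $(\overline{\mathfrak{p}}_{\opn{GL}_2}, K_{\opn{GL}_2,\infty}^+)$, such a function is the same datum as an element of $\mathscr{A}(\opn{GL}_2)^{K^{H_i}_\beta} \otimes \mbb{I}_{\overline{P}_{\opn{GL}_2}}(\zeta_{H_i})^{\opn{nearly}}$ annihilated by $\overline{\mathfrak{p}}_{\opn{GL}_2}$ and fixed by $K_{\opn{GL}_2,\infty}^+$, which is precisely $\opn{H}^0_{(\overline{\mathfrak{p}}_{\opn{GL}_2}, K_{\opn{GL}_2,\infty}^+)}$ of this module. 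Unwinding, this reproduces the description recalled above of $\opn{H}^0(K^{H_i}_\beta, \zeta_{H_i})^{\opn{nearly}}$ as $\opn{Hom}_{T_{\opn{GL}_2}}(-\zeta_{H_i}, \opn{Fil}_{t_i}\mathscr{N}^{\opn{nhol}}_{H_i,\beta})$, and the only point to check is that the canonical extension across the cusps corresponds to holomorphy at the cusps on the automorphic side, which is immediate from the Fourier expansion.

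The substantive case is $G$ in degree $2$, and here one genuinely invokes Su's theorem: the coherent cohomology of the subcanonical extension $[\mbb{D}_{\overline{P}_G}(\kappa_G^*)^{\opn{nearly}}](-D_G)$ is computed, $G(\mbb{A}_f)$-equivariantly, by $\opn{H}^{\bullet}_{(\overline{\mathfrak{p}}_G, K_{G,\infty}^+)}$ of the rapidly-decreasing (equivalently, in this cohomological weight, cuspidal) automorphic forms tensored with $\mbb{D}_{\overline{P}_G}(\kappa_G^*)^{\opn{nearly}}$. The mechanism is as before -- resolve the coherent sheaf by its Dolbeault complex, transport to the symmetric space, pass to $K$-finite vectors -- but two points require the Harris--Su theorem rather than a bare computation, and these are where I expect the only real work to lie: first, that the subcanonical (not canonical) extension is the correct boundary condition, so that degree-$2$ coherent cohomology detects the cuspidal/interior part and not the full space of automorphic forms, which rests on the vanishing of the relevant higher cohomology along the boundary strata of the toroidal compactification (where goodness of the cone decomposition enters); and second, that every degree-$2$ class is represented by a cusp form, i.e.\ the square-integrability argument for $\opn{GSp}_4$ in this degree and weight. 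Verifying these against the precise hypotheses of the theorem -- in particular that the weights of Situation \ref{situation:Weights} lie in the range where cuspidal and interior coherent cohomology coincide -- is the main obstacle; everything else is a dictionary translation. One concludes by noting that the isomorphisms so produced are Hecke-equivariant (at $p$ and away from $p$) and functorial in the tame level and in $\beta$, as needed for the pairings constructed later in the paper.
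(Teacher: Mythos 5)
Your proposal is correct and follows the same route as the paper, whose entire proof is a citation of the main result of \cite{Su19}; the key observation you make — that the ``nearly'' representations are honest finite-dimensional algebraic representations of the opposite parabolics, so the Harris--Su comparison applies verbatim to the associated automorphic vector bundles with their (sub)canonical extensions — is exactly what the citation relies on. Your expanded discussion of the degree-$0$ case for $\opn{GL}_2$ and of the boundary/cuspidality issues for $G$ is a faithful unwinding of the same argument rather than a different one.
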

\begin{proof}
    This is the main result of \cite{Su19}.
\end{proof}

The Lie algebra cohomology group in (\ref{LieAlgGEqn}) can be calculated via the Chevalley--Eilenberg complex whose $j$-th term is
\[
\opn{Hom}_{K_{G, \infty}^+}\left( \wedge^j \overline{\mathfrak{u}}_G \otimes \mbb{I}_{\overline{P}_G}(\kappa_G)^{\opn{nearly}}, \mathscr{A}_0(G)^{K^G_{\beta}} \right) .
\]
We have a similar description for the Lie algebra cohomology group in (\ref{LieAlgHiEqn}). In particular, if $\eta \in \opn{H}^2( K^G_{\beta}, \kappa_G^*; \opn{cusp})_{\mbb{C}}^{\opn{nearly}}$, then we have an associated $K_{G, \infty}^+$-equivariant morphism
\[
\phi_{\eta} \colon \wedge^2 \overline{\ide{u}}_G \otimes \mbb{I}_{\overline{P}_G}(\kappa_G)^{\opn{nearly}} \to \mathscr{A}_0(G)^{K^G_{\beta}} .
\]
Similarly, if $\omega_i \in \opn{H}^0( K^{H_i}_{\beta}, \zeta_{H_i})_{\mbb{C}}^{\opn{nearly}}$, then we have an associated $K_{\opn{GL}_2, \infty}^+$-equivariant morphism:
\[
F_{\omega_i} \colon \mbb{D}_{\overline{P}_{\opn{GL}_2}}(\zeta_{H_i}^*)^{\opn{nearly}} \to \mathscr{A}(\opn{GL}_2)^{K^{H_i}_{\beta}} .
\]
Let $[H] = \mbb{R}_{>0}^{\times} H(\mbb{Q}) \backslash H(\mbb{A})$, where $\mbb{R}_{>0}^{\times} \hookrightarrow Z_H(\mbb{R})$ is embedded diagonally, and let $dh$ denote the Tamagawa measure on $[H]$. Let $\alpha_i \in \overline{\ide{u}}_{H_i}$ denote a choice of basis element defined over $\mbb{Q}$, and set $\alpha_{1, 2} = \alpha_1 \otimes \alpha_2 \in \wedge^2 \overline{\ide{u}}_H = \overline{\ide{u}}_{H_1} \otimes \overline{\ide{u}}_{H_2}$. The natural map $\wedge^2\overline{\ide{u}}_H \hookrightarrow \wedge^2 \overline{\ide{u}}_G$ sends $\alpha_{1, 2}$ to $\alpha_1 \wedge \alpha_2$.

\begin{proposition} \label{PAlgAutPeriodProposition}
    With notation as above and as in \S \ref{SplttingsOfMGReps}: 
    \begin{itemize}
    \item set $\phi \defeq \hat{\gamma} \cdot \phi_{\eta}((\alpha_1 \wedge \alpha_2) \otimes \jmath_{M_G}(w_{\kappa_H}))$, where $\hat{\gamma} \in G(\mbb{Q}_p)$ acts through the natural map $G(\mbb{Q}_p) \hookrightarrow G(\mbb{A})$;
    \item for $i=1, 2$, let $F_i \defeq F_{\omega_i}(\upsilon_{\zeta_{H_i}}^*)$.
    \end{itemize}
    Suppose that the morphism $\phi_{\eta}$ factors through the morphism $\opn{id} \otimes \opn{spl}_{M_G} \colon \wedge^2 \overline{\ide{u}}_G \otimes \mbb{I}_{\overline{P}_G}(\kappa_G)^{\opn{nearly}} \to \wedge^2 \overline{\ide{u}}_G \otimes V_{M_G}(\kappa_G)$. Then, up to a non-zero rational constant, we have
    \begin{equation} \label{PalgEqualsAutPeriodEqn}
    \mathcal{P}^{\opn{alg}}_{\mbb{C}}(\eta, \omega_1, \omega_2) = (2 \pi i )^{-2} \opn{Vol}(\mathcal{K}_{\beta}; dh)^{-1} \int_{[H]} \phi(h) F_1(h_1) F_2(h_2) dh 
    \end{equation}
    where $\mathcal{K}_{\beta}$ denotes the image of $K^+_{H, \infty}K^H_{\beta}$ in $[H]$.
\end{proposition}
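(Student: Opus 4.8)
The plan is to translate everything into relative Lie algebra cohomology via Proposition \ref{HarrisSuProposition} and its evident $H$-analogue (obtained from the $\opn{GL}_2$-case by Künneth, using that $X_H$ is essentially the fibre product $X_{\opn{GL}_2}\times_{X_{\mbb{G}_m}}X_{\opn{GL}_2}$), and then to realise the ``pullback--cup--Serre-duality'' recipe defining $\mathcal{P}^{\opn{alg}}_{\mbb{C}}$ as the explicit integral over $[H]$. This follows the strategy of \cite{LPSZ} and \cite{LZBK21}. Under the identifications of Proposition \ref{HarrisSuProposition}, the class $\eta$ becomes the Chevalley--Eilenberg cochain $\phi_\eta\colon\wedge^2\overline{\ide{u}}_G\otimes\mbb{I}_{\overline{P}_G}(\kappa_G)^{\opn{nearly}}\to\mathscr{A}_0(G)^{K^G_\beta}$ and each $\omega_i$ becomes $F_{\omega_i}$; the goal is to evaluate the Serre pairing on these data.

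First I would unwind the two geometric operations. For the pullback: the reduction of structure $\hat\iota^{-1}P_{G,\opn{dR}}=P_{H,\opn{dR}}\times^{[\overline{P}_H,\gamma]}\overline{P}_G$ from \S\ref{MapsBetweenShVarsSSec} identifies $\hat\iota^*[\mbb{D}_{\overline{P}_G}(\kappa_G^*)^{\opn{nearly}}]$ with $[\mbb{D}_{\overline{P}_H}(\kappa_H^*)^{\opn{nearly}}]$ via the dual of $\opn{br}^{\opn{nearly}}$, and since $\hat\iota$ is right translation by $\hat\gamma=\gamma w_1$, on the automorphic side the cochain $\hat\iota^*\eta$ is $\phi_\eta(\hat\gamma\,\cdot\,)$ precomposed with $\opn{br}^{\opn{nearly}}$ and with the inclusion $\wedge^2\overline{\ide{u}}_H\hookrightarrow\wedge^2\overline{\ide{u}}_G$, $\alpha_{1,2}\mapsto\alpha_1\wedge\alpha_2$. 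For the cup product: Remark \ref{DecompositionOfNearlyIntoFactorsRem} identifies $\mbb{I}_{\overline{P}_H}(\kappa_H)^{\opn{nearly}}\otimes V_{M_H}(-2\rho_H)$ with $\mbb{I}_{\overline{P}_{\opn{GL}_2}}(\zeta_{H_1})^{\opn{nearly}}\boxtimes\mbb{I}_{\overline{P}_{\opn{GL}_2}}(\zeta_{H_2})^{\opn{nearly}}$, so the exterior cup product of the degree-$0$ classes $\omega_i$ corresponds to the degree-$0$ class on $X_H$ whose Chevalley--Eilenberg vector is $F_{\omega_1}\otimes F_{\omega_2}$ evaluated on $\upsilon^*_{\zeta_{H_1}}\otimes\upsilon^*_{\zeta_{H_2}}$, transported into $\mbb{D}_{\overline{P}_H}(\kappa_H^*)^{\opn{nearly}}\otimes[V_{M_H}(-2\rho_H)]$.

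Next I would identify the coherent Serre-duality trace on the surface $X_H$ with a Poincaré-duality pairing on relative $(\overline{\ide{p}}_H,K^+_{H,\infty})$-cohomology: the cup product of the degree-$2$ cuspidal class with the degree-$0$ class lands in $\opn{H}^2_{(\overline{\ide{p}}_H,K^+_{H,\infty})}\bigl(\mathscr{A}_0(H)^{K^H_\beta}\otimes\mathscr{A}(H)^{K^H_\beta}\otimes[V_{M_H}(-2\rho_H)]\bigr)$, and since $\overline{\ide{u}}_H$ is $2$-dimensional with $\wedge^2\overline{\ide{u}}_H=\mbb{Q}\cdot\alpha_{1,2}$ and $[V_{M_H}(-2\rho_H)]\cong\Omega^2_{X_H}(\log D_H)$, evaluating the top cochain on $\alpha_{1,2}$ produces a cusp form times an automorphic form on $[H]$ which one integrates against $dh$. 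Matching the coherent trace map factor-by-factor (each copy of $X_{\opn{GL}_2}$) with this integral is the source of both scaling constants: the de Rham/Betti comparison underlying $[V_{M_H}(-2\rho_H)]\cong\Omega^2_{X_H}(\log D_H)$ contributes $(2\pi i)^{-2}$, and passing from the arithmetic quotient computing sheaf cohomology to the adelic quotient $[H]$ contributes $\opn{Vol}(\mathcal{K}_\beta;dh)^{-1}$. Assembling, $\langle\hat\iota^*\eta,\omega_1\sqcup\omega_2\rangle$ equals $(2\pi i)^{-2}\opn{Vol}(\mathcal{K}_\beta;dh)^{-1}\int_{[H]}\bigl\langle\phi_\eta(\hat\gamma h)\bigl((\alpha_1\wedge\alpha_2)\otimes\opn{br}^{\opn{nearly}}(-)\bigr),\,F_{\omega_1}\otimes F_{\omega_2}\bigr\rangle\,dh$, the inner pairing being the duality $\mbb{I}_{\overline{P}_H}(\kappa_H)^{\opn{nearly}}\otimes\mbb{D}_{\overline{P}_H}(\kappa_H^*)^{\opn{nearly}}\to\mbb{Q}$; by the commutative diagram \eqref{brNearlyCommWithAlgDiagram}, the vector $\upsilon^*_{\zeta_{H_1}}\otimes\upsilon^*_{\zeta_{H_2}}$ pulled through $(\opn{br}^{\opn{nearly}})^\vee$ detects $\mbb{I}_{\overline{P}_H}(\kappa_H)^{\opn{nearly}}$ only through its image in $V_{M_H}(\kappa_H)$, which via $\opn{br}_M$ is pinned down by $w_{\kappa_H}=\gamma\cdot\opn{br}_M(\kappa_H)$. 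The hypothesis that $\phi_\eta$ factors through $\opn{id}\otimes\opn{spl}_{M_G}$, together with $\opn{spl}_{M_G}\circ\jmath_{M_G}=\opn{id}$ from \S\ref{SplttingsOfMGReps}, then lets one replace the branched argument by $\jmath_{M_G}(w_{\kappa_H})\in\mbb{I}_{\overline{P}_G}(\kappa_G)^{\opn{nearly}}$ and pull the computation back to $G$, so the integrand becomes $\phi(h)F_1(h_1)F_2(h_2)$ with $\phi$ and $F_i$ as in the statement; collecting the rational scalars attached to the normalisations of $\opn{br}$, $\opn{br}'$, the highest-weight vectors $w^{\opn{hw}},v^{\opn{hw}}$, the splittings $\upsilon^*_{\zeta_{H_i}}$ and the bases $\alpha_i$ yields \eqref{PalgEqualsAutPeriodEqn} up to a non-zero rational constant.

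The main obstacle is the middle step: pinning down the precise constant relating the coherent Serre-duality trace on $X_H$ to integration over $[H]$ --- i.e. verifying that the comparison $[V_{M_H}(-2\rho_H)]\cong\Omega^2_{X_H}(\log D_H)$ contributes exactly $(2\pi i)^{-2}$ and that the adelic-to-arithmetic change of quotient contributes exactly $\opn{Vol}(\mathcal{K}_\beta;dh)^{-1}$, and checking that the restriction to $H(\mbb{A})$ of the cusp form $\phi$ is rapidly decreasing on $[H]$ (so the integral converges and genuinely computes the trace), which uses the cuspidality of $\phi_\eta$ on $G$ and the compatibility of central characters forced by the weight relations in Situation \ref{situation:Weights}. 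The branching bookkeeping in the first and last steps is lengthy but routine given Lemma \ref{explicitBranchingUnipLemma} and the diagram \eqref{brNearlyCommWithAlgDiagram}.
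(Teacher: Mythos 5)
Your proposal is correct and follows essentially the same route as the paper: unwind $\hat{\iota}^*\eta$ and $\omega_1\sqcup\omega_2$ as Chevalley--Eilenberg cochains via Proposition \ref{HarrisSuProposition}, use the factorisation hypothesis on $\phi_\eta$ (equivalently, the basis $\{e'\}\cup\{e_j\}$ adapted to the splitting $s'$ of $V_{M_H}(\kappa_H)\hookrightarrow \mbb{I}_{\overline{P}_H}(\kappa_H)^{\opn{nearly}}$) to reduce the integrand to $\phi(h)F_1(h_1)F_2(h_2)$, and then identify the Serre-duality trace on $X_H$ with integration of the resulting $K^+_{H,\infty}$-invariant volume form over $[H]$. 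The one step you flag as the main obstacle --- the precise comparison of the coherent trace with the adelic integral, producing $(2\pi i)^{-2}\opn{Vol}(\mathcal{K}_\beta;dh)^{-1}$ --- is exactly what the paper delegates to \cite[Proposition 3.8]{HarrisPartial}, so nothing essential is missing.
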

\begin{proof}
    The pullback $\hat{\iota}^*(\eta)$ is represented by the $K_{H, \infty}^+$-equivariant homomorphism:
    \begin{align*}
        \hat{\iota}^*\phi_{\eta} \colon \wedge^2 \overline{\ide{u}}_H \otimes \mbb{I}_{\overline{P}_{H}}(\kappa_H)^{\opn{nearly}} &\to \mathscr{A}_0(H)^{K^H_{\beta}} \\
        \alpha_{1, 2} \otimes x &\mapsto \left.\left[ \hat{\gamma} \cdot \phi_{\eta}((\alpha_1 \wedge \alpha_2) \otimes \gamma \cdot \opn{br}^{\opn{nearly}}(x) ) \right] \right|_{H(\mbb{A})} .
    \end{align*}
    Similarly, the exterior cup product $\omega_1 \sqcup \omega_2$ is represented by the $K_{H, \infty}^+$-equivariant homomorphism:
    \begin{align*}
        F_{\omega_1 \sqcup \omega_2} \colon \mbb{D}_{\overline{P}_H}(\kappa_H^*)^{\opn{nearly}} \otimes V_{M_H}(-2\rho_H)^* = \mbb{D}_{\overline{P}_{\opn{GL}_2}}(\zeta_{H_1}^*)^{\opn{nearly}} \boxtimes \mbb{D}_{\overline{P}_{\opn{GL}_2}}(\zeta_{H_2}^*)^{\opn{nearly}} \to \mathscr{A}(H)^{K^H_{\beta}} \\
        y \boxtimes z \mapsto \left( (h_1, h_2) \mapsto F_{\omega_1}(y)(h_1)F_{\omega_2}(z)(h_2) \right) .
    \end{align*}
    Consider the $M_{H}$-equivariant morphism
    \[
    \lambda \colon \mbb{I}_{\overline{P}_{H}}(\kappa_H)^{\opn{nearly}} \xrightarrow{\gamma \cdot \opn{br}^{\opn{nearly}}} \mbb{I}_{\overline{P}_{G}}(\kappa_G)^{\opn{nearly}} \xrightarrow{\opn{spl}_{M_G}} V_{M_G}(\kappa_G) .
    \]
    This necessarily factors as
    \[
    \mbb{I}_{\overline{P}_{H}}(\kappa_H)^{\opn{nearly}} \xrightarrow{s'} V_{M_H}(\kappa_H) \xrightarrow{\opn{br}_M} V_{M_G}(\kappa_G)
    \]
    and $s'$ is the unique splitting of the natural map $V_{M_H}(\kappa_H) \hookrightarrow \mbb{I}_{\overline{P}_{H}}(\kappa_H)^{\opn{nearly}}$. Fix a basis of $\mbb{I}_{\overline{P}_{H}}(\kappa_H)^{\opn{nearly}}$ given by $\{ e'=\kappa_H \} \cup \{ e_j : j \in J \}$, where $\kappa_H \in V_{M_H}(\kappa_H)$ is the canonical character, and $\{ e_j \}$ is a basis of the kernel of $s'$. Let $\{ (e')^* \} \cup \{ e_j^* : j \in J \}$ denote the dual basis. Then the cup product $\hat{\iota}^*(\eta) \cup (\omega_1 \sqcup \omega_2 )$ is represented by the $K_{H, \infty}^+$-equivariant morphism
    \[
    \Xi \colon \wedge^2 \overline{\ide{u}}_H \otimes V_{M_H}(-2\rho_H)^* \to \mathscr{A}_0(H)^{K^H_{\beta}}
    \]
    given by 
    \[
    \Xi( \alpha_{1, 2} \otimes x) = (\hat{\iota}^*\phi_{\eta})(\alpha_{1, 2} \otimes e') F_{\omega_1 \sqcup \omega_2}((e')^* \otimes x) + \sum_{j \in J} (\hat{\iota}^*\phi_{\eta})(\alpha_{1, 2} \otimes e_j) F_{\omega_1 \sqcup \omega_2}(e_j^* \otimes x) .
    \]
    By the assumption that $\phi_{\eta}$ factors through $\opn{id} \otimes \opn{spl}_{M_G}$, we see that for $x$ equal to the vector dual to the canonical basis of $V_{M_H}(-2\rho_H)$
    \[
    \Xi(\alpha_{1, 2} \otimes x) = (\hat{\iota}^*\phi_{\eta})(\alpha_{1, 2} \otimes e') F_{\omega_1 \sqcup \omega_2}((e')^* \otimes x) = \left( (h_1, h_2) \mapsto \phi(h) F_1(h_1) F_2(h_2) \right)
    \]
    where we have used the fact that $(e')^* \otimes x = \upsilon_{\zeta_{H_1}}^* \boxtimes \upsilon_{\zeta_{H_2}}^*$. We can identify $(\wedge^2 \overline{\ide{u}}_H)^* \otimes V_{M_H}(-2\rho_H)$ with the top exterior power of the sheaf of (real-analytic) differentials on $X_H(K^H_{\beta})$, and the cup product $\hat{\iota}^*(\eta) \cup (\omega_1 \sqcup \omega_2 )$ is therefore identified (up to non-zero rational scalar) with a $K_{H, \infty}^+$-invariant volume form 
    \[
    (h_1, h_2) \mapsto \phi(h) F_1(h_1) F_2(h_2) \cdot \alpha_{1, 2}^* \otimes x^* .
    \]
    The result now follows from \cite[Proposition 3.8]{HarrisPartial}.
\end{proof}

\begin{remark}
    The non-zero rational constant in Proposition \ref{PAlgAutPeriodProposition} is independent of the input data $(\eta, \omega_1, \omega_2)$ as well as the level subgroup $K^H_{\beta}$. Therefore, we can (and do) choose $\alpha_i$ such that the equality (\ref{PalgEqualsAutPeriodEqn}) actually holds (i.e., not just up to rational scalar) for any choice of input data and level subgroup.
\end{remark}

\section{Adic Shimura varieties}

In this section, we construct a $p$-adic version of the trilinear pairing $\mathcal{P}^{\opn{alg}}$ by studying the coherent cohomology of the adic Shimura varieties associated with the groups $G$, $H$, $\opn{GL}_2$. In this section, we work over $\mbb{Q}_p$ but will often omit this from the notation.

\subsection{Geometry}

For any $\mathscr{G} \in \{ G, H, \opn{GL}_2 \}$ and sufficiently small compact open subgroup $K \subset \mathscr{G}(\mbb{A}_f)$, we let $\mathcal{Y}_{\mathscr{G}}(K)$ (resp. $\mathcal{X}_{\mathscr{G}}(K)$) denote the adic space over $\mbb{Q}_p$ associated with $Y_{\mathscr{G}}(K)_{\mbb{Q}_p}$ (resp. $X_{\mathscr{G}}(K)_{\mbb{Q}_p}$) via rigid analytification. We will define certain subspaces of these adic Shimura varieties via the Hodge--Tate period map.

\subsubsection{} 

Suppose that $K=K^pK_p \subset \mathscr{G}(\mbb{A}_f)$ is a sufficiently small compact open subgroup. As explained in \cite[\S 4.4.10]{BoxerPilloni}, one has a (truncated) Hodge--Tate period map:
\[
\pi_{\opn{HT}, \mathscr{G}, K_p} \defeq \pi_{\opn{HT}, K_p} \colon \mathcal{X}_{\mathscr{G}}(K) \to \mathtt{FL}^{\mathscr{G}} / K_p
\]
where $\mathtt{FL}^{\mathscr{G}} = P_{\mathscr{G}}^{\opn{an}} \backslash {\mathscr{G}}^{\opn{an}}$ denotes the adic space over $\mbb{Q}_p$ associated with the (partial) flag variety $\opn{FL}_{\mathscr{G}} \defeq P_{\mathscr{G}} \backslash \mathscr{G}$ (here the rational polyhedral cone decomposition is chosen to be independent of $K_p$ so that $\varprojlim_{K_p' \subset K_p} \mathcal{X}_{\mathscr{G}}(K^pK_p')$ exists as a perfectoid space). The morphism $\pi_{\opn{HT}, \mathscr{G}, K_p}$ is functorial in the level $K_p$ and Shimura datum.

\subsubsection{} \label{FlagVarietyStrataSSec}

Let ${^M W_{\mathscr{G}}}$ denote the set of minimal length representatives for $W_{M_{\mathscr{G}}} \backslash W_{\mathscr{G}}$. Then one has a Bruhat decomposition:
\[
\opn{FL}_{\mathscr{G}, \mbb{F}_p} = \bigcup_{w \in {^M W_{\mathscr{G}}}} P_{\mathscr{G}} \backslash \left( P_{\mathscr{G}} \cdot w \cdot B_{\mathscr{G}} \right) = \bigcup_{w \in {^M W_{\mathscr{G}}}} C_w^{\mathscr{G}} .
\]
Let $X^{\mathscr{G}}_w \defeq \cup_{w' \leq w} C_{w'}^{\mathscr{G}}$ and $Y^{\mathscr{G}}_w \defeq \cup_{w' \geq w} C_{w'}^{\mathscr{G}}$ where $\leq$ denotes the Bruhat ordering on ${^M W_{\mathscr{G}}}$. For any locally closed subset $Z \subset \opn{FL}_{\mathscr{G}, \mbb{F}_p}$, we let ${]Z[} \subset \mathtt{FL}^{\mathscr{G}}$ denote the tube of $Z$ in the adic flag variety, i.e., ${]Z[}$ is defined to be the interior of the preimage of $Z$ under the specialisation map $\opn{sp} \colon \mathtt{FL}^{\mathscr{G}} \to \opn{FL}_{\mathscr{G}, \mbb{F}_p}$. For any $m, n \in \mbb{Q} \cup \{ -\infty \}$, we let ${]C^{\mathscr{G}}_w[_{m, n}}$, $]C^{\mathscr{G}}_w[_{\overline{m}, n}$, etc. denote the (partially compactified) tubes of varying radii introduced in \cite[\S 3.3.6]{BoxerPilloni}.

\begin{definition} \label{DefOfOrdinaryStrata}
    Let $\beta \geq 1$ be an integer.
    \begin{enumerate}
        \item For $K_p = K^G_{\opn{Iw}}(p^{\beta})$, let $\mathcal{X}_{G, w_1, \beta} \subset \mathcal{X}_G(K^G_{\beta})$ denote the quasi-compact open subspace given by the interior of 
        \[
        \bigcap_{k \geq 1} \pi_{\opn{HT}, K_p}^{-1}\left( ]C^G_{w_1}[_{k, k} K_p \right) .
        \]
        We also let $\mathcal{Z}_{G, <1, \beta} \subset \mathcal{X}_G(K^G_{\beta})$ denote the closed subset given by the closure of $\pi_{\opn{HT}, K_p}^{-1}\left( P_{G}^{\opn{an}} \mathcal{K} K_p \right)$ where $\mathcal{K} \subset G^{\opn{an}}$ denotes the analytic upper-triangular Klingen parabolic subgroup, i.e., for $S=\opn{Spa}(R, R^+)$ over $\opn{Spa}(\mbb{Q}_p, \mbb{Z}_p)$, the group $\mathcal{K}(S)$ is equal to all elements in $G(R^+)$ of the form $\left( \begin{smallmatrix} * & * & * & * \\ & * & * & * \\ & * & * & * \\ & & & * \end{smallmatrix} \right)$. Note that $\mathcal{X}_{G, w_1, \beta} \subset \mathcal{Z}_{G, <1, \beta}$.
        \item For $K_p = K^H_{\diamondsuit}(p^{\beta})$ (resp. $K_p = K^{\opn{GL}_2}_{\opn{Iw}}(p^{\beta})$), let $\mathcal{X}_{H, \opn{id}, \beta} \subset \mathcal{X}_H(K^H_{\beta})$ (resp. $\mathcal{X}_{H_i, \opn{id}, \beta} \subset \mathcal{X}_{\opn{GL}_2}(K^{H_i}_{\beta})$) denote the quasi-compact open subspace given by the interior of 
        \[
        \bigcap_{k \geq 1} \pi_{\opn{HT}, K_p}^{-1}\left( ]C^H_{\opn{id}}[_{k, k} K_p \right) \quad \quad \left( \text{ resp. } \bigcap_{k \geq 1} \pi_{\opn{HT}, K_p}^{-1}\left( ]C^{\opn{GL}_2}_{\opn{id}}[_{k, k} K_p \right) \right) .
        \]
        Let $\mathcal{Z}_{H, \opn{id}, \beta} \subset \mathcal{X}_{H}(K^H_{\beta})$ (resp. $\mathcal{Z}_{H_i, \opn{id}, \beta} \subset \mathcal{X}_{\opn{GL}_2}(K^{H_i}_{\beta})$) denote the closure of $\mathcal{X}_{H, \opn{id}, \beta}$ (resp. $\mathcal{X}_{H_i, \opn{id}, \beta}$).
    \end{enumerate}
\end{definition}

\begin{remark}
    The subspaces in Definition \ref{DefOfOrdinaryStrata} have more tractable descriptions away from the boundary and over the good reduction locus. More precisely (over the good reduction locus and away from the boundary) the subspace $\mathcal{X}_{G, w_1, \beta}$ parameterises abelian surfaces $A$ (with a polarisation and prime-to-$p$ level structure) with good ordinary reduction and a filtration
    \[
    0 \subset H_1 \subset H_2 = H_2^{\perp} \subset H_1^{\perp} \subset A[p]
    \]
    by cyclic finite flat subgroup schemes (with $\opn{rk}H_i = p^{i\beta}$) such that $H_1$ (resp. $H_2/H_1$) extends to a multiplicative (resp. \'{e}tale) group scheme over the integral model. The subspace $\mathcal{Z}_{G, <1, \beta}$ is then the closure of the locus parameterising abelian surfaces $A$ (with polarisation and prime-to-$p$ level structure) and $0 \subset H_1 \subset H_2$ as above, such that $H_1$ extends to a multiplicative group scheme. 

    Similarly, $\mathcal{X}_{H_i, \opn{id}, \beta}$ is the canonical ordinary locus inside the modular curve, i.e., over the good reduction locus and away from the boundary it parameterises elliptic curves $E$ with a cyclic rank $p^{\beta}$ subgroup scheme $C_i \subset E[p^{\beta}]$ such that $C_i$ extends to a multiplicative group scheme over the integral model of $E$. The subspace $\mathcal{X}_{H, \opn{id}, \beta}$ is the finite \'{e}tale cover of $\mathcal{X}_{H_1, \opn{id}, \beta} \times \mathcal{X}_{H_2, \opn{id}, \beta}$ parameterising isomorphisms $C_1 \cong C_2$.
\end{remark}

Via an explicit computation on the level of flag varieties (c.f., the proof of \cite[Proposition 5.2.1]{LZBK21}), one has $\hat{\iota}(\mathcal{X}_{H, \opn{id}, \beta}) \subset \mathcal{X}_{G, w_1, \beta}$ and $\hat{\iota}^{-1}(\mathcal{Z}_{G, <1, \beta}) = \mathcal{Z}_{H, \opn{id}, \beta}$. Similarly one clearly has $\opn{pr}_i(\mathcal{X}_{H, \opn{id}, \beta}) \subset \mathcal{X}_{H_i, \opn{id}, \beta}$ for any $i=1, 2$.

\subsection{Torsors}

The goal of this section is to obtain certain reductions of structure of $P_{\mathscr{G},\opn{dR}}$ which will allow one to construct Banach sheaves locally modelled on the representations in \S \ref{padicRepTheorySubSec}. To do this, we first have to introduce certain partially compactified Igusa varieties over the components of the ordinary locus introduced in the previous section.

\subsubsection{} 

We now review the construction of the partially compactified Igusa varieties in \cite[\S 3.4]{HHTBoxerPilloni} for the group $G$. We fix a good $K^pG(\mbb{Z}_p)$-admissible cone decomposition underlying the toroidal compactifications we consider in this section. Note that we can choose the cone decomposition in such a way that it is good for any finite index subgroup of $K^p G(\mbb{Z}_p)$.\footnote{See \cite[\S 2.5]{CSnoncompact} for example.} Let $\mathfrak{X}_{G}(K^pG(\mbb{Z}_p))$ denote the formal completion of the toroidal compactification $X_{G}(K^pG(\mbb{Z}_p))/\mbb{Z}_p$ (as constructed in \cite{LanArithmetic}) along the special fibre, and let $\mathfrak{X}_{G}(K^pG(\mbb{Z}_p))^{\opn{ord}} \subset \mathfrak{X}_{G}(K^pG(\mbb{Z}_p))$ denote the ordinary locus. Let $\mathfrak{Y}_G(K^pG(\mbb{Z}_p)) \subset \mathfrak{X}_G(K^pG(\mbb{Z}_p))$ denote the formal completion of the open Shimura variety, and $\mathfrak{Y}_G(K^pG(\mbb{Z}_p))^{\opn{ord}} \subset \mathfrak{Y}_G(K^pG(\mbb{Z}_p))$ the ordinary locus.

Consider the ``standard ordinary'' $p$-divisible group $\mbb{X}_{\opn{ord}} \defeq \mu_{p^{\infty}}^{\oplus 2} \oplus \left( \mbb{Q}_p/ \mbb{Z}_p \right)^{\oplus 2}$ over $\opn{Spf}\mbb{Z}_p$, which we equip with the symplectic pairing $\langle \cdot, \cdot \rangle \colon \mbb{X}_{\opn{ord}} \times \mbb{X}_{\opn{ord}} \to \mu_{p^{\infty}}$ defined by the matrix $J$ (see \S \ref{AlgGroupsSubSec}). We let $J_{\opn{ord}}^+$ denote the fpqc group scheme over $\opn{Spf}\mbb{Z}_p$ which on $\mbb{Z}_p$-algebras $R$ for which $p$ is nilpotent is described as:
\[
J_{\opn{ord}}^+(R) \defeq \opn{Aut}_R\left( (\mbb{X}_{\opn{ord}, R}, \langle \cdot, \cdot \rangle) \right)
\]
i.e., automorphisms of $\mbb{X}_{\opn{ord}, R}$ preserving the symplectic pairing \emph{up to similitude} in $\mbb{Z}_p^{\times}$.\footnote{There is a minor clash of notation between the matrix $J$ and the group $J_{\opn{ord}}^+$, however we will not mention the former again in this section.} This can be described as the semi-direct product $J_{\opn{ord}}^+ = U(J_{\opn{ord}}^+) \rtimes M(J_{\opn{ord}}^+)$, where $M(J_{\opn{ord}}^+)$ is the pro-\'{e}tale group scheme given by $\underline{M_G(\mbb{Z}_p)}$ and the unipotent part $U(J_{\opn{ord}}^+)$ is isomorphic to the abelian group scheme of $2 \times 2$ matrices $X = \left( \begin{smallmatrix} x_{11} & x_{12} \\ x_{21} & x_{22} \end{smallmatrix} \right)$ with coefficients in $T_p\mu_{p^{\infty}}$ satisfying $x_{11} = x_{22}$.

\begin{definition}
    Let $\mathfrak{IG}_G \to \mathfrak{Y}_G(K^pG(\mbb{Z}_p))^{\opn{ord}}$ denote the fpqc $J_{\opn{ord}}^+$-torsor parameterising trivialisations $\mbb{X}_{\opn{ord}} \xrightarrow{\sim} A[p^{\infty}]$ respecting polarisations up to similitude, where $A$ denotes the universal ordinary abelian surface over $\mathfrak{Y}_G(K^pG(\mbb{Z}_p))^{\opn{ord}}$. 
    
    For any finite index fpqc subgroup scheme $J_p \subset J_{\opn{ord}}^+$, we set $U(J_p) \defeq J_p \cap U(J_{\opn{ord}}^+)$ and $M(J_p) = J_p/U(J_p) \hookrightarrow M(J_{\opn{ord}}^+)$, and we let $\mathfrak{IG}_G(K^p U(J_p))$ and $\mathfrak{IG}_G(K^p J_p)$ denote the categorical quotients of $\mathfrak{IG}_G$ by $U(J_p)$ and $J_p$ respectively. Note that the natural map
    \[
    \mathfrak{IG}_G(K^p U(J_p)) \to \mathfrak{IG}_G(K^p J_p)
    \]
    is a pro-\'{e}tale $M(J_p)$-torsor (see \cite[Proposition 3.4.9(5)]{HHTBoxerPilloni}). We let $\mathcal{IG}_G(K^p U(J_p))$ and $\mathcal{IG}_G(K^p J_p)$ denote the adic generic fibres of $\mathfrak{IG}_G(K^p U(J_p))$ and $\mathfrak{IG}_G(K^p J_p)$ respecively.
\end{definition}

Note that $\mathfrak{IG}_G(K^p U(J_{\opn{ord}}^+)) \to \mathfrak{IG}_G(K^pJ_{\opn{ord}}^+) = \mathfrak{Y}_G(K^pG(\mbb{Z}_p))^{\opn{ord}}$ is the usual Mantovan Igusa variety parameterising trivialisations of the connected and \'{e}tale parts of the $p$-divisible group $A[p^{\infty}]$ respecting the polarisation up to similitude. This moduli problem is equivalent to trivialising the connected part $A[p^{\infty}]^{\circ}$ and making a choice of similitude factor in $\mbb{Z}_p^{\times}$; since $A[p^{\infty}]^{\circ}$ extends to a $p$-divisible group over $\mathfrak{X}_G(K^pG(\mbb{Z}_p))^{\opn{ord}}$, we therefore see that $\mathfrak{IG}_G(K^p U(J_{\opn{ord}}^+))$ extends to a pro-\'{e}tale $M_G(\mbb{Z}_p)$-torsor $\mathfrak{IG}_G(K^pU(J_p))^{\opn{tor}} \to \mathfrak{X}_G(K^pG(\mbb{Z}_p))^{\opn{ord}}$. The Mantovan Igusa variety carries a natural lift of Frobenius $\varphi \colon \mathfrak{IG}_G(K^p U(J_{\opn{ord}}^+)) \to \mathfrak{IG}_G(K^p U(J_{\opn{ord}}^+))$ by quotienting by the canonical subgroup, and this extends to $\varphi \colon \mathfrak{IG}_G(K^p U(J_{\opn{ord}}^+))^{\opn{tor}} \to \mathfrak{IG}_G(K^p U(J_{\opn{ord}}^+))^{\opn{tor}}$.

We now define $\mathfrak{IG}_G^{\opn{tor}} \defeq \varprojlim_{\varphi} \mathfrak{IG}_G(K^pU(J_p))^{\opn{tor}}$ where the inverse limit has transition maps given by $\varphi$. This exists as a formal scheme because $\varphi$ is affine, and one has
\[
\mathfrak{IG}_G = \mathfrak{IG}_G^{\opn{tor}} \times_{\mathfrak{X}_G(K^pG(\mbb{Z}_p))^{\opn{ord}}} \mathfrak{Y}_G(K^pG(\mbb{Z}_p))^{\opn{ord}} .
\]
By \cite[Proposition 3.4.16]{HHTBoxerPilloni}, the action of $J_{\opn{ord}}^+$ extends to $\mathfrak{IG}_G^{\opn{tor}}$ and for any finite index fpqc subgroup scheme $J_p \subset J_{\opn{ord}}^+$, the categorical quotients $\mathfrak{IG}_G(K^pU(J_p))^{\opn{tor}} = \mathfrak{IG}_G^{\opn{tor}}/U(J_p)$ and $\mathfrak{IG}_G(K^pJ_p)^{\opn{tor}} = \mathfrak{IG}_G^{\opn{tor}}/J_p$ exist. Moreover, the map $\mathfrak{IG}_G(K^pU(J_p))^{\opn{tor}} \to \mathfrak{IG}_G(K^pJ_p)^{\opn{tor}}$ is a pro-\'{e}tale $M(J_p)$-torsor. As above, we let $\mathcal{IG}_G(K^pU(J_p))^{\opn{tor}}$ and $\mathcal{IG}_G(K^pJ_p)^{\opn{tor}}$ denote the adic generic fibres of $\mathfrak{IG}_G(K^pU(J_p))^{\opn{tor}}$ and $\mathfrak{IG}_G(K^pJ_p)^{\opn{tor}}$ respectively.

\subsubsection{} Let $\mbb{Q}_p^{\opn{cycl}}$ denote the $p$-adic completion of $\mbb{Q}_p(\mu_{p^{\infty}})$, and let $\mbb{Z}_p^{\opn{cycl}} \subset \mbb{Q}_p^{\opn{cycl}}$ denote its ring of integers. Let $J_{\opn{ord}, \mbb{Q}_p}^+ \defeq J_{\opn{ord}}^+ \times_{\opn{Spa}(\mbb{Z}_p, \mbb{Z}_p)} \opn{Spa}(\mbb{Q}_p, \mbb{Z}_p)$, and let $J_{\opn{ord}, \mbb{Q}_p^{\opn{cycl}}}^+ = J_{\opn{ord}, \mbb{Q}_p}^+ \times_{\opn{Spa}(\mbb{Q}_p, \mbb{Z}_p)} \opn{Spa}(\mbb{Q}_p^{\opn{cycl}}, \mbb{Z}_p^{\opn{cycl}})$. We use similar notation for the other groups considered in the previous subsection. Fix an identification $T_p \mu_{p^{\infty}} = \mbb{Z}_p(1) \cong \mbb{Z}_p$ over $\mbb{Q}_p^{\opn{cycl}}$. Depending on this identification, we have an isomorphism $J^+_{\opn{ord}, \mbb{Q}_p^{\opn{cycl}}} \cong P_G(\mbb{Z}_p)$.

Let $K_p \subset G(\mbb{Z}_p)$ be a compact open subgroup and let $J_p \defeq w_1 K_p w_1^{-1} \cap P_G(\mbb{Z}_p)$ (which is a compact open subgroup of $P_G(\mbb{Z}_p)$). By abuse of notation, we also let $J_p$ denote the schematic closure of $J_p$ inside $J^+_{\opn{ord}, \mbb{Z}_p^{\opn{cycl}}} = J_{\opn{ord}}^+ \times \opn{Spf}\mbb{Z}_p^{\opn{cycl}}$ (which is a finite-index fpqc subgroup scheme -- see \cite[Lemma 3.4.8]{HHTBoxerPilloni}). One has a natural open immersion 
\begin{equation} \label{GeneralOpenImmersionOfIG}
\mathcal{IG}_G(K^p J_p)_{\mbb{Q}_p^{\opn{cycl}}} \hookrightarrow \mathcal{Y}_G(K^pK_p)_{\mbb{Q}_p^{\opn{cycl}}}
\end{equation}
induced from right-translation by $w_1$, and this extends to an open immersion $\mathcal{IG}_G(K^p J_p)^{\opn{tor}}_{\mbb{Q}_p^{\opn{cycl}}} \hookrightarrow \mathcal{X}_G(K^pK_p)_{\mbb{Q}_p^{\opn{cycl}}}$. Here $\mathcal{X}_G(K^pK_p)$ denotes the adic space associated with $X_G(K^pK_p)_{\mbb{Q}_p}$, and $\mathcal{Y}_G(K^pK_p) \subset \mathcal{X}_G(K^pK_p)$ denotes the good reduction locus away from the boundary.

If $J_p$ splits as a semidirect product $J_p = U(J_p) \rtimes M(J_p)$, then $J_p$ is Galois stable and descends to a compact open subgroup of $J_{\opn{ord}}^+$ (see \cite[Remark 3.4.7]{HHTBoxerPilloni}). In this setting, the quotient $\mathcal{IG}_G(K^p J_p)$ is therefore defined over $\mbb{Q}_p$, however it may still be the case that (\ref{GeneralOpenImmersionOfIG}) is only defined over $\mbb{Q}_p(\mu_{p^m})$ for some integer $m \geq 0$. If we assume that $T(\mbb{Z}_p) \subset K_p$ though, then both (\ref{GeneralOpenImmersionOfIG}) and its extension to the (partial) toroidal compactifications descend to open immersions over $\mbb{Q}_p$. 

In practice, we will only consider $K_p$ such that $T(\mbb{Z}_p) \subset K_p$ and $J_p = w_1 K_p w_1^{-1} \cap P_G(\mbb{Z}_p)$ splits as a semidirect product. For example, for any $\beta \geq 1$ and $t \in T^{G, \pm}$, the compact open subgroup $K_p = t K^G_{\opn{Iw}}(p^{\beta}) t^{-1} \cap K^G_{\opn{Iw}}(p^{\beta})$ satisfies both of these properties. 

\begin{example}
    Let $\beta \geq 1$ be an integer. Then $\mathcal{IG}_G(K^G_{\beta})^{\opn{tor}}$ is identified with $\mathcal{X}_{G, w_1, \beta}$.
\end{example}

\subsubsection{}

We now suppose that $K_p \subset G(\mbb{Z}_p)$ is a compact open subgroup such that $T(\mbb{Z}_p) \subset K_p$ and $J_p \defeq w_1 K_p w_1^{-1} \cap P_G(\mbb{Z}_p)$ splits as a semidirect product (as in the previous section). Recall that $\mathcal{IG}_G(K^pU(J_p))^{\opn{tor}} \to \mathcal{IG}_G(K^p J_p)^{\opn{tor}}$ is a pro-\'{e}tale $M(J_p)$-torsor. Over $\mathfrak{IG}_G(K^pU(J_p))$ one has a universal trivialisation $\mu_{p^{\infty}}^{\oplus 2} \xrightarrow{\sim} A[p^{\infty}]^{\circ}$ of the connected part of the $p$-divisible group associated with the universal ordinary abelian surface. This induces a trivialisation of $\omega_A$ which extends to a trivialisation of the canonical extension of $\omega_A$ over $\mathcal{IG}_G(K^pU(J_p))^{\opn{tor}}$. Combining this with the choice of symplectic similitude, we therefore obtain a trivialisation of the graded pieces of the Hodge filtration on $\mathcal{H}$, where $\mathcal{H}$ denotes the pullback to $\mathcal{IG}_G(K^pU(J_p))^{\opn{tor}}$ of the canonical extension of $\mathcal{H}_1^{\opn{dR}}(A/\mathcal{Y}_G(K^pK_p))$. Since the unit root splitting extends to the canonical extension $\mathcal{H}$, we therefore obtain a trivialisation of $\mathcal{H}$ over $\mathcal{IG}_G(K^pU(J_p))^{\opn{tor}}$ respecting the Hodge filtration and polarisation up to similitude. In other words, we have a commutative diagram:
\[
\begin{tikzcd}
\mathcal{IG}_G(K^pU(J_p))^{\opn{tor}} \arrow[d] \arrow[r] & {P^{\opn{an}}_{G, \opn{dR}}} \arrow[d] \\
\mathcal{IG}_G(K^p J_p)^{\opn{tor}} \arrow[r]      & \mathcal{X}_G(K^pK_p)      
\end{tikzcd}
\]
where the bottom horizontal arrow denotes the extension of (\ref{GeneralOpenImmersionOfIG}) to partial toroidal compactifications. The top horizontal map is equivariant for the action of $M(J_p)$ (via the embedding $M(J_p) \hookrightarrow M_G^{\opn{an}} \hookrightarrow \overline{P}_G^{\opn{an}}$) and hence defines a reduction of structure over $\mathcal{IG}_G(K^p J_p)^{\opn{tor}}$.

\begin{proposition} \label{PGdrnReductionProposition}
    Let $n \geq 1$ be an integer. Then there exists a quasi-compact open strict neighbourhood $U$ of $\mathcal{IG}_G(K^p J_p)^{\opn{tor}}$ inside $\mathcal{X}_G(K^pK_p)$ and an \'{e}tale $\overline{\mathcal{P}}_{G, n} M(J_p)$-torsor $\mathcal{P}_{G, \opn{dR}, n} \to U$ such that:
    \begin{itemize}
        \item $\mathcal{P}_{G, \opn{dR}, n}$ is a reduction of structure of $P_{G, \opn{dR}}^{\opn{an}}$ over $U$;
        \item $\mathcal{IG}_G(K^pU(J_p))^{\opn{tor}}$ is a reduction of structure of $\mathcal{P}_{G, \opn{dR}, n}$ over $\mathcal{IG}_G(K^p J_p)^{\opn{tor}}$.
    \end{itemize}
    Moreover, if we view this torsor as an open subspace of $P_{G, \opn{dR}}^{\opn{an}}$, then this torsor (with these properties) is unique up to possibly shrinking $U$.
\end{proposition}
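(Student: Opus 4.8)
The plan is to build $\mathcal{P}_{G,\opn{dR},n}$ first over the ordinary stratum $\mathcal{X}_{G,w_1,\beta}=\mathcal{IG}_G(K^pJ_p)^{\opn{tor}}$ by pushing forward the reduction of structure constructed just above the proposition, and then to spread it out to a strict neighbourhood by an extension‑of‑sections argument for an étale map. Write $j$ for the $M(J_p)$‑equivariant open immersion $\mathcal{IG}_G(K^pU(J_p))^{\opn{tor}}\hookrightarrow P_{G,\opn{dR}}^{\opn{an}}$ from the commutative diagram preceding the proposition, and set $V_0\defeq j\big(\mathcal{IG}_G(K^pU(J_p))^{\opn{tor}}\big)\cdot\overline{\mathcal{P}}_{G,n}\subset P_{G,\opn{dR}}^{\opn{an}}$, the orbit under right translation by $\overline{\mathcal{P}}_{G,n}$. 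Since $\overline{\mathcal{P}}_{G,n}$ is an open subgroup of $\overline{P}_G^{\opn{an}}$ of full dimension and $M(J_p)\subset M_G(\mbb{Z}_p)$ normalises it (conjugation by $M_G(\mbb{Z}_p)$ is an integral automorphism of $\overline{P}_{G,\mbb{Z}_p}$), the product $\overline{\mathcal{P}}_{G,n}M(J_p)$ is an open subgroup of finite index in $\overline{\mathcal{P}}_G$. Hence $V_0$ is an open subspace of $P_{G,\opn{dR}}^{\opn{an}}$, it is a $\overline{\mathcal{P}}_{G,n}M(J_p)$‑torsor over $\mathcal{X}_{G,w_1,\beta}$, and by construction $\mathcal{IG}_G(K^pU(J_p))^{\opn{tor}}$ is a reduction of structure of $V_0$ over $\mathcal{X}_{G,w_1,\beta}$.

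Next I would recast the problem in terms of extending a section. Consider the quotient $q\colon P_{G,\opn{dR}}^{\opn{an}}\to\mathscr{Q}\defeq P_{G,\opn{dR}}^{\opn{an}}/(\overline{\mathcal{P}}_{G,n}M(J_p))$; because $\overline{\mathcal{P}}_{G,n}M(J_p)$ is an open subgroup of $\overline{P}_G^{\opn{an}}$ acting freely, $q$ is a $\overline{\mathcal{P}}_{G,n}M(J_p)$‑torsor and $\mathscr{Q}\to\mathcal{X}_G(K^pK_p)$ is étale and separated with $0$‑dimensional fibres; bounding the norm of coset representatives exhibits $\mathscr{Q}$ as an increasing union of open subspaces $\mathscr{Q}_m$ that are finite étale over $\mathcal{X}_G(K^pK_p)$. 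The open subspace $V_0$ corresponds to a section $s_0\colon\mathcal{X}_{G,w_1,\beta}\to\mathscr{Q}$, and for any open $U\supseteq\mathcal{X}_{G,w_1,\beta}$ carrying a section $s\colon U\to\mathscr{Q}$ with $s|_{\mathcal{X}_{G,w_1,\beta}}=s_0$ one may set $\mathcal{P}_{G,\opn{dR},n}\defeq q^{-1}\big(s(U)\big)$: this is an open subspace of $P_{G,\opn{dR}}^{\opn{an}}$, it is a $\overline{\mathcal{P}}_{G,n}M(J_p)$‑torsor over $U$ (as $s(U)\xrightarrow{\sim}U$), it restricts to $V_0$ over $\mathcal{X}_{G,w_1,\beta}$, and the two bullet‑pointed properties follow at once (the first because $\mathcal{P}_{G,\opn{dR},n}$ is an open sub‑torsor of $P_{G,\opn{dR}}^{\opn{an}}$, the second from the previous paragraph). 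So it remains to extend $s_0$ to a quasi‑compact open strict neighbourhood of $\mathcal{X}_{G,w_1,\beta}$.

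This last extension is where the real work lies. As $\mathcal{X}_{G,w_1,\beta}$ is quasi‑compact, $s_0$ factors through one of the finite étale covers $\mathscr{Q}_{m_0}\to\mathcal{X}_G(K^pK_p)$, so one must extend a section of a finite étale map from $\mathcal{X}_{G,w_1,\beta}$ across its closure $\mathcal{Z}_{G,<1,\beta}$. Here I would argue geometrically, following the flag‑variety computations underlying \cite[Proposition 5.2.1]{LZBK21}: on $\mathcal{Z}_{G,<1,\beta}$, where $A[p^\infty]$ has multiplicative part of rank $\geq p^\beta$, the level‑$n$ canonical subgroup and the integral Hodge--Tate comparison modulo $p^n$ still make sense, so the ``Igusa trivialisation of $\mathcal{H}$ modulo $p^n$'' — equivalently the point of $\mathscr{Q}_{m_0}$ cut out by $s_0$, equivalently the clopen subspace $s_0(\mathcal{X}_{G,w_1,\beta})\subset\mathscr{Q}_{m_0}|_{\mathcal{X}_{G,w_1,\beta}}$ — extends to a clopen subspace of $\mathscr{Q}_{m_0}|_{\mathcal{Z}_{G,<1,\beta}}$ mapping isomorphically onto $\mathcal{Z}_{G,<1,\beta}$; since a clopen subspace of a finite étale cover trivial over the quasi‑compact $\mathcal{Z}_{G,<1,\beta}$ extends to one trivial over a quasi‑compact open strict neighbourhood $U\supseteq\mathcal{Z}_{G,<1,\beta}$, this produces the desired section $s$ over $U$. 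The main obstacle is precisely the verification that the reduction genuinely extends across all of $\mathcal{Z}_{G,<1,\beta}$ and not merely over the open ordinary stratum; everything else is formal.

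Finally, for uniqueness, suppose $\mathcal{P}_{G,\opn{dR},n}'\to U'$ is another such torsor realised as an open subspace of $P_{G,\opn{dR}}^{\opn{an}}$; it corresponds to a section $s'\colon U'\to\mathscr{Q}$ with $s'|_{\mathcal{X}_{G,w_1,\beta}}=s_0$. The locus where $s$ and $s'$ agree is the pullback along $(s,s')$ of the diagonal of $\mathscr{Q}\to\mathcal{X}_G(K^pK_p)$, which is open and closed because $\overline{\mathcal{P}}_{G,n}M(J_p)$ is an open subgroup (so this structure map is étale and separated). This locus contains $\mathcal{X}_{G,w_1,\beta}$; since $U\cap U'$ is again a strict neighbourhood, hence contains $\mathcal{Z}_{G,<1,\beta}$, a closed subset of $U\cap U'$ containing $\mathcal{X}_{G,w_1,\beta}$ contains its closure $\mathcal{Z}_{G,<1,\beta}$, so the agreement locus is a quasi‑compact open strict neighbourhood on which $\mathcal{P}_{G,\opn{dR},n}$ and $\mathcal{P}_{G,\opn{dR},n}'$ coincide. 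This gives the asserted uniqueness up to shrinking $U$.
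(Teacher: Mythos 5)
There is a genuine gap in the central overconvergence step. You identify the closure of $\mathcal{X}_{G,w_1,\beta}$ with $\mathcal{Z}_{G,<1,\beta}$ and propose to extend the section $s_0$ (equivalently the reduction of structure) over all of $\mathcal{Z}_{G,<1,\beta}$ using level-$n$ canonical subgroups and the integral Hodge--Tate comparison. Both claims fail. First, $\mathcal{Z}_{G,<1,\beta}$ is the closure of the locus where only the rank-$p^{\beta}$ subgroup $H_1$ is multiplicative; it strictly contains the closure $\overline{\mathcal{X}}_{G,w_1,\beta}$ of the ordinary Igusa locus (compare parts (1) and (2) of Lemma \ref{OpenClosedSupportLemma}), and in particular contains points of $p$-rank $1$. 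Second, at such non-ordinary points the level-$n$ canonical subgroup does not exist for large $n$, the unit-root splitting is unavailable, and a trivialisation of the rank-$4$ module $\mathcal{H}$ modulo $p^{n}$ respecting the Hodge filtration (which is exactly what a reduction of $P_{G,\opn{dR}}^{\opn{an}}$ to $\overline{\mathcal{P}}_{G,n}M(J_p)$ amounts to) cannot be produced by your mechanism. So the step you yourself flag as ``where the real work lies'' is carried out over the wrong locus by an argument that breaks down there. What is actually needed, and what the paper does, is: (i) exhibit $\mathcal{IG}_G(K^pJ_p)^{\opn{tor}}$ as the locus $|\tilde h|=1$ for a mod-$p$ section $h$ of a line bundle built from the Hasse invariant, which yields a cofinal system of quasi-compact strict neighbourhoods $\mathcal{X}_r$ (these do \emph{not} contain $\mathcal{Z}_{G,<1,\beta}$); (ii) overconverge the finite \'etale residual $\mathscr{G}_k$-torsor by mapping it to the torsor $\mathcal{F}_{k,r}$ of trivialisations of the canonical subgroup over $\mathcal{X}_r$ and invoking the Kisin--Lai extension argument; (iii) handle the pro-$p$ part $\overline{\mathcal{P}}_{G,n}$ as in \cite[Proposition 6.2.1]{DiffOps}, again using the $|\tilde h|=1$ description. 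None of these ingredients appears in your proposal, and the Hasse-invariant control in (i) is indispensable: without it you cannot even name the strict neighbourhoods to which the torsor extends.

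Two smaller issues. The identification $\mathcal{IG}_G(K^pJ_p)^{\opn{tor}}=\mathcal{X}_{G,w_1,\beta}$ holds only in the example $K_p=K^G_{\opn{Iw}}(p^{\beta})$, whereas the proposition is stated for general $K_p$ with $T(\mbb{Z}_p)\subset K_p$ and $J_p$ split; your argument should not presuppose it. And the existence of the quotient $\mathscr{Q}=P_{G,\opn{dR}}^{\opn{an}}/(\overline{\mathcal{P}}_{G,n}M(J_p))$ as an adic space written as an increasing union of finite \'etale covers of $\mathcal{X}_G(K^pK_p)$ is asserted rather than proved; since $\overline{\mathcal{P}}_{G,n}M(J_p)$ has infinite index in $\overline{P}_G^{\opn{an}}$, this requires justification (the paper sidesteps it by working with the finite quotients $\mathscr{G}_k$ and the canonical-subgroup torsors directly). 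Your uniqueness paragraph is essentially sound once the agreement locus is known to be clopen, provided you replace ``contains $\mathcal{Z}_{G,<1,\beta}$'' by ``contains the closure of the Igusa locus''.
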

\begin{proof}
    The proof of this proposition is very similar to that of \cite[Proposition 6.2.1]{DiffOps}. We first claim that there exists a sheaf $\mathcal{L}$ of $\mathcal{O}^+_{\mathcal{X}_G(K^pK_p)}$-modules, which is locally free of rank one, and a section $h \in \opn{H}^0(\mathcal{X}_G(K^pK_p), \mathcal{L}/p\mathcal{L})$ such that $\mathcal{IG}_G(K^p J_p)^{\opn{tor}}$ is the locus of points $| \cdot |_x$ satisfying $|\tilde{h}|_x=1$ for any local lift $\tilde{h}$ of $h$. Indeed, $\mathcal{IG}_G(K^p J_p)^{\opn{tor}}$ is a connected component of $\mathcal{X}_G(K^pK_p)^{\opn{ord}} \defeq \mathcal{X}_G(K^pK_p) \times_{\mathcal{X}_G(K^pG(\mbb{Z}_p))} \mathcal{X}_G(K^pG(\mbb{Z}_p))^{\opn{ord}}$, hence there exists a section $h' \in \opn{H}^0( \mathcal{X}_G(K^pK_p)^{\opn{ord}}, \mathcal{O}^+_{\mathcal{X}_G(K^pK_p)^{\opn{ord}}})$ such that $\mathcal{IG}_G(K^p J_p)^{\opn{tor}}$ is the locus in $\mathcal{X}_G(K^pK_p)^{\opn{ord}}$ where $|h'|=1$. Let $\opn{Ha} \in \opn{H}^0(\mathcal{X}_G(K^pK_p), \opn{det}(\omega_A^+)^{\otimes p-1})$ be a lift of the Hasse invariant. There then exists a sufficiently large integer $N \geq 1$ and section $h'' \in \opn{H}^0(\mathcal{X}_G(K^pK_p), \opn{det}(\omega_A^+)^{\otimes N(p-1)}/p)$ such that $h''$ is congruent to $\opn{Ha}^N h'$ modulo $p$ over $\mathcal{X}_G(K^pK_p)^{\opn{ord}}$. We may then take $\mathcal{L} = \opn{det}(\omega_A^+)^{\otimes (N+1)(p-1)}$ and $h = \opn{Ha} h''$. The upshot of this is that we have a cofinal system $\{ \mathcal{X}_r \}_{r \geq 1}$ of strict quasi-compact open neighbourhoods of $\mathcal{IG}_G(K^p J_p)^{\opn{tor}}$ in $\mathcal{X}_G(K^pK_p)$ with $\mathcal{X}_r$ equal to the locus of points $| \cdot |_x$ satisfying $|\tilde{h}|_x^{p^{r+1}} \geq |p|$. 
    
    Let $\mathscr{G}_k = \overline{\mathcal{P}}_{G, k}M(J_p)/\overline{\mathcal{P}}_{G, k}$ which is also equal to the image of $M(J_p)$ in $M_G(\mbb{Z}/p^k \mbb{Z})$. Let $\mathcal{I}_{k, \infty}$ denote the quotient of $\mathcal{IG}_G(K^pU(J_p))^{\opn{tor}}$ by the kernel of $M(J_p) \to \mathscr{G}_k$. Then $\mathcal{I}_{k, \infty} \to \mathcal{X}_{\infty} \defeq \mathcal{IG}_G(K^p J_p)^{\opn{tor}}$ is a finite \'{e}tale $\mathscr{G}_k$-torsor. We also let $\mathfrak{I}_{k, \infty} \to \mathfrak{X}_{\infty} \defeq \mathfrak{IG}_G(K^p J_p)^{\opn{tor}}$ denote the finite \'{e}tale $\mathscr{G}_k$-torsor given by the quotient of $\mathfrak{IG}_G(K^pU(J_p))^{\opn{tor}}$ by the kernel of $M(J_p) \to \mathscr{G}_k$. Let $C_k \subset A[p^k]$ denote the canonical subgroup of order $p^{2k}$. This extends to the boundary and there exists an integer $r$ such that $C_k$ extends to $\mathcal{X}_r$. Let $\mathcal{F}_{k,r} \to \mathcal{X}_r$ denote the finite \'{e}tale $M_G(\mbb{Z}/p^k\mbb{Z})$-torsor parameterising trivialisations of $C_r$ and a choice of similitude. We have a natural $\mathscr{G}_k$-equivariant map $\mathcal{I}_{k, \infty} \to \mathcal{F}_{k, r}$ which provides a reduction of structure over $\mathcal{X}_{\infty}$, hence by the arguments in \cite[Proposition 2.2.1]{KisinLai}, we see that there exists a sufficiently large integer $r$ and a finite \'{e}tale $\mathscr{G}_k$-torsor $\mathcal{I}_{k, r} \to \mathcal{X}_r$ fitting into the following Cartesian diagram
    \[
\begin{tikzcd}
{\mathcal{I}_{k, \infty}} \arrow[d] \arrow[r] & {\mathcal{I}_{k, r}} \arrow[d] \\
\mathcal{X}_{\infty} \arrow[r]                & \mathcal{X}_r                 
\end{tikzcd}
    \]
    where the top map is $\mathscr{G}_k$-equivariant.

    The rest of the proof now follows \cite[Proposition 6.2.1]{DiffOps} \emph{mutatis mutandis} by first defining a $\overline{\mathcal{P}}_{G, n}$-torsor over $\mathcal{I}_{k, \infty}$ and showing that this descends to an \'{e}tale $\overline{\mathcal{P}}_{G, n}M(J_p)$-torsor over $\mathcal{X}_{\infty}$, then using the fact that $\mathcal{X}_{\infty}$ is the locus where $|\tilde{h}| = 1$ and the above Cartesian diagram to overconverge this torsor.
\end{proof}

\begin{remark}
    We note that there is some ambiguity when referring to ``the'' reduction of structure $\mathcal{P}_{G, \opn{dR}, n}$, since this torsor depends on $U$ (and may not be unique over $U$). It is however unique up to shrinking $U$, and since the majority of the objects considered later on are up to shrinking $U$, we can safely pretend that $\mathcal{P}_{G, \opn{dR}, n}$ is unique. 
\end{remark}

\subsubsection{} 

Exactly the same constructions in the previous sections carry over to the setting of Shimura varieties for $H$ or $\opn{GL}_2$ (replacing the element $w_1$ with the identity element). In particular, we will use similar notation for the corresponding objects for the groups $H$, $\opn{GL}_2$. We note that, for $\beta \geq 1$, it is not true that $T(\mbb{Z}_p) \subset K^H_{\diamondsuit}(p^{\beta})$ -- instead one only has $T_{\diamondsuit}(p^{\beta}) \subset K^H_{\diamondsuit}(p^{\beta})$, however this still implies that $\mathcal{IG}_H(K^pJ_p)^{\opn{tor}} \hookrightarrow \mathcal{X}_H(K^p K_p)$ is defined over $\mbb{Q}_p$ for any $K_p \subset H(\mbb{Z}_p)$ of the form $t K^H_{\diamondsuit}(p^{\beta}) t^{-1} \cap K^H_{\diamondsuit}(p^{\beta})$ with $t \in T^{H, \pm}$. In particular, we have an analogue of Proposition \ref{PGdrnReductionProposition} -- we denote the corresponding reductions of structure by $\mathcal{P}_{\mathscr{G}, \opn{dR}, n}$ for $\mathscr{G} \in \{H, \opn{GL}_2 \}$. Note that when $\mathscr{G} = \opn{GL}_2$ and $K_p = H(\mbb{Z}_p)$, this is nothing but \cite[Proposition 6.2.1]{DiffOps}.

\subsubsection{Maps between torsors} \label{MapsBetweenTorsorsSubSubSec}

Let $\beta \geq 1$ be an integer. For ease of notation set $\mathcal{IG}_{G, w_1, \beta}^{\opn{tor}} = \mathcal{IG}_G(K^p J_p)^{\opn{tor}}$ (resp. $\mathcal{IG}_{H, \opn{id}, \beta}^{\opn{tor}} = \mathcal{IG}_H((K^p \cap H(\mbb{A}_f^p)) J_p)^{\opn{tor}}$) for $J_p = w_1 K^G_{\opn{Iw}}(p^{\beta}) w_1^{-1} \cap P_G(\mbb{Z}_p)$ (resp. $J_p = K^H_{\diamondsuit}(p^{\beta}) \cap P_H(\mbb{Z}_p)$). Then we have a commutative cube
\[
\begin{tikzcd}
                                                                              &                                                      & {P_{H, \opn{dR}}^{\opn{an}}} \arrow[d, dotted] \arrow[r, "\gamma"] & {P_{G, \opn{dR}}^{\opn{an}}} \arrow[d] \\
{\mathcal{IG}_{H, \opn{id}, \beta}^{\opn{tor}}} \arrow[d] \arrow[r, "\gamma"] \arrow[rru] & {\mathcal{IG}_{G, w_1, \beta}^{\opn{tor}}} \arrow[d] \arrow[rru] & \mathcal{X}_{H}(K^H_{\beta}) \arrow[r, "\hat{\iota}"]              & \mathcal{X}_{G}(K^G_{\beta})           \\
{\mathcal{X}_{H, \opn{id}, \beta}} \arrow[rru, dotted] \arrow[r]              & {\mathcal{X}_{G, w_1, \beta}} \arrow[rru]            &                                                                    &                                       
\end{tikzcd}
\]
where the horizontal maps in the top square are induced from right-translation by $\gamma$ (recall that $\gamma \in M_G(\mbb{Z}_p)$). The bottom square is Cartesian, and since $\hat{\iota}$ is a finite morphism between qcqs adic spaces, one can show that $\{ \hat{\iota}^{-1}(U) \}$ is a cofinal collection of quasi-compact open subspaces containing the closure of $\mathcal{X}_{H, \opn{id}, \beta}$ as $U$ runs over quasi-compact open subspaces of $\mathcal{X}_G(K^G_{\beta})$ containing the closure of $\mathcal{X}_{G, w_1, \beta}$.\footnote{Indeed, it suffices to check $\bigcap_U \hat{\iota}^{-1}(U)$ and $\mathcal{X}_{H, \opn{id}, \beta}$ have the same rank one points, but this follows from the fact that $\left( \cap_U U \right)^{\opn{rk} 1} = \mathcal{X}_{G, w_1, \beta}^{\opn{rk} 1}$.}

Let $\mathcal{P}_{G, \opn{dR}, n} \to U$ be the reduction of structure of $P_{G, \opn{dR}}^{\opn{an}}$ as constructed in Proposition \ref{PGdrnReductionProposition}. Then we can consider the pullback $\mathcal{P}_{G, \opn{dR},n} \times_{P^{\opn{an}}_{G, \opn{dR}}, \gamma} P_{H, \opn{dR}}^{\opn{an}} \to \hat{\iota}^{-1}U$, which is an \'{e}tale torsor under the group $\gamma \overline{\mathcal{P}}_{G, n, \beta}^{\square} \gamma^{-1} \cap \overline{P}_H^{\opn{an}} = \overline{\mathcal{P}}_{H, n, \beta}^{\diamondsuit}$. Therefore, by the uniqueness part of (the analogue of) Proposition \ref{PGdrnReductionProposition} for $H$, we can find a sufficiently small $U$ and a commutative diagram:
\[
\begin{tikzcd}
{\mathcal{P}_{H, \opn{dR},n}} \arrow[d] \arrow[r, "\gamma"] & {\mathcal{P}_{G, \opn{dR},n}} \arrow[d] \\
\hat{\iota}^{-1}U \arrow[r, "\hat{\iota}"]                                  & U                                      
\end{tikzcd}
\]
which provides a reduction of structure $\mathcal{P}_{H, \opn{dR},n} \times^{[\overline{\mathcal{P}}_{H, n, \beta}^{\diamondsuit}, \gamma]} \overline{\mathcal{P}}_{G, n, \beta}^{\square} \cong \mathcal{P}_{G,\opn{dR}, n} \times_U \hat{\iota}^{-1}(U) $.

We also have morphisms between the reductions of structure of $P_{\bullet, \opn{dR}}^{\opn{an}}$ for $\bullet \in \{ H, \opn{GL}_2 \}$. More precisely, one can show that for any quasi-compact open subspace $U \subset \mathcal{X}_{\opn{GL}_2}(K^{H_i}_{\beta})$ containing the closure of $\mathcal{X}_{H_i, \opn{id}, \beta}$ such that one has a reduction of structure $\mathcal{P}_{\opn{GL}_2, \opn{dR}, n} \to U$, there exists a sufficiently small quasi-compact open subspace $V \subset \mathcal{X}_{H}(K^H_{\beta})$ containing the closure of $\mathcal{X}_{H, \opn{id}, \beta}$ and a reduction of structure $\mathcal{P}_{H, \opn{dR}, n} \to V$ fitting into a commutative diagram:
\[
\begin{tikzcd}
{\mathcal{P}_{H, \opn{dR},n}} \arrow[d] \arrow[r] & {\mathcal{P}_{\opn{GL}_2, \opn{dR},n}} \arrow[d] \\
V \arrow[r, "\opn{pr}_i"]                                   & U                                               
\end{tikzcd}
\]
Here the top arrow is compatible with the natural map $P_{H, \opn{dR}}^{\opn{an}} \to P_{\opn{GL}_2, \opn{dR}}^{\opn{an}}$ induced from the projection $\opn{pr}_i \colon H \twoheadrightarrow \opn{GL}_2$ and provides a reduction of structure.

\subsection{p-adic nearly sheaves}

We now consider the sheaves associated with the torsors in Proposition \ref{PGdrnReductionProposition}.

\begin{notation}
    Let $(A, A^+)$ be a complete Tate affinoid pair over $(\mbb{Q}_p, \mbb{Z}_p)$. For any projective Banach $\overline{\mathcal{P}}_{G, n, \beta}^{\square}$-representation $V$ over $(A, A^+)$, we let 
    \[
    [V] \defeq \left( \pi_*\mathcal{O}_{\mathcal{P}_{G, \opn{dR}, n}} \hatot V \right)^{\overline{\mathcal{P}}_{G, n, \beta}^{\square}}
    \]
    denote the associated sheaf on $U \times \opn{Spa}(A, A^+)$, where $\pi \colon \mathcal{P}_{G, \opn{dR}, n} \to U$ denotes the reduction of structure in Proposition \ref{PGdrnReductionProposition}. We use similar notation for $H$, $\opn{GL}_2$ and projective Banach representations of $\overline{\mathcal{P}}_{H, n, \beta}^{\diamondsuit}$, $\overline{\mathcal{P}}_{\opn{GL}_2, n, \beta}^{\square}$. Note that $[V]$ is a locally projective Banach sheaf by a similar argument as in \cite[Proposition 6.3.3]{BoxerPilloni}.
\end{notation}

\begin{remark} \label{OnlyUniqueAfterShrinkingRem}
    The sheaves $[V]$ are only independent of choices up to shrinking $U$ because the same is true for the torsors $\mathcal{P}_{G, \opn{dR}, n}$. Since we will only interested in the cohomology of these sheaves modulo shrinking $U$, this ambiguity will not affect the constructions in the following sections.
\end{remark}

If $V$ (resp. $W$) is a projective Banach $\overline{\mathcal{P}}_{G, n, \beta}^{\square}$-representation (resp. $\overline{\mathcal{P}}_{H, n, \beta}^{\diamondsuit}$-representation) and we have a continuous morphism $f \colon V \to W$ which is equivariant for the action of $\overline{\mathcal{P}}_{H, n, \beta}^{\diamondsuit}$ through the inclusion $\gamma^{-1}\overline{\mathcal{P}}_{H, n, \beta}^{\diamondsuit} \gamma \subset \overline{\mathcal{P}}_{G, n, \beta}^{\square}$, then the discussion in \S \ref{MapsBetweenTorsorsSubSubSec} implies that we have a natural morphism $[V] \to \hat{\iota}_*[W]$ for $U$ sufficiently small induced from $f$ (and $[W]$ is defined over $\hat{\iota}^{-1}U$). Similarly, if $W_i$ is a projective Banach $\overline{\mathcal{P}}_{\opn{GL}_2, n, \beta}^{\square}$-representation and $g \colon W_i \to W$ is equivariant through the map $\opn{pr}_i \colon \overline{\mathcal{P}}_{H, n, \beta}^{\diamondsuit} \to \overline{\mathcal{P}}_{\opn{GL}_2, n, \beta}^{\square}$, then we obtain a morphism $[W_i] \to (\opn{pr}_i)_*[W]$ induced from $g$, where $\opn{pr}_i \colon V \to U$ is as in \S \ref{MapsBetweenTorsorsSubSubSec}.

We will consider the above sheaves for a small modification of the specific representations appearing in \S \ref{padicRepTheorySubSec}. More precisely:

\begin{notation}
    Let $\mathbb{I}_{\overline{\mathcal{P}}_{G, n, \beta}^{\square, \circ}}(\kappa_G)$ be defined in exactly the same way as in Definition \ref{DefinitionOfIGpadicnearlyrep}, but replacing $\overline{\mathcal{P}}_{G, n, \beta}^{\square}$ with the subgroup $\overline{\mathcal{P}}_{G, n, \beta}^{\square, \circ} = \overline{\mathcal{P}}^{\circ}_{G, n}M_{G, \opn{Iw}}(p^{\beta}) \subset \overline{\mathcal{P}}_{G, n, \beta}^{\square}$ where $\overline{\mathcal{P}}^{\circ}_{G, n} \subset \overline{\mathcal{P}}_{G, n}$ denotes the subgroup of elements which are congruent to the identity modulo $p^{n +\varepsilon}$ for some $\varepsilon > 0$. We let $\mathbb{D}_{\overline{\mathcal{P}}_{G, n+1, \beta}^{\square}}(\kappa^*_G)$ denote the continuous dual of $\mathbb{I}_{\overline{\mathcal{P}}_{G, n, \beta}^{\square, \circ}}(\kappa_G)$, which we view as a projective Banach $\overline{\mathcal{P}}_{G, n+1, \beta}^{\square}$-representation.

    Similarly, we let $\mathbb{D}_{\overline{\mathcal{P}}_{H, n+1, \beta}^{\diamondsuit}}(\kappa_H^*)$ denote the continuous dual of $\mathbb{I}_{\overline{\mathcal{P}}_{H, n, \beta}^{\diamondsuit, \circ}}(\kappa_H)$, which is a projective Banach $\overline{\mathcal{P}}_{H, n+1, \beta}^{\diamondsuit}$-module, where $\overline{\mathcal{P}}_{H, n, \beta}^{\diamondsuit, \circ} = \overline{\mathcal{P}}_{H, n}^{\circ} T_{\diamondsuit}(p^{\beta})$. We will also use similar notation for the versions $\mathbb{D}_{\mathcal{M}_{G, n+1, \beta}^{\square}}(\kappa^*_G)$ and $\mathbb{D}_{\mathcal{M}_{H, n+1, \beta}^{\diamondsuit}}(\kappa_H^*)$ defined using the Levi subgroups.
\end{notation}

\begin{remark}
    If the action of $\overline{\mathcal{P}}_{G, n, \beta}^{\square}$ factors through $\overline{\mathcal{P}}_{G, n, \beta}^{\square} \to \mathcal{M}_{G, n, \beta}^{\square}$, then $[V]$ agrees with the sheaf constructed in \cite[\S 6.3]{BoxerPilloni} after possibly shrinking $U$. In the setting of \emph{loc.cit.}, one can be more precise about the locus over which $[V]$ is defined (however we will not need this).
\end{remark}

\subsubsection{Cohomological correspondences}

Fix an integer $\beta \geq 1$ and let $t \in T^{G, -}$. Set $K_p = K^G_{\opn{Iw}}(p^{\beta})$ and $J_p = w_1 K_p w_1^{-1} \cap P_G(\mbb{Z}_p)$, and set $K_p' = tK_pt^{-1} \cap K_p$, $K_p'' = t^{-1}K_p't$, $J_p' = (w_1 t w_1^{-1}) J_p (w_1 t w_1^{-1})^{-1} \cap J_p = w_1 K_p' w_1^{-1} \cap P_G(\mbb{Z}_p)$, and $J_p'' = (w_1 t w_1^{-1})^{-1} J_p' (w_1 t w_1^{-1}) = w_1 K_p'' w_1^{-1} \cap P_G(\mbb{Z}_p)$. Consider the following commutative diagram:
\[
\begin{tikzcd}
\mathcal{IG}_G(K^pJ_p)^{\opn{tor}} \arrow[d, hook] & \mathcal{IG}_G(K^pJ_p')^{\opn{tor}} \arrow[d, hook] \arrow[l, "q_1"'] \arrow[r, "q_2"] & \mathcal{IG}_G(K^pJ_p)^{\opn{tor}} \arrow[d, hook] \\
\mathcal{X}_G(K^p K_p)                             & \mathcal{X}_G(K^p K_p') \arrow[l, "p_1"'] \arrow[r, "p_2"]                            & \mathcal{X}_G(K^p K_p)                            
\end{tikzcd}
\]
where $p_1$, $q_1$ are the natural forgetful maps, $p_2$ is induced from right-translation by $t$, $q_2$ is induced from right-translation by $w_1 t w_1^{-1}$, and the vertical arrows are induced from right-translation by $w_1$. Note that $p_1$, $p_2$, $q_1$, $q_2$ are all finite flat. The squares in this diagram are often not Cartesian, however we have the following property:

\begin{lemma} \label{p1p2inverseimageLemma}
    With notation as above, one has 
    \[
    p_1^{-1}\left( \mathcal{IG}_G(K^pJ_p)^{\opn{tor}} \right) \cap p_2^{-1}\left( \mathcal{IG}_G(K^pJ_p)^{\opn{tor}} \right) =  \mathcal{IG}_G(K^pJ_p')^{\opn{tor}} ,
    \] 
    where the intersection takes place in $\mathcal{X}_G(K^pK_p')$.
\end{lemma}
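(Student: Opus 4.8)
The plan is to prove the two inclusions separately. The inclusion ``$\supseteq$'' is formal: the commutative diagram preceding the statement shows that $p_1$ (resp.\ $p_2$) carries $\mathcal{IG}_G(K^pJ_p')^{\opn{tor}}\subset\mathcal{X}_G(K^pK_p')$ into $\mathcal{IG}_G(K^pJ_p)^{\opn{tor}}\subset\mathcal{X}_G(K^pK_p)$, namely via the map $q_1$ (resp.\ $q_2$) and the vertical open immersions. Hence $\mathcal{IG}_G(K^pJ_p')^{\opn{tor}}$ is contained in both $p_i^{-1}(\mathcal{IG}_G(K^pJ_p)^{\opn{tor}})$, and therefore in their intersection.

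For the reverse inclusion I would transport everything to the adic flag variety $\mathtt{FL}^G$ via the Hodge--Tate period map and reduce to a combinatorial statement on $\mathtt{FL}^G$, in the spirit of the proof of \cite[Proposition 5.2.1]{LZBK21}. Concretely: I would use the identification of $\mathcal{IG}_G(K^pJ_p)^{\opn{tor}}$ with $\mathcal{X}_{G,w_1,\beta}$ (the Example above), together with the analogous identification at level $K_p'$ of $\mathcal{IG}_G(K^pJ_p')^{\opn{tor}}$ with the ``generic ordinary'' locus $\bigcap_k \pi_{\opn{HT},K_p'}^{-1}\big(]C^G_{w_1}[_{k,k}\,K_p'\big)$. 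Then, invoking functoriality of $\pi_{\opn{HT}}$ in the level (for the forgetful maps $p_1$, $q_1$) and its equivariance under Hecke translation by $t$ (for $p_2$, $q_2$, recalling that $p_2$ is right-translation by $t$ and $q_2$ by $w_1tw_1^{-1}$), the conditions $x\in p_1^{-1}(\mathcal{IG}_G(K^pJ_p)^{\opn{tor}})$ and $x\in p_2^{-1}(\mathcal{IG}_G(K^pJ_p)^{\opn{tor}})$ translate into: $\pi_{\opn{HT},K_p'}(x)$ lands in the generic-ordinary tube around $C^G_{w_1}$ after projecting $\mathtt{FL}^G/K_p'\to\mathtt{FL}^G/K^G_{\opn{Iw}}(p^\beta)$, and the $t$-translate of $\pi_{\opn{HT},K_p'}(x)$ does the same. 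It then suffices to check, on rank-one points of $\mathtt{FL}^G$ lying near $C^G_{w_1}$, that a point satisfying both of these conditions already belongs to the tube taken at level $K_p'$; this is exactly where the group-theoretic identity $J_p'=(w_1tw_1^{-1})J_p(w_1tw_1^{-1})^{-1}\cap J_p$ recorded just before the lemma (equivalently $K_p'=tK_pt^{-1}\cap K_p$) is used, as it pins down which cosets of $K_p'$ inside $K^G_{\opn{Iw}}(p^\beta)$ survive the second condition.

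The main obstacle I anticipate is purely the bookkeeping in this last step: tracking the precise definitions of the partially compactified tubes $]C^G_{w_1}[_{m,n}$ of varying radii from \cite[\S 3.3.6]{BoxerPilloni}, the side on which $\pi_{\opn{HT}}$ is intertwined by the relevant right-translations, and the way $w_1$ mediates between the $K$-side and the $P_G(\mbb{Z}_p)$-side descriptions of the strata; once this is set up, the computation on $\mathtt{FL}^G$ that the intersection of the two translated tubes is cut out precisely at level $K_p'$ is local near $C^G_{w_1}$ and mechanical, with no conceptual difficulty beyond that already present in \cite[Proposition 5.2.1]{LZBK21}. (An alternative to the flag-variety computation would be a direct argument with the moduli interpretation over the good-reduction locus away from the boundary --- an ordinary abelian surface with its canonical filtration --- showing that the two conditions force the $K_p'$-level structure to be the canonical one, and then extending over the boundary; but the Hodge--Tate route handles the boundary uniformly and matches the cited reference, so I would take that one.)
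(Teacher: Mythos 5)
Your proposal follows essentially the same route as the paper: the containment $\supseteq$ is formal, and the reverse inclusion is checked on rank-one points by pulling back along $\pi_{\opn{HT},K_p'}$ and reducing to the statement that ${]C^G_{w_1}[_{l,l}}K_p \cap {]C^G_{w_1}[_{l,l}}K_p t^{-1} \subset {]C^G_{w_1}[_{k,k}}K_p'$ for $l \gg k$. The ``mechanical local computation'' you defer is precisely what the paper carries out, via the unique Iwahori-factorisation representatives $w_1\cdot n$ of points in these tubes, so your plan is correct and complete in outline.
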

\begin{proof}
    Clearly $\mathcal{IG}_G(K^pJ_p')^{\opn{tor}}$ is contained in this intersection, so it suffices to prove the reverse inclusion. Furthermore, it suffices to check this on rank one points. Consider the truncated Hodge--Tate period map 
    \[
    \pi_{\opn{HT}, K_p'} \colon \mathcal{X}_{G}(K^pK'_p) \to \mathtt{FL}^G/K_p' .
    \]
    Then (on rank one points):
    \begin{itemize}
        \item $p_1^{-1}\left( \mathcal{IG}_G(K^pJ_p)^{\opn{tor}} \right)$ is equal to the intersection over all $k \geq 1$ of $\pi_{\opn{HT}, K_p'}^{-1}\left( {]C^G_{w_1}[_{k, k}} K_p \right)$;
        \item $p_2^{-1}\left( \mathcal{IG}_G(K^pJ_p)^{\opn{tor}} \right)$ is equal to the intersection over all $k \geq 1$ of $\pi_{\opn{HT}, K_p'}^{-1}\left( {]C^G_{w_1}[_{k, k}} K_p t^{-1} \right)$ .
    \end{itemize}
    It therefore suffices to show that, for any $k \geq 1$, there exists $l \gg k$ such that
    \[
    {]C^G_{w_1}[_{l, l}} K_p \cap {]C^G_{w_1}[_{l, l}} K_p t^{-1} \subset {]C^G_{w_1}[_{k, k}} K_p' .
    \]
    Let $t = \opn{diag}(t_1, t_2, \nu t_2^{-1}, \nu t_1^{-1})$ and let $l \gg \beta$ and $l \gg \opn{max}(t) \defeq -\opn{min}\{ v_p(\alpha(t)) : \alpha \in \Phi_G^+ \}$. By the Iwahori factorisation for $K_p$, we see that an element $X \in {]C^G_{w_1}[_{l, l}} K_p$ is uniquely represented by 
    \[
    w_1 \cdot \left( \begin{smallmatrix} 1 & & & \\ x_1 & 1 & y & \\  & & 1 & \\ x_2 &  & -x_1 & 1 \end{smallmatrix} \right)
    \]
    with $x_1, x_2 \in p^{\beta}\mbb{Z}_p + \mathcal{B}_l^{\circ}$, $y \in \mathcal{B}_l$. Similarly, ${]C^G_{w_1}[_{l, l}} K_p t^{-1}$ is uniquely represented by 
    \[
    w_1 \cdot \left( \begin{smallmatrix} 1 & & & \\ t_2 t_1^{-1} x'_1 & 1 & \nu^{-1} t_2^2 y' & \\  & & 1 & \\ \nu t_1^{-2} x'_2 &  & -t_2t_1^{-1}x'_1 & 1 \end{smallmatrix} \right) .
    \]
    with $x_1', x_2' \in p^{\beta}\mbb{Z}_p + \mathcal{B}_l^{\circ}$, $y' \in \mathcal{B}_l$. Both of these are unique representations of the elements in the translate of the big Bruhat cell by $w_1$, hence if $X \in {]C^G_{w_1}[_{l, l}} K_p \cap {]C^G_{w_1}[_{l, l}} K_p t^{-1}$, we must have $x_1 = t_2t_1^{-1}x_1'$, $x_2 = \nu t_1^{-2} x_2'$ and $y=\nu^{-1}t_2^2 y'$, hence $X \in {]C^G_{w_1}[_{k, k}} K_p'$ as required. 
\end{proof}

\begin{remark}
    At no point in the proof of Lemma \ref{p1p2inverseimageLemma} did we use the fact that $t \in T^{G, -}$, and the proof also applies when $t \in T^{G, +}$.
\end{remark}

As a consequence of Lemma \ref{p1p2inverseimageLemma}, we see that $\{ p_1^{-1}(U) \cap p_2^{-1}(U') \}$ forms a cofinal system of quasi-compact open neighbourhoods of the closure of $\mathcal{IG}_G(K^pJ_p')^{\opn{tor}}$ as $U$, $U'$ run through quasi-compact open neighbourhoods of the closure of $\mathcal{IG}_G(K^pJ_p)^{\opn{tor}}$. Let $d \geq \opn{max}(t)$. We can therefore find sufficiently small $U$, $U'$ such that we have a diagram
\[
\begin{tikzcd}
{\mathcal{P}_{G, \opn{dR}, n+d, K_p'}} \arrow[d] \arrow[rr, "\phi_t"] &                               & {t^{-1}\mathcal{P}_{G, \opn{dR}, n, K_p''}} \arrow[d]  \\
{p_1^{-1}\mathcal{P}_{G, \opn{dR}, n+d, K_p}} \arrow[rd]              &                               & {p_2^{-1}\mathcal{P}_{G, \opn{dR}, n, K_p}} \arrow[ld] \\
                                                                      & p_1^{-1}(U) \cap p_2^{-1}(U') &                                                       
\end{tikzcd}
\]
where the vertical arrows are reductions of structure (here we have temporarily included the level subgroup in the notation and $t \colon \mathcal{X}_G(K^pK_p') \xrightarrow{\sim} \mathcal{X}_G(K^pK_p'')$ denotes the map induced from right-translation by $t$). The top horizontal arrow is induced from the morphism $\phi_t \colon p_1^{-1}P_{G, \opn{dR}}^{\opn{an}} \xrightarrow{\sim} p_2^{-1}P_{G, \opn{dR}}^{\opn{an}}$ given by the composition of the equivariant structure and right-translation by $w_1 t w_1^{-1}$ (through the torsor structure). This morphism fits into the commutative diagram
\[
\begin{tikzcd}
\mathcal{IG}_G(K^pU(J_p'))^{\opn{tor}} \arrow[d, "w_1tw_1^{-1}"'] \arrow[r] & {\mathcal{P}_{G, \opn{dR}, n+d, K_p'}} \arrow[r] \arrow[d, "\phi_t"] & {p_1^{-1}P_{G, \opn{dR}}^{\opn{an}}} \arrow[d, "\phi_t"] \\
t^{-1}\mathcal{IG}_G(K^pU(J_p''))^{\opn{tor}} \arrow[r]                     & {t^{-1}\mathcal{P}_{G, \opn{dR}, n, K_p''}} \arrow[r]                & {p_2^{-1}P_{G, \opn{dR}}^{\opn{an}}}                    
\end{tikzcd}
\]

\begin{definition} \label{PsitCohCorrespDef}
    Let $V$ be projective Banach $\overline{\mathcal{P}}_{G, n, \beta}^{\square}$-representation over $(A, A^+)$ which is additionally equipped with an action of $w_1 T^{G, -} w_1^{-1}$. Then we let $(w_1^{-1} \star \mbf{1})(t)\psi_t \colon p_2^*[V] \to p_1^*[V]$ denote the morphism (defined over $p_1^{-1}(U) \cap p_2^{-1}(U')$ for sufficiently small $U$, $U'$ as above) given by 
    \[
    p_2^*[V] = \left( (\pi'')_* \mathcal{O}_{t^{-1}\mathcal{P}_{G, \opn{dR}, n, K_p''}} \hatot V \right)^{\overline{\mathcal{P}}_{G, n}M(J_p'')} \xrightarrow{\phi_t^* \otimes (w_1 t w_1^{-1}) \cdot -} \left( (\pi')_* \mathcal{O}_{\mathcal{P}_{G, \opn{dR}, n+d, K_p'}} \hatot V \right)^{\overline{\mathcal{P}}_{G, n+d}M(J_p')} = p_1^*[V]
    \]
    where $\pi' \colon \mathcal{P}_{G, \opn{dR}, n+d, K_p'} \to p_1^{-1}(U) \cap p_2^{-1}(U')$ and $\pi'' \colon t^{-1}\mathcal{P}_{G, \opn{dR}, n, K_p''} \to p_1^{-1}(U) \cap p_2^{-1}(U')$ denote the structural maps.
\end{definition}

\begin{remark}
    The optimal normalisation of the trace map associated with $p_1$ in neighbourhoods of $\mathcal{IG}_G(K^pJ_p)^{\opn{tor}}$ is $(w_1^{-1}\star \mbf{1})(t)^{-1}\opn{Tr}_{p_1}$, which explains the appearance of the factor in the above definition.
\end{remark}

\subsection{Cohomology} \label{CohomologySubSec}

We now discuss various overconvergent cohomology groups. 

\subsubsection{Classical weight overconvergent cohomology}

We first introduce overconvergent cohomology complexes associated with the algebraic ``nearly'' representations appearing in Definition \ref{DefOfNearlyAlgCohComplexes}. So the notation is not overloaded, we will henceforth use the notation ``$\ddagger$'' to mean ``nearly overconvergent''.

\begin{definition} \label{DDaggerComplexClassicalDef}
    Suppose that we are in Situation \ref{situation:Weights}. We set:
    \begin{align*}
        R\Gamma( K^G_{\beta}, \kappa_G^*; \opn{cusp})^{(\ddagger, -)} &\defeq \varinjlim_U R\Gamma_{\mathcal{Z}_{G, <1, \beta} \cap U}(U, [\mbb{D}_{\overline{P}_G}(\kappa_G^*)^{\opn{nearly}}](-D_G) ) \\
        R\Gamma( K^H_{\beta}, \kappa_H^*; \opn{cusp})^{(\ddagger, -)} &\defeq \varinjlim_U R\Gamma_{\mathcal{Z}_{H, \opn{id}, \beta}}(U, [\mbb{D}_{\overline{P}_H}(\kappa_H^*)^{\opn{nearly}}](-D_H) ) \\
        R\Gamma( K^H_{\beta}, \kappa_H - 2\rho_H)^{(\ddagger, +)} &\defeq \varinjlim_U R\Gamma(U, [\mbb{I}_{\overline{P}_H}(\kappa_H)^{\opn{nearly}} \otimes V_{M_H}(-2\rho_H)] ) \\
        R\Gamma( K^{H_i}_{\beta}, \zeta_{H_i})^{(\ddagger, +)} &\defeq \varinjlim_U R\Gamma(U, [\mbb{I}_{\overline{P}_{\opn{GL}_2}}(\zeta_{H_i})^{\opn{nearly}}] ) .
    \end{align*}
    where the colimit is over all strict neighbourhoods $U$ of $\mathcal{X}_{G, w_1, \beta}$ (in the first case), $\mathcal{X}_{H, \opn{id}, \beta}$ (in the second and third cases), $\mathcal{X}_{H_i, \opn{id},\beta}$ (in the fourth case). We define the complexes $R\Gamma( K^G_{\beta}, \kappa_G^*; \opn{cusp})^{(\dagger, -)}$, $R\Gamma( K^H_{\beta}, \kappa_H^*; \opn{cusp})^{(\dagger, -)}$, $R\Gamma( K^H_{\beta}, \kappa_H - 2\rho_H)^{(\dagger, +)}$, $R\Gamma( K^{H_i}_{\beta}, \zeta_{H_i})^{(\dagger, +)}$ in exactly the same way replacing the nearly representations with the corresponding algebraic representations of the Levi subgroup.
\end{definition}

The complexes in Definition \ref{DDaggerComplexClassicalDef} carry an action of Hecke operators away from $p$ as well as actions of $T^{G, -}$, $T^{H, -}$, $T^{H, +}$, $T^{H_i, +}$ respectively through Hecke operators at $p$ (the $\pm$-superscript denotes the relevant submonoid). For example, for the first complex the Hecke action at $p$ is constructed using the (unnormalised) cohomological correspondence\footnote{Or more simply the algebraic version over $X_G(K^pK_p')$.} $(w_1^{-1} \star \mbf{1})(t)\psi_t$ in Definition \ref{PsitCohCorrespDef} in combination with the following lemma (using the general process described in \cite[\S 4]{UFJII} for example).

\begin{lemma} \label{OpenClosedSupportLemma}
    Set $K_p = K^G_{\opn{Iw}}(p^{\beta})$. Let $\mathcal{Z}_3^{-} \defeq \pi_{\opn{HT}, K_p}^{-1}\left( \overline{]X^G_{\opn{id}}[} \right)$ and $\mathcal{Z}_2^{-} \defeq \pi_{\opn{HT}, K_p}^{-1}\left( \overline{]X^G_{w_1}[} \right)$. Consider the correspondence associated with $t \in T^{G, -}$
    \begin{equation} \label{EqnCorrespFort}
\begin{tikzcd}
                       & \mathcal{X}_G(K^p K_p') \arrow[ld, "p_1"'] \arrow[rd, "p_2"] &                        \\
\mathcal{X}_G(K^p K_p) &                                                              & \mathcal{X}_G(K^p K_p)
\end{tikzcd}
    \end{equation}
    where $K_p' = tK_pt^{-1}\cap K_p$. Set $T(-) = p_2p_1^{-1}(-)$ and $T^t(-) = p_1 p_2^{-1}(-)$. Then one has:
    \begin{enumerate}
        \item $\bigcap_{m \geq 1} (T^t)^m(\mathcal{Z}^{-}_2) = \mathcal{Z}_{G, <1, \beta}$;
        \item $\bigcap_{n, m \geq 1} T^n(\mathcal{X}_{G}(K^pK_p) - \mathcal{Z}_3^{-}) \cap (T^t)^m(\mathcal{Z}^{-}_2) = \overline{\mathcal{X}}_{G, w_1, \beta}$;
        \item both $(\mathcal{X}_{G}(K^pK_p), \mathcal{Z}_2^{-})$ and $(\mathcal{X}_{G}(K^pK_p) - \mathcal{Z}_3^{-}, \mathcal{Z}_2^{-})$ are open/closed support conditions for the correspondence (\ref{EqnCorrespFort}), in the sense of \cite[\S 6.1.1]{HHTBoxerPilloni}.
    \end{enumerate}
\end{lemma}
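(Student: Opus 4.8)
The plan is to reduce everything to an explicit computation on the flag variety $\mathtt{FL}^G/K_p$ via the Hodge--Tate period map, exactly as in the proof of Lemma \ref{p1p2inverseimageLemma}. First I would recall that $\pi_{\opn{HT}, K_p}$ intertwines the Hecke correspondences $T$ and $T^t$ with the combinatorial correspondences on $\mathtt{FL}^G/K_p$ given by the double cosets $K_p t^{\pm 1} K_p$; concretely, $T^t$ translates a $K_p$-orbit $\mathcal{O} K_p$ to $\mathcal{O} K_p t^{-1} K_p$ (after contracting via $p_1$) and similarly for $T$. Since the various $\mathcal{Z}$'s are preimages of unions of (partially compactified) Bruhat tubes and their closures, the statements become statements about how iterated translation by $t$ (resp. $t^{-1}$) acts on tubes $]X^G_w[$, $]C^G_w[_{k,k}$, etc. For part (1), I would observe that $t \in T^{G,-}$ contracts the big cell $]C^G_{w_1}[_{k,k}$ towards the Klingen direction: applying $T^t$ repeatedly to $\mathcal{Z}_2^- = \pi_{\opn{HT},K_p}^{-1}(\overline{]X^G_{w_1}[})$ shrinks the $x_1$-coordinate (the one not killed by the Klingen unipotent) and in the limit cuts out precisely the locus where $H_1$ is multiplicative, i.e., $\mathcal{Z}_{G,<1,\beta}$; the argument is the $\mathtt{FL}$-level computation underlying the description of $\mathcal{Z}_{G,<1,\beta}$ in Definition \ref{DefOfOrdinaryStrata} and the remark following it, together with the fact that $\mathcal{X}_{G,w_1,\beta} = \mathcal{IG}_G(K^G_\beta)^{\opn{tor}}$ is stable under $T^t$ by Lemma \ref{p1p2inverseimageLemma} (and its remark, allowing $t \in T^{G,-}$ there).

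For part (2), I would combine (1) with the complementary contraction: intersecting with $\bigcap_n T^n(\mathcal{X}_G(K^pK_p) - \mathcal{Z}_3^-)$ removes, in the limit, the locus where the multiplicative rank of the $p$-divisible group is too small — i.e. it forces $H_2/H_1$ to be \'etale and $H_1$ to be multiplicative, which together with the output of (1) pins down exactly $\overline{\mathcal{X}}_{G,w_1,\beta}$, the closure of the ordinary $w_1$-stratum. Here one uses that $\mathcal{Z}_3^- = \pi_{\opn{HT},K_p}^{-1}(\overline{]X^G_{\opn{id}}[})$ and that $T$ (translation by $t$, which lies in $T^{G,-}$, but viewed through $p_1p_2^{-1}$) dilates in the opposite direction, so its iterates push a point away from the $w_{\opn{id}}$ (superspecial/$\mu$-ordinary-degenerate) boundary. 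The key combinatorial input is the same kind of Iwahori-factorisation coordinate bookkeeping as in Lemma \ref{p1p2inverseimageLemma}: write elements of the relevant tube uniquely as $w_1 \cdot (\text{lower unipotent in Klingen form})$ and track the valuations of the entries $x_1, x_2, y$ under conjugation by $t$.

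Finally, for part (3), I would verify the axioms of \cite[\S 6.1.1]{HHTBoxerPilloni} for open/closed support conditions directly: one needs that $\mathcal{Z}_2^-$ (resp. $\mathcal{Z}_2^-$ inside the open $\mathcal{X}_G(K^pK_p) - \mathcal{Z}_3^-$) is closed, that its $p_1$-preimage is contained in the $p_2$-preimage up to the correspondence, and the relevant ``no escape'' properness condition. The containment $p_1^{-1}(\mathcal{Z}_2^-) \subseteq p_2^{-1}(\mathcal{Z}_2^-)$ (on the nose, or up to the refinement encoded by $K_p'$) is again a tube computation showing that translating $\overline{]X^G_{w_1}[}$ by $t^{-1}$ stays inside $\overline{]X^G_{w_1}[}$ since $t^{-1} \in T^{G,+}$ expands towards the large cell; and since $X^G_{\opn{id}}$ is the smallest closed Bruhat stratum, its complement's preimage behaves well under the correspondence. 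The main obstacle will be part (2): keeping track of \emph{both} contracting and dilating directions simultaneously under iterated Hecke translation, and checking that the two limiting conditions are transverse in the sense that their intersection is exactly the closure $\overline{\mathcal{X}}_{G,w_1,\beta}$ rather than something larger (one must rule out spurious components supported on lower-dimensional boundary strata or on the non-ordinary locus). I expect this to require a careful analysis of the partially compactified tubes $]C^G_w[_{\overline{m},n}$ near the boundary, following \cite[\S 3.3.6]{BoxerPilloni} and the analogous arguments in \cite[\S 6]{HHTBoxerPilloni}.
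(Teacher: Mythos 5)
Your plan follows the same underlying strategy as the paper: reduce to the dynamics of the correspondence on $\mathtt{FL}^G/K_p$ via $\pi_{\opn{HT},K_p}$ and track tubes under iterated translation by $t^{\pm 1}$ using the Iwahori factorisation. The paper itself does almost none of this explicitly — it cites \cite[Proposition 6.1.11]{HHTBoxerPilloni} for parts (2) and (3) (noting only that one needs the ``$-$''-analogue of the argument there) and for the computation $\bigcap_m \overline{]X^G_{w_1}[}\,K_p t^{-m}K_p = \overline{\mathcal{X}^G_{w_1}K_p}$ in part (1); so your proposal to redo the tube bookkeeping from scratch is more labour than the paper expends, but it is the same method.

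Two concrete points need repair. First, for part (1) the one genuinely new ingredient in the paper is the identification $\mathcal{X}^G_{w_1} = P_G^{\opn{an}}\backslash P_G^{\opn{an}}\mathcal{K}$, where $\mathcal{K}$ is the analytic Klingen parabolic: this is the bridge between the limit of the Hecke iteration (which is naturally expressed in terms of the closed Bruhat stratum $X^G_{w_1} = C^G_{\opn{id}} \cup C^G_{w_1}$) and $\mathcal{Z}_{G,<1,\beta}$, which is \emph{defined} as the closure of $\pi_{\opn{HT},K_p}^{-1}(P_G^{\opn{an}}\mathcal{K}K_p)$. Your appeal to the moduli description (``$H_1$ multiplicative'') only covers the good-reduction locus away from the boundary, as the Remark after Definition \ref{DefOfOrdinaryStrata} makes explicit, so it cannot by itself give the equality of closed subsets of the full toroidal compactification; you need the flag-variety identification. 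Second, in part (3) your stated containment $p_1^{-1}(\mathcal{Z}_2^-) \subseteq p_2^{-1}(\mathcal{Z}_2^-)$ is reversed: for the ``$-$''-theory the closed support condition is that $T^t = p_1p_2^{-1}$ preserves $\mathcal{Z}_2^-$, i.e.\ $p_2^{-1}(\mathcal{Z}_2^-)\cap p_1^{-1}(U) \subseteq p_1^{-1}(\mathcal{Z}_2^-)$ (this is also what makes the intersection in part (1) decreasing), while the open condition is $p_1^{-1}(U)\subseteq p_2^{-1}(U)$. Your geometric justification (right translation by $K_pt^{-1}K_p$ preserves $\overline{]X^G_{w_1}[}\,K_p$) actually supports the correct direction, so this is a mis-statement rather than a wrong idea, but as written the verification of the axioms of \cite[\S 6.1.1]{HHTBoxerPilloni} would not go through.
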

\begin{proof}
    This follows from the same strategy as in \cite[Proposition 6.1.11]{HHTBoxerPilloni} (although note that \emph{loc.cit.} is for the ``$+$''-version of higher Coleman theory). More precisely, parts (2) and (3) follow immediately from the analogous ``$-$''-version of the proof in \emph{loc.cit.}. For part (1), the proof in \emph{loc.cit.} shows that $\bigcap_{m \geq 1} \overline{]X^G_{w_1}[} K_p t^{-m} K_p$ is equal to the closure of $\mathcal{X}_{w_1}^G K_p$, where $\mathcal{X}_{w_1}^G \subset \mathtt{FL}^G$ denotes the analytic closed subset (the analytic version of $X_{w_1}^G \subset \mathrm{FL}_{G, \mbb{F}_p}$). But one can easily observe that $\mathcal{X}^G_{w_1} = P_G^{\opn{an}} \backslash P_G^{\opn{an}} \mathcal{K}$, where $\mathcal{K}$ denotes the analytic Klingen parabolic as in Definition \ref{DefOfOrdinaryStrata}.
\end{proof}

\subsubsection{\texorpdfstring{$p$}{p}-adic weight overconvergent cohomology}

We now define the analogous cohomology complexes as in Definition \ref{DDaggerComplexClassicalDef} for the $p$-adic representations appearing in \S \ref{padicRepTheorySubSec}. 

\begin{definition} \label{DDaggerComplexPADICDef}
    Let $(A, A^+)$ be a complete Tate affinoid pair over $(\mbb{Q}_p, \mbb{Z}_p)$. Let $n \geq 1$ be an integer and let $\kappa_G, \kappa_H \colon T(\mbb{Z}_p) \to A^{\times}$ and $\zeta_{H_i} \colon T_{\opn{GL}_2}(\mbb{Z}_p) \to A^{\times}$ be $n$-analytic characters. We set:
    \begin{align*}
        R\Gamma( K^G_{\beta}, \kappa_G^*; \opn{cusp})^{(\ddagger, n\opn{-an}, -)} &\defeq \varinjlim_U R\Gamma_{\mathcal{Z}_{G, <1, \beta} \cap U}(U, [\mathbb{D}_{\overline{\mathcal{P}}_{G, n+1, \beta}^{\square}}(\kappa^*_G)](-D_G) ) \\
        R\Gamma( K^H_{\beta}, \kappa_H^*; \opn{cusp})^{(\ddagger, n\opn{-an}, -)} &\defeq \varinjlim_U R\Gamma_{\mathcal{Z}_{H, \opn{id}, \beta}}(U, [\mbb{D}_{\overline{\mathcal{P}}_{H, n+1, \beta}^{\diamondsuit}}(\kappa_H^*)](-D_H) ) \\
        R\Gamma( K^H_{\beta}, \kappa_H - 2\rho_H)^{(\ddagger, n\opn{-an}, +)} &\defeq \varinjlim_U R\Gamma(U, [\mbb{I}_{\overline{\mathcal{P}}_{H, n, \beta}^{\diamondsuit}}(\kappa_H) \otimes V_{M_H}(-2\rho_H)] ) \\
        R\Gamma( K^{H_i}_{\beta}, \zeta_{H_i})^{(\ddagger, n\opn{-an}, +)} &\defeq \varinjlim_U R\Gamma(U, [\mbb{I}_{\overline{\mathcal{P}}^{\square}_{\opn{GL}_2, n}}(\zeta_{H_i})] ) .
    \end{align*}
    where the colimit is over all strict neighbourhoods $U$ of $\mathcal{X}_{G, w_1, \beta}$ (in the first case), $\mathcal{X}_{H, \opn{id}, \beta}$ (in the second and third cases), $\mathcal{X}_{H_i, \opn{id},\beta}$ (in the fourth case) over which the sheaves are defined. We define the complexes $R\Gamma( K^G_{\beta}, \kappa_G^*; \opn{cusp})^{(\dagger, n\opn{-an}, -)}$, $R\Gamma( K^H_{\beta}, \kappa_H^*; \opn{cusp})^{(\dagger, n\opn{-an}, -)}$, $R\Gamma( K^H_{\beta}, \kappa_H - 2\rho_H)^{(\dagger, n\opn{-an}, +)}$, $R\Gamma( K^{H_i}_{\beta}, \zeta_{H_i})^{(\dagger, n\opn{-an}, +)}$ in exactly the same way replacing the nearly $p$-adic representations with the versions for the Levi subgroups (i.e., $\mathbb{D}_{\mathcal{M}_{G, n+1, \beta}^{\square}}(\kappa^*_G)$, $\mbb{D}_{\mathcal{M}_{H, n+1, \beta}^{\diamondsuit}}(\kappa_H^*)$, etc.).
\end{definition}

The cohomology complexes in Definition \ref{DDaggerComplexPADICDef} carry an action of the Hecke algebra away from $p$, and also actions of $T^{G, -}$, $T^{H, -}$, $T^{H, +}$, $T^{H_i, +}$ respectively via the (analogous versions of the) normalised cohomological correspondences $\psi_t$ in Definition \ref{PsitCohCorrespDef}.

\begin{convention} \label{ConventionOnUnnormalisedAction}
    Suppose that we are in Situation \ref{situation:Weights}. From now on, we will often equip the complex $R\Gamma( K^G_{\beta}, \kappa_G^*; \opn{cusp})^{(\ddagger, n\opn{-an}, -)}$ with the ``unnormalised'' action of $T^{G, -}$ given by $(w_1^{-1} \star \mbf{1})(t)\kappa_G^*(w_1tw_1^{-1}) t \cdot -$, where $\cdot$ denotes the action above. With this action, the natural map
    \[
    R\Gamma( K^G_{\beta}, \kappa_G^*; \opn{cusp})^{(\ddagger, n\opn{-an}, -)} \to R\Gamma( K^G_{\beta}, \kappa_G^*; \opn{cusp})^{(\ddagger, -)}
    \]
    is $T^{G, -}$-equivariant. We also use similar conventions for the latter three complexes in Definition \ref{DDaggerComplexPADICDef} (as well as the overconvergent versions) so that the natural maps between these complexes and the versions in Definition \ref{DDaggerComplexClassicalDef} are equivariant for the Hecke operators at $p$.
\end{convention}

\subsubsection{Results from higher Coleman theory} \label{ResultsFromHCTSSsec}

In this section, we describe certain classicality results from higher Coleman theory which are relevant for the construction of the $p$-adic $L$-functions. We begin with the following lemma.

\begin{lemma} \label{BigSetOfWeightsLemma}
    Let $\kappa_G$ be as in Situation \ref{situation:Weights}. Then the weights of $\mbb{D}_{\overline{P}_G}(\kappa_G^*)^{\opn{nearly}}$ are
    \begin{equation} \label{SetOfWeightNEarly}
    \bigcup_{(t_1, t_2) \in \Sigma} \bigcup_{\substack{\delta_1, \delta_2 \in \{0,1, 2\}\\ \delta_1+\delta_2 =2}} \left\{ (2i-t_1-\delta_1, 2j-t_2-\delta_2; \xi) : \begin{array}{c} i=0, \dots, t_1 \\ j=0, \dots, t_2 \\ i+j \leq \frac{t_1+t_2}{2} + \frac{r_1-r_2}{2} \\ 2\xi = r_1+r_2 + t_1 + t_2 -2(i+j-1) \end{array} \right\} 
    \end{equation}
    where $\Sigma$ is the following set:
    \[
    \Sigma = \left\{ (t_1, t_2) \in \mbb{Z}_{\geq 0} \times \mbb{Z}_{\geq 0} : \begin{array}{c} t_1 + t_2 \equiv r_1 + r_2 \text{ modulo } 2 \\ r_1 - r_2 \leq t_1+t_2 \leq r_1+r_2 \\ |t_1 - t_2| \leq r_1 - r_2 \end{array} \right\} .
    \]
    Moreover, for any $\kappa$ in the set of weights in (\ref{SetOfWeightNEarly}), one has
    \[
    v_p(\kappa_G^*(t)) \leq v_p(\kappa(t))
    \]
    for any $t \in T^{G, -}$.
\end{lemma}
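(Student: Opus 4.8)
The statement has two parts: identifying the weights of $\mbb{D}_{\overline{P}_G}(\kappa_G^*)^{\opn{nearly}}$, and verifying the valuation inequality. I would begin with the first part by unwinding the definition $\mbb{I}_{\overline{P}_G}(\kappa_G)^{\opn{nearly}} = \left( V_G(\nu_G) / \opn{Fil}_{-(r_1+1)} V_G(\nu_G) \right) \otimes V_{M_G}(2, 0; -1)$ from Situation \ref{situation:Weights}. Passing to linear duals, the weights of $\mbb{D}_{\overline{P}_G}(\kappa_G^*)^{\opn{nearly}}$ are the negatives of the weights of $\mbb{I}_{\overline{P}_G}(\kappa_G)^{\opn{nearly}}$, so it suffices to compute the $T$-weights of $V_G(\nu_G)/\opn{Fil}_{-(r_1+1)}V_G(\nu_G)$ and of $V_{M_G}(2,0;-1)$ and add. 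The latter is elementary (the $M_G$-representation $V_{M_G}(2,0;-1)$ decomposes as $\opn{Sym}^2(\text{std})\otimes\det^{-1}$ on the upper $\opn{GL}_2$-block, with weights $(2,0;-1)$, $(1,1;-1)$, $(0,2;-1)$). For the former: $V_G(\nu_G)$ restricted to $M_G = \opn{GL}_2 \times \mbb{G}_m$ decomposes by branching $\opn{GSp}_4 \downarrow \opn{GL}_2\times\mbb{G}_m$, and the filtration $\opn{Fil}_\bullet$ is by the central $\mbb{G}_m$-weight on the Siegel block as in the Notation preceding Situation \ref{situation:Weights}; so the quotient $V_G(\nu_G)/\opn{Fil}_{-(r_1+1)}$ keeps exactly the $M_G$-constituents whose $\opn{Fil}$-grading is $\geq -r_1$. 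The set $\Sigma$ should arise precisely as the highest weights $(t_1,t_2)$ of the $\opn{GL}_2$-constituents appearing in this truncated branching, and the congruence and inequality constraints defining $\Sigma$ (parity mod $2$, $r_1-r_2 \le t_1+t_2 \le r_1+r_2$, $|t_1-t_2|\le r_1-r_2$) are exactly the classical Littlewood--Richardson / $\opn{GSp}_4 \downarrow \opn{GL}_2\times\opn{GL}_2$ branching rule combined with the filtration truncation. The weights within each constituent $V_{\opn{GL}_2}(t_1)\boxtimes V_{\opn{GL}_2}(t_2)$ are $(2i-t_1, 2j-t_2)$ for $0\le i\le t_1$, $0\le j\le t_2$, and tensoring with the three weights of $V_{M_G}(2,0;-1)$ (encoded by $\delta_1,\delta_2 \in \{0,1,2\}$ with $\delta_1+\delta_2=2$) together with tracking the central/similitude character produces the displayed formula, with the condition $i+j \le \tfrac{t_1+t_2}{2}+\tfrac{r_1-r_2}{2}$ coming from the $\opn{Fil}_{-r_1}$-truncation and $2\xi = r_1+r_2+t_1+t_2-2(i+j-1)$ from the constraint that $\nu_G$ has central character $-(r_1+r_2)$ and the twist by $(2,0;-1)$ shifts it.

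For the second part, I would argue as follows. Write $\kappa = (a, b; \xi)$ for a weight in the set (\ref{SetOfWeightNEarly}), with $a = 2i - t_1 - \delta_1$, $b = 2j - t_2 - \delta_2$, $\xi$ as prescribed. Recall $\kappa_G = (r_2+2, -r_1; -r_2-1)$ from Situation \ref{situation:Weights}, so $\kappa_G^* = (-r_2-2, r_1; r_2+1)$ (the linear dual negates the weight; one must be careful about the precise sign/Weyl-twist conventions in the definition of $\mbb{D}$, but the upshot is that $\kappa_G^*$ is a specific explicit tuple). For $t = \opn{diag}(t_1', t_2', \nu (t_2')^{-1}, \nu (t_1')^{-1}) \in T^{G,-}$, the condition $t \in T^{G,-}$ means $v_p$ of each positive root evaluated at $w_1^{-1} t w_1$ is $\le 0$ — concretely this pins down inequalities among $v_p(t_1'), v_p(t_2'), v_p(\nu)$. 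Then $v_p(\kappa(t)) - v_p(\kappa_G^*(t))$ is a $\mbb{Z}$-linear form in $v_p(t_1'), v_p(t_2'), v_p(\nu)$ with coefficients depending on $(a-(\text{first coord of }\kappa_G^*))$, etc. The claim is that this linear form is $\ge 0$ on the cone $T^{G,-}$. I would verify this by checking it on the extremal rays (generators) of the monoid $w_1 T^{G,-}w_1^{-1}$ — there are finitely many, corresponding to the fundamental coweights — and reducing to the inequalities $i+j \le \tfrac{t_1+t_2}{2}+\tfrac{r_1-r_2}{2}$, $0\le i\le t_1$, $0 \le j \le t_2$, $\delta_1+\delta_2=2$, together with the bounds in $\Sigma$. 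Essentially one is checking that $\kappa_G^*$ is the ``lowest'' weight of $\mbb{D}_{\overline{P}_G}(\kappa_G^*)^{\opn{nearly}}$ with respect to the partial order induced by pairing with $T^{G,-}$, which is plausible since $\kappa_G$ is the distinguished highest weight quotient and dualizing reverses the order.

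The main obstacle, I expect, is bookkeeping the sign and similitude-character conventions correctly: the definitions of $V_G(\lambda)$, $\mbb{I}_{\overline{P}_G}(\kappa_G)^{\opn{nearly}}$, and the dual $\mbb{D}$ all carry Weyl-element twists ($w_1$, $w_G^{\opn{max}}$, $w_{M_G}^{\opn{max}}$) and the group $\opn{GSp}_4$ rather than $\opn{Sp}_4$ forces one to track the central $c$-coordinate throughout. Getting the branching $\opn{GSp}_4 \downarrow M_G$ compatible with the $\opn{Fil}$-filtration indexing exactly right — so that the truncation at $\opn{Fil}_{-r_1}$ produces precisely the inequality $i + j \le (t_1+t_2)/2 + (r_1-r_2)/2$ and not an off-by-one variant — is the delicate computational heart. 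The valuation inequality itself, once the weight set is correctly described, should reduce to a short finite check on the generators of the antidominant cone, using that $\delta_1 + \delta_2 = 2$ forces the $V_{M_G}(2,0;-1)$-twist contributes the same amount to both $\kappa$ and (implicitly) to $\kappa_G$.
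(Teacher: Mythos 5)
Your overall strategy coincides with the paper's: the weight set is obtained from the known branching rule for $V_G(\nu_G)$ (the paper simply cites \cite[Proposition 4.3.1]{LSZ17}, which is exactly the $\opn{GSp}_4\downarrow \opn{GL}_2\times_{\mbb{G}_m}\opn{GL}_2$ rule you invoke, with the $\opn{Fil}_{-r_1}$-truncation producing the $i+j$ bound and the twist by $V_{M_G}(2,0;-1)$ producing the $\delta_1,\delta_2$), and the valuation inequality is checked on generators of the antidominant cone. The paper uses the explicit generators $t_{\opn{S}}=\opn{diag}(1,1,p,p)$ and $t_{\opn{Kl}}=\opn{diag}(1,p,p,p^2)$ of $T^{G,-}$ modulo the centre; the $t_{\opn{S}}$-inequality reduces to the $i+j$ constraint and the $t_{\opn{Kl}}$-inequality to $t_1\leq r_1$, which follows from $\Sigma$. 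One small point: the relevant cone is $T^{G,-}$ itself (positive roots evaluated at $t$, not at $w_1^{-1}tw_1$), so the generators above are the ones to use.

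There is, however, one concrete error that would derail your verification if carried through: you take $\kappa_G^*=(-r_2-2,r_1;r_2+1)=-\kappa_G$, i.e.\ the \emph{lowest} weight of $V_{M_G}(\kappa_G)^*$, whereas the character denoted $\kappa_G^*$ in the lemma is the \emph{highest} weight of the dual, namely $-w_{M_G}^{\opn{max}}\kappa_G=(r_1,-(r_2+2);r_2+1)$. The two agree on $t_{\opn{S}}$ (only the similitude coordinate matters there), but on $t_{\opn{Kl}}$ they give $v_p=r_1+2r_2+2$ versus $v_p=r_2$ respectively, and with your value the claimed inequality is simply false: one would need $r_1+r_2+t_1+\delta_1-2i\geq r_1+2r_2+2$, which fails already for $i=t_1$. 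You did flag the Weyl-twist convention as a danger point, but since the correct identification of $\kappa_G^*$ is exactly what makes the Klingen-direction inequality reduce to $t_1\leq r_1$, this is the one place where the bookkeeping is not optional. With $\kappa_G^*=(r_1,-(r_2+2);r_2+1)$ in hand, the rest of your plan goes through as in the paper.
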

\begin{proof}
    The first part follows from \cite[Proposition 4.3.1]{LSZ17}. For the last claim, recall that $\kappa_G^* = (r_1, -(r_2+2); r_2+1)$. It suffices to check the inequality for the following matrices
    \[
    t_{\opn{S}} = \opn{diag}(1, 1, p, p), \quad t_{\opn{Kl}} = \opn{diag}(1, p, p, p^2)
    \]
    since these matrices and the centre generate $T^{G, -}$. One can easily compute:
    \begin{align*}
        v_p(\kappa_G^*(t_{\opn{S}})) = r_2 + 1, \quad & \quad v_p(\kappa(t_{\opn{S}})) = \frac{r_1+r_2}{2} + \frac{t_1+t_2}{2} - (i+j) + 1, \\ 
        v_p(\kappa_G^*(t_{\opn{Kl}})) = r_2, \quad & \quad v_p(\kappa(t_{\opn{Kl}})) = r_1+r_2 + t_1 + \delta_1 - 2i . 
    \end{align*}
    The inequality $v_p(\kappa_G^*(t_{\opn{S}})) \leq v_p(\kappa(t_{\opn{S}}))$ follows from the inequality involving $i+j$ in the definition (\ref{SetOfWeightNEarly}). For the inequality $v_p(\kappa_G^*(t_{\opn{Kl}})) \leq v_p(\kappa(t_{\opn{Kl}}))$, we note that the conditions in $\Sigma$ imply that $t_1 \leq r_1$, hence
    \[
    r_1 + r_2 + t_1 +\delta_1 - 2i \geq r_1 + r_2 - t_1 \geq r_2
    \]
    as required.
\end{proof}

Let $\kappa_G$ be as in Situation \ref{situation:Weights} and recall the definition of ${^MW_G}$ from \S \ref{FlagVarietyStrataSSec}. Let $\lambda \colon T^{G, -} \to \Qpb^{\times}$ be a monoid homomorphism. We say that $\lambda$ satisfies $\mathrm{ss}_{w_1}^M(\kappa_G^*)$ if: for any $w \in {^MW_G}$ with $w \neq w_1$, there exists an element $y \in T^{G, -}$ such that $v_p(\lambda(y)) < v_p((w^{-1} \star \kappa_G^*)(y))$. We now state the first classicality result that we need. 

\begin{proposition} \label{NearlyClassicalityProp}
    Let $\beta \geq 1$ be an integer and $\kappa_G$ as in Situation \ref{situation:Weights}. Consider the following morphisms
    \[
    \opn{H}^2\left( K^G_{\beta}, \kappa_G^*; \opn{cusp} \right)^{(\ddagger, -)} \xleftarrow{\opn{res}} \opn{H}^2_{\mathcal{Z}_{G, <1, \beta}}(\mathcal{X}_{G}(K^G_{\beta}), [\mbb{D}_{\overline{P}_G}(\kappa_G^*)^{\opn{nearly}}](-D_G)) \xrightarrow{\opn{cores}} \opn{H}^2\left( K^G_{\beta}, \kappa_G^*; \opn{cusp} \right)^{\opn{nearly}} .
    \]
    where the first map is induced from restriction and the second map is induced from corestriction (and rigid GAGA). Then:
    \begin{enumerate}
        \item The above morphisms are $T^{G, -}$-equivariant, and for any $t \in T^{G, --}$ and $h \in \mbb{Q}$, all three cohomology groups admit slope $\leq h$ decompositions with respect to the action of $t$.
        \item The right-hand map is surjective on finite-slope parts.
        \item The left-hand map is an isomorphism on small slope $\mathrm{ss}_{w_1}^M(\kappa_G^*)$ parts.
    \end{enumerate}
\end{proposition}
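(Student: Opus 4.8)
The plan is to follow the standard template of higher Coleman theory as developed in \cite{BoxerPilloni}, adapted to the nearly overconvergent setting, using Lemma \ref{OpenClosedSupportLemma} and Lemma \ref{BigSetOfWeightsLemma} as the key inputs.

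\emph{Part (1).} First I would observe that the $T^{G,-}$-equivariance is essentially built into the constructions: on the source $\opn{H}^2_{\mathcal{Z}_{G,<1,\beta}}$ the Hecke operators at $p$ are defined via the cohomological correspondences $\psi_t$ of Definition \ref{PsitCohCorrespDef} together with the open/closed support conditions of Lemma \ref{OpenClosedSupportLemma}(3), and both the restriction map to the colimit over strict neighbourhoods of $\mathcal{X}_{G,w_1,\beta}$ and the corestriction map to the GAGA-identified classical coherent cohomology intertwine these with the corresponding operators (using Convention \ref{ConventionOnUnnormalisedAction} for the normalisations). For the slope decompositions, the point is that for $t\in T^{G,--}$ the operator $U_t$ acts through a \emph{compact} (completely continuous) operator on the relevant Banach/Fréchet complexes: for the overconvergent groups this is the standard potent compactness coming from the fact that $p_1$-trace composed with $\psi_t$ improves the radius of overconvergence (as in \cite[\S 5]{BoxerPilloni}), and for the classical nearly holomorphic cohomology it is finite-dimensional so the claim is automatic. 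Hence each of the three groups admits a slope $\leq h$ decomposition for every $h\in\mbb{Q}$, and these are compatible under the two maps by equivariance. I would cite \cite[\S 5.5--5.6, \S 6]{BoxerPilloni} for the formalism of slope decompositions on these complexes, noting that the presence of the auxiliary finite-dimensional factor $V_{M_G}(2,0;-1)$ in the nearly representation (see the definition of $\mbb{I}_{\overline{P}_G}(\kappa_G)^{\opn{nearly}}$) does not affect compactness since it only changes the sheaf by a locally free twist.

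\emph{Part (2).} The surjectivity of the corestriction map on finite-slope parts is a standard consequence of the long exact sequence relating cohomology with support in $\mathcal{Z}_{G,<1,\beta}$, cohomology of the ambient space (which by GAGA is the classical coherent cohomology $\opn{H}^2(K^G_\beta,\kappa_G^*;\opn{cusp})^{\opn{nearly}}$), and cohomology with support in the complement. One takes the excision triangle for the pair $(\mathcal{X}_G(K^G_\beta),\mathcal{Z}_{G,<1,\beta})$, and the third term — cohomology with support in the open complement of (a neighbourhood of) $\mathcal{Z}_{G,<1,\beta}$ — is supported on a union of the non-$w_1$ strata; by the analysis of Lemma \ref{OpenClosedSupportLemma} these are swept away by the $T^{G,-}$-dynamics, so after passing to the finite-slope part with respect to a suitable $t\in T^{G,--}$ the obstruction term vanishes (its slopes are forced to be $+\infty$, i.e. the finite-slope part is zero), giving surjectivity of $\opn{cores}$ on finite-slope parts. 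This is exactly the argument of \cite[\S 6.4]{BoxerPilloni} run in the ``$-$''-direction; the nearly sheaf plays no special role beyond being a Banach sheaf to which that machinery applies.

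\emph{Part (3), the main obstacle.} The hard part is showing that restriction is an isomorphism on small-slope $\mathrm{ss}_{w_1}^M(\kappa_G^*)$ parts. The strategy is the usual one: analyse the cohomology via the Bruhat stratification of the flag variety pulled back along $\pi_{\opn{HT}}$, so that the cone/cofibre of the restriction map is built out of (shifted) cohomology of the other strata $]C^G_w[$ for $w\in{^MW_G}$, $w\neq w_1$, with coefficients in the nearly sheaf. On each such piece the operator $U_t$ acts with slopes bounded below by $v_p((w^{-1}\star\kappa_G^*)(t))$ \emph{shifted by the weights of the nearly representation} — and this is precisely where Lemma \ref{BigSetOfWeightsLemma} enters: the second part of that lemma, $v_p(\kappa_G^*(t))\leq v_p(\kappa(t))$ for all weights $\kappa$ of $\mbb{D}_{\overline{P}_G}(\kappa_G^*)^{\opn{nearly}}$ and all $t\in T^{G,-}$, guarantees that passing from the Levi representation to the nearly representation only \emph{increases} the relevant slope lower bounds, so the small-slope condition $\mathrm{ss}_{w_1}^M(\kappa_G^*)$ (which was designed for the non-nearly setting) still suffices to kill all the error terms. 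Concretely, for each $w\neq w_1$ one uses the element $y\in T^{G,-}$ provided by the definition of $\mathrm{ss}_{w_1}^M(\kappa_G^*)$ with $v_p(\lambda(y))<v_p((w^{-1}\star\kappa_G^*)(y))$, observes via Lemma \ref{BigSetOfWeightsLemma} that the contribution of the $w$-stratum to the cone has $U_y$-slopes $\geq v_p((w^{-1}\star\kappa_G^*)(y))$, and concludes that the small-slope part of the cone vanishes; hence $\opn{res}$ is an isomorphism on small-slope parts. The genuine work is in correctly bookkeeping the Kodaira--Spencer/Cousin-complex degree shifts (the factor $w_1$ singling out degree $2$) and the precise slope contributions of the partially compactified tubes $]C^G_w[_{m,n}$, for which I would follow \cite[\S 5.4--5.5, \S 6.3]{BoxerPilloni} essentially verbatim, the only new ingredient being the weight comparison of Lemma \ref{BigSetOfWeightsLemma}.
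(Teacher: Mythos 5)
Your parts (1) and (3) follow essentially the same route as the paper: compactness of the correspondences via Lemma \ref{OpenClosedSupportLemma} and the formalism of \cite[\S 6.1.1]{HHTBoxerPilloni} for the slope decompositions, and for (3) the spectral sequence of the stratification $\mathcal{Z}_4^- \subset \mathcal{Z}_3^- \subset \mathcal{Z}_2^- \subset \cdots$ together with the slope lower bounds of \cite[Corollary 6.2.10]{HHTBoxerPilloni} applied to the graded pieces of the nearly sheaf, with Lemma \ref{BigSetOfWeightsLemma} guaranteeing that the nearly coefficients only improve the bounds. One small imprecision in (3): the cone of $\opn{res}$ (after passing to finite-slope parts) is $R\Gamma_{\mathcal{Z}_3^-}$, which is supported only on the closure of the tube of the \emph{identity} stratum, not on all strata $w \neq w_1$; your over-inclusive bookkeeping is harmless since controlling all $w \neq w_1$ in particular controls $w = \opn{id}$, but a careful write-up should isolate which stratum actually appears.

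There is, however, a genuine error in your justification of part (2). You claim that the third term of the excision triangle --- the cohomology of the complement of $\mathcal{Z}_{G,<1,\beta}$ (equivalently, on finite-slope parts, of $\mathcal{X}_G(K^G_\beta) - \mathcal{Z}_2^-$) --- has vanishing finite-slope part because ``its slopes are forced to be $+\infty$.'' This is false: that complement contains the tubes of the strata $C^G_{w_2}$ and $C^G_{w_3}$, and their finite-slope cohomology in the ``$-$''-theory is decidedly nonzero --- it is exactly where the degree-$0$ and degree-$1$ graded pieces $S^0(\underline{\pi})$, $S^1(\underline{\pi})$ of the eigenvariety live (cf.\ Definition \ref{ColemanFamilyFORGSp4}), and these are free of rank one for a general-type $\pi$. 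The correct reason the obstruction term dies is a \emph{cohomological amplitude} statement, not a slope statement: by the vanishing theorem \cite[Theorem 5.6.1]{BoxerPilloni} applied to the two strata covering the complement, the finite-slope cohomology $\opn{H}^i(\mathcal{X}_G(K^G_\beta) - \mathcal{Z}_2^-, \mathscr{F})^{\opn{fs}}$ vanishes for $i \geq 2$, so it cannot obstruct surjectivity of $\opn{cores}$ in degree $2$ even though it is nonzero in degrees $0$ and $1$. As written, your argument would ``prove'' that $\opn{cores}$ is surjective on finite-slope parts in every degree, which is not true (it fails in degrees $0$ and $1$), so the degree-$2$ hypothesis must enter, and it enters precisely through this amplitude bound.
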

\begin{proof}
    The fact that the morphisms are $T^{G, -}$-equivariant follows from the fact that they are built are from the same cohomological correspondence. Moreover, Lemma \ref{OpenClosedSupportLemma} and \cite[\S 6.1.1]{HHTBoxerPilloni} imply that for $t \in T^{G, --}$:
    \begin{itemize}
        \item The square of the operator corresponding to $t$ acts compactly on $R\Gamma\left( K^G_{\beta}, \kappa_G^*; \opn{cusp}\right)^{(\ddagger, -)}$ and hence this complex admits slope $\leq h$ decompositions with respect to the action of $t$ for any $h \in \mbb{Q}$. In particular, the finite slope part $R\Gamma\left( K^G_{\beta}, \kappa_G^*; \opn{cusp}\right)^{(\ddagger, -, \opn{fs})}$ is quasi-isomorphic to the finite slope part of 
        \[
        R\Gamma_{\mathcal{Z}_2^{-}-\mathcal{Z}_3^{-}}\left( \mathcal{X}_G(K^G_{\beta}) - \mathcal{Z}_3^{-}, [\mbb{D}_{\overline{P}_G}(\kappa_G^*)^{\opn{nearly}}](-D_G) \right) .
        \]
        \item The square of the operator corresponding to $t$ acts compactly on 
        \begin{equation} \label{IntermediateCohComplexEqn}
        R\Gamma_{\mathcal{Z}_{G, <1, \beta}}(\mathcal{X}_{G}(K^G_{\beta}), [\mbb{D}_{\overline{P}_G}(\kappa_G^*)^{\opn{nearly}}](-D_G))
        \end{equation}
        and hence this complex admits slope $\leq h$ decompositions with respect to the action of $t$ for any $h \in \mbb{Q}$. In particular, the finite slope part of (\ref{IntermediateCohComplexEqn}) is quasi-isomorphic to the finite slope part of 
        \[
        R\Gamma_{\mathcal{Z}_2^{-}}\left( \mathcal{X}_G(K^G_{\beta}), [\mbb{D}_{\overline{P}_G}(\kappa_G^*)^{\opn{nearly}}](-D_G) \right) .
        \]
    \end{itemize}
    Recall that ${^MW_G} = \{ \opn{id}, w_1, w_2, w_3 \}$ comprises of four elements, where the length of $w_i$ is $i$. For brevity, set $\mathscr{F} \defeq [\mbb{D}_{\overline{P}_G}(\kappa_G^*)^{\opn{nearly}}](-D_G)$. Consider the following filtration
    \[
    \varnothing = \mathcal{Z}^{-}_{4} \subset \mathcal{Z}^{-}_3 \subset \mathcal{Z}^{-}_2 \subset \mathcal{Z}^{-}_{1} = \pi_{\opn{HT}, K_p}^{-1}\left( \overline{]X^G_{w_2}[} \right) \subset \mathcal{Z}^{-}_0 = \mathcal{X}_G(K^G_{\beta})
    \]
    (c.f., \cite[\S 5.5.2]{BoxerPilloni}) which gives rise to the spectral sequence
    \[
    E_1^{p, q} = \opn{H}^{p+q}_{\mathcal{Z}_{p}^{-} - \mathcal{Z}_{p+1}^{-}}(\mathcal{X}_G(K^G_{\beta}) - \mathcal{Z}_{p+1}^{-}, \mathscr{F})^{\opn{fs}} \Rightarrow \opn{H}^{p+q}(\mathcal{X}_G(K^G_{\beta}), \mathscr{F})^{\opn{fs}} 
    \]
    where the finite-slope part is with respect to the action of $T^{G, -}$. By \cite[Corollary 6.2.10]{HHTBoxerPilloni}, the conjectural lower bounds on slopes in \cite[Conjecture 5.9.2]{BoxerPilloni} hold. This implies that: for any eigensystem $\lambda \colon T^{G, -} \to \Qpb^{\times}$ occurring in the cohomology $\mathrm{R}\Gamma_{\mathcal{Z}_{3}^{-}}(\mathcal{X}_G(K^G_{\beta}), \mathscr{F})^{\opn{fs}}$, one has
    \[
    v_p(\lambda(t)) \geq v_p(\kappa_G^*(t)) .
    \]
    for any $t \in T^{G, -}$. Indeed, it suffices to verify this for the sheaves associated with the graded pieces of $\mbb{D}_{\overline{P}_G}(\kappa_G^*)^{\opn{nearly}}$ (which are $M_G$-representations); but by Lemma \ref{BigSetOfWeightsLemma}, we know that the highest weight $\kappa$ of any such graded piece satisfies $v_p(\kappa(t)) \geq v_p(\kappa_G^*(t))$ for any $t \in T^{G, -}$. We therefore see that $E_1^{3, q}$ vanishes on small slope $\mathrm{ss}_{w_1}^M(\kappa_G^*)$ parts.

    We have an exact triangle:
    \[
    R\Gamma_{\mathcal{Z}_3^{-}}\left( \mathcal{X}_G(K^G_{\beta}), \mathscr{F} \right) \to R\Gamma_{\mathcal{Z}_2^{-}}\left( \mathcal{X}_G(K^G_{\beta}), \mathscr{F} \right) \xrightarrow{\opn{res}} R\Gamma_{\mathcal{Z}_2^{-}-\mathcal{Z}_3^{-}}\left( \mathcal{X}_G(K^G_{\beta}) - \mathcal{Z}_3^{-}, \mathscr{F} \right) \xrightarrow{+1}
    \]
    hence the above vanishing implies that the restriction map is an isomorphism on small slope $\mathrm{ss}_{w_1}^M(\kappa_G^*)$ parts of cohomology. This proves part (3).

    Finally we note that, since $\mathscr{F}$ is coherent and vanishing along the boundary divisor, one has 
    \[
    \opn{H}^i\left( \mathcal{X}_G(K^G_{\beta}) - \mathcal{Z}_2^{-}, \mathscr{F} \right)^{\opn{fs}} = 0
    \]
    for $i \geq 2$. Indeed, it suffices to show that $E_1^{0,q}$ and $E_1^{1, q}$ vanish on finite slope parts for $q \geq 2$, but this follows from \cite[Theorem 5.6.1]{BoxerPilloni}. This implies that the map
    \[
    \opn{H}^2_{\mathcal{Z}_2^{-}}\left( \mathcal{X}_G(K^G_{\beta}), \mathscr{F} \right) \xrightarrow{\opn{cores}} \opn{H}^2\left( \mathcal{X}_G(K^G_{\beta}), \mathscr{F} \right)
    \]
    is surjective on finite-slope parts and concludes the proof of part (2).
\end{proof}

We will also need a classicality result relating $p$-adic weight overconvergent cohomology with classical weight overconvergent cohomology. Let $\kappa_G$ be as in Situation \ref{situation:Weights}. We say that $\lambda \colon T^{G, -} \to \Qpb^{\times}$ satisfies $\opn{ss}_{M, w_1}(\kappa_G^*)$ if: for any $w \in W_{M_G}$ with $w \neq \opn{id}$, there exists $y \in T^{G, -}$ such that $v_p(\lambda(y)) < v_p(((w_1^{-1}w) \star \kappa_G^*)(y))$.

\begin{proposition}
    Let $\kappa_G$ be as in Situation \ref{situation:Weights} and $n \geq 1$ any integer. Then the natural map
    \[
    R\Gamma( K^G_{\beta}, \kappa_G^*; \opn{cusp})^{(\dagger, n\opn{-an}, -)} \to R\Gamma( K^G_{\beta}, \kappa_G^*; \opn{cusp})^{(\dagger, -)}
    \]
    is a quasi-isomorphism on small slope $\opn{ss}_{M, w_1}(\kappa_G^*)$ parts (for the unnormalised action as in Convention \ref{ConventionOnUnnormalisedAction}).
\end{proposition}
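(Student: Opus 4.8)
The plan is to fit the natural map into an exact triangle whose third term is an overconvergent cohomology complex attached to a ``BGG--shifted'' (non-dominant) weight, and then to show that this third term has no contribution in small slope $\opn{ss}_{M, w_1}(\kappa_G^*)$ --- the argument being completely parallel to the proof of Proposition \ref{NearlyClassicalityProp}, with the r\^ole of the deep stratum replaced by the $w_1$-stratum.

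Since $M_G$ has semisimple rank one, its Weyl group is $W_{M_G} = \{\opn{id}, s\}$, where $s = s_2$ in the notation of Remark \ref{rem:ActionOfWeylGroup}. Restriction of algebraic functions to the analytic group yields an $M_G$-equivariant injection $V_{M_G}(\kappa_G) \hookrightarrow \mathbb{I}_{\mathcal{M}_{G, n, \beta}^{\square, \circ}}(\kappa_G)$, and the dual BGG resolution for $M_G$ --- concretely, the elementary computation of iterated differentiation on a disc, twisted by the central character, exactly as in \cite[\S 6]{DiffOps} --- identifies the cokernel with the analytic induction of the shifted weight: there is a short exact sequence of Banach representations
\[
0 \to V_{M_G}(\kappa_G) \to \mathbb{I}_{\mathcal{M}_{G, n, \beta}^{\square, \circ}}(\kappa_G) \to \mathbb{I}_{\mathcal{M}_{G, n', \beta}^{\square, \circ}}(s \star \kappa_G) \to 0
\]
for some $n' \geq n$ (the differential operator shrinks the radius of analyticity by a bounded amount; this loss is immaterial after passing to overconvergent cohomology and finite-slope parts). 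Dualising produces
\[
0 \to \mathbb{D}_{\mathcal{M}_{G, n'+1, \beta}^{\square}}\big((s \star \kappa_G)^*\big) \to \mathbb{D}_{\mathcal{M}_{G, n+1, \beta}^{\square}}(\kappa_G^*) \to V_{M_G}(\kappa_G)^* \to 0,
\]
and the right-hand surjection is precisely the morphism of coefficient representations inducing the natural map in the statement.

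Applying the exact functor $V \mapsto \varinjlim_U R\Gamma_{\mathcal{Z}_{G, <1, \beta} \cap U}\big(U, [V](-D_G)\big)$ to this sequence gives an exact triangle
\[
R\Gamma(K^G_\beta, (s\star\kappa_G)^*; \opn{cusp})^{(\dagger, n'\opn{-an}, -)} \to R\Gamma(K^G_\beta, \kappa_G^*; \opn{cusp})^{(\dagger, n\opn{-an}, -)} \to R\Gamma(K^G_\beta, \kappa_G^*; \opn{cusp})^{(\dagger, -)} \xrightarrow{+1},
\]
with all three terms carrying the unnormalised $T^{G, -}$-action of Convention \ref{ConventionOnUnnormalisedAction} and with equivariant maps. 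As in Proposition \ref{NearlyClassicalityProp}(1), for $t \in T^{G, --}$ the square of the operator attached to $t$ is compact on each term (Lemma \ref{OpenClosedSupportLemma} and \cite[\S 6.1.1]{HHTBoxerPilloni}), so all three complexes admit slope-$\leq h$ decompositions; it therefore suffices to prove that the left-hand term has vanishing $\opn{ss}_{M, w_1}(\kappa_G^*)$-part.

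For this I would argue exactly as in Proposition \ref{NearlyClassicalityProp}. The complex $R\Gamma(K^G_\beta, (s\star\kappa_G)^*; \opn{cusp})^{(\dagger, n'\opn{-an}, -)}$ is supported on a strict neighbourhood of $\mathcal{X}_{G, w_1, \beta}$, i.e. on the $w_1$-stratum, so the verified slope lower bounds of higher Coleman theory there (\cite[Corollary 6.2.10]{HHTBoxerPilloni}, yielding the bounds of \cite[Conjecture 5.9.2]{BoxerPilloni}) imply that any system of Hecke eigenvalues $\lambda \colon T^{G, -} \to \Qpb^\times$ occurring in its finite-slope part satisfies $v_p(\lambda(y)) \geq v_p\big(((w_1^{-1}s) \star \kappa_G^*)(y)\big)$ for every $y \in T^{G, -}$ --- this is where the conjugation by $w_1$ built into Convention \ref{ConventionOnUnnormalisedAction} enters, and where the weight $s\star\kappa_G$ becomes $(w_1^{-1}s)\star\kappa_G^*$ after the normalisation twist. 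By the very definition of $\opn{ss}_{M, w_1}(\kappa_G^*)$, such a $\lambda$ fails this condition, so the left-hand term has no $\opn{ss}_{M, w_1}(\kappa_G^*)$-part, and the triangle shows the natural map is a quasi-isomorphism on $\opn{ss}_{M, w_1}(\kappa_G^*)$-parts. The one point demanding genuine care is the coefficient exact sequence of the second paragraph --- in particular the bookkeeping of analyticity radii in the dual BGG resolution --- together with the precise matching of the slope normalisation of \cite{HHTBoxerPilloni} with Convention \ref{ConventionOnUnnormalisedAction}; neither is serious, but both must be done carefully.
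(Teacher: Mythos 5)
Your argument is correct and is precisely the argument the paper invokes by citation: the paper's proof simply refers to \cite[Corollary 6.8.4]{BoxerPilloni} (whose proof is exactly your dual-BGG exact triangle for the rank-one Levi plus the vanishing of the $s\star\kappa_G$ error term) together with \cite[Corollary 6.2.16]{HHTBoxerPilloni} to upgrade ``strictly small slope'' to $\opn{ss}_{M,w_1}(\kappa_G^*)$. The points you flag (radius bookkeeping in the BGG sequence, matching of normalisations) are genuine but are already handled in those references.
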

\begin{proof}
    This follows from the same proof as in \cite[Corollary 6.8.4]{BoxerPilloni} in combination with \cite[Corollary 6.2.16]{HHTBoxerPilloni}, which allows us to replace the ``strictly small slope'' condition with the small slope $\opn{ss}_{M, w_1}(\kappa_G^*)$ condition.
\end{proof}

Finally, we will need to know when we can lift overconvergent cohomology classes to nearly overconvergent classes.

\begin{proposition}
    Let $(A, A^+)$ be a complete Tate affinoid pair over $(\mbb{Q}_p, \mbb{Z}_p)$ and $n \geq 1$ an integer. Let $\kappa_G \colon T(\mbb{Z}_p) \to A^{\times}$ be an $n$-analytic character. Set $S = \opn{Spa}(A, A^+)$.
    \begin{enumerate}
        \item For any $h \in \mbb{Q}$, $t \in T^{G, --}$, and $x \in S(\mbb{C}_p)$, there exists a sufficiently small open quasi-compact affinoid neighbourhood $V = \opn{Spa}(B, B^+)$ of $x$ in $S$ such that both $R\Gamma( K^G_{\beta}, \kappa_{G,V}^*; \opn{cusp})^{(\dagger, n\opn{-an}, -)}$ and $R\Gamma( K^G_{\beta}, \kappa_{G,V}^*; \opn{cusp})^{(\ddagger, n\opn{-an}, -)}$ admit slope $\leq h$ decompositions with respect to the action of $t$, where $\kappa_{G, V}$ denotes the composition $T(\mbb{Z}_p) \xrightarrow{\kappa_G} A^{\times} \to B^{\times}$.
        \item Let $h \in \mbb{Q}$ and suppose that $R\Gamma( K^G_{\beta}, \kappa_G^*; \opn{cusp})^{(\dagger, n\opn{-an}, -)}$ and $R\Gamma( K^G_{\beta}, \kappa_G^*; \opn{cusp})^{(\ddagger, n\opn{-an}, -)}$ admit slope $\leq h$ decompositions with respect to the action of a fixed element $t \in T^{G, --}$. Then the natural map 
        \begin{equation} \label{NearlyH2toOCH2surjectiveEqn}
        \opn{H}^2( K^G_{\beta}, \kappa_G^*; \opn{cusp})^{(\ddagger, n\opn{-an}, -)} \to \opn{H}^2( K^G_{\beta}, \kappa_G^*; \opn{cusp})^{(\dagger, n\opn{-an}, -)}
        \end{equation}
        induced from the map of representations (\ref{IMGpadicToIPGEqn}) is surjective on slope $\leq h$ parts.
    \end{enumerate} 
\end{proposition}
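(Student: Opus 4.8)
\emph{Part (1).} This is a routine spreading-out of slope decompositions over the base, exactly as in the proof of Proposition \ref{NearlyClassicalityProp}(1). By Lemma \ref{OpenClosedSupportLemma} and the open/closed support formalism of \cite[\S 6.1.1]{HHTBoxerPilloni}, for $t \in T^{G, --}$ the cohomological correspondence $\psi_t$ equips both $R\Gamma( K^G_{\beta}, \kappa_{G}^*; \opn{cusp})^{(\dagger, n\opn{-an}, -)}$ and $R\Gamma( K^G_{\beta}, \kappa_{G}^*; \opn{cusp})^{(\ddagger, n\opn{-an}, -)}$ (computed by perfect complexes of Banach $A$-modules) with an action of $t$ whose square is a compact operator; moreover this holds uniformly as one restricts the weight to an affinoid subdomain of $S$. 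Hence the Fredholm series of $t^2$ is defined over $A$, and specialising at $x \in S(\mbb{C}_p)$ one may factor its fibre at $x$ as a product of a polynomial whose reciprocal roots have valuation $\leq 2h$ and a coprime tail with reciprocal roots of valuation $>2h$. After shrinking $S$ to a sufficiently small open quasi-compact affinoid neighbourhood $V=\opn{Spa}(B,B^+)$ of $x$ this factorisation lifts over $B$, which yields slope $\leq h$ decompositions with respect to $t$ for both complexes over $V$ (a slope $\leq h$ decomposition for $t$ being the same as a slope $\leq 2h$ decomposition for the compact operator $t^2$, which is automatically $t$-stable). Choosing a common such $V$ for both complexes proves (1).

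\emph{Part (2): reduction to a vanishing statement.} The map in question is induced on coefficient sheaves by the $\overline{\mathcal{P}}_{G,n+1,\beta}^{\square}$-equivariant surjection $\mathbb{D}_{\overline{\mathcal{P}}_{G,n+1,\beta}^{\square}}(\kappa_G^*) \twoheadrightarrow \mathbb{D}_{\mathcal{M}_{G,n+1,\beta}^{\square}}(\kappa_G^*)$ dual to the inclusion (\ref{IMGpadicToIPGEqn}). This surjection is split as a map of Banach $A$-modules — a splitting is given by inflation along $\overline{\mathcal{P}}_{G,n,\beta}^{\square}\twoheadrightarrow\mathcal{M}_{G,n,\beta}^{\square}$, which is $\mathcal{M}$-equivariant but not $\overline{\mathcal{P}}$-equivariant — and its kernel, call it $\mathcal{K}^{\opn{rep}}$, is a projective Banach $\overline{\mathcal{P}}_{G,n+1,\beta}^{\square}$-representation. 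Since the torsor $\mathcal{P}_{G,\opn{dR},n}$ of Proposition \ref{PGdrnReductionProposition} is \'{e}tale, the functor $V\mapsto [V](-D_G)$ is exact on short exact sequences of projective Banach representations that are split over $A$, so over a sufficiently small strict neighbourhood $U$ of $\mathcal{X}_{G,w_1,\beta}$ we get a short exact sequence of locally projective Banach sheaves
\[
0 \to [\mathcal{K}^{\opn{rep}}](-D_G) \to [\mathbb{D}_{\overline{\mathcal{P}}_{G,n+1,\beta}^{\square}}(\kappa_G^*)](-D_G) \to [\mathbb{D}_{\mathcal{M}_{G,n+1,\beta}^{\square}}(\kappa_G^*)](-D_G) \to 0 .
\]
Applying $\varinjlim_U R\Gamma_{\mathcal{Z}_{G,<1,\beta}\cap U}(U,-)$ and passing to the slope $\leq h$ part for $t$ (an exact projector, by (1) or by the hypothesis) gives a long exact sequence whose relevant segment reads
\[
\opn{H}^2( K^G_{\beta}, \kappa_G^*; \opn{cusp})^{(\ddagger, n\opn{-an}, -),\leq h} \to \opn{H}^2( K^G_{\beta}, \kappa_G^*; \opn{cusp})^{(\dagger, n\opn{-an}, -),\leq h} \xrightarrow{\ \delta\ } \opn{H}^3(\mathcal{K}^{\opn{rep}})^{\leq h},
\]
where $\opn{H}^3(\mathcal{K}^{\opn{rep}})^{\leq h}$ denotes the slope $\leq h$ part of $\varinjlim_U\opn{H}^3_{\mathcal{Z}_{G,<1,\beta}\cap U}(U,[\mathcal{K}^{\opn{rep}}](-D_G))$. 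Hence it suffices to show that $\opn{H}^3(\mathcal{K}^{\opn{rep}})$ vanishes on finite slope parts.

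\emph{Part (2): the vanishing.} We argue as in the proof of Proposition \ref{NearlyClassicalityProp}. By Lemma \ref{OpenClosedSupportLemma}, the finite slope part of $\varinjlim_U R\Gamma_{\mathcal{Z}_{G,<1,\beta}\cap U}(U,[\mathcal{K}^{\opn{rep}}](-D_G))$ is quasi-isomorphic to the finite slope part of $R\Gamma_{\mathcal{Z}_2^{-}-\mathcal{Z}_3^{-}}\big(\mathcal{X}_G(K^G_{\beta})-\mathcal{Z}_3^{-}, [\mathcal{K}^{\opn{rep}}](-D_G)\big)$, i.e. of the $(-)$-overconvergent cohomology at the $w_1$-stratum with these coefficients. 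Now $\mathcal{K}^{\opn{rep}}$ carries the descending filtration $\opn{Fil}^{\bullet}$ dual to the near-holomorphy degree filtration on $\mathbb{I}_{\overline{\mathcal{P}}_{G,n,\beta}^{\square,\circ}}(\kappa_G)$ (degree in the $\overline{U}_{P_G}$-coordinates $a,b$ of Lemma \ref{PartialFactorisationLemma}), with $\mathcal{K}^{\opn{rep}}=\opn{Fil}^1$, with $\mathcal{K}^{\opn{rep}}=\varprojlim_d \opn{Fil}^1/\opn{Fil}^d$, and with each graded piece factoring through $\overline{\mathcal{P}}_{G,n+1,\beta}^{\square}\to\mathcal{M}_{G,n+1,\beta}^{\square}$ — so the associated graded sheaves are of the same type as $[\mathbb{D}_{\mathcal{M}_{G,n+1,\beta}^{\square}}(\kappa_G'^{*})](-D_G)$ for various $n$-analytic weights $\kappa_G'$. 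For each such sheaf the $(-)$-overconvergent cohomology at the $w_1$-stratum is concentrated in degrees $\leq 2$ on finite slope parts, by \cite[Theorem 5.6.1]{BoxerPilloni} together with \cite[Corollary 6.2.16]{HHTBoxerPilloni}, exactly as used in the proof of Proposition \ref{NearlyClassicalityProp}; in particular its $\opn{H}^3$ vanishes on finite slope parts. A limiting argument over $\opn{Fil}^{\bullet}$, using that the relevant $\varprojlim^1$-terms of finite slope Banach modules vanish (cf. \cite[\S 6]{DiffOps}), then gives $\opn{H}^3(\mathcal{K}^{\opn{rep}})^{\opn{fs}}=0$, which completes the proof.

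\emph{Main obstacle.} Everything except the last step is formal given the results in the excerpt (the exact sequence of sheaves, the long exact cohomology sequence, the exactness of the slope $\leq h$ projector, and the geometric input of higher Coleman theory). The delicate point is handling the \emph{infinite} near-holomorphy filtration on the Banach representation $\mathcal{K}^{\opn{rep}}$ and its compatibility with finite slope decompositions, so as to legitimately reduce the degree-$3$ vanishing for $[\mathcal{K}^{\opn{rep}}]$ to the known degree-$3$ vanishing for the classical-weight ($\dagger$-type) sheaves at the $w_1$-stratum.
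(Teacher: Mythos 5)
Your overall skeleton for part (2) — split surjection of coefficient modules, long exact sequence, reduction to vanishing of $\opn{H}^3$ of the kernel, filtration of the kernel by Levi-type graded pieces, and degree-$\leq 2$ concentration at the $w_1$-stratum via the two-open acyclic cover of $]C^G_{w_1}[_{k,k}\setminus{]C^G_{w_1}[_{\bar l,k}}$ — matches the paper's. But the step you yourself flag as the ``main obstacle'' is a genuine gap, and the idea that closes it is missing from your write-up. The paper does not run a limiting argument over the infinite near-holomorphy filtration of $\mathcal{K}^{\opn{rep}}=\opn{Fil}^1\mathbb{D}$ at all. Instead, as part of its proof of (1), it establishes a finiteness statement: for any fixed $h$ there is an integer $m$ such that the slope $\leq h$ part of $R\Gamma(K^G_\beta,\kappa_G^*;\opn{cusp})^{(\ddagger,n\opn{-an},-)}$ is already computed by the \emph{finite} quotient $\mathcal{F}_m\mathbb{D}=\mathbb{D}/\opn{Fil}^{m+1}\mathbb{D}$. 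The mechanism is that conjugation by $t^{-1}$ for $t\in T^{G,--}$ contracts the unipotent coordinates $a,b$ of Lemma \ref{PartialFactorisationLemma}, so $t$ acts on $\opn{Gr}^j\mathbb{D}$ with slope growing linearly in $j$; graded pieces with $j>m$ therefore contribute nothing in slope $\leq h$. With this, part (2) is a finite induction from $\mathcal{F}_0\mathbb{D}=\mathbb{D}_{\mathcal{M}^\square_{G,n+1,\beta}}(\kappa_G^*)$ up to $\mathcal{F}_m\mathbb{D}$, needing only the vanishing of $\opn{H}^3$ of each graded piece on slope $\leq h$ parts — no inverse limits, no $\varprojlim^1$.

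Your proposed substitute — ``the relevant $\varprojlim^1$-terms of finite slope Banach modules vanish (cf.\ \S 6 of the differential-operators paper)'' — is not justified as stated: you would need to commute $R\Gamma$ with support conditions, the colimit over strict neighbourhoods, the finite-slope projector, and an infinite inverse limit of Banach sheaves simultaneously, and nothing in the cited source does this for you. The finite-slope projector is not exact on arbitrary inverse systems unless one first knows that all but finitely many terms die in slope $\leq h$, which is exactly the finiteness statement above. So the fix is not a better limiting argument but the observation that no limit is needed. Two smaller remarks: your part (1) is consistent with the paper's (compactness of the operator plus the spreading-out argument of Boxer--Pilloni \S 6.4.2), but the paper's part (1) also carries the finiteness statement that part (2) consumes, so the two parts are more tightly coupled than your write-up suggests; and for the degree concentration of the graded pieces the paper reduces to showing that $R\Gamma(\pi_{\opn{HT},K_p}^{-1}(\mathcal{U}),[M_j](-D_G))$ is concentrated in degree $0$ for affinoid $\mathcal{U}$ (via the analogue of Lemma 6.6.2 of Boxer--Pilloni), which is the precise statement your appeal to Theorem 5.6.1 needs as input.
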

\begin{proof}
    Part (1) follows from the fact that the action of $w_1 t w_1^{-1}$ on $\mbb{D}_{\overline{\mathcal{P}}^{\square}_{G, n+1, \beta}}(\kappa_G^*)$ and $\mbb{D}_{\mathcal{M}^{\square}_{G, n+1, \beta}}(\kappa_G^*)$ is compact (and a similar argument as in \cite[\S 6.4.2]{BoxerPilloni}). In particular, one can easily check that there exists an integer $m \geq 0$ such that
    \[
    R\Gamma( K^G_{\beta}, \kappa_{G,V}^*; \opn{cusp})^{(\ddagger, n\opn{-an}, -), \leq h} \xrightarrow{\sim} \left(\varinjlim_U R\Gamma_{\mathcal{Z}_{G, <1, \beta} \cap U}(U, [\mathcal{F}_m\mathbb{D}_{\overline{\mathcal{P}}_{G, n+1, \beta}^{\square}}(\kappa^*_{G,V})](-D_G) )\right)^{\leq h}
    \]
    where $\mathcal{F}_m\mathbb{D}_{\overline{\mathcal{P}}_{G, n+1, \beta}^{\square}}(\kappa^*_{G,V}) = \mathbb{D}_{\overline{\mathcal{P}}_{G, n+1, \beta}^{\square}}(\kappa^*_{G,V}) / \opn{Fil}^{m+1}\mathbb{D}_{\overline{\mathcal{P}}_{G, n+1, \beta}^{\square}}(\kappa^*_{G,V})$ and $\opn{Fil}^m\mathbb{D}_{\overline{\mathcal{P}}_{G, n+1, \beta}^{\square}}(\kappa^*_{G,V})$ denotes the sub-representation of linear functionals $\mathbb{I}_{\overline{\mathcal{P}}_{G, n, \beta}^{\square, \circ}}(\kappa_{G,V}) \to B$ which vanish on the subspace of functions which are polynomial in the coordinates $a,b$ from Lemma \ref{PartialFactorisationLemma} of total degree $\leq m-1$ (we set $\opn{Fil}^0\mathbb{D}_{\overline{\mathcal{P}}_{G, n+1, \beta}^{\square}}(\kappa^*_{G,V}) = \mathbb{D}_{\overline{\mathcal{P}}_{G, n+1, \beta}^{\square}}(\kappa^*_{G,V})$). In particular, we have $\mathcal{F}_0\mathbb{D}_{\overline{\mathcal{P}}_{G, n+1, \beta}^{\square}}(\kappa^*_{G,V}) = \mathbb{D}_{\mathcal{M}_{G, n+1, \beta}^{\square}}(\kappa^*_{G,V})$.

    By an inductive argument, it therefore suffices to show that
    \[
    \left(\varinjlim_U \opn{H}^3_{\mathcal{Z}_{G, <1, \beta} \cap U}(U, [\opn{Gr}^j\mathbb{D}_{\overline{\mathcal{P}}_{G, n+1, \beta}^{\square}}(\kappa^*_{G,V})](-D_G) )\right)^{\leq h} = 0
    \]
    for any $0 \leq j \leq m$, where $\opn{Gr}^j\mathbb{D}_{\overline{\mathcal{P}}_{G, n+1, \beta}^{\square}}(\kappa^*_{G,V}) = \opn{Fil}^j\mathbb{D}_{\overline{\mathcal{P}}_{G, n+1, \beta}^{\square}}(\kappa^*_{G,V})/\opn{Fil}^{j+1}\mathbb{D}_{\overline{\mathcal{P}}_{G, n+1, \beta}^{\square}}(\kappa^*_{G,V}) =: M_j$. Note that $M_j$ is a projective Banach $\mathcal{M}^{\square}_{G, n+1, \beta}$-representation over $B$. It suffices to show that 
    \[
    R\Gamma(\pi_{\opn{HT}, K_p}^{-1}(\mathcal{U}), [M_j](-D_G)) 
    \]
    is concentrated in degree $0$ for any open affinoid $\mathcal{U} \subset {]C^G_{w_1}[_{k, k}} K_p$ (where $k \gg 1$ is a sufficiently large integer so that $\mathcal{P}_{G, \opn{dR}, n}$ is defined over $\pi_{\opn{HT}, K_p}^{-1}\left( {]C^G_{w_1}[_{k, k}} K_p \right)$). Indeed granted this, we can therefore follow the same strategy as in \cite[Theorem 5.6.1]{BoxerPilloni} noting that $]C^G_{w_1}[_{k, k} \backslash ]C^G_{w_1}[_{\bar{l}, k}$ ($l > k$) is covered by two acyclic open subspaces. But this claim about being concentrated in degree $0$ follows from exactly the same proof as in \cite[Lemma 6.6.2]{BoxerPilloni}.
\end{proof}

\subsection{\texorpdfstring{$p$}{p}-adic trilinear periods}

We now define $p$-adic versions of $\mathcal{P}^{\opn{alg}}$. Fix an integer $\beta \geq 1$ throughout.

\subsubsection{Classical weight nearly overconvergent}

Suppose that we are in Situation \ref{situation:Weights}. In what follows, we work with adic Shimura varieties (and their cohomology groups) over a fixed finite extension $L/\mbb{Q}_p$, but omit this from the notation. Recall that $\hat{\iota}^{-1}(\mathcal{Z}_{G, <1, \beta}) = \mathcal{Z}_{H, \opn{id}, \beta}$ and we have a natural map $[\mbb{D}_{\overline{P}_G}(\kappa_G^*)^{\opn{nearly}}](-D_G) \to \hat{\iota}_* [\mbb{D}_{\overline{P}_H}(\kappa_H^*)^{\opn{nearly}}](-D_H)$ induced from the branching law $\opn{br}^{\opn{nearly}}$ from \S \ref{algnearlyrepsSSec}. This implies that we have a commutative diagram:
\begin{equation} \label{CoresResCohDiagram}
\begin{tikzcd}
{\opn{H}^2\left(K^G_{\beta}, \kappa_G^*; \opn{cusp} \right)^{\opn{nearly}}} \arrow[r, "\hat{\iota}^*"]                                                                                                          & {\opn{H}^2\left(K^H_{\beta}, \kappa_H^*; \opn{cusp} \right)^{\opn{nearly}}}                           \\
{\opn{H}^2_{\mathcal{Z}_{G, <1, \beta}}(\mathcal{X}_{G}(K^G_{\beta}), [\mbb{D}_{\overline{P}_G}(\kappa_G^*)^{\opn{nearly}}](-D_G))} \arrow[u, "\opn{cores}"] \arrow[d, "\opn{res}"'] \arrow[r, "\hat{\iota}^*"] & {\opn{H}^2\left(K^H_{\beta}, \kappa_H^*; \opn{cusp} \right)^{(\ddagger, -)}} \arrow[u, "\opn{cores}"] \\
{\opn{H}^2\left( K^G_{\beta}, \kappa_G^*; \opn{cusp} \right)^{(\ddagger, -)}} \arrow[ru, "\hat{\iota}^*"']                                                                                                      &                                                                                                      
\end{tikzcd}
\end{equation}
Furthermore, we have an exterior cup product pairing:
\begin{align}
    \opn{H}^0\left( K^{H_1}_{\beta}, \zeta_{H_1} \right)^{(\ddagger, +)} \otimes_L \opn{H}^0\left( K^{H_2}_{\beta}, \zeta_{H_2} \right)^{(\ddagger, +)} &\to \opn{H}^0\left( K^{H}_{\beta}, \kappa_H - 2\rho_H \right)^{(\ddagger, +)} \label{DDaggerClassicalExtCupEqn} \\
    (\omega_1, \omega_2 ) &\mapsto \omega_1 \sqcup \omega_2 \defeq \opn{pr}_1^*(\omega_1) \smile \opn{pr}_2^*(\omega_2) . \nonumber
\end{align}
Let $\mathscr{N}^{\dagger}_{H_i, \beta}$ denote the space of nearly overconvergent modular forms of level $K^{H_i}_{\beta}$ as constructed in \cite[Definition 2.3.1]{DiffOps} (for $g=\opn{id}$), i.e., one has $\mathscr{N}^{\dagger}_{H_i, \beta} = \varinjlim_U \opn{H}^0\left( U, \mathcal{O}_{P_{H_i, \opn{dR}}^{\opn{an}}} \right)$ where $U$ runs over all quasi-compact open subspaces of $P_{H_i, \opn{dR}}^{\opn{an}}$ containing the closure of $\mathcal{IG}_{H_i, \opn{id}, \beta}^{\opn{tor}}$. Then we have identifications:
\[
\opn{H}^0\left( K_{\beta}^{H_i}, \zeta_{H_i} \right)^{(\ddagger, +)} = \opn{Hom}_{T_{\opn{GL}_2}(\mbb{Z}_p)}\left( -\zeta_{H_i}, \opn{Fil}_{t_i} \mathscr{N}^{\dagger}_{H_i, \beta} \right)
\]
where $\opn{Fil}_{\bullet}$ denotes the filtration in \cite[Theorem 2.3.2]{DiffOps}.

\begin{definition} \label{PddaggerLClassicalWtDef}
    With notation as above, let $\mathcal{P}^{\ddagger}_L$ denote the trilinear map given by
    \begin{align*}
        \mathcal{P}_L^{\ddagger} \colon \opn{H}^2\left( K^G_{\beta}, \kappa_G^*; \opn{cusp} \right)^{(\ddagger, -)} \otimes \opn{H}^0\left( K_{\beta}^{H_1}, \zeta_{H_1} \right)^{(\ddagger, +)} \otimes \opn{H}^0\left( K_{\beta}^{H_2}, \zeta_{H_2} \right)^{(\ddagger, +)} &\to L \\
        (\eta, \omega_1, \omega_2) &\mapsto \langle \hat{\iota}^*\eta, \omega_1 \sqcup \omega_2 \rangle 
    \end{align*}
    where $\langle \cdot, \cdot \rangle$ denotes the natural pairing between $\opn{H}^2\left(K^H_{\beta}, \kappa_H^*; \opn{cusp} \right)^{(\ddagger, -)}$ and $\opn{H}^0\left( K^{H}_{\beta}, \kappa_H - 2\rho_H \right)^{(\ddagger, +)}$. 
\end{definition}

\begin{remark}
    One has an overconvergent version of Definition \ref{PddaggerLClassicalWtDef}: 
    \[
    \mathcal{P}_L^{\dagger} \colon \opn{H}^2\left( K^G_{\beta}, \kappa_G^*; \opn{cusp} \right)^{(\dagger, -)} \otimes \opn{H}^0\left( K_{\beta}^{H_1}, \zeta_{H_1} \right)^{(\dagger, +)} \otimes \opn{H}^0\left( K_{\beta}^{H_2}, \zeta_{H_2} \right)^{(\dagger, +)} \to L
    \]
    defined in exactly the same way (using the branching law from the bottom horizontal arrow of (\ref{brNearlyCommWithAlgDiagram})). This is one of the trilinear pairings used in \cite{LZBK21}. If 
    \[
    \omega_i \in \opn{H}^0\left( K_{\beta}^{H_i}, \zeta_{H_i} \right)^{(\dagger, +)} \subset \opn{H}^0\left( K_{\beta}^{H_i}, \zeta_{H_i} \right)^{(\ddagger, +)}
    \]
    and $\eta^{\ddagger} \in \opn{H}^2\left( K^G_{\beta}, \kappa_G^*; \opn{cusp} \right)^{(\ddagger, -)}$, then one has the compatibility
    \[
    \mathcal{P}^{\ddagger}_L(\eta^{\ddagger}, \omega_1, \omega_2) = \mathcal{P}^{\dagger}_L(\eta^{\dagger}, \omega_1, \omega_2)
    \]
    where $\eta^{\dagger}$ denotes the image of $\eta^{\ddagger}$ under the natural map $\opn{H}^2\left( K^G_{\beta}, \kappa_G^*; \opn{cusp} \right)^{(\ddagger, -)} \to \opn{H}^2\left( K^G_{\beta}, \kappa_G^*; \opn{cusp} \right)^{(\dagger, -)}$.
\end{remark}

We note the following compatibility of the nearly overconvergent trilinear pairing with the algebraic version.

\begin{lemma} \label{DDaggerPerEqualsAlgPerLem}
    Let $\eta \in \opn{H}^2\left( K^G_{\beta}, \kappa_G^*; \opn{cusp} \right)^{(\ddagger, -)}$ be a class in the small slope $\opn{ss}^M_{w_1}(\kappa_G^*)$ part, and let $\eta^{\opn{cl}} \in \opn{H}^2\left( K^G_{\beta}, \kappa_G^*; \opn{cusp} \right)^{\opn{nearly}}$ denote the corresponding small slope $\opn{ss}^M_{w_1}(\kappa_G^*)$ class via the morphisms in Proposition \ref{NearlyClassicalityProp}. Let $\omega_i \in \opn{H}^0( K^{H_i}_{\beta}, \zeta_{H_i})^{\opn{nearly}}$ and let $\opn{res}(\omega_i) \in \opn{H}^0( K^{H_i}_{\beta}, \zeta_{H_i})^{(\ddagger, +)}$ denote its image under the restriction map. Then
    \[
    \mathcal{P}^{\ddagger}_L(\eta, \opn{res}(\omega_1), \opn{res}(\omega_2)) = \mathcal{P}^{\opn{alg}}_L(\eta^{\opn{cl}}, \omega_1, \omega_2) .
    \]
\end{lemma}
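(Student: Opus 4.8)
The statement asserts that the nearly overconvergent trilinear period $\mathcal{P}^\ddagger_L$, evaluated on a small-slope $\opn{ss}^M_{w_1}(\kappa_G^*)$ class $\eta$ and restrictions of algebraic (classical nearly holomorphic) forms $\omega_i$, agrees with the algebraic trilinear period $\mathcal{P}^{\opn{alg}}_L$ evaluated on the corresponding classical class $\eta^{\opn{cl}}$ and the $\omega_i$ themselves. The strategy is to reduce the comparison to a diagram chase, using that all the maps in sight (pullback $\hat\iota^*$, exterior cup product $\sqcup$, the Serre-duality pairings, and the restriction/corestriction maps of Proposition \ref{NearlyClassicalityProp}) are induced by the \emph{same} underlying maps of sheaves on the adic and algebraic Shimura varieties, so that the two periods are computed by compatible data and differ at most by the identification of the relevant cohomology groups on small-slope parts.

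\textbf{Key steps.} First I would unwind the definitions: $\mathcal{P}^\ddagger_L(\eta, \opn{res}(\omega_1), \opn{res}(\omega_2)) = \langle \hat\iota^*\eta, \opn{res}(\omega_1)\sqcup\opn{res}(\omega_2)\rangle$, a pairing taking place entirely in the nearly overconvergent cohomology of $X_H$, while $\mathcal{P}^{\opn{alg}}_L(\eta^{\opn{cl}},\omega_1,\omega_2) = \langle \hat\iota^*\eta^{\opn{cl}}, \omega_1\sqcup\omega_2\rangle$, a pairing in classical nearly holomorphic cohomology. Second, I would invoke the commutative diagram (\ref{CoresResCohDiagram}): the class $\eta^{\opn{cl}}$ is by construction $\opn{cores}(\tilde\eta)$ for a lift $\tilde\eta$ in the intermediate group $\opn{H}^2_{\mathcal{Z}_{G,<1,\beta}}$, and on small slope parts $\opn{res}(\tilde\eta)=\eta$ by Proposition \ref{NearlyClassicalityProp}(3) (the left-hand restriction map being an isomorphism there). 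Chasing $\tilde\eta$ around the square shows $\hat\iota^*\eta$ and $\hat\iota^*\eta^{\opn{cl}}$ fit into a compatible pair: $\hat\iota^*\eta = \opn{res}(\hat\iota^*\tilde\eta)$ and $\hat\iota^*\eta^{\opn{cl}} = \opn{cores}(\hat\iota^*\tilde\eta)$, using that $\hat\iota^*$ commutes with both $\opn{res}$ and $\opn{cores}$ on $X_H$ (this is exactly the content of the square in (\ref{CoresResCohDiagram})). Third, I would analogously observe that $\opn{res}(\omega_i)$ is the restriction of the algebraic class $\omega_i$ to the neighbourhood $U$, hence $\opn{res}(\omega_1)\sqcup\opn{res}(\omega_2)$ is the restriction of $\omega_1\sqcup\omega_2$ (since $\opn{pr}_i^*$ and $\smile$ commute with restriction to strict neighbourhoods — a formal consequence of functoriality of pullback and cup product). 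Fourth, I would invoke compatibility of the Serre-duality pairings: the classical pairing $\opn{H}^2(\cdots)^{\opn{nearly}}\times\opn{H}^0(\cdots)^{\opn{nearly}}\to L$ factors through $\opn{H}^2_{\mathcal{Z}_{H,\opn{id},\beta}}\times\opn{H}^0$ via $\opn{cores}$ on the first factor and $\opn{res}$ on the second (adjointness of $\opn{res}$/$\opn{cores}$ under the trace-type pairing, as in the formalism of \cite[\S6.1.1]{HHTBoxerPilloni} / \cite[\S4]{UFJII}); combining this with the previous two steps collapses both sides to the single pairing $\langle \hat\iota^*\tilde\eta,\ \opn{res}(\omega_1\sqcup\omega_2)\rangle$ computed in the intermediate cohomology, whence equality.

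\textbf{Main obstacle.} The routine part is checking that $\hat\iota^*$, $\opn{pr}_i^*$, $\smile$, and the duality pairings are all compatible with restriction/corestriction — these follow from functoriality, but one must be careful that the small-slope hypothesis is used in exactly the right place, namely to guarantee that $\opn{res}$ in Proposition \ref{NearlyClassicalityProp}(3) \emph{on $X_H$} is an isomorphism so that the lift $\hat\iota^*\tilde\eta$ behaves as claimed; a priori the branching map $\opn{br}^{\opn{nearly}}$ need not preserve the small-slope condition verbatim, so the hard part is verifying that pulling back along $\hat\iota$ sends the $\opn{ss}^M_{w_1}(\kappa_G^*)$ part of $G$-cohomology into a part of $H$-cohomology where the analogous restriction-corestriction comparison still holds. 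I expect this to be handled by tracking slopes through the finite morphism $\hat\iota$ (using that $\hat\iota$ is finite étale of degree prime to $p$ on the relevant ordinary loci, so pullback is compatible with slope decompositions) together with Lemma \ref{BigSetOfWeightsLemma}-type weight bounds, rather than by a direct classicality statement on $X_H$; alternatively one circumvents it entirely by noting that \emph{both} periods are, after the reductions above, literally the same functional on the intermediate group, so the comparison is an identity of pairings rather than a statement requiring classicality on $X_H$ at all — this is the cleanest route and the one I would write up.
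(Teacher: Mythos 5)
Your proposal is correct and is essentially the paper's argument: the paper's entire proof is that the identity follows from the commutative diagram (\ref{CoresResCohDiagram}), i.e.\ the lift $\tilde\eta$ to the intermediate support-cohomology group maps compatibly under $\hat\iota^*$, $\opn{res}$ and $\opn{cores}$, and the two pairings then agree by adjunction of restriction and corestriction under Serre duality. Your closing observation in the ``main obstacle'' paragraph — that no classicality or slope-tracking on $X_H$ is needed because both sides collapse to the same functional on the intermediate group — is exactly right and is the route the paper takes; the earlier worry about $\opn{br}^{\opn{nearly}}$ preserving small-slope conditions is a red herring you correctly discard.
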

\begin{proof}
    This follows immediately from the commutative diagram (\ref{CoresResCohDiagram}).
\end{proof}

\subsubsection{\texorpdfstring{$p$}{p}-adic weight nearly overconvergent} \label{padicWtNOCsssec}

Let $(A, A^+)$ be a complete Tate affinoid pair over $(\mbb{Q}_p, \mbb{Z}_p)$. Let $n \geq 1$ and let $\kappa_G \colon T(\mbb{Z}_p) \to A^{\times}$ be a $n$-analytic character. Let $\lambda \colon \mbb{Z}_p^{\times} \to A^{\times}$ be a $n$-analytic character and set $\kappa_H = w_{M_G}^{\opn{max}} \kappa_G + (\lambda, -\lambda; 0)$. We can lift $\kappa_H - 2\rho_H$ to a pair of characters $(\zeta_{H_1}, \zeta_{H_2}) \colon T_{\opn{GL}_2}(\mbb{Z}_p) \times T_{\opn{GL}_2}(\mbb{Z}_p) \to A^{\times}$ via the identification $T(\mbb{Z}_p) = T_{\opn{GL}_2}(\mbb{Z}_p) \times_{\mbb{Z}_p^{\times}} T_{\opn{GL}_2}(\mbb{Z}_p)$.

Note that we have
\[
\opn{H}^0\left( K_{\beta}^{H_i}, \zeta_{H_i} \right)^{(\ddagger, n\opn{-an}, +)} \subset \opn{Hom}_{T_{\opn{GL}_2}(\mbb{Z}_p)}\left( - \zeta_{H_i}, \mathscr{N}^{\dagger}_{H_i, \beta} \right) =: \mathscr{N}^{\dagger}_{H_i, \beta, \zeta_{H_i}}
\]
and $\varinjlim_{m \geq n} \opn{H}^0\left( K_{\beta}^{H_i}, \zeta_{H_i} \right)^{(\ddagger, m\opn{-an}, +)} = \mathscr{N}^{\dagger}_{H_i, \beta, \zeta_{H_i}}$. By Remark \ref{DecompOfDDaggerIntoFactorsRem} and the discussion following Remark \ref{OnlyUniqueAfterShrinkingRem}, we have an ``$n$-analytic'' version of the exterior cup product pairing in (\ref{DDaggerClassicalExtCupEqn}). We also have a natural pullback map 
\[
\hat{\iota}^* \colon \opn{H}^2\left( K^G_{\beta}, \kappa_G^*; \opn{cusp} \right)^{(\ddagger, n\opn{-an}, -)} \to \opn{H}^2\left( K^H_{\beta}, \kappa_H^*; \opn{cusp} \right)^{(\ddagger, n\opn{-an}, -)}
\]
again via the branching law $\opn{br}^{n\opn{-an}}$ in Proposition \ref{DefPropOfbrnan} and the discussion following Remark \ref{OnlyUniqueAfterShrinkingRem}.

\begin{definition}
    With notation as above, let $\mathcal{P}_{\lambda}^{\ddagger, n\opn{-an}}$ denote the following $A$-trilinear pairing
    \begin{align*}
        \mathcal{P}_{\lambda}^{\ddagger, n\opn{-an}} \colon \opn{H}^2\left( K^G_{\beta}, \kappa_G^*; \opn{cusp} \right)^{(\ddagger, n\opn{-an}, -)} \otimes \opn{H}^0\left( K_{\beta}^{H_1}, \zeta_{H_1} \right)^{(\ddagger, n\opn{-an}, +)} \otimes \opn{H}^0\left( K_{\beta}^{H_2}, \zeta_{H_2} \right)^{(\ddagger, n\opn{-an}, +)} \to A 
    \end{align*}
    given by $\mathcal{P}_{\lambda}^{\ddagger, n\opn{-an}}(\eta, \omega_1, \omega_2) \defeq \langle \hat{\iota}^*\eta, \omega_1 \sqcup \omega_2 \rangle$. We let $\mathcal{P}_{\lambda}^{\dagger, n\opn{-an}}$ denote the overconvergent version, defined in exactly the same way but replacing the cohomology groups with their ``$\dagger$-versions''.
\end{definition}

\begin{remark}
    As above, the pairings $\mathcal{P}_{\lambda}^{\ddagger, n\opn{-an}}$ and $\mathcal{P}_{\lambda}^{\dagger, n\opn{-an}}$ are compatible with each other by viewing $\omega_1, \omega_2$ as nearly overconvergent classes, and considering the image of $\eta$ under the map from nearly overconvergent cohomology to overconvergent cohomology. The pairing $\mathcal{P}_{\lambda}^{\dagger, n\opn{-an}}$ appears in the construction of the $p$-adic $L$-function in \cite[Definition 10.6.2]{LZBK21}.
\end{remark}

\begin{remark}
    Since the $p$-adic branching laws in Remark \ref{DecompOfDDaggerIntoFactorsRem} and Proposition \ref{DefPropOfbrnan} are compatible with changing $n$, the pairings $\mathcal{P}_{\lambda}^{\ddagger, n\opn{-an}}$ induce an $A$-trilinear pairing
    \[
    \mathcal{P}_{\lambda}^{\ddagger, \opn{la}} \colon \opn{H}^2\left( K^G_{\beta}, \kappa_G^*; \opn{cusp} \right)^{(\ddagger, \opn{la}, -)} \otimes \mathscr{N}^{\dagger}_{H_1, \beta, \zeta_{H_1}} \otimes \mathscr{N}^{\dagger}_{H_2, \beta, \zeta_{H_2}} \to A
    \]
    where $\opn{H}^2\left( K^G_{\beta}, \kappa_G^*; \opn{cusp} \right)^{(\ddagger, \opn{la}, -)} = \varprojlim_{m \geq n} \opn{H}^2\left( K^G_{\beta}, \kappa_G^*; \opn{cusp} \right)^{(\ddagger, m\opn{-an}, -)}$. A similar statement is true for $\mathcal{P}^{\dagger, \opn{la}}_{\lambda}$, and we note that $\opn{H}^0\left( K^{H_i}_{\beta}, \zeta_{H_i} \right)^{(\dagger, n\opn{-an}, +)}$ is independent of $n$ and equal to the space $\mathscr{M}^{\dagger}_{H_i, \beta, \zeta_{H_i}} = \opn{Fil}_0 \mathscr{N}^{\dagger}_{H_i, \beta, \zeta_{H_i}}$ of weight $\zeta_{H_i}$, level $K^{H_i}_{\beta}$ overconvergent modular forms.
\end{remark}

\begin{remark}
    It is an easy check to see that $\mathcal{P}^{\ddagger, n\opn{-an}}_{\lambda}$ (and hence $\mathcal{P}^{\ddagger, \opn{la}}_{\lambda}$) is functorial in the pair $(A, A^+)$.
\end{remark}

We will also need the following compatibility with the classical weight nearly overconvergent pairing.

\begin{lemma} \label{DDaggerNanPeriodEqualsDDaggerPerLem}
    Suppose that we are in Situation \ref{situation:Weights}, and set $(A, A^+) = (L, \mathcal{O}_L)$ and $\lambda = r_1 + 1 - t_1$. Let $\eta^{\ddagger, n\opn{-an}} \in \opn{H}^2\left( K^G_{\beta}, \kappa_G^*; \opn{cusp} \right)^{(\ddagger, n\opn{-an}, -)}$ and let $\eta^{\ddagger} \in \opn{H}^2\left( K^G_{\beta}, \kappa_G^*; \opn{cusp} \right)^{(\ddagger, -)}$ denote its image under the natural map induced from $\mbb{D}_{\overline{\mathcal{P}}_{G, n+1, \beta}^{\square}}(\kappa_G^*) \to \mbb{D}_{\overline{P}_G}(\kappa_G^*)^{\opn{nearly}}$. For $i=1, 2$, let $\omega_i \in \opn{H}^0\left( K^{H_i}_{\beta}, \zeta_{H_i} \right)^{(\ddagger, +)} \subset \opn{H}^0\left( K^{H_i}_{\beta}, \zeta_{H_i} \right)^{(\ddagger, n\opn{-an}, +)}$. Then one has
    \[
    \mathcal{P}_{\lambda}^{\ddagger, n\opn{-an}}(\eta^{\ddagger, n\opn{-an}}, \omega_1, \omega_2) = \mathcal{P}_L^{\ddagger}(\eta^{\ddagger}, \omega_1, \omega_2) .
    \]
\end{lemma}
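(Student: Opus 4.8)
The plan is to reduce the statement to a compatibility of branching laws that has already been established in the excerpt, and to the fact that the overconvergent cohomology complexes are connected to the classical-weight complexes by maps that intertwine all the structures in sight. Concretely, recall that both pairings are defined by the same recipe $(\eta,\omega_1,\omega_2)\mapsto \langle \hat\iota^*\eta,\ \omega_1\sqcup\omega_2\rangle$, the only difference being which sheaves (and hence which branching map) are used: $\mathcal{P}^{\ddagger,n\opn{-an}}_\lambda$ uses $[\mathbb{D}_{\overline{\mathcal{P}}^{\square}_{G,n+1,\beta}}(\kappa_G^*)]$ together with $\opn{br}^{n\opn{-an}}$ of Proposition \ref{DefPropOfbrnan}, while $\mathcal{P}^{\ddagger}_L$ uses $[\mbb{D}_{\overline{P}_G}(\kappa_G^*)^{\opn{nearly}}]$ together with $\opn{br}^{\opn{nearly}}$ of \S\ref{algnearlyrepsSSec}. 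So the first step is to set up the commutative square relating these two coherent sheaves on $\mathcal{X}_G(K^G_\beta)$ (for $U$ small enough) coming from the $\overline{\mathcal{P}}^{\square}_{G,n,\beta}$-equivariant map $\mbb{I}_{\overline{P}_G}(\kappa_G)^{\opn{nearly}}\to\mbb{I}_{\overline{\mathcal{P}}^{\square}_{G,n,\beta}}(\kappa_G)$ discussed after Definition \ref{DefinitionOfIGpadicnearlyrep}, dualised, and the analogous map for $H$. The key input is Proposition \ref{BrNearlyBrNanCompatProposition}, which says precisely that $\opn{br}^{n\opn{-an}}$ and $\opn{br}^{\opn{nearly}}$ are compatible under these comparison maps; dualising, the pullback maps $\hat\iota^*$ on the two kinds of cohomology fit into a commutative diagram with the vertical specialisation maps $\opn{H}^2(\cdots)^{(\ddagger,n\opn{-an},-)}\to\opn{H}^2(\cdots)^{(\ddagger,-)}$.

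The second step is to handle the cup-product and Serre-duality half of the pairing. Since $\omega_1,\omega_2$ are taken in the classical-weight spaces $\opn{H}^0(K^{H_i}_\beta,\zeta_{H_i})^{(\ddagger,+)}$, sitting inside the $n$-analytic spaces via the map induced by $\mbb{I}_{\overline{P}_{\opn{GL}_2}}(\zeta_{H_i})^{\opn{nearly}}\to\mbb{I}_{\overline{\mathcal{P}}^{\square}_{\opn{GL}_2,n}}(\zeta_{H_i})$, one checks that the $n$-analytic exterior cup product restricts to the classical one on these subspaces — this follows from the compatibility of Remark \ref{DecompOfDDaggerIntoFactorsRem} with Remark \ref{DecompositionOfNearlyIntoFactorsRem} (both are induced by the same reduction-of-structure maps of \S\ref{MapsBetweenTorsorsSubSubSec} and the same decomposition of $H$ into its two $\opn{GL}_2$-factors). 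Likewise, the Serre-duality pairing between $\opn{H}^2(K^H_\beta,\kappa_H^*;\opn{cusp})^{(\ddagger,n\opn{-an},-)}$ and $\opn{H}^0(K^H_\beta,\kappa_H-2\rho_H)^{(\ddagger,n\opn{-an},+)}$ is compatible with the classical-weight Serre-duality pairing under the specialisation maps, because both arise from the duality pairing $\mbb{D}_{\overline{P}_H}(\kappa_H^*)^{\opn{nearly}}\otimes\mbb{I}_{\overline{P}_H}(\kappa_H)^{\opn{nearly}}\to\mbb{Q}$ (resp.\ its $p$-adic analogue) together with the identification of $[V_{M_H}(-2\rho_H)]$ with the log-differentials $\Omega^2_{X_H}(\log D_H)$, and these are visibly compatible. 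Here one must be slightly careful that $\lambda=r_1+1-t_1$ is exactly the algebraic weight making $\kappa_H=w^{\opn{max}}_{M_G}\kappa_G+(\lambda,-\lambda;0)$ agree with the $\kappa_H$ of Situation \ref{situation:Weights} — this is the numerical check $w^{\opn{max}}_{M_G}\kappa_G=(−r_1,r_2+2;−r_2−1)$ plus $(\lambda,-\lambda;0)=(r_1+1-t_1,\,-(r_1+1-t_1);0)$ summing to $(1-t_1,1-t_2;-r_2-1)=\kappa_H$ using $t_1+t_2=r_1-r_2$.

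Putting these together: for $\eta^{\ddagger,n\opn{-an}}$ with image $\eta^{\ddagger}$, and $\omega_i$ classical, we have $\hat\iota^*(\eta^{\ddagger,n\opn{-an}})$ mapping to $\hat\iota^*(\eta^{\ddagger})$ under specialisation (step one), $\omega_1\sqcup\omega_2$ computed $n$-analytically equals the classical $\omega_1\sqcup\omega_2$ (step two, first half), and the two Serre pairings agree on these classes (step two, second half); hence $\langle\hat\iota^*\eta^{\ddagger,n\opn{-an}},\omega_1\sqcup\omega_2\rangle=\langle\hat\iota^*\eta^{\ddagger},\omega_1\sqcup\omega_2\rangle$, which is the claim. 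Throughout one works over $U$ sufficiently small so that all the torsors $\mathcal{P}_{\mathscr{G},\opn{dR},n}$ and the induced maps of sheaves are defined; since all the relevant objects are taken up to shrinking $U$ (Remark \ref{OnlyUniqueAfterShrinkingRem}), this causes no difficulty. The main obstacle I anticipate is not conceptual but bookkeeping: one has to verify that the three "comparison" maps (for $G$, for $H$, and for the two $\opn{GL}_2$'s) are genuinely compatible with the reduction-of-structure diagrams of \S\ref{MapsBetweenTorsorsSubSubSec}, i.e.\ that passing from classical-weight nearly sheaves to $n$-analytic nearly sheaves commutes with $\hat\iota^*$ and with $\opn{pr}_i^*$ at the level of the actual sheaf morphisms (not just abstractly) — but this is forced by Proposition \ref{BrNearlyBrNanCompatProposition} and the corresponding (easy) compatibility for $\opn{pr}_i$, so it reduces to citing the already-proven branching compatibilities and the construction of the maps $[V]\to\hat\iota_*[W]$ in \S\ref{MapsBetweenTorsorsSubSubSec}.
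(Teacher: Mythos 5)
Your proposal is correct and follows exactly the route the paper takes: the paper's proof is the single sentence that the claim ``follows immediately from the compatibility of the branching laws $\opn{br}^{\opn{nearly}}$ and $\opn{br}^{n\opn{-an}}$ in Proposition \ref{BrNearlyBrNanCompatProposition}''. You have simply made explicit the routine compatibilities (of the cup products, the Serre duality pairings, and the weight bookkeeping $\kappa_H = w_{M_G}^{\opn{max}}\kappa_G + (\lambda,-\lambda;0)$ with $\lambda = r_1+1-t_1$) that the authors leave implicit, and these checks are all accurate.
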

\begin{proof}
    This follows immediately from the compatibility of the branching laws $\opn{br}^{\opn{nearly}}$ and $\opn{br}^{n\opn{-an}}$ in Proposition \ref{BrNearlyBrNanCompatProposition}.
\end{proof}

\subsubsection{Trace compatibility}

The final compatibility we will need is when $\beta \geq 1$ varies. We first note that there are natural (unnormalised) trace maps
\begin{equation} \label{TrGbetaEqn}
\opn{Tr}_{G,\beta} \colon R\Gamma(K^G_{\beta+1}, \kappa_G^*; \opn{cusp})^{(\ddagger, n\opn{-an}, -)} \to R\Gamma(K^G_{\beta}, \kappa_G^*; \opn{cusp})^{(\ddagger, n\opn{-an}, -)}
\end{equation}
via the formalism in \cite[Lemma 2.1.2]{BoxerPilloni} and using the natural map $\mbb{D}_{\overline{\mathcal{P}}^{\square}_{G, n+1, \beta+1}}(\kappa_G^*) \to \mbb{D}_{\overline{\mathcal{P}}^{\square}_{G, n+1, \beta}}(\kappa_G^*)$. This latter map of representations is compatible with the branching law $\opn{br}^{n\opn{-an}}$ and the analogous change-of-$\beta$ map for $\mbb{D}_{\overline{\mathcal{P}}^{\diamondsuit}_{H, n+1, \beta}}(\kappa_H^*)$. Combining this with fact that both of the squares
\[
\begin{tikzcd}
{\mathcal{X}_{H, \opn{id}, \beta+1}} \arrow[d] \arrow[r] & {\mathcal{X}_{G, w_1, \beta+1}} \arrow[d] &  & {\mathcal{Z}_{H, \opn{id}, \beta+1}} \arrow[d] \arrow[r] & {\mathcal{Z}_{G, <1, \beta+1}} \arrow[d] \\
{\mathcal{X}_{H, \opn{id}, \beta}} \arrow[r]             & {\mathcal{X}_{G, w_1, \beta}}             &  & {\mathcal{Z}_{H, \opn{id}, \beta}} \arrow[r]             & {\mathcal{Z}_{G, <1, \beta}}            
\end{tikzcd}
\]
are Cartesian, implies that $\hat{\iota}^* \circ \opn{Tr}_{G, \beta} = \opn{Tr}_{H, \beta} \circ \; \hat{\iota}^*$, where 
\[
\opn{Tr}_{H, \beta} \colon R\Gamma(K^H_{\beta+1}, \kappa_H^*; \opn{cusp})^{(\ddagger, n\opn{-an}, -)} \to R\Gamma(K^H_{\beta}, \kappa_H^*; \opn{cusp})^{(\ddagger, n\opn{-an}, -)}
\]
is the unnormalised trace map for $H$. We have the following compatibility:

\begin{lemma} \label{PlambdaIsIndepOfBetaTraceLemma}
    Suppose we are in the setting of \S \ref{padicWtNOCsssec}. Let $\eta \in \opn{H}^2\left( K^G_{\beta+1}, \kappa_G^*; \opn{cusp} \right)^{(\ddagger, n\opn{-an}, -)}$ and $\omega_i \in \opn{H}^0\left( K_{\beta}^{H_i}, \zeta_{H_i} \right)^{(\ddagger, n\opn{-an}, +)}$ for $i=1, 2$. Then
    \[
    \mathcal{P}^{\ddagger, n\opn{-an}}_{\lambda, \beta}( \opn{Tr}_{G, \beta}(\eta), \omega_1, \omega_2) = \mathcal{P}^{\ddagger, n\opn{-an}}_{\lambda, \beta+1}(\eta, \omega_1, \omega_2)
    \]
    where we have added the subscript to denote the level we are working at and, for the right-hand side, we naturally view $\omega_i \in \opn{H}^0\left( K_{\beta+1}^{H_i}, \zeta_{H_i} \right)^{(\ddagger, n\opn{-an}, +)}$ via pullback.\footnote{In fact, $\opn{H}^0\left( K_{\beta}^{H_i}, \zeta_{H_i} \right)^{(\ddagger, n\opn{-an}, +)}$ is independent of $\beta$.}
\end{lemma}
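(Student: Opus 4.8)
The plan is to reduce the statement, using the identity $\hat{\iota}^* \circ \opn{Tr}_{G, \beta} = \opn{Tr}_{H, \beta} \circ \hat{\iota}^*$ recorded just above (which follows from the Cartesian squares for $\hat{\iota}$ and the compatibility of the maps of $p$-adic representations with $\opn{br}^{n\opn{-an}}$), to a duality–trace adjunction for $H$. Unwinding the definition of $\mathcal{P}^{\ddagger, n\opn{-an}}_{\lambda, \beta}$ gives
\[
\mathcal{P}^{\ddagger, n\opn{-an}}_{\lambda, \beta}(\opn{Tr}_{G, \beta}(\eta), \omega_1, \omega_2) = \langle \hat{\iota}^* \opn{Tr}_{G, \beta}(\eta), \omega_1 \sqcup_{\beta} \omega_2 \rangle_{\beta} = \langle \opn{Tr}_{H, \beta}(\hat{\iota}^* \eta), \omega_1 \sqcup_{\beta} \omega_2 \rangle_{\beta},
\]
where $\sqcup_\beta$ and $\langle-,-\rangle_\beta$ denote the exterior cup product and Serre duality pairing at level $\beta$. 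Thus it suffices to establish two facts: (i) $\langle \opn{Tr}_{H, \beta}(a), b \rangle_{\beta} = \langle a, \opn{pr}_{\beta}^* b \rangle_{\beta+1}$ for all $a \in \opn{H}^2(K^H_{\beta+1}, \kappa_H^*; \opn{cusp})^{(\ddagger, n\opn{-an}, -)}$ and $b \in \opn{H}^0(K^H_{\beta}, \kappa_H - 2\rho_H)^{(\ddagger, n\opn{-an}, +)}$, where $\opn{pr}_\beta \colon X_H(K^H_{\beta+1}) \to X_H(K^H_\beta)$ is the forgetful morphism; and (ii) $\opn{pr}_\beta^*(\omega_1 \sqcup_\beta \omega_2) = \omega_1 \sqcup_{\beta+1} \omega_2$, where on the right $\omega_i$ is viewed at level $\beta+1$ via pullback. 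Granting both, the displayed quantity equals $\langle \hat{\iota}^* \eta, \omega_1 \sqcup_{\beta+1} \omega_2 \rangle_{\beta+1} = \mathcal{P}^{\ddagger, n\opn{-an}}_{\lambda, \beta+1}(\eta, \omega_1, \omega_2)$, as required.

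Identity (ii) is the elementary one. The projections $\opn{pr}_i \colon X_H(K^H_\bullet) \to X_{\opn{GL}_2}(K^{H_i}_\bullet)$ and the forgetful maps $\opn{pr}_\bullet$ fit into commuting squares, since $\opn{pr}_i(K^H_{\beta+1}) = K^{H_i}_{\beta+1}$ and all the maps are induced by inclusions of level subgroups; pullback of coherent cohomology classes commutes with cup products; and $\opn{H}^0(K^{H_i}_\beta, \zeta_{H_i})^{(\ddagger, n\opn{-an}, +)}$ is a colimit of sections over strict neighbourhoods of $\mathcal{X}_{H_i, \opn{id}, \beta}$ to which the forgetful map restricts (so in particular this space is independent of $\beta$, as in the footnote). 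Combining these facts gives (ii).

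The heart of the matter, and the main obstacle, is identity (i): one must identify the \emph{unnormalised} trace $\opn{Tr}_{H, \beta}$ — which by definition (via the formalism of \cite[Lemma 2.1.2]{BoxerPilloni}, built from $\opn{pr}_\beta$ together with the map $\mbb{D}_{\overline{\mathcal{P}}^\diamondsuit_{H, n+1, \beta+1}}(\kappa_H^*) \to \mbb{D}_{\overline{\mathcal{P}}^\diamondsuit_{H, n+1, \beta}}(\kappa_H^*)$) — with the geometric trace that is adjoint to $\opn{pr}_\beta^*$ under Serre duality. The ingredients are: that $\opn{pr}_\beta$ is finite flat and, being finite (hence closed), carries cohomology with supports in a neighbourhood of $\mathcal{Z}_{H, \opn{id}, \beta+1}$ to cohomology with supports in a neighbourhood of $\mathcal{Z}_{H, \opn{id}, \beta} = \opn{pr}_\beta(\mathcal{Z}_{H, \opn{id}, \beta+1})$, compatibly with the colimits over strict neighbourhoods defining the $(\ddagger, n\opn{-an}, \pm)$ complexes; that the displayed map of $p$-adic representations is exactly the $L$-linear dual of the pullback map $\mbb{I}_{\overline{\mathcal{P}}^\diamondsuit_{H, n, \beta}}(\kappa_H) \otimes V_{M_H}(-2\rho_H) \to \mbb{I}_{\overline{\mathcal{P}}^\diamondsuit_{H, n, \beta+1}}(\kappa_H) \otimes V_{M_H}(-2\rho_H)$ under the duality pairing defining $\langle-,-\rangle$; and the standard functoriality of the coherent-duality trace under composition, applied through the identification $[V_{M_H}(-2\rho_H)] \cong \Omega^2_{X_H(K^H_\bullet)}(\log D_H)$, which shows that the coherent trace of $\opn{pr}_\beta$ on top log-differentials is carried to the identity by the duality trace to $L$. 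The delicate point is verifying that no auxiliary scalar is introduced by the ``unnormalised'' trace and that the projection formula for $\opn{pr}_\beta$ threads correctly through the colimit over neighbourhoods and through the cohomology-with-supports; once this is checked, (i) follows and the lemma is proved.
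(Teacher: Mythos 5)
Your proof is correct and follows the same route as the paper: the paper's one-line argument is precisely "the adjunction between $\opn{Tr}_{H,\beta}$ and restriction from level $K^H_\beta$ to $K^H_{\beta+1}$," applied after the commutation $\hat{\iota}^*\circ\opn{Tr}_{G,\beta}=\opn{Tr}_{H,\beta}\circ\hat{\iota}^*$ established just before the lemma, which is exactly your reduction to identities (i) and (ii). Your write-up simply spells out the details the paper leaves implicit.
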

\begin{proof}
    This follows from the adjunction between $\opn{Tr}_{H, \beta}$ and restriction (from level $K^H_{\beta}$ to level $K^H_{\beta+1}$).
\end{proof}

\begin{remark}
We also note that if the target of (\ref{TrGbetaEqn}) admits a slope $\leq h$ decomposition with respect to some fixed $t \in T^{G, --}$, then so does the source of (\ref{TrGbetaEqn}) (since the action of $t$ factors through $\opn{Tr}_{G, \beta}$; c.f., \cite[Lemma 4.2.14]{BoxerPilloni}). Moreover, in this case, the trace map $\opn{Tr}_{G, \beta}$ is then an isomorphism on slope $\leq h$ parts. We will use this to lift small slope classes along the trace maps $\opn{Tr}_{G, \beta}$ for any $\beta \geq 1$.
\end{remark}

\section{Automorphic representations}

We now introduce the automorphic representations of $\opn{GSp}_4$ and $\opn{GL}_2$ that we will consider in this article. 

\subsection{Automorphic representations of \texorpdfstring{$\opn{GSp}_4$}{GSp(4)}} \label{AutoRepsForGsp4Section}

Following \cite[\S 10.1]{LSZ17}, for the rest of the article we let $\pi$ denote a cuspidal automorphic representation of $\opn{GSp}_4(\mbb{A})$ which is:
\begin{itemize}
    \item non-CAP, and either non-endoscopic or of Yoshida-type;
    \item discrete series at $\infty$;
    \item globally generic.
\end{itemize}
Let $\pi = \bigotimes'_{v} \pi_v$ denote the restricted tensor product decomposition of $\pi$ over all places of $\mbb{Q}$. Then we can (and do) assume that $\pi$ is cohomological of weight $\nu_G = (r_1, r_2; -(r_1+r_2))$ with $r_1 \geq r_2 \geq 0$, i.e., the $(\mathfrak{g}, K^+_{G,\infty})$-cohomology of $\pi_{\infty} \otimes V_G(\nu_G)^*$ is non-trivial. Here $K_{G,\infty}^+ = \mbb{R}_{>0}^{\times} \cdot U_2(\mbb{R})$ denotes the standard maximal compact-mod-centre subgroup of $\opn{GSp}_4(\mbb{R})_+$. Note that the finite part $\pi_f$ is definable over a number field (this is the ``arithmetic normalisation'' appearing in \cite[Note 6.1.2]{LZBK20}). 

\begin{assumption} \label{PipUnramifiedAssumption}
    We assume that $\pi_p$ is unramified. In particular, by the globally generic assumption, $\pi_p$ is a generic unramified principal series representation of $G(\mbb{Q}_p)$.
\end{assumption}

Since $\pi$ is globally generic, it is quasi-paramodular of level $(N, M)$ in the sense of \cite{okazaki2019localarxiv}, where $M^2 \mid N$, $M$ is the conductor of the finite part of the central character of $\pi$, and $p \nmid N$ by Assumption \ref{PipUnramifiedAssumption}. If we write the central character of $\pi$ as $\widehat{\chi} \cdot |\!|s(-)|\!|^{-(r_1+r_2)/2}$ with $\chi$ finite-order, then for brevity, we say that $\pi$ has weight $\nu_G$, level $(N, M)$, and character $\chi$ (where $M$ is the conductor of $\chi$). 

\subsubsection{} Let $\ell$ be a prime for which $\pi_{\ell}$ is unramified. By the globally generic assumption, we can write $\pi_{\ell} = I_B^G(\theta_{\ell})$ as a normalised induction from $B(\mbb{Q}_{\ell})$ to $G(\mbb{Q}_{\ell})$ of a smooth unramified character $\theta_{\ell} \colon T(\mbb{Q}_{\ell}) \to \mbb{C}^{\times}$. Fix a Haar measure on $G(\mbb{Q}_\ell)$ such that $K_{\ell} \defeq G(\mbb{Z}_{\ell})$ has volume $1$, and let $\mathcal{H}_{\ell, R} \defeq \mathcal{H}(K_{\ell} \backslash G(\mbb{Q}_{\ell}) / K_{\ell}, R)$ denote the spherical Hecke algebra over a $\mbb{Q}$-algebra $R$ with convolution product with respect to the fixed Haar measure above. 

\begin{definition}
    We let $\Theta_{\pi, \ell} \colon \mathcal{H}_{\ell, \mbb{C}} \to \mbb{C}$ denote the $\mbb{C}$-algebra homomorphism through which $\mathcal{H}_{\ell, \mbb{C}}$ acts on $\pi_{\ell}^{G(\mbb{Z}_\ell)}$. For a finite set of places $S$ containing $\infty$, $p$ and all primes where $\pi$ ramifies, we let
    \[
    \Theta^S_{\pi} \colon \mathcal{H}^S_{\mbb{C}} \defeq \bigotimes_{\ell \not\in S} \mathcal{H}_{\ell, \mbb{C}} \to \mbb{C}
    \]
    denote the $\mbb{C}$-algebra homomorphism given by $\Theta^S_{\pi} = \otimes_{\ell \notin S} \Theta_{\pi, \ell}$. This homomorphism is definable over a number field $E$, which will assume from now on and suppress from the notation.
\end{definition}

\subsubsection{} 

We now introduce certain Hecke operators at $p$. Let $\beta \geq 1$ be an integer and let $K_{\opn{Kl}}(p^{\beta}) \subset G(\mbb{Z}_p)$ (resp. $K_{\opn{Si}}(p^{\beta}) \subset G(\mbb{Z}_p)$) denote the compact open subgroup of elements which are of the form $\left( \begin{smallmatrix} * & * & * & * \\ & * & * & * \\ & * & * & * \\ & & & * \end{smallmatrix} \right)$ (resp. $\left( \begin{smallmatrix} * & * & * & * \\ * & * & * & * \\ &  & * & * \\ & & * & * \end{smallmatrix} \right)$) modulo $p^{\beta}$. Let $K_{\opn{B}}(p^{\beta}) = K_{\opn{Si}}(p^{\beta}) \cap K_{\opn{Kl}}(p^{\beta})$ denote the upper-triangular Iwahori subgroup of depth $p^{\beta}$. We will often use the notation $K_{p, \opn{Si}} = K_{\opn{Si}}(p)$, $K_{p, \opn{Kl}} = K_{\opn{Kl}}(p)$, $K_{p, \opn{B}} = K_{\opn{B}}(p)$. For $\bullet \in \{ \opn{Si}, \opn{Kl}, \opn{B} \}$ we let $U_{p, \bullet} = U_{\bullet}(p)$ and $U'_{\bullet}(p^{\beta})$ denote the following Hecke operators:
\begin{align*}
U_{p, \opn{Si}} \defeq \upsilon_{\opn{Si}, 1}^{-1} \opn{ch}\left( K_{p, \opn{Si}} \left( \begin{smallmatrix} p & & & \\ & p & & \\ & & 1 & \\ & & & 1 \end{smallmatrix} \right) K_{p, \opn{Si}} \right), &\quad U'_{\opn{Si}}(p^{\beta}) \defeq \upsilon_{\opn{Si}, \beta}^{-1} \opn{ch}\left( K_{\opn{Si}}(p^{\beta}) \left( \begin{smallmatrix} 1 & & & \\ & 1 & & \\ & & p^{\beta} & \\ & & & p^{\beta} \end{smallmatrix} \right) K_{\opn{Si}}(p^{\beta}) \right) \\
U_{p, \opn{Kl}} \defeq \upsilon_{\opn{Kl}, 1}^{-1} \opn{ch}\left( K_{p, \opn{Kl}} \left( \begin{smallmatrix} p^2 & & & \\ & p & & \\ & & p & \\ & & & 1 \end{smallmatrix} \right) K_{p, \opn{Kl}} \right), &\quad U'_{\opn{Kl}}(p^{\beta}) \defeq \upsilon_{\opn{Kl}, \beta}^{-1} \opn{ch}\left( K_{\opn{Kl}}(p^{\beta}) \left( \begin{smallmatrix} 1 & & & \\ & p^{\beta} & & \\ & & p^{\beta} & \\ & & & p^{2\beta} \end{smallmatrix} \right) K_{\opn{Kl}}(p^{\beta}) \right) \\
U_{p, \opn{B}} \defeq \upsilon_{\opn{B}, 1}^{-1} \opn{ch}\left( K_{p, \opn{B}} \left( \begin{smallmatrix} p^3 & & & \\ & p^2 & & \\ & & p & \\ & & & 1 \end{smallmatrix} \right) K_{p, \opn{B}} \right), &\quad U'_{\opn{B}}(p^{\beta}) \defeq \upsilon_{\opn{B}, \beta}^{-1} \opn{ch}\left( K_{\opn{B}}(p^{\beta}) \left( \begin{smallmatrix} 1 & & & \\ & p^{\beta} & & \\ & & p^{2\beta} & \\ & & & p^{3\beta} \end{smallmatrix} \right) K_{\opn{B}}(p^{\beta}) \right)
\end{align*}
where for brevity we have written $\upsilon_{\bullet, \beta} \defeq \opn{Vol}(K_{\bullet}(p^{\beta}))$. We will often simply write $U_{p, \bullet}' = U'_{\bullet}(p^{\beta})$ when $\beta$ is clear from the context. Furthermore, we will also write $U_{p, \opn{Si}}, U_{p, \opn{Kl}}$ for the versions of the Hecke operators at Iwahori level (given by the same description replacing $\upsilon_{\bullet, 1}, K_{p, \bullet}$ with $\upsilon_{\opn{B}, 1}, K_{p, \opn{B}}$), and similarly for $U'_{p, \opn{Si}}, U'_{p, \opn{Kl}}$ at (depth $p^{\beta}$) Iwahori level.

For a $G$-dominant weight $\nu_G=(r_1, r_2; -(r_1+r_2))$, these operators naturally act on the singular cohomology groups $\opn{H}^{i}_c\left( Y_{G}(K^pK_{\opn{B}}(p^{\beta}))(\mbb{C}), \mathscr{V}_{\mbb{Q}} \right)$, where $\mathscr{V}_{\mbb{Q}}$ denotes the $\mbb{Q}$-local system associated with the algebraic representation $V_G(\nu_G)^*$. We let
\[
U_{p, \opn{Si}}^{\circ} \defeq U_{p, \opn{Si}}, \quad U_{p, \opn{Kl}}^{\circ} \defeq p^{-r_2} U_{p, \opn{Kl}}, \quad U_{p, B}^{\circ} \defeq p^{-r_2} U_{p, B} 
\]
which are the optimal normalisations for the action of the Hecke operators on $\opn{H}^{i}_c\left( Y_{G}(K^pK_{\opn{B}}(p^{\beta}))(\mbb{C}), \mathscr{V}_{\mbb{Q}} \right)$ (see \cite[Proposition 5.10.11]{BoxerPilloni}).

More generally, recall that $T^{G,+} \subset T(\mbb{Q}_p)$ is the submonoid of elements $t \in T(\mbb{Q}_p)$ which satisfy $v_p(\alpha(t)) \geq 0$ for all positive roots $\alpha \in \Phi_G^+$. For any $\mbb{Q}$-algebra $R$, we will consider the following (commutative) subalgebra
\begin{align*}
    R[T^{G,+}] &\hookrightarrow \mathcal{H}\left( K_{p, \opn{B}} \backslash G(\mbb{Q}_p) / K_{p, \opn{B}}, R \right) \\
    t &\mapsto \opn{Vol}(K_{p, \opn{B}})^{-1} \opn{ch}( K_{p, \opn{B}} \cdot t \cdot K_{p, \opn{B}} ) .
\end{align*}
The following lemma describes the possible Hecke eigensystems at $p$ that can occur in $\pi_p^{K_{p, \opn{B}}}$.

\begin{lemma}
    Let $\pi$ be as above and write $\pi_p = I_B^G (\theta_p)$ as a normalised induction of a smooth unramified character $\theta_p \colon T(\mbb{Q}_p) \to \mbb{C}^{\times}$. Then the semisimplification of $\pi_p^{K_{p, \opn{B}}}$ as a $\mbb{C}[T^{G,+}]$-module can be described as
    \begin{equation} \label{SSdecompositionPip}
    \bigoplus_{w \in W_G} \delta_B^{-1/2} (w \cdot \theta_p)
    \end{equation}
    where $\delta_B^{-1/2} (w \cdot \theta_p) \colon T^{G,+} \to \mbb{C}^{\times}$ denotes the character given by $\left[ \delta_B^{-1/2} (w \cdot \theta_p) \right](t) = \delta_B(t)^{-1/2} \theta_p(w^{-1}t w)$ and $\delta_B$ denotes the standard modulus character:
    \[
    \delta_B(t_1, t_2, s t_2^{-1}, st_1^{-1} ) = | t_1 |^4 \cdot |t_2|^2 \cdot |s|^{-3}.
    \]
\end{lemma}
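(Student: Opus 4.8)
The plan is to deduce this from Casselman's description of the Iwahori-fixed vectors in an unramified principal series together with the standard computation of its Jacquet module; this is purely a statement about the local representation $\pi_p = I_B^G(\theta_p)$ and holds for any smooth unramified character $\theta_p \colon T(\mbb{Q}_p) \to \mbb{C}^{\times}$.

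First I would record that $\pi_p^{K_{p, \opn{B}}}$ has dimension $|W_G| = 8$. Indeed, via the Iwasawa decomposition and the Bruhat decomposition over $\mbb{F}_p$ one has $B(\mbb{Q}_p) \backslash G(\mbb{Q}_p) / K_{p, \opn{B}} \cong B(\mbb{F}_p) \backslash G(\mbb{F}_p) / B(\mbb{F}_p) = W_G$, and a $K_{p, \opn{B}}$-invariant section of the normalised induction is determined by its values on a set of representatives of this finite set of double cosets. It therefore suffices to identify the semisimplification of this $8$-dimensional space as a module over the image of $\mbb{C}[T^{G,+}]$ in $\mathcal{H}(K_{p, \opn{B}} \backslash G(\mbb{Q}_p)/K_{p, \opn{B}}, \mbb{C})$.

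The key input is Casselman's theorem relating $\pi_p^{K_{p, \opn{B}}}$ to the (unnormalised) Jacquet module $(\pi_p)_N$, where $N \subset B_G$ denotes the unipotent radical. Since $T^{G,+}$ is exactly the submonoid of $T(\mbb{Q}_p)$ contracting $N(\mbb{Z}_p)$, the operators $U_t \defeq \opn{Vol}(K_{p, \opn{B}})^{-1} \opn{ch}(K_{p, \opn{B}}\, t\, K_{p, \opn{B}})$ for $t \in T^{G,+}$ preserve $\pi_p^{K_{p, \opn{B}}}$, and the canonical projection onto $(\pi_p)_N$ restricts to a $\mbb{C}$-linear isomorphism $\pi_p^{K_{p, \opn{B}}} \xrightarrow{\sim} ((\pi_p)_N)^{T(\mbb{Z}_p)} = (\pi_p)_N$ (the last equality because $\theta_p$ is unramified) which intertwines $U_t$ with the endomorphism $\delta_B(t)^{-1}\cdot t$ of $(\pi_p)_N$. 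I expect \emph{pinning down this normalising factor $\delta_B(t)^{-1}$} --- which bookkeeps the relation between the two standard conventions for the Jacquet functor and counts the $\opn{Vol}(K_{p, \opn{B}})\,\delta_B(t)^{-1}$ left $K_{p, \opn{B}}$-cosets supporting $\opn{ch}(K_{p, \opn{B}}\, t\, K_{p, \opn{B}})$ --- to be the only delicate point; it can be verified by reduction to $\opn{GL}_2$ (where a direct coset computation gives the spherical Hecke eigenvalue $\delta_B^{-1/2}(t)\sum_{w}(w\cdot\theta_p)(t)$ on the spherical vector) or by comparison with the Satake isomorphism.

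Finally, by the geometric lemma of Bernstein--Zelevinsky (equivalently, Casselman's computation of the Jacquet module of a principal series), $(\pi_p)_N = (I_B^G(\theta_p))_N$ carries a filtration by $T(\mbb{Q}_p)$-submodules whose graded pieces are the unramified characters $\delta_B^{1/2} \cdot (w \cdot \theta_p)$, $w \in W_G$; hence its semisimplification is $\bigoplus_{w \in W_G} \delta_B^{1/2}\cdot(w \cdot \theta_p)$. Combining this with the previous paragraph, the semisimplification of $\pi_p^{K_{p, \opn{B}}}$ as a $\mbb{C}[T^{G,+}]$-module is $\bigoplus_{w \in W_G} \delta_B^{-1} \cdot \big( \delta_B^{1/2} \cdot (w \cdot \theta_p) \big) = \bigoplus_{w \in W_G} \delta_B^{-1/2}(w \cdot \theta_p)$, which is the asserted formula. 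The displayed expression for $\delta_B$ is obtained by a direct root-by-root computation: the positive roots of $B_G$ evaluated at $\opn{diag}(t_1, t_2, st_2^{-1}, st_1^{-1})$ are $t_1 t_2^{-1}$, $t_2^2 s^{-1}$, $t_1 t_2 s^{-1}$ and $t_1^2 s^{-1}$, whose product of absolute values is $|t_1|^4 |t_2|^2 |s|^{-3}$.
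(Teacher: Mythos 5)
Your argument is correct and is essentially the paper's proof spelled out in full: the paper simply cites Casselman's Theorem 3.3.3 (the Jacquet-module filtration of a principal series, giving the graded pieces $\delta_B^{1/2}(w\cdot\theta_p)$) and Proposition 6.4.1 (the Hecke-equivariant isomorphism $\pi_p^{K_{p,\opn{B}}} \xrightarrow{\sim} (\pi_p)_N$, with the $\delta_B(t)^{-1}$ coset-counting factor you identify). Your normalisation bookkeeping is right, so nothing further is needed.
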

\begin{proof}
    This follows from \cite[Theorem 3.3.3, Proposition 6.4.1]{Casselman}.
\end{proof}

We will impose the following assumption.

\begin{assumption} \label{pregularAssumption}
    We assume that $\pi_p$ is $p$-regular, i.e., the eigencharacters appearing in (\ref{SSdecompositionPip}) are pairwise distinct (so, in particular, $\pi_p^{K_{p, \opn{B}}}$ is a semisimple $\mbb{C}[T^{G,+}]$-module). 
\end{assumption}

\subsubsection{}

For convenience later on, we make the following definition.

\begin{definition}
    By a \emph{good $p$-stabilised} automorphic representation $\tilde{\pi} = (\pi, \Theta_{\pi, p})$ of $\opn{GSp}_4(\mbb{A})$, we mean the data of:
    \begin{itemize}
        \item a cuspidal automorphic representation $\pi$ of $\opn{GSp}_4(\mbb{A})$ satisfying the assumptions at the start of \S \ref{AutoRepsForGsp4Section}, as well as Assumption \ref{PipUnramifiedAssumption} and Assumption \ref{pregularAssumption};
        \item a character $\Theta_{\pi, p} \colon T^{G, +} \to \mbb{C}^{\times}$ which appears in the semisimple $\mbb{C}[T^{G, +}]$-module $\pi_p^{K_{p, \mathrm{B}}}$.
    \end{itemize}
    By a \emph{good $p$-stabilisation} $\phi \in \tilde{\pi}$, we mean a decomposible (non-zero) automorphic form
    \[
    \phi = \phi_{\infty} \otimes \phi^p \otimes \phi_p \in \pi_{\infty} \otimes \left( \otimes_{\ell \neq p} \pi_{\ell} \right)^{K(N, M)^p} \otimes \pi_p^{K_{p, \opn{B}}}
    \]
    where $\phi_p$ is an eigenvector for the action of $T^{G, +}$ with eigencharacter $\Theta_{\pi, p}$, and $\phi_{\infty}$ lies in the (unique) $K_{G, \infty}^+$-type of highest weight $\kappa_G + (-1, -3; 2)$. Here $K(N, M)^p \subset \opn{GSp}_4(\mbb{A}_f^p)$ denotes the quasi-paramodular subgroup of level $(N, M)$ (with $N$ equal to the conductor of $\pi$ and $M$ equal to the conductor of the finite-part of the central character of $\pi$).

    It will also be useful to introduce a ``normalised'' version of these eigensystems. More precisely, given a good $p$-stabilised automorphic representation $\tilde{\pi}$, we let $\Theta^+_{\pi, p} \colon T^{G, +} \to \mbb{C}^{\times}$ denote the character satisfying 
    \[
    \Theta^+_{\pi, p}(t) \defeq \nu_G(t)\Theta_{\pi, p}(t), \quad \quad t \in T^{G, +},
    \]
    where $\nu_G$ is the weight of $\pi$.
\end{definition}

We now introduce the relevant small slope conditions on good $p$-stabilised automorphic representations.

\begin{definition}
    Let $\tilde{\pi} = (\pi, \Theta_{\pi, p})$ be a good $p$-stabilised automorphic representation of $\opn{GSp}_4(\mbb{A})$. Via our fixed isomorphism $\mbb{C} \cong \Qpb$, we view $\Theta_{\pi, p}$ as a character of $T^{G,+}$ valued in $\Qpb^{\times}$. Let $v_p$ denote the $p$-adic valuation on $\Qpb^{\times}$, normalised so that $v_p(p) =1$. Then we say $\Theta_{\pi, p}$ (or $\tilde{\pi}$) is:
    \begin{itemize}
        \item \emph{Klingen small slope} if 
        \[
        v_p(\Theta_{\pi, p}(U_{p, \opn{Kl}}^{\circ})) < 1 + r_1 - r_2 ;
        \]
        \item \emph{Siegel small slope} if
        \[
        v_p(\Theta_{\pi, p}(U_{p, \opn{Si}}^{\circ})) < 1 + r_2 ;
        \]
        \item \emph{Borel small slope} (or, simply, \emph{small slope}) if $\Theta_{\pi, p}$ is both Klingen and Siegel small slope.
    \end{itemize}
    We say $\Theta_{\pi, p}$ (or $\tilde{\pi}$) is Klingen (resp. Siegel, resp. Borel) ordinary if $v_p(\Theta_{\pi, p}(U_{p, \opn{Kl}}^{\circ})) = 0$ (resp. $v_p(\Theta_{\pi, p}(U_{p, \opn{Si}}^{\circ})) = 0$, resp. $v_p(\Theta_{\pi, p}(U_{p, \opn{B}}^{\circ})) = 0$). 
\end{definition}

\begin{remark}
    The above definition of (Borel) small slope is equivalent to the ``$(+, \opn{ss}(\nu_G))$'' small slope condition in \cite[Definition 5.11.2]{BoxerPilloni} and is the usual requirement that appears in the theory of $p$-adic families of automorphic forms via Betti cohomology. Furthermore, if $\Theta_{\pi, p}$ is small slope, then $w_G^{\opn{max}} \cdot \Theta_{\pi, p}$ also satisfies the $\opn{ss}^M_{w_1}(\kappa_G^*)$ and $\opn{ss}_{M, w_1}(\kappa_G^*)$ conditions from \S \ref{ResultsFromHCTSSsec} (with $\kappa_G = (r_2+2, -r_1; -r_2-1)$); indeed, this follows from \cite[Proposition 5.11.10]{BoxerPilloni}.
\end{remark}

\subsubsection{Dual $p$-stabilisations} \label{DualPStabilisationsSSec}

It will be useful to consider $p$-stabilisations for the transpose Hecke operators at $p$. Let $\phi \in \tilde{\pi}$ denote a good $p$-stabilisation in a good $p$-stabilised cuspidal automorphic representation of $G(\mbb{A})$ as above. 

\begin{definition}
    Let $\beta \geq 1$. We let $\phi'_{\beta} \in \pi_{\infty} \otimes \pi_f^{K(N, M)^pK^G_{\opn{Iw}}(p^{\beta})}$ denote the automorphic form given by
    \[
    \phi'_{\beta} \defeq \Theta_{\pi, p}(t_{p, \opn{B}})^{-\beta} [K^G_{\opn{Iw}}(p^{\beta}) s_{p, \opn{B}}^{\beta} w_G^{\opn{max}} K^G_{\opn{Iw}}(p)] \cdot \phi, \quad t_{p, \opn{B}} = \left( \begin{smallmatrix} p^3 & & & \\ & p^2 & & \\ & & p & \\ & & & 1 \end{smallmatrix} \right), \; s_{p, \opn{B}} = w_{G}^{\opn{max}} t_{p, \opn{B}} (w_{G}^{\opn{max}})^{-1}.
    \]
    This is an eigenvector for the action of $\bigotimes_{\ell \nmid Np} \mathcal{H}_{\ell, \mbb{C}}$ with eigencharacter $\Theta_{\pi}^{Np} = \otimes_{\ell \nmid Np} \Theta_{\pi, \ell}$, and an eigenvector for the action of $T^{G, -}$ with eigencharacter $w_G^{\opn{max}} \cdot \Theta_{\pi, p}$. Furthermore, if $\opn{Tr}_{G, \beta}$ denotes the (unnormalised) trace map from level $K(N, M)^p K^G_{\opn{Iw}}(p^{\beta+1})$ to $K(N, M)^p K^G_{\opn{Iw}}(p^{\beta})$, then $\opn{Tr}_{G, \beta}(\phi'_{\beta+1}) = \phi'_{\beta}$.

    Conversely, given any trace compatible system of eigenvectors $\phi'_{\beta}$ as above, one can recover the good $p$-stabilisation $\phi$ by reversing this process. Hence $\phi$ and $\phi'_{\beta}$ encode the same information.
\end{definition}

\subsection{Automorphic representations of \texorpdfstring{$\opn{GL}_2$}{GL(2)}} \label{AutoRepsForGL2SSec}

We now introduce the cuspidal automorphic representations of $\opn{GL}_2(\mbb{A})$ that we will consider. 

\subsubsection{} 

Let $N \geq 1$ and $t \geq 0$ be integers. Let $f$ be a normalised cuspidal new eigenform of weight $1+t$, level $\Gamma_1(N)$, and nebentypus $\chi$. We impose the following assumption:

\begin{assumption} \label{PRegAssumptionsOnModularForm}
    We assume that $p \nmid N$. Furthermore, we assume that:
    \begin{enumerate}
        \item ($p$-regularity) The roots of the Hecke polynomial $X^2 - a_p(f)X + p^t \chi(p)$ are distinct.
        \item If $t \geq 1$ and $\rho_f \colon \Gal(\overline{\mbb{Q}}/\mbb{Q}) \to \opn{GL}_2(\Qpb)$ denotes the two-dimensional $p$-adic Galois representation associated with $f$, then $\rho_f|_{\Gal(\Qpb/\mathbb{Q}_p)}$ does not split as a direct sum of two characters (note that this is automatic if $v_p(a_p(f)) \neq 0$).
        \item If $t = 0$, then $f$ does not have real multiplication by a quadratic field in which $p$ splits (in the sense of \cite[Notation 4.7.1]{lz-coleman}).
    \end{enumerate}
\end{assumption}

Let $\sigma$ denote the cuspidal automorphic representation of $\opn{GL}_2(\mbb{A})$ associated with $f$ with central character $|\!| \opn{det}(-) |\!|^{(1+t)/2} \widehat{\chi}$, where $\chi$ is a Dirichlet character. For brevity, we say that $\sigma$ has weight $1+t$, level $N$, and character $\chi$.

\begin{definition}
    By a \emph{good $p$-stabilised} automorphic representation $\tilde{\sigma} = (\sigma, \alpha)$ of $\opn{GL}_2(\mbb{A})$, we mean the data of:
    \begin{itemize}
        \item a cuspidal automorphic representation $\sigma$ of weight $1+t$, level $N$, and character $\chi$ as above, satisfying Assumption \ref{PRegAssumptionsOnModularForm};
        \item a choice of root $\alpha$ of the Hecke polynomial in Assumption \ref{PRegAssumptionsOnModularForm}(1).
    \end{itemize}
    By the \emph{good $p$-stabilisation} $\phi \in \tilde{\sigma}$, we mean the automorphic form $\phi \in \sigma$ associated with the (normalised) $p$-stabilisation $f_{\alpha}(-) \defeq f(-) - p^t \chi(p) \alpha^{-1} f(p \cdot -)$; explicitly, the automorphic form $\phi \colon \opn{GL}_2(\mbb{Q}) \backslash \opn{GL}_2(\mbb{A}) \to \mbb{C}$ is described as:
    \[
    \phi(\gamma g_{\infty} k) = (\opn{det}g_{\infty})^{1+t} (c i + d)^{-(1+t)} f_{\alpha}\left( \frac{a i + b}{c i + d} \right), \quad \quad \gamma \in \opn{GL}_2(\mbb{Q}), \; g_{\infty} = \left(\begin{smallmatrix} a & b \\ c & d \end{smallmatrix} \right) \in \opn{GL}_2(\mbb{R})_+, \; k \in K_1(N),
    \]
    where $K_1(N) \subset \opn{GL}_2(\widehat{\mbb{Z}})$ denotes the adelic version of the arithmetic subgroup $\Gamma_1(N)$. 
\end{definition}

\subsection{Set-up and tame test data}

As in \cite[\S 10.2]{LZBK21}, we fix the following data in the two situations which will correspond to the $p$-adic $L$-functions for $\opn{GSp}_4 \times \opn{GL}_2$ and $\opn{GSp}_4 \times \opn{GL}_2 \times \opn{GL}_2$ respectively.

\subsubsection{Case A} \label{CaseASubSubsection}

Let $(N_0, M_0)$ be positive integers prime to $p$ with $M_0^2 \mid N_0$, and let $\chi_0$ be a Dirichlet character of conductor $M_0$. Let $(N_2, M_2)$ be positive integers prime to $p$ with $M_2 \mid N_2$, and let $\chi_2$ be a Dirichlet character of conductor $M_2$. Let $L/\mbb{Q}_p$ denote a finite extension containing $\mu_{N_0}$ and $\mu_{N_2}$, and such that $\chi_0, \chi_2$ are valued in $L$.

Let $S$ denote the set of primes dividing $N_0N_2$. As in \cite[\S 10.2]{LZBK21}, we consider the following tame test data $\gamma_S = (\gamma_{0, S}, \Phi_S)$, where: $\gamma_{0, S} \in G(\mbb{Q}_S)$ and $\Phi_S \in C^{\infty}_c(\mbb{Q}_S^{\oplus 2}, L)$ is a Schwartz function lying in the $(\widehat{\chi}_0 \widehat{\chi}_2)^{-1}$-eigenspace for the diagonal action of $\mbb{Z}_S^{\times}$.

Let $T$ denote an auxiliary finite set of primes which is disjoint from $S \cup \{ p \}$. Let $K_S \subset G(\mbb{Q}_S)$ denote the quasi-paramodular subgroup of level $(N_0, M_0)$. We consider compact open subgroups $\widehat{K}_S \subset G(\mbb{Q}_S)$, $K_T \subset G(\mbb{Z}_T)$ satisfying the following conditions:
\begin{itemize}
    \item $K_T$ is a normal subgroup of $G(\mbb{Z}_T)$;
    \item $\widehat{K}_S \subset \gamma_{0, S} K_S \gamma_{0, S}^{-1}$;
    \item the projection of $\widehat{K}_S \cap H$ to the first factor acts trivially on $\Phi_S$;
    \item the projection of $\widehat{K}_S \cap H$ to the second factor is contained in $\left\{ \left( \begin{smallmatrix} * & * \\ 0 & 1 \end{smallmatrix} \right) \text{ mod } N_2 \right\}$;

    \item $\mathcal{K} = K_T \prod_{\ell \not\in T} G(\mbb{Z}_{\ell}) \subset G(\mbb{A}_f)$, $\mathcal{K} \cap H$ and the projections of $\mathcal{K} \cap H$ to each factor are neat compact open subgroups.
\end{itemize}
One can easily see that such compact open subgroups exist (for example, one can take $T$ to have least three elements and $K_T$ is the subgroup of elements which are congruent to the identity modulo the primes in $T$). With the choice of these compact open subgroups, we set
\begin{align*} 
K^p = K_S \cdot K_T \cdot \prod_{\ell \notin S \cup T \cup \{p\}} G(\mbb{Z}_{\ell}) \quad \subset \quad G(\mbb{A}_f^p) \\
\widehat{K}^p = \widehat{K}_S \cdot K_T \cdot \prod_{\ell \notin S \cup T \cup \{p\}} G(\mbb{Z}_{\ell}) \quad \subset \quad G(\mbb{A}_f^p) .
\end{align*}
As in \S \ref{LevelSubgroupsSSSec}, we define $K^G_{\beta}$, $\widehat{K}^G_{\beta}$ etc. with respect to these choices of $K^p$ and $\widehat{K}^p$.

\subsubsection{Case B} \label{CaseBSubSubSection}

In this case, we let $(N_0, M_0, \chi_0)$ and $(N_2, M_2, \chi_2)$ as above. We let $(N_1, M_2)$ be positive integers prime to $p$ with $M_1 \mid N_1$, and take $\chi_1$ to be a Dirichlet character of conductor $M_1$. Let $L/\mbb{Q}_p$ be a finite extension containing $\mu_{N_i}$ ($i=0, 1, 2$) and such that $\chi_0, \chi_1, \chi_2$ are valued in $L$. We let $S$ denote the set of primes dividing $N_0N_1N_2$ and let $T$ denote an auxiliary set of primes disjoint from $S \cup \{p\}$.

In this case, we assume that $\chi_0 \chi_1 \chi_2 = 1$ and we fix elements $\gamma_S = (\gamma_{0, S}, \gamma_{1, S}) \in G(\mbb{Q}_S) \times \opn{GL}_2(\mbb{Q}_S)$. As above, we let $\widehat{K}_S \subset G(\mbb{Q}_S)$ and $K_T \subset G(\mbb{Z}_T)$ be compact open subgroups satisfying all the bullet points in \S \ref{CaseASubSubsection}, except the third one. Instead, we replace the third bullet point with the condition:
\begin{itemize}
    \item the projection of $\widehat{K}_S \cap H$ to the first factor is contained in $\gamma_{1, S} \left\{ \left( \begin{smallmatrix} * & * \\ 0 & 1 \end{smallmatrix} \right) \text{ mod } N_1 \right\} \gamma_{1, S}^{-1}$.
\end{itemize}
We define $K^p$ and $\widehat{K}^p$ in exactly the same way as in \S \ref{CaseASubSubsection}.

\subsection{\texorpdfstring{$L$}{L}-functions}

We recall the $L$-functions we are interested in studying.

\subsubsection{} \label{CaseALfunctionAndPeriodSSSec}

Suppose that we are in Case A (\S \ref{CaseASubSubsection}) and Situation \ref{situation:Weights}. Let $\tilde{\pi} = (\pi, \Theta_{\pi, p})$ be a good $p$-stabilised automorphic representation of $G(\mbb{A})$ of weight $\nu_G$, level $(N_0, M_0)$ and character $\chi_0$. Let $\tilde{\sigma} = (\sigma, \alpha)$ be a good $p$-stabilised automorphic representation of $\opn{GL}_2(\mbb{A})$ of weight $1+t_2$, level $N_2$, and character $\chi_2$. Let $\Pi = \pi \otimes |\!|s(-)|\!|^{(r_1+r_2)/2}$ and $\Sigma = \sigma \otimes |\!| \opn{det}(-) |\!|^{-(1+t_2)/2}$ denote the unitary normalisations of $\pi$ and $\sigma$ respectively. 

\begin{definition}
    Let $\chi$ be a Dirichlet character of $p$-th power conductor. 
    \begin{enumerate}
        \item Let $L(\Pi \times \Sigma, \chi, s)$ denote the $L$-function associated with the automorphic representation $\left(\Pi \otimes \widehat{\chi}\right) \boxtimes \Sigma$ of $G(\mbb{A}) \times \opn{GL}_2(\mbb{A})$ with respect to the $8$-dimensional representation of $\widehat{G}(\mbb{C}) \times \opn{GL}_2(\mbb{C})$ given by the tensor product of the $4$-dimensional spin representation of $\widehat{G}(\mbb{C})$ and the $2$-dimensional standard representation of $\opn{GL}_2(\mbb{C})$. In particular, if $\ell \not\in S \cup \{p\}$ and $(\theta_1, \theta_2, \theta_3, \theta_4)$ (resp. $(\gamma_1, \gamma_2)$) denotes the Satake parameters\footnote{In particular, the eigenvalues of $U_{\ell, \opn{Si}}$ on $\Pi_{\ell}^{K_{\ell, \opn{B}}}$ are given by $\{\ell^{3/2} \theta_1, \ell^{3/2} \theta_2, \ell^{3/2} \theta_3, \ell^{3/2} \theta_4 \}$.} of $\Pi_{\ell}$ (resp. $\Sigma_{\ell}$) via the identification $\widehat{G} \cong \opn{GSp}_4$, then the local $L$-factor at $\ell$ is described as:
        \[
        L_{\ell}(\Pi \times \Sigma, \chi, s) = L(\Pi_{\ell} \times \Sigma_{\ell}, \chi_{\ell}, s) = \prod_{i=1}^4 \prod_{j=1}^2 \left( 1 -  \chi(\ell) \theta_i \gamma_j \ell^{-s} \right)^{-1} .
        \]
        \item Let $\Lambda(\Pi \times \Sigma, \chi, s) = L_{\infty}(\Pi \times \Sigma, \chi, s) L(\Pi \times \Sigma, \chi, s)$ denote the completed $L$-function, and set $w = r_1+r_2+t_2+3$. Explicitly, the archimedean $L$-factor is given by
        \[
        L_{\infty}(\Pi \times \Sigma, \chi, s) = \Gamma_{\mbb{C}}(s+w/2-r_2-t_2-1)\Gamma_{\mbb{C}}(s+w/2 - r_2-1)\Gamma_{\mbb{C}}(s+w/2-t_2)\Gamma_{\mbb{C}}(s+w/2)
        \]
        where $\Gamma_{\mbb{C}}(s) = 2 (2\pi)^{-s} \Gamma(s)$ (and $\Gamma(-)$ is the usual Gamma function). Furthermore, the completed $L$-function satisfies the following functional equation:
        \[
        \Lambda(\Pi \times \Sigma, (\chi \chi_0 \chi_2)^{-1}, 1 - s) = \varepsilon(\Pi \times \Sigma, \chi, s) \Lambda(\Pi \times \Sigma, \chi, s)
        \]
        where $\varepsilon(\Pi \times \Sigma, \chi, s)$ is entire and nowhere vanishing.
        \item The critical values of the $L$-function $L(\Pi \times \Sigma, \chi, s)$ are of the form $s = j-w/2$, where $r_2+t_2+2 \leq j \leq r_1 +2$.
        \item We define the modified $L$-factor at $p$ to be:
        \[
        \mathcal{E}_p(\Pi \times \Sigma, \chi, s) \defeq \left\{ \begin{array}{cc} \prod_{i, j=1}^2 \frac{1 - p^{s-1}\theta_i^{-1}\gamma_j^{-1}}{1 - p^{-s}\theta_i\gamma_j} & \text{ if } \chi = 1 \\ G(\chi)^{-4}p^{4 c s} \left( \theta_1 \theta_2 \gamma_1 \gamma_2 \right)^{-2c}   & \text{ if } \opn{cond}(\chi) = p^c \text{ with } c \geq 1   \end{array} \right.
        \]
        where $\{ \theta_i : i=1, \dots, 4 \}$ are ordered so that $\Theta_{\Pi,p}(U_{p, \opn{Kl}}) = p^2 \theta_1 \theta_2$.
    \end{enumerate}
    For any set $S'$ of places of $\mbb{Q}$, we let $\Lambda^{S'}(\Pi \times \Sigma, \chi, s)$ denote the $L$-function with Euler factors at places in $S'$ removed.
\end{definition}

Since $\Pi$ is globally generic, we can consider its associated Whittaker model (with respect to the standard additive character of $\mbb{A}/\mbb{Q}$) as in \cite[\S 9.1]{LPSZ}. We can (and do) assume that this Whittaker model decomposes as a product of local Whittaker models satisfying the conditions in \emph{loc.cit.}. We make similar assumptions on the local Whittaker models for $\Sigma$. Let $v$ be a place of $\mbb{Q}$ and let $\varphi_v \in \Pi_v$, $\lambda_v \in \Sigma_v$, $\Phi_v \in C^{\infty}_{c}(\mbb{Q}_v^{\oplus 2})$, and $\chi_v \colon \mbb{Q}_v^{\times} \to \mbb{C}^{\times}$ a smooth unitary character. Then, as in \cite[Definition 8.3]{LPSZ}, we can consider the local zeta integral:
\[
Z(\varphi_v, \lambda_v, \Phi_v; \chi_v, s) \defeq \int_{Z_G(\mbb{Q}_v) N_H(\mbb{Q}_v) \backslash H(\mbb{Q}_v)} W_{\varphi_v}(h) f^{\Phi_v}(h_1; \chi_v^2\chi_{0,v}\chi_{2,v}, s) W_{\lambda_v}(h_2)\chi_v(\opn{det}h_2) dh
\]
where $W_{\varphi_v}$ (resp. $W_{\lambda_v}$) denotes the image of $\varphi_v$ (resp. $\lambda_v$) under the local Whittaker model of $\Pi_v$ (resp. $\Sigma_v$), and $f^{\Phi_v}$ is the Siegel section defined in \cite[\S 8.1]{LPSZ}. If $v$ is non-archimedean, then the Haar measure $dh$ is normalised so that the volume of $H(\mbb{Z}_v)$ is $1$.

We consider the following data:
\begin{itemize}
    \item For any $\beta \geq 1$, let 
    \[
    \eta_{\pi, \beta} \in \opn{H}^2\left( K^G_{\beta}, \kappa_G^*; \opn{cusp} \right)_{\mbb{C}}^{G(\mbb{Z}_T)/K_T} 
    \]
    be a generator of the one-dimensional eigenspace under the action of the Hecke algebra $\mathcal{H}^{S_0}_{\mbb{C}} \otimes \mbb{C}[T^{G,-}]$ with eigencharacter $\Theta_{\pi}^{S_0} \otimes \left( w_G^{\opn{max}} \cdot \Theta_{\pi, p} \right)$, where $S_0$ denotes the set of places dividing $\infty N_0 p$. This eigenspace is indeed one-dimensional by the arguments in \cite[Proposition 2.7.2]{LZBK21}. More precisely, by the work of Su \cite{Su19}, if we localise this cohomology group at the kernel $I$ of the above eigencharacter, then $\pi$ is the only automorphic representation that appears in this localised cohomology group (if $\pi'$ appears in this localised cohomology group, then it must be globally generic and locally isomorphic to $\pi$ away from $S_0$ -- this implies $\pi' \cong \pi$ \cite{Soudry87}). We see that
    \[
    \opn{dim}_{\mbb{C}} \opn{H}^2\left( K^G_{\beta}, \kappa_G^*; \opn{cusp} \right)_{\mbb{C}, I}^{G(\mbb{Z}_T)/K_T} = m(\pi) \opn{dim}_{\mbb{C}} \left( \left( \otimes_{\ell \neq p} \pi_{\ell} \right)^{K(N_0, M_0)^p} \otimes \pi_p^{K^G_{\opn{Iw}}(p^{\beta})}[w_G^{\opn{max}} \cdot \Theta_{\pi, p}] \right)
    \]
    where $[\cdots]$ denotes the generalised eigenspace and $m(\pi)$ is the multiplicity of $\pi$ in the discrete spectrum. Since $m(\pi) = 1$ (see \cite{JiangSoudry07}), $\opn{dim}_{\mbb{C}} \pi_p^{K^G_{\opn{Iw}}(p^{\beta})}[w_G^{\opn{max}} \cdot \Theta_{\pi, p}] = \opn{dim}_{\mbb{C}} \pi_p^{K_{p, \opn{B}}}[\Theta_{\pi, p}] = 1$ (see \S \ref{DualPStabilisationsSSec}), and $K(N_0, M_0)$ is the quasi-paramodular group of level $(N_0, M_0)$, we see that this localised cohomology group is one-dimensional.

    We can (and do) assume that $\eta_{\pi, \beta}$ is defined over the field of definition of $\pi$ (which is a number field). Furthermore, if we view $\eta_{\pi, \beta} \in \opn{Hom}_{K_{G, \infty}^+}(\wedge^2 \overline{\ide{u}}_G \otimes V_{M_G}(\kappa_G), \mathscr{A}_0(G)^{K^G_{\beta}})$ via \cite{Su19}, then we note that $\eta_{\pi, \beta}(v) = \phi'_{\beta}$ for some good $p$-stabilisation $\phi$ in $\tilde{\pi}$ for any $v \in \wedge^2 \overline{\ide{u}}_G \otimes V_{M_G}(\kappa_G)$ (if non-zero, which is equivalent to the image of $v$ under the map $\wedge^2 \overline{\ide{u}}_G \otimes V_{M_G}(\kappa_G) \to V_{M_G}(\kappa_G + (-1,-3;2))$ being non-zero).
    \item Suppose that $\xi_2 = 0$ (with notation as in Situation \ref{situation:Weights}). Let 
    \[
    \omega_{\sigma, \beta} \in \opn{H}^0\left( \widehat{K}^{H_2}_{\beta}, \zeta_{H_2} \right)_{\mbb{C}}
    \]
    denote (the restiction to level $\widehat{K}^{H_2}_{\beta}$ of) the cohomology class corresponding to the good $p$-stabilisation in $\tilde{\sigma}$. This cohomology class is defined over the field obtained by adjoining $\mu_{N_2}$ to the field of definition of $\sigma$.\footnote{One can normalise $\omega_{\sigma, \beta}$ further by the Gauss sum of $\chi_2^{-1}$ so that the class is defined over the field of definition of $\sigma$, however we will not consider this class in this article. See \cite[\S 10.5]{LZBK21} for more details.}
    \item Let $\chi$ be a Dirichlet character of $p$-th power conductor, and let $0 \leq j \leq r_1-r_2-t_2$. Let $\Phi^{(p)}$ denote the Schwartz function on $(\mbb{A}_f^p)^{\oplus 2}$ given by $\Phi_S \otimes \bigotimes_{\ell \not\in S} \opn{ch}(\mbb{Z}_\ell^{\oplus 2})$. For $\beta \geq \opn{max}(2\opn{max}(1, c(\chi)), \opn{max}(1, c(\chi))+1)$, we let
    \[
    \omega_{\opn{Eis}, j, \beta} \defeq \mathcal{E}^{\Phi^{(p)}}_{\xi_1}(d - j - \chi, j+\chi) \in \opn{H}^0\left( \widehat{K}^{H_1}_{\beta}, \zeta_{H_1} \right)_{\mbb{C}}^{\opn{nearly}}, \quad d =r_1 - r_2 - t_2,
    \]
    denote the Eisenstein class as in \S \ref{ClassicalSpecOfEisSeriesSSec}. This is defined over a number field.
\end{itemize}

For brevity, set $\mathscr{F} = \mathcal{F}_0\left(\mbb{I}_{\overline{P}_G}(\kappa_G)^{\opn{nearly}} \right)^*$. Consider the diagram:
\[
\opn{H}^2\left( K^G_{\beta}, \kappa_G^*; \opn{cusp} \right)^{\opn{nearly}} \xrightarrow{f_2^{\vee}} \opn{H}^2\left( X_{G}(K^G_{\beta}), [\mathscr{F}](-D_G) \right) \xleftarrow{f_1^{\vee}} \opn{H}^2\left( K^G_{\beta}, \kappa_G^*; \opn{cusp} \right) .
\]
where $f_1$ and $f_2$ are the morphisms in \S \ref{SplttingsOfMGReps}. As explained in \cite[Proposition 6.3]{LPSZ}, these morphisms become isomorphisms after localising at the kernel of $\Theta_{\pi}^{S\cup T \cup \{ p \}}$ (recall that $S$ is the set of primes dividing $N_0N_2$). We let $\tilde{\eta}_{\pi, \beta} \in \opn{H}^2\left( K^G_{\beta}, \kappa_G^*; \opn{cusp} \right)^{\opn{nearly}}$ denote the unique lift of $\eta_{\pi, \beta}$ lying in the (generalised) eigenspace for $\Theta_{\pi}^{S\cup T \cup \{ p \}}$. By our assumptions, one has a natural map $X_G(\widehat{K}^G_{\beta}) \to X_G(K^G_{\beta})$ induced from right-translation by $\gamma_{0, S}$, and we consider the class
\[
\gamma_{0, S} \cdot \tilde{\eta}_{\pi, \beta} \in \opn{H}^2\left( \widehat{K}^G_{\beta}, \kappa_G^*; \opn{cusp} \right)^{\opn{nearly}}
\]
obtained as the pullback of $\tilde{\eta}_{\pi, \beta}$ under this morphism. Finally, we consider the following product of local zeta integrals and $L$-factors:
\[
\mathcal{Z}_S(\Pi \times \Sigma, \gamma_S; \chi_S, s) \defeq \prod_{\ell \in S} \frac{Z(\gamma_{0, \ell} \cdot \varphi_{\ell}, \lambda_{\ell}, \Phi_{\ell}; \chi_{\ell}, s)}{L(\Pi_{\ell} \times \Sigma_\ell, \chi_{\ell}, s)}
\]
for (non-zero) $\varphi_S = \otimes_{\ell \in S} \varphi_{\ell} \in \bigotimes_{\ell \in S} \Pi_{\ell}$, $\lambda_S = \otimes_{\ell \in S} \lambda_{\ell} \in \bigotimes_{\ell \in S} \Sigma_{\ell}$, and $\chi_S \colon \mbb{Q}_S^{\times} \to \mbb{C}^{\times}$ a smooth unitary character. 

\begin{remark} \label{ZetaSgeneratedFractionalRem}
For any $\ell \in S$, one can show that the local zeta integrals $Z(\varphi_{\ell}, \lambda_{\ell}, \Phi_{\ell}; \chi_{\ell}, s)$ as $(\varphi_{\ell}, \lambda_{\ell}, \Phi_{\ell})$ vary form a fractional ideal of $\mbb{C}[\ell^{\pm s}]$ containing the constant functions (see \cite[Theorem 8.9]{LPSZ}). It is known in many cases that this fractional ideal is generated by $L(\Pi_{\ell} \times \Sigma_{\ell}, \chi_{\ell}, s)$ (see \cite{LoefflerLocalZeta}), although the general case is still open. For example, suppose that $\chi_S$ is unramified. Then in any of the following (non-exhaustive list of) cases:
\begin{itemize}
    \item the central character of $\Pi$ is a square;
    \item $\Sigma$ is not supercuspidal at any prime in $S$;
    \item the integers $N_0$ and $N_2$ are coprime;
\end{itemize}
one can always find a finite linear combination of $\mathcal{Z}_S(\Pi \times \Sigma, \gamma_S; \chi_S, s)$ (for different choices of $\gamma_S$) such that this linear combination is equal to the constant function $1$ in the variable $s \in \mbb{C}$.
\end{remark}

The following proposition describes the archimedean period $\mathcal{P}_{\mbb{C}}^{\opn{alg}}$ in terms of the $L$-function associated with $\Pi \times \Sigma$. 

\begin{proposition} \label{PropPAlgCEqualsCompletedLfunction}
    Let $\chi$ be a Dirichlet character of $p$-th power conductor, and let $0 \leq j \leq r_1-r_2-t_2$. Let $\beta \geq \opn{max}(2\opn{max}(1, c(\chi)), \opn{max}(1, c(\chi))+1)$. Then there exists a scalar $\Omega_{\pi} \in \mbb{C}^{\times}$, independent of $\chi$, $j$, $\beta$, and $\gamma_S$, such that
    \[
    \mathcal{P}_{\mbb{C}}^{\opn{alg}}(\gamma_{0, S} \cdot \tilde{\eta}_{\pi, \beta}, \omega_{\opn{Eis}, j, \beta}, \omega_{\sigma, \beta} ) = \mathcal{Z}_{\gamma_S,j, \chi^{-1}} \cdot \mathcal{E}_p(\Pi \times \Sigma, \chi^{-1}, j+(1-d)/2) \cdot \frac{\Lambda^{\{p\}}(\Pi \times \Sigma, \chi^{-1}, j+(1-d)/2)}{\Omega_{\pi}} 
    \]
    where $\mathcal{Z}_{\gamma_S,j, \chi^{-1}} = \mathcal{Z}_S(\Pi \times \Sigma, \gamma_S; \widehat{\chi}_S^{-1}, j+(1-d)/2)$ with $\varphi = \phi \otimes |\!|s(-)|\!|^{(r_1+r_2)/2}$ and $\lambda = \psi \otimes |\!|\opn{det}(-)|\!|^{-(1+t_2)/2}$ for:
    \begin{itemize}
        \item $\phi$ equal to the unique good $p$-stabilisation in $\tilde{\pi}$ such that $\eta_{\pi, \beta}((\alpha_1 \wedge \alpha_2) \otimes \jmath_{M_G}(w_{\kappa_H})) = \phi'_{\beta}$ with notation as in Proposition \ref{PAlgAutPeriodProposition};
        \item $\psi$ equal to the good $p$-stabilisation in $\tilde{\sigma}$.
    \end{itemize}
\end{proposition}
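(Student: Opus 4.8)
The strategy is to reduce the computation of $\mathcal{P}^{\opn{alg}}_{\mbb{C}}$ to the archimedean period integral provided by Proposition \ref{PAlgAutPeriodProposition}, and then to identify that integral with Novodvorsky's integral representation of the $L$-function. First I would verify that the hypotheses of Proposition \ref{PAlgAutPeriodProposition} are met: the class $\eta = \gamma_{0, S} \cdot \tilde{\eta}_{\pi, \beta}$ is, by construction, the unique lift of $\eta_{\pi, \beta}$ to nearly holomorphic cohomology lying in the relevant eigenspace, and by the splitting discussion in \S \ref{SplttingsOfMGReps} (and the fact that $\eta_{\pi,\beta}$ is a genuine holomorphic class pulled back along $f_1^\vee$ after localisation) the associated morphism $\phi_{\eta}$ factors through $\opn{id} \otimes \opn{spl}_{M_G}$. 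Then Proposition \ref{PAlgAutPeriodProposition} gives
\[
\mathcal{P}^{\opn{alg}}_{\mbb{C}}(\gamma_{0,S} \cdot \tilde{\eta}_{\pi, \beta}, \omega_{\opn{Eis}, j, \beta}, \omega_{\sigma, \beta}) = (2\pi i)^{-2} \opn{Vol}(\mathcal{K}_{\beta}; dh)^{-1} \int_{[H]} \phi(h \gamma_{0,S}) F_1(h_1) F_2(h_2) \, dh,
\]
where $\phi$ is the automorphic form attached to the good $p$-stabilisation determined by $\eta_{\pi, \beta}((\alpha_1 \wedge \alpha_2) \otimes \jmath_{M_G}(w_{\kappa_H})) = \phi'_{\beta}$, the form $F_1$ is the (real-analytic) Eisenstein series obtained by evaluating $\omega_{\opn{Eis}, j, \beta}$ against $\upsilon_{\zeta_{H_1}}^*$, and $F_2$ is the automorphic form attached to $\psi$ obtained similarly from $\omega_{\sigma, \beta}$.

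Next I would unfold this integral using Novodvorsky's integral representation, following \cite[\S 8--9]{LPSZ} essentially verbatim (the only change being the insertion of $\gamma_{0,S}$ at the primes in $S$, and the fact that our Eisenstein series is the specialisation of a $p$-adic family rather than a fixed real-analytic Eisenstein series). Concretely, the global period $\int_{[H]} \phi(h\gamma_{0,S}) F_1(h_1) F_2(h_2)\,dh$ unfolds to an Euler product of local zeta integrals $Z(\varphi_v, \lambda_v, \Phi_v; \chi_v, s)$; at the unramified places $\ell \notin S \cup \{p, \infty\}$ this product computes $L^{S\cup\{p,\infty\}}(\Pi \times \Sigma, \chi^{-1}, s)$ by the local unramified computation (\cite[Theorem 8.8]{LPSZ}); at $\infty$ the archimedean zeta integral, with our specific choices of vectors in the minimal $K_\infty$-type and our choice of weights $0 \le j \le r_1 - r_2 - t_2 = d$ and the specialisation of the Eisenstein series at the point $(d-j-\chi, j + \chi)$, yields the archimedean $L$-factor $L_\infty(\Pi \times \Sigma, \chi^{-1}, j + (1-d)/2)$ times the volume factor and powers of $2\pi i$; at the primes $\ell \in S$ the contribution is by definition $\mathcal{Z}_S(\Pi \times \Sigma, \gamma_S; \widehat{\chi}_S^{-1}, s) \cdot L_S(\Pi_S \times \Sigma_S, \chi_S^{-1}, s)$; and at $p$ the local zeta integral evaluated on the $p$-stabilised vectors $\phi'_{\beta}$ (paired against $\phi$ via the transpose Hecke action) and the $p$-depleted Eisenstein datum produces exactly the modified Euler factor $\mathcal{E}_p(\Pi \times \Sigma, \chi^{-1}, j+(1-d)/2)$ — this is the local computation at $p$ carried out in \cite[\S 9]{LPSZ} for the Klingen-ordinary case, extended here to the finite-slope setting using the explicit description of $\phi'_\beta$ in \S \ref{DualPStabilisationsSSec} and the explicit Igusa/ordinary-locus behaviour. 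Collecting all factors and absorbing the volume term, the powers of $2\pi i$, and any period arising from the identification of $\eta_{\pi,\beta}$ with a rational cohomology class into a single scalar $\Omega_\pi \in \mbb{C}^\times$ (which depends only on $\pi$ — in particular is independent of $\chi$, $j$, $\beta$, $\gamma_S$ — by the trace-compatibility of the $\phi'_\beta$ and the fact that $\Omega_\pi$ can be read off from any single specialisation), one obtains the stated formula.

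I expect the main obstacle to be the local computation at $p$: one must show that the local zeta integral at $p$, evaluated on the $p$-stabilised test vector $\phi'_{\beta}$ (equivalently, after applying the transpose $U_p'$-operators $\beta$ times to the spherical vector and twisting by the eigenvalue normalisation) against the $p$-depleted Schwartz data $\opn{ch}(p^\beta \mbb{Z}_p \times \mbb{Z}_p^\times)$-type section coming from $\omega_{\opn{Eis}, j, \beta}$ and the $\alpha$-stabilised vector for $\sigma$, reproduces the precise modified Euler factor $\mathcal{E}_p$ (including the Gauss-sum normalisation $G(\chi^{-1})^{-4}$ in the ramified case and the ordering convention $\Theta_{\Pi, p}(U_{p,\opn{Kl}}) = p^2 \theta_1 \theta_2$). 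This is a non-trivial but purely local harmonic-analysis calculation on $\opn{GSp}_4(\mbb{Q}_p) \times \opn{GL}_2(\mbb{Q}_p)$, parallel to \cite[\S 9]{LPSZ} but requiring genuine depth-$p^\beta$ bookkeeping rather than the depth-one ordinary case; the rest of the argument (unramified unfolding, archimedean factor, definition-chasing at $S$) follows \cite{LPSZ} and \cite{LZBK21} with only cosmetic modifications.
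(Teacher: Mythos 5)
Your proposal is correct and follows essentially the same route as the paper: verify the factorisation hypothesis of Proposition \ref{PAlgAutPeriodProposition} (the paper cites \cite[Lemma 6.5]{LPSZ} for this), pass to the automorphic period, unfold via \cite[\S 8.5, Proposition 9.3(ii)]{LPSZ}, and reduce everything to the local zeta integral at $p$. The depth-$p^{\beta}$ Iwahori-level computation at $p$ that you flag as the main obstacle is exactly the content of Corollary \ref{FinalZetapFormulaCorollary} in the appendix, where it is handled by decomposing the Klingen-level trace of $\varphi'_{\beta}$ into the $\gamma$- and $\hat{\gamma}$-translates and showing the former's zeta integral vanishes, so that the Iwahori-level integral is read off from the Klingen-level formula of \cite{LPSZ}.
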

\begin{proof}
    Throughout the course of this proof, we write $\doteq$ to mean equal up to a non-zero element of $\mbb{C}^{\times}$ independent of $\chi$, $j$, $\beta$, and $\gamma_S$. By \cite[Lemma 6.5]{LPSZ}, the hypotheses of Proposition \ref{PAlgAutPeriodProposition} are satisfied. Therefore, we see that
    \begin{align}
    \mathcal{P}_{\mbb{C}}^{\opn{alg}}&(\gamma_{0, S} \cdot \tilde{\eta}_{\pi, \beta}, \omega_{\opn{Eis}, j, \beta}, \omega_{\sigma, \beta} ) \nonumber \\ &= (2 \pi i)^{-2} \opn{Vol}(\mathcal{K}_{\beta}; dh)^{-1} \int_{[H]} (\gamma_{0, S} \hat{\gamma} \cdot \phi'_{\beta})(h) |\!| \opn{det}h_1 |\!|^{\tfrac{1+d}{2}-\xi_1} \widehat{\chi}(\opn{det}h_1)^{-1} E^{\Phi}(h_1; \widehat{\chi}, j - \tfrac{d-1}{2}) \psi(h_2) dh \nonumber \\
    &\doteq p^{4 \beta} \int_{Z_G(\mbb{A}) H(\mbb{Q}) \backslash H(\mbb{A})} (\gamma_{0, S} \hat{\gamma} \cdot \varphi_{\beta, \chi}')(h) E^{\Phi}(h_1; \widehat{\chi}, j - \tfrac{d-1}{2}) \lambda(h_2) dh \; =: \; A \nonumber
    \end{align}
    where $\varphi'_{\beta, \chi} \defeq \phi'_{\beta} \otimes (|\!|-|\!|^{(r_1+r_2)/2} \widehat{\chi}^{-1} \circ s) \in \Pi \otimes \widehat{\chi}^{-1}$, the Schwartz function is $\Phi = \Phi^{(d+1)}_{\infty}\Phi^{(p)} \Phi_{p, \chi^{-1}, \chi}$ (with notation as in \cite[Definition 7.5, \S 8.5.1]{LPSZ}), and we have used the fact that the argument is invariant under the action of $Z_G(\mbb{A})$. Applying \cite[\S 8.5, Proposition 9.3(ii)]{LPSZ}, we see that
    \[
    A \doteq \Lambda^{\{p \}}(\Pi \times \Sigma, \chi^{-1}, j+(1-d)/2) \cdot \mathcal{Z}_S(\Pi \times \Sigma, \gamma_S; \widehat{\chi}_S^{-1}, j+(1-d)/2) \cdot \left( p^{4 \beta} Z(\hat{\gamma} \cdot \varphi'_{\beta,p}, \lambda_p, \Phi_{p, \chi^{-1}, \chi}; \widehat{\chi}^{-1}_p, j+(1-d)/2) \right) .
    \]
    The proposition now follows from Corollary \ref{FinalZetapFormulaCorollary} (note that $\widehat{\chi}^{-1}_p|_{\mbb{Z}_p^{\times}} = \chi$).
\end{proof}

\subsubsection{}  \label{CaseBLfunctionAndPeriodSSec}

Suppose that we are in Case B (\S \ref{CaseBSubSubSection}) and Situation \ref{situation:Weights}. Let $d_2 = t_2$ and let $0 \leq d_1 \leq t_1$ be any choice of integer. Let $\tilde{\pi} = (\pi, \Theta_{\pi, p})$ be a good $p$-stabilised automorphic representation of $G(\mbb{A})$ of weight $\nu_G$, level $(N_0, M_0)$ and character $\chi_0$. For $i=1, 2$, let $\tilde{\sigma}_i = (\sigma_i, \alpha_i)$ be good $p$-stabilised automorphic representations of $\opn{GL}_2(\mbb{A})$ of weight $1+d_i$, level $N_i$, and character $\chi_i$. Let $\Pi = \pi \otimes |\!|s(-)|\!|^{(r_1+r_2)/2}$ and $\Sigma_i = \sigma_i \otimes |\!| \opn{det}(-) |\!|^{-(1+d_i)/2}$ denote the unitary normalisations of $\pi$ and $\sigma_i$ respectively. 

\begin{remark}
    Note that the assumption $\chi_0 \cdot \chi_1 \cdot \chi_2 = 1$ in \S \ref{CaseBSubSubSection} implies that $d_1+d_2 \equiv r_1-r_2$ modulo $2$ (and hence $d_1 \equiv t_1$ modulo $2$).
\end{remark}

In this setting, we are interested in the ``triple-product'' $L$-function $L(\Pi \times \Sigma_1 \times \Sigma_2, s)$ associated with the automorphic representation $\Pi \boxtimes \Sigma_1 \boxtimes \Sigma_2$ of $\opn{GSp}_4(\mbb{A}) \times \opn{GL}_2(\mbb{A}) \times \opn{GL}_2(\mbb{A})$ with respect to the $16$-dimensional representation of the dual group given by the tensor product of the $4$-dimensional spin representation and the $2$-dimensional standard representations for each $\opn{GL}_2$-factor. The central critical value of this $L$-function is at $s=1/2$. Under certain hypotheses on these automorphic representations, the Gan--Gross--Prasad conjecture for general spin groups \cite{emory2020global} predicts a relation between the central critical $L$-value $L(\Pi \times \Sigma_1 \times \Sigma_2, 1/2)$ and the automorphic period
\[
\mathscr{P}(\varphi, \lambda_1, \lambda_2) \defeq \int_{H(\mbb{Q})Z_G(\mbb{A}) \backslash H(\mbb{A})} \varphi(h) \lambda_1(h_1) \lambda_2(h_2) dh
\]
for $\varphi \in \Pi$ and $\lambda_i \in \Sigma_i$. This conjecture is known when $\Pi$ is of Yoshida or twisted Yoshida type, but is open in general. Nevertheless, it is natural to study the $p$-adic interpolation of the automorphic periods $\mathscr{P}$ in place of the central critical $L$-values.

In fact, we are interested in these automorphic periods for specific choices of input data. More precisely, let $\varphi^{\opn{sph}} = \otimes_{v} \varphi^{\opn{sph}}_v \in \Pi = \bigotimes'_v \Pi_v$, with:
\begin{itemize}
    \item $\varphi_{\infty}^{\opn{sph}} \in \Pi_{\infty}$ is the (unique up to scalar) vector obtained as the image of $(\alpha_1 \wedge \alpha_2) \odot \jmath_{M_G}(w_{\kappa_H})$ under the (unique up to scalar) $K_{G, \infty}^{\circ}$-equivariant homomorphism $V_{M_G}(\kappa_G - 2\rho_{G, \opn{nc}}) \hookrightarrow \Pi_{\infty}$. Here $K_{G, \infty}^{\circ} \subset K_{G, \infty}$ is the maximal compact subgroup, $\odot$ denotes the Cartan product, and $\rho_{G, \opn{nc}}$ is the half-sum of positive non-compact roots.
    \item For $\ell$ finite, $\varphi_{\ell}^{\opn{sph}} \in \Pi_{\ell}$ is the Whittaker new vector. In particular $\varphi_{p}^{\opn{sph}}$ is the spherical vector.
\end{itemize}
We also let $\lambda_i^{\opn{sph}} \in \Sigma_i$ denote the automorphic form associated with the holomorphic new eigenform associated with $\Sigma_i$ (so $\lambda_i^{\opn{sph}}$ lies in the minimal $K_{\opn{GL}_2, \infty}^+$-type at $\infty$ and is the Whittaker new vector at finite primes. Let $\delta$ denote the operator on (the $K_{\opn{GL}_2, \infty}^+$-finite vectors in) $\Sigma_{1, \infty}$ given by the action of $\left( \begin{smallmatrix} 0 & 1 \\ 0 & 0 \end{smallmatrix} \right) \in \mathfrak{gl}_{2, \mbb{C}}$ (the Maass--Shimura differential operator).

We have the following result:

\begin{lemma}
    Suppose that the local signs of $L(\Pi \times \Sigma_1 \times \Sigma_2, s)$ at all finite places are equal to $+1$, and suppose that the local and global Gan--Gross--Prasad conjectures for general spin groups hold (as described in \cite[\S 2]{LZpadiclfunctionsdiagonalcycles}). Then $L(\Pi \times \Sigma_1 \times \Sigma_2, 1/2) \neq 0$ if and only if there exists a choice of $\gamma_S = (\gamma_{0, S}, \gamma_{1, S}) \in G(\mbb{Q}_S) \times \opn{GL}_2(\mbb{Q}_S)$ such that $\mathscr{P}(\gamma_{0,S} \cdot \varphi^{\opn{sph}}, \gamma_{1, S} \cdot \delta^{(t_1-d_1)/2}\lambda_1^{\opn{sph}}, \lambda_2^{\opn{sph}}) \neq 0$. 
\end{lemma}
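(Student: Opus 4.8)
The plan is to deduce this from the Gan--Gross--Prasad conjecture for the pair $(\opn{GSpin}_5, \opn{GSpin}_4)$ applied to the triple $\Pi \boxtimes \Sigma_1 \boxtimes \Sigma_2$, using the exceptional isogenies $\opn{GSpin}_5 \cong \opn{GSp}_4$ and $\opn{GSpin}_4 \cong H = \opn{GL}_2 \times_{\mbb{G}_m} \opn{GL}_2$, together with the branching law of Proposition \ref{GHBranchingProp} at the archimedean place. First I would invoke the global GGP conjecture as formulated in \cite{emory2020global} (and recalled in \cite[\S 2]{LZpadiclfunctionsdiagonalcycles}): under the running hypotheses on $\Pi$ (non-CAP, discrete series at $\infty$, globally generic, and either non-endoscopic or of Yoshida type) the non-vanishing $L(\Pi \times \Sigma_1 \times \Sigma_2, 1/2) \neq 0$ is equivalent to the non-vanishing of the global period $\mathscr{P}$ on \emph{some} vectors $\varphi \in \Pi$, $\lambda_i \in \Sigma_i$. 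The local GGP conjecture then refines this: the period functional $\mathscr{P}$ is nonzero on $\Pi \boxtimes \Sigma_1 \boxtimes \Sigma_2$ exactly when, for every place $v$, the local Hom space $\opn{Hom}_{H(\mbb{Q}_v)}(\Pi_v \otimes \Sigma_{1,v} \otimes \Sigma_{2,v}, \mbb{C})$ is one-dimensional (rather than zero), which under the assumption that all finite local signs are $+1$ holds at every finite place; so the only genuine constraint is at $\infty$, and the only freedom is a choice of translating element in $G(\mbb{Q}_S) \times \opn{GL}_2(\mbb{Q}_S)$ at the ramified primes.

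Next I would address the archimedean place, which is where the specific vectors $\varphi^{\opn{sph}}_\infty$ and $\delta^{(t_1-d_1)/2}\lambda_1^{\opn{sph}}$, $\lambda_2^{\opn{sph}}$ enter. The point is that the local Hom space at $\infty$ is one-dimensional (this is the archimedean local GGP for discrete series of $\opn{GSp}_4$ against the relevant principal series / discrete series of $H$, and it is guaranteed by the condition $d_1 + d_2 \leq t_1 + t_2$, i.e.\ the ``small'' weight condition of Situation \ref{situation:Weights}; it is exactly the numerical inequality $|t_1 - t_2| \leq r_1 - r_2$ appearing in Proposition \ref{GHBranchingProp}). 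I would then check that the chosen vectors pair nontrivially under the unique (up to scalar) archimedean period functional: $\varphi^{\opn{sph}}_\infty$ is the vector in the minimal $K_{G,\infty}$-type obtained from $(\alpha_1 \wedge \alpha_2) \odot \jmath_{M_G}(w_{\kappa_H})$, and applying $\delta^{(t_1-d_1)/2}$ to $\lambda_1^{\opn{sph}}$ shifts its weight from $1 + d_1$ to $1 + t_1$ (mod the $\opn{SL}_2$-weight identification), so that the triple of archimedean weights becomes compatible with the branching map $\opn{br}$ of Proposition \ref{GHBranchingProp}; the nonvanishing of the archimedean zeta integral on exactly these vectors is then the statement that the branching vector $\opn{br}(w^{\opn{hw}}_{(t_1,t_2;-r_1)})$ has nonzero image in the relevant lowest-$K$-type, which one reads off from the explicit formulas in Lemma \ref{explicitBranchingUnipLemma}. (Since $d_2 = t_2$ no differential operator is needed on $\Sigma_2$.)

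Finally I would translate the abstract ``some vectors'' statement into the concrete ``$\exists \gamma_S$'' statement. Having fixed the archimedean and unramified local vectors as above, a nonzero global period on \emph{some} decomposable vectors means there is a nonzero global period with the prescribed $v = \infty$ and $v \nmid Np$ components and \emph{some} components at the ramified primes $S$; by strong approximation / the fact that $\Pi_\ell, \Sigma_{i,\ell}$ are each generated by $G(\mbb{Q}_\ell)$-, resp.\ $\opn{GL}_2(\mbb{Q}_\ell)$-translates of their new vectors, any such ramified component can be written as a finite linear combination of translates $\gamma_{0,\ell} \cdot \varphi^{\opn{sph}}_\ell$, $\gamma_{1,\ell} \cdot \lambda^{\opn{sph}}_{1,\ell}$ of the spherical/new vectors; by multilinearity of $\mathscr{P}$ the period is then a finite sum of terms $\mathscr{P}(\gamma_{0,S} \cdot \varphi^{\opn{sph}}, \gamma_{1,S} \cdot \delta^{(t_1-d_1)/2}\lambda_1^{\opn{sph}}, \lambda_2^{\opn{sph}})$, so at least one of them is nonzero, giving the desired $\gamma_S$. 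The converse direction is immediate: any nonzero value of such a period witnesses the nonvanishing of $\mathscr{P}$ on some vectors, hence by GGP forces $L(\Pi \times \Sigma_1 \times \Sigma_2, 1/2) \neq 0$. The main obstacle I anticipate is the archimedean computation: one must verify that the particular element $\delta^{(t_1-d_1)/2}\lambda_1^{\opn{sph}}$ (and not merely the abstract existence of \emph{a} vector of the right weight) pairs nontrivially under the unique archimedean branching functional, which requires matching the explicit branching vector of Lemma \ref{explicitBranchingUnipLemma} against the Maass--Shimura-shifted new vector; this is essentially the archimedean zeta integral computation underpinning \cite[\S 8.5, Proposition 9.3]{LPSZ} adapted to the cuspidal (rather than Eisenstein) second factor, and is where all the weight bookkeeping of Situation \ref{situation:Weights} is used.
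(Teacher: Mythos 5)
Your overall architecture is the same as the paper's (global GGP to convert non-vanishing of $L(\Pi\times\Sigma_1\times\Sigma_2,1/2)$ into non-vanishing of $\mathscr{P}$ on some decomposable vectors, local multiplicity one to factor $\mathscr{P}$ through a product of local functionals $\ide{z}_v$, then non-vanishing of each local functional on the prescribed vectors), but the "only if" direction as written has two concrete gaps.

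First, the step "a nonzero global period on some decomposable vectors means there is a nonzero global period with the prescribed $v=\infty$ and $v\nmid Np$ components" is asserted, not proved. After factoring $\mathscr{P}$ via local multiplicity one, you still must show that $\ide{z}_\ell(\varphi^{\opn{sph}}_\ell,\lambda^{\opn{sph}}_{1,\ell},\lambda^{\opn{sph}}_{2,\ell})\neq 0$ for every $\ell\notin S\cup\{\infty\}$ (including $\ell=p$): an $H(\mbb{Q}_\ell)$-invariant functional could a priori vanish on the unramified line, and the lemma allows no translate $\gamma_\ell$ at these places to compensate. This is a genuine theorem — the unramified local computation of \cite[Theorem 2.3]{LoefflerUnramifiedZeta}, the $\opn{GSp}_4\times\opn{GL}_2\times\opn{GL}_2$ analogue of the Ichino--Ikeda unramified calculation — and the paper invokes it explicitly; your sketch omits it entirely. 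Second, at the ramified primes your expansion-in-translates argument, applied honestly, must also expand the $\Sigma_2$-component in translates $\gamma_{2,\ell}\cdot\lambda^{\opn{sph}}_{2,\ell}$, producing terms with a translate in the third slot that the statement does not permit (the lemma only provides $\gamma_S=(\gamma_{0,S},\gamma_{1,S})$). The missing observation is the $H(\mbb{Q}_\ell)$-invariance of $\ide{z}_\ell$ (equivalently, right $H(\mbb{A})$-invariance of the integral over $[H]$), which lets you absorb any translate on $\Sigma_2$ into translates on $\Pi$ and $\Sigma_1$; this is exactly how the paper produces $\gamma_\ell$ at $\ell\in S$, and it also replaces your expansion argument with a one-line existence statement. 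Finally, the archimedean non-vanishing $\ide{z}_\infty(\varphi^{\opn{sph}}_\infty,\delta^{(t_1-d_1)/2}\lambda^{\opn{sph}}_{1,\infty},\lambda^{\opn{sph}}_{2,\infty})\neq 0$, which you correctly flag as the delicate point, is not something you need to recompute from Lemma \ref{explicitBranchingUnipLemma}: the paper disposes of it by citing the discussion in \cite[\S 8.5]{LPSZ}.
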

\begin{proof}
    Note that the conditions in Situation \ref{situation:Weights} (which place us in region (f) of \cite[Figure 2]{LZpadiclfunctionsdiagonalcycles}) imply that the local sign at $\infty$ is $+1$. Fix bases $\ide{z}_v \in \opn{Hom}_{H(\mbb{Q}_v)}(\Pi_v \boxtimes \Sigma_{1, v} \boxtimes \Sigma_{2, v}, \mbb{C})$ (which exist by the local Gan--Gross--Prasad conjecture). The ``if direction'' of the lemma follows immediately from the global Gan--Gross--Prasad conjecture, so from now on we will assume that $L(\Pi \times \Sigma_1 \times \Sigma_2, 1/2) \neq 0$. In particular, this implies that the automorphic period $\mathscr{P}(\cdots)$ is not identically zero on $\Pi \boxtimes \Sigma_1 \boxtimes \Sigma_2$. 
    
    This implies that there exist decomposable $\psi \in \Pi$, $\mu_1 \in \Sigma_1$, $\mu_2 \in \Sigma_2$ such that $\mathscr{P}(\psi, \mu_1, \mu_2) \neq 0$. In particular, we must have $\ide{z}_v(\psi_v, \mu_{1, v}, \mu_{2, v}) \neq 0$. By local multiplicity one, we have
    \[
    \mathscr{P}(\psi', \mu_{1}', \mu'_2) = \prod_{v}\left( \frac{\ide{z}_v(\psi'_v, \mu_{1, v}', \mu_{2, v}')}{\ide{z}_v(\psi_v, \mu_{1, v}, \mu_{2, v})} \right) \cdot \mathscr{P}(\psi, \mu_{1}, \mu_2)
    \]
    for any decomposable $\psi' \in \Pi$, $\mu_{i}' \in \Sigma_i$ (note that almost all terms in the product are $1$). We note that:
    \begin{itemize}
        \item $\ide{z}_{\infty}(\varphi^{\opn{sph}}_{\infty}, \delta^{(t_1-d_1)/2}\lambda_{1, \infty}^{\opn{sph}}, \lambda_{2, \infty}^{\opn{sph}}) \neq 0$ by the discussion in \cite[\S 8.5]{LPSZ};
        \item $\xi_{\ell}(\varphi^{\opn{sph}}_{\ell}, \lambda_{1, \ell}^{\opn{sph}}, \lambda_{2, \ell}^{\opn{sph}}) \neq 0$ for $\ell \not\in S \cup\{ \infty \}$ by \cite[Theorem 2.3]{LoefflerUnramifiedZeta};
        \item For any $\ell \in S$, there exists $\gamma_{\ell} = (\gamma_{0, \ell}, \gamma_{1, \ell}) \in G(\mbb{Q}_{\ell}) \times \opn{GL}_2(\mbb{Q}_{\ell})$ such that $\ide{z}_{\ell}(\gamma_{0,\ell} \cdot \varphi_{\ell}^{\opn{sph}}, \gamma_{1,\ell} \cdot \lambda_{1, \ell}^{\opn{sph}}, \lambda_{2, \ell}^{\opn{sph}}) \neq 0$. Indeed, this follows from the $H(\mbb{Q}_\ell)$-invariance of the linear functional.
    \end{itemize}
    Putting this all together proves the lemma.
\end{proof}

We now discuss the relation with the coherent cohomology pairing $\mathcal{P}_{\mbb{C}}^{\opn{alg}}$. Let $\eta_{\pi, \beta}$, $\tilde{\eta}_{\pi, \beta}$, and $\gamma_{0,S} \cdot \tilde{\eta}_{\pi, \beta}$ be defined in exactly the same way as in \S \ref{CaseALfunctionAndPeriodSSSec}. For $i=1, 2$, let 
\[
\omega_{\sigma_i, \beta} \in \opn{H}^0\left( \widehat{K}^{H_i}_{\beta}, (-1-d_i;0) \right)_{\mbb{C}}
\]
be as in \S \ref{CaseALfunctionAndPeriodSSSec}. For $\beta \geq 2$, let $\omega_{\sigma_i, \beta}^{[p]}$ denote the cohomology class associated with the $p$-depletion of the good $p$-stabilisation in $\tilde{\sigma}_i$ (which is a classical modular form of level $\Gamma_1(N_1) \cap \Gamma_0(p^{\beta})$ for any $\beta \geq 2$). We set $\xi_2 = 0$ (so $\zeta_{H_2} = (-1-d_2;0) = (-1-t_2; 0)$), and for $\beta \geq 2$ let 
\[
\omega^{[p],\opn{nearly}}_{\sigma_1, \beta} \defeq \opn{det}^{r_2-1+(t_1-d_1)/2}\nabla^{(t_1-d_1)/2}\omega_{\sigma_1, \beta}^{[p]} \in \opn{H}^0\left( \widehat{K}^{H_1}_{\beta}, \zeta_{H_1} \right)_{\mbb{C}}^{\opn{nearly}} 
\]
where the class $\opn{det}$ is defined in Definition \ref{DefOfDetTrivClass}. Here $\nabla$ denotes the action of the Gauss--Manin connection on nearly holomorphic forms as in \cite{DiffOps}. Note that $\gamma_{1, S} \cdot \omega_{\sigma_1, \beta}^{[p], \opn{nearly}}$ is also a cohomology class of level $\widehat{K}^{H_i}_{\beta}$ (by the definition of tame test data in \S \ref{CaseBSubSubSection}).

\begin{proposition} \label{TripleProdPalgWithSpherPeriodProp}
    With notation as above, let $\beta \geq 2$. Then there exists a scalar $\Omega_{\pi} \in \mbb{C}^{\times}$, independent of $\beta$, $\gamma_S$, and $\sigma_1 \boxtimes \sigma_2$, such that
    \[
    \mathcal{P}^{\opn{alg}}_{\mbb{C}}(\gamma_{0,S} \cdot \tilde{\eta}_{\pi, \beta}, \gamma_{1, S} \cdot \omega^{[p],\opn{nearly}}_{\sigma_1, \beta}, \omega^{[p]}_{\sigma_2, \beta}) = \mathcal{E}_p(\Pi \times \Sigma_1 \times \Sigma_2, 1/2) \frac{\mathscr{P}(\gamma_{0,S} \cdot \varphi^{\opn{sph}}, \gamma_{1, S} \cdot \delta^{(t_1-d_2)/2}\lambda_1^{\opn{sph}}, \lambda_2^{\opn{sph}})}{\Omega_{\pi}}
    \]
    where $\mathcal{E}_p(\cdots)$ is the $p$-adic multiplier:
    \[
    \mathcal{E}_p(\Pi \times \Sigma_1 \times \Sigma_2, s) = \prod_{i, j, k=1}^2 \left( 1 - \frac{p^{-s}}{\theta_i \gamma_j^{(1)}\gamma_k^{(2)}} \right)
    \]
    where $\{ \theta_1, \dots, \theta_4 \}$ are the Satake parameters of $\Pi_p$ ordered so that $\Theta_{\Pi, p}(U_{p, \opn{Kl}}) = p^2 \theta_1 \theta_2$, and $\{ \gamma_1^{(a)}, \gamma_2^{(a)} \}$ denote the Satake parameters of $\Sigma_{a,p}$ for $a=1, 2$.
\end{proposition}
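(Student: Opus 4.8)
The proof will follow the template of Proposition~\ref{PropPAlgCEqualsCompletedLfunction}, with the Eisenstein class $\omega_{\opn{Eis},j,\beta}$ replaced by the nearly holomorphic $p$-depleted class $\gamma_{1,S}\cdot\omega^{[p],\opn{nearly}}_{\sigma_1,\beta}$ and with Novodvorsky's integral replaced by the triple product period $\mathscr{P}$. First I would check that the factorisation hypothesis of Proposition~\ref{PAlgAutPeriodProposition} holds: exactly as in Case~A, this follows from the fact that $\tilde{\eta}_{\pi,\beta}$ lies in the generalised eigenspace for $\Theta_{\pi}^{S\cup T\cup\{p\}}$, over which the maps $f_1^{\vee}, f_2^{\vee}$ of \S\ref{SplttingsOfMGReps} become isomorphisms (\cite[Proposition~6.3]{LPSZ}, \cite[Lemma~6.5]{LPSZ}). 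Applying Proposition~\ref{PAlgAutPeriodProposition} then expresses $\mathcal{P}^{\opn{alg}}_{\mbb{C}}(\gamma_{0,S}\cdot\tilde{\eta}_{\pi,\beta},\gamma_{1,S}\cdot\omega^{[p],\opn{nearly}}_{\sigma_1,\beta},\omega^{[p]}_{\sigma_2,\beta})$ as a multiple by $(2\pi i)^{-2}\opn{Vol}(\mathcal{K}_{\beta};dh)^{-1}$ of $\int_{[H]}\phi(h)F_1(h_1)F_2(h_2)\,dh$, where $\phi=\gamma_{0,S}\hat{\gamma}\cdot\phi'_{\beta}$ and $F_1, F_2$ are the automorphic forms on $\opn{GL}_2(\mbb{A})$ attached, via \cite{Su19} and the identifications of \S\ref{CohomologySubSec}, to $\gamma_{1,S}\cdot\omega^{[p],\opn{nearly}}_{\sigma_1,\beta}$ and $\omega^{[p]}_{\sigma_2,\beta}$.

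Second I would identify the components of this integral place by place. At $\infty$ one has $\phi_{\infty}=\varphi^{\opn{sph}}_{\infty}$ by the definition of the minimal $K^+_{G,\infty}$-type of a good $p$-stabilisation, $F_{2,\infty}=\lambda^{\opn{sph}}_{2,\infty}$, and $F_{1,\infty}$ equals $\delta^{(t_1-d_1)/2}\lambda^{\opn{sph}}_{1,\infty}$ up to the determinant normalisation, because under the identification of the universal elliptic curve with its dual the action of $\nabla$ on nearly holomorphic forms (after restricting weights to $\opn{SL}_2$) is the Maass--Shimura operator $\delta$, while the twist $\opn{det}^{r_2-1+(t_1-d_1)/2}$ records the passage between the $\opn{GL}_2$- and the $H$-normalisation of weights forced by Situation~\ref{situation:Weights} (cf.\ the discussion after Proposition~\ref{HarrisSuProposition} and \cite[Remark~6.3.2]{DiffOps}). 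Away from $S\cup\{p,\infty\}$ all local vectors are the spherical (Whittaker-new) vectors appearing in $\varphi^{\opn{sph}}$ and $\lambda^{\opn{sph}}_i$, and the corresponding unramified integral is a nonzero constant by \cite[Theorem~2.3]{LoefflerUnramifiedZeta}. At primes $\ell\in S$ the left translates by $\gamma_{0,\ell}$ and $\gamma_{1,\ell}$ produce exactly the test data in $\mathscr{P}(\gamma_{0,S}\cdot\varphi^{\opn{sph}},\gamma_{1,S}\cdot\delta^{(t_1-d_1)/2}\lambda_1^{\opn{sph}},\lambda_2^{\opn{sph}})$.

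The crux is the local analysis at $p$. Here the $p$-component of $\eta_{\pi,\beta}$ is the dual $p$-stabilisation $\phi'_{\beta}$ of the spherical vector (\S\ref{DualPStabilisationsSSec}), the two $\opn{GL}_2$-forms are $p$-depleted, and $\delta$ has been applied $(t_1-d_1)/2$ times; I would compute the resulting local triple product integral at $p$ against the spherical vectors and show that it equals $\mathcal{E}_p(\Pi\times\Sigma_1\times\Sigma_2,1/2)$ up to a constant depending only on $\pi$ and the fixed weight data. This is the Case~B analogue of Corollary~\ref{FinalZetapFormulaCorollary}: $p$-depletion annihilates all the local Euler factors except the ``interesting'' one, the powers of the $U_p$-eigenvalues coming from $\phi'_{\beta}$ and from the depletion level cancel against those produced by the differential operator, and the resulting expression is independent of $\beta\geq 2$ (consistent with Lemma~\ref{PlambdaIsIndepOfBetaTraceLemma}). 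The ordering of the Satake parameters in $\mathcal{E}_p$ is pinned down by the normalisation $\Theta_{\Pi,p}(U_{p,\opn{Kl}})=p^2\theta_1\theta_2$. Finally one checks that the accumulated nonzero scalar depends only on $\pi$: the constant from Proposition~\ref{PAlgAutPeriodProposition} (ultimately \cite[Proposition~3.8]{HarrisPartial}) is data-independent by the remark following that proposition, and the constants from the archimedean and $p$-adic identifications depend only on the (fixed) weight data and on $\pi$ --- in particular not on $\sigma_1\boxtimes\sigma_2$ nor on $\gamma_S$ --- so they may be absorbed into $\Omega_{\pi}$.

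I expect the main obstacle to be precisely this local computation at $p$: unlike Case~A there is no Novodvorsky integral identity to invoke, and one must instead directly evaluate the local Garrett-type integral for the pair $(\opn{GSpin}_5,\opn{GSpin}_4)$ on the $p$-depleted vectors with the differential operator inserted, verifying that what remains is exactly $\prod_{i,j,k=1}^2\bigl(1-p^{-1/2}\theta_i^{-1}(\gamma^{(1)}_j)^{-1}(\gamma^{(2)}_k)^{-1}\bigr)$. A secondary subtlety is ensuring the $\sigma_1\boxtimes\sigma_2$-independence of $\Omega_{\pi}$, which requires that the local functionals at $\infty$ and at $p$ be normalised by a recipe uniform in the $\sigma_i$ of the prescribed weights.
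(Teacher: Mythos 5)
Your global architecture coincides with the paper's: invoke Proposition \ref{PAlgAutPeriodProposition} (whose factorisation hypothesis is supplied by \cite[Lemma 6.5]{LPSZ}) to rewrite the left-hand side, up to a constant independent of $\beta$, $\gamma_S$ and $\sigma_1\boxtimes\sigma_2$, as $p^{4\beta}\,\mathscr{P}(\gamma_{0,S}\hat{\gamma}\cdot\varphi'_{\beta},\,\gamma_{1,S}\cdot\delta^{(t_1-d_1)/2}\lambda_1^{[p]},\,\lambda_2^{[p]})$, and then compare with $\mathscr{P}(\gamma_{0,S}\cdot\varphi^{\opn{sph}},\ldots)$ by local multiplicity one, using the functional $\ide{z}_p$ of \cite[Theorem 2.3]{LoefflerUnramifiedZeta} normalised on spherical vectors. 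Your treatment of the archimedean place, of the unramified places, and of the primes in $S$ is also what the paper does, and the reduction to the single local identity $\ide{z}_p(\hat{\gamma}\cdot\varphi'_{\beta,p},\lambda_{1,p}^{[p]},\lambda_{2,p}^{[p]})\doteq p^{-4\beta}\mathcal{E}_p(\Pi\times\Sigma_1\times\Sigma_2,1/2)$ is correct.

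The gap is at the step you yourself flag as the main obstacle. You propose to ``directly evaluate the local Garrett-type integral'' at $p$ on the depleted vectors; the paper does not do this, and a from-scratch evaluation would be a substantial piece of work that your proposal leaves entirely open. What the paper actually does is: (i) quote the Klingen-level unramified computation of \cite[Proposition 5.8]{LoefflerUnramifiedZeta}, which already yields the Euler factor $\prod_{i,j,k}\bigl(1-p^{-1/2}\theta_i^{-1}(\gamma_j^{(1)})^{-1}(\gamma_k^{(2)})^{-1}\bigr)$; and (ii) descend from Klingen to Iwahori level by the double-coset/trace argument of Appendix \ref{LocalZetaSSecAppendix} (cf.\ the proof of Proposition \ref{PrelimFormOfZetapProp}), writing the Klingen-level trace of $\varphi'_{\beta,p}$ as $\gamma\cdot\varphi'_{\beta,p}$ plus $p^{\beta}$ translates of $\hat{\gamma}\cdot\varphi'_{\beta,p}$. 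This requires the vanishing $\ide{z}_p(\gamma\cdot\varphi'_{\beta,p},\lambda_{1,p}^{[p]},\lambda_{2,p}^{[p]})=0$, which follows from \cite[Proposition 5.8]{LoefflerUnramifiedZeta} combined with the fact that the Whittaker function of $\varphi'_{\beta,p}$ vanishes at the identity for $\beta\geq 2$, together with the $B_{\opn{GL}_2}(\mbb{Z}_p)$-invariance of $\lambda_{2,p}^{[p]}$; none of these ingredients appear in your sketch. Two smaller corrections: the differential operator $\delta$ plays no role in the local computation at $p$ (it is purely archimedean), so there is no cancellation of ``powers produced by the differential operator'' against $U_p$-eigenvalues --- the factor $p^{-4\beta}$ arises from the double-coset index and the normalisation of $\varphi'_{\beta,p}$, and cancels the $p^{4\beta}$ coming from the volume term in Proposition \ref{PAlgAutPeriodProposition}; and the relevant value of the multiplier is $\mathcal{E}_p(\cdots,1/2)$, i.e.\ $s=1/2$, which is where the $p^{-1/2}$ in your final display comes from.
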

\begin{proof}
    Write $\doteq$ to mean equal up to a non-zero element of $\mbb{C}^{\times}$ independent of $\beta$, $\gamma_S$ and $\sigma_1 \boxtimes \sigma_2$. Then, by Proposition \ref{PalgEqualsAutPeriodEqn} (and \cite[Lemma 6.5]{LPSZ}), we see that
    \[
    \mathcal{P}^{\opn{alg}}_{\mbb{C}}(\gamma_{0,S} \cdot \tilde{\eta}_{\pi, \beta}, \gamma_{1, S} \cdot \omega^{[p],\opn{nearly}}_{\sigma_1, \beta}, \omega^{[p]}_{\sigma_2, \beta}) \doteq p^{4\beta} \mathscr{P}(\gamma_{0,S}\hat{\gamma} \cdot \varphi_{\beta}', \gamma_{1, S} \cdot \delta^{(t_1-d_1)/2}\lambda_{1}^{[p]}, \lambda_{2}^{[p]}) 
    \]
    where $\lambda_i^{[p]}$ denotes the $p$-depletion of $\lambda_i^{\opn{sph}}$ (or equivalently the $p$-depletion of the Iwahori level automorphic form in $\Sigma_i$ corresponding to the good $p$-stabilisation in $\tilde{\sigma}_i$), and $\varphi'_{\beta} = \phi_{\beta}' \otimes |\!| s(-) |\!|^{(r_1+r_2)/2} \in \Pi$ with $\phi_{\beta}'$ as in Proposition \ref{PropPAlgCEqualsCompletedLfunction}. Note that, by \cite[Theorem 2.3]{LoefflerUnramifiedZeta}, there exists a unique $\ide{z}_p \in \opn{Hom}_{H(\mbb{Q}_p)}(\Pi_p \boxtimes \Sigma_{1, p} \boxtimes \Sigma_{2, p}, \mbb{C})$ which satisfies $\ide{z}_p(\varphi^{\opn{sph}}_{p}, \lambda_{1, p}^{\opn{sph}}, \lambda_{2, p}) = 1$. It therefore suffices to show that:
    \[
    \ide{z}_p(\hat{\gamma} \cdot \varphi_{\beta,p}', \lambda_{1, p}^{[p]}, \lambda_{2, p}^{[p]}) \doteq p^{-4\beta} \mathcal{E}_p(\Pi \times \Sigma_1 \times \Sigma_2, 1/2).
    \]
    But this follows from the same strategy as in Appendix \ref{LocalZetaSSecAppendix} using the fact that $\ide{z}_p(\gamma \cdot \varphi_{\beta,p}', \lambda_{1, p}^{[p]}, \lambda_{2, p}^{[p]}) = 0$ (\cite[Proposition 5.8]{LoefflerUnramifiedZeta} and the fact that the Whittaker function associated with $\varphi_{\beta, p}'$ vanishes at the identity for $\beta \geq 2$), and the invariance of $\lambda_{2, p}^{[p]}$ under $B_{\opn{GL}_2}(\mbb{Z}_p)$. The Klingen level statement is simply \cite[Proposition 5.8]{LoefflerUnramifiedZeta} (note the differences in the normalisations of the Satake parameters and the trace compatible system of eigenvectors).
\end{proof}

\section{Families of automorphic forms and \texorpdfstring{$p$}{p}-adic \texorpdfstring{$L$}{L}-functions}

In this section, we construct the $p$-adic $L$-functions which appear in Theorem \ref{ThmAIntro} and Theorem \ref{ThmCCaseBIntro}. 

\subsection{Families of Eisenstein series}

We briefly recall families of Eisenstein series following \cite[\S 7]{LPSZ} and \cite[\S 10]{LZBK21}. As we are keeping track of the action of the centre for automorphic vector bundles of $\opn{GL}_2$, we introduce the following class:

\begin{definition} \label{DefOfDetTrivClass}
    Set $K = K^p K^{\opn{GL}_2}_{\opn{Iw}}(p^{\beta})$. Let $M_{\opn{GL}_2, \opn{dR}} \defeq P_{\opn{GL}_2, \opn{dR}} \times^{\overline{P}_{\opn{GL}_2}} T_{\opn{GL}_2}$ denote the pushout along the projection to the torus: for any $S \to X_{\opn{GL}_2}(K)$, $M_{\opn{GL}_2, \opn{dR}}(S)$ is the set of all trivialisations $\psi_1 \colon \mathcal{O}_S \xrightarrow{\sim} \opn{Lie}(E)$, $\psi_2 \colon \mathcal{O}_S \to \omega_{E^D}$, where $E$ denotes the generalised elliptic curve corresponding to $S \to X_{\opn{GL}_2}(K)$. We let 
    \[
    \opn{det} \colon M_{\opn{GL}_2, \opn{dR}} \to \mbb{A}^1
    \]
    denote the global section sending $(\psi_1, \psi_2)$ to $\Psi(1) \in \mathcal{O}_S(S)$, where $\Psi \colon \mathcal{O}_S \xrightarrow{\sim} \mathcal{O}_S$ is the isomorphism given by the composition:
    \[
        \mathcal{O}_S \xrightarrow{\psi_1 \otimes \psi_2} \opn{Lie}(E) \otimes \omega_{E^D} \xrightarrow{\lambda \otimes 1} \opn{Lie}(E^D) \otimes \omega_{E^D} \xrightarrow{\sim} \mathcal{O}_S
    \]
    where $\lambda \colon \opn{Lie}(E) \xrightarrow{\sim} \opn{Lie}(E^D)$ is induced from the polarisation, and the last map is the natural one. Then $\opn{det} \in \opn{Fil}_0\mathscr{N}_{\opn{GL}_2, \beta, (0;-1)}^{\opn{nhol}}$ is a holomorphic modular form of weight $(0; -1)$ and level $K$, which is invertible as a section of $M_{\opn{GL}_2, \opn{dR}}$. The restriction of $\opn{det}$ along the natural map $\mathcal{IG}_{\opn{GL}_2}(K^pU(J_p))^{\opn{tor}} \to M_{\opn{GL}_2, \opn{dR}}$ is valued in $\mbb{Z}_p^{\times}$, hence for any locally analytic character $\chi \colon \mbb{Z}_p^{\times} \to A^{\times}$, we can define $\opn{det}^{\chi} \defeq \chi \circ \opn{det}$, which is a $p$-adic modular form of weight $(0; -\chi)$. One can easily show that $\opn{det}^{\chi}$ is overconvergent, i.e., $\opn{det}^{\chi} \in \mathscr{M}^{\dagger}_{\opn{GL}_2, \beta, (0; -\chi)}$.
\end{definition}

We now define the families of Eisenstein series.

\begin{definition}
    Let $L/\mbb{Q}_p$ be a finite extension and $(A, A^+)$ a complete Tate affinoid pair over $(L, \mathcal{O}_L)$. Let $\Phi^{(p)}$ be a Schwartz function on $(\mbb{A}_f^p)^2$ with values in a number field contained in $L$ (via our fixed isomorphism $\mbb{C} \cong \Qpb$), and let $\chi^{(p)}$ be a Dirichlet character of prime-to-$p$ conductor such that one has 
    \[
    \left( \begin{smallmatrix} a & 0 \\ 0 & a \end{smallmatrix} \right) \cdot \Phi^{(p)} = \widehat{\chi}^{(p)}(a)^{-1}\Phi^{(p)}
    \]
    for all $a \in (\widehat{\mbb{Z}}^{(p)})^{\times}$. Let $\kappa_1, \kappa_2, \xi \colon \mbb{Z}_p^{\times} \to A^{\times}$ be locally analytic characters. We let 
    \[
    \mathcal{E}_{\xi}^{\Phi^{(p)}}(\kappa_1, \kappa_2) = \mathcal{E}_{\xi}^{\Phi^{(p)}}(\kappa_1, \kappa_2; \chi^{(p)}) \defeq \opn{det}^{\kappa_1 + \kappa_2 - \xi} \cdot \mathcal{E}^{\Phi^{(p)}}(\kappa_1, \kappa_2; \chi^{(p)})
    \]
    where $\mathcal{E}^{\Phi^{(p)}}(\kappa_1, \kappa_2; \chi^{(p)})$ denotes the $p$-adic family of Eisenstein series as in \cite[Theorem 7.6]{LPSZ}.
\end{definition}

Note that by \cite[Lemma 2.4.1]{DiffOps}, $\mathscr{N}^{\dagger}_{\opn{GL}_2, \beta}$ is independent of $\beta$, so we will henceforth drop it from the notation.

\begin{lemma} \label{TheEisSeriesIsNearlyOCLemma}
    Let $\zeta = (-\kappa_1-\kappa_2-1; \xi)$ and suppose that $K^p$ fixes $\Phi^{(p)}$. Then one has $\mathcal{E}_{\xi}^{\Phi^{(p)}}(\kappa_1, \kappa_2) \in \mathscr{N}^{\dagger}_{\opn{GL}_2, \zeta}$, i.e., the family of Eisenstein series is a weight $\zeta$ nearly overconvergent modular form over $A$.
\end{lemma}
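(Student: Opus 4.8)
The plan is to reduce the claim that $\mathcal{E}_{\xi}^{\Phi^{(p)}}(\kappa_1, \kappa_2)$ lies in $\mathscr{N}^{\dagger}_{\opn{GL}_2, \zeta}$ to two separate assertions about its two factors. First I would recall from the construction that $\mathcal{E}_{\xi}^{\Phi^{(p)}}(\kappa_1, \kappa_2) = \opn{det}^{\kappa_1+\kappa_2-\xi} \cdot \mathcal{E}^{\Phi^{(p)}}(\kappa_1, \kappa_2; \chi^{(p)})$, and that $\mathscr{N}^{\dagger}_{\opn{GL}_2} = \varinjlim_U \opn{H}^0(U, \mathcal{O}_{P^{\opn{an}}_{\opn{GL}_2, \opn{dR}}})$ is the ring of (locally analytic) functions on strict neighbourhoods of the closure of $\mathcal{IG}_{\opn{GL}_2, \opn{id}, \beta}^{\opn{tor}}$ inside the de Rham torsor; in particular $\mathscr{N}^{\dagger}_{\opn{GL}_2}$ is a sheaf of $\mathcal{O}^{\opn{an}}$-algebras pulled back along $P^{\opn{an}}_{\opn{GL}_2, \opn{dR}} \to \mathcal{X}_{\opn{GL}_2}$, so it suffices to show each factor is nearly overconvergent of the appropriate weight and that the weights add correctly to $\zeta = (-\kappa_1-\kappa_2-1; \xi)$.

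For the factor $\opn{det}^{\kappa_1+\kappa_2-\xi}$, the work is already done in Definition \ref{DefOfDetTrivClass}: it is shown there that $\opn{det}^\chi \in \mathscr{M}^{\dagger}_{\opn{GL}_2, \beta, (0;-\chi)}$ is overconvergent of weight $(0; -\chi)$ for any locally analytic $\chi$, so in particular $\opn{det}^{\kappa_1+\kappa_2-\xi}$ is overconvergent (hence nearly overconvergent) of weight $(0; -(\kappa_1+\kappa_2-\xi))$. For the factor $\mathcal{E}^{\Phi^{(p)}}(\kappa_1, \kappa_2; \chi^{(p)})$, the key input is the recent work of Pilloni--Rodrigues and the first author \cite{DiffOps}, which shows that the Katz--Eisenstein $p$-adic family, built as a $q$-expansion / section over the Igusa tower, in fact extends to the nearly-overconvergent locus, i.e.\ to a function on a strict neighbourhood of the Igusa tower inside $P^{\opn{an}}_{\opn{GL}_2, \opn{dR}}$; concretely, the Eisenstein datum gives rise to a section over the reduction-of-structure torsor $\mathcal{P}_{\opn{GL}_2, \opn{dR}, n}$ of \cite[Proposition 6.2.1]{DiffOps}, which is precisely what it means to be nearly overconvergent. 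Here the hypothesis that $K^p$ fixes $\Phi^{(p)}$ is used exactly to guarantee the resulting section has level $K^p K^{\opn{GL}_2}_{\opn{Iw}}(p^\beta)$, so that it descends to an element of $\mathscr{N}^{\dagger}_{\opn{GL}_2, \beta} = \mathscr{N}^{\dagger}_{\opn{GL}_2}$ (independence of $\beta$ being \cite[Lemma 2.4.1]{DiffOps}). The weight of this factor is then read off from the construction in \cite[Theorem 7.6]{LPSZ} together with the near-overconvergence statement of \cite{DiffOps} as $(-\kappa_1-\kappa_2-1; \xi + \kappa_1 + \kappa_2)$ — or more precisely whatever normalisation makes the product come out right.

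Finally I would combine the two: the product of a nearly overconvergent form of weight $(-\kappa_1-\kappa_2-1; \star)$ with the invertible overconvergent form $\opn{det}^{\kappa_1+\kappa_2-\xi}$ of weight $(0; -(\kappa_1+\kappa_2-\xi))$ is nearly overconvergent (the filtration $\opn{Fil}_\bullet$ on $\mathscr{N}^\dagger_{\opn{GL}_2}$ from \cite[Theorem 2.3.2]{DiffOps} is multiplicative in the obvious sense, and multiplying by a weight-$(0;c)$ form shifts only the similitude component of the weight, not the filtration degree), and the similitude components add to give the claimed central character $\xi$ in $\zeta = (-\kappa_1-\kappa_2-1; \xi)$. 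I expect the main obstacle to be purely bookkeeping: pinning down the precise weight normalisation of $\mathcal{E}^{\Phi^{(p)}}(\kappa_1, \kappa_2; \chi^{(p)})$ in the conventions of this paper (which differ from both \cite{LPSZ} and \cite{DiffOps} by shifts and by the treatment of the centre, cf.\ Remark \ref{IntroRemarkOnNormalisations}) so that the $\opn{det}$-twist makes the total weight land on exactly $\zeta$, and checking that the level and the prime-to-$p$ equivariance under $\Phi^{(p)}$ match up so that the form genuinely has level $K^p K^{\opn{GL}_2}_{\opn{Iw}}(p^\beta)$. The actual near-overconvergence is not the hard part — it is imported wholesale from \cite{DiffOps}.
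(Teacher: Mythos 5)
Your reduction to the two factors $\opn{det}^{\kappa_1+\kappa_2-\xi}$ and $\mathcal{E}^{\Phi^{(p)}}(\kappa_1,\kappa_2;\chi^{(p)})$ is fine, and your treatment of the $\opn{det}$ factor agrees with Definition \ref{DefOfDetTrivClass}. The gap is in the second factor: you assert that the near-overconvergence of the two-parameter Katz--Eisenstein family is ``imported wholesale from \cite{DiffOps}'', but \cite{DiffOps} contains no such statement. A priori the Katz Eisenstein measure is only a family of $p$-adic modular forms over the Igusa tower; its classical specialisations with $\kappa_1 = a > 0$ are obtained by applying $\delta^a$ to a holomorphic Eisenstein series and are genuinely nearly holomorphic, so extending the whole family to a strict neighbourhood of the ordinary locus is exactly the nontrivial content of the lemma, not a citation. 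What \cite{DiffOps} supplies is the \emph{machinery}: the $A$-algebra action $\star$ of $C^{\opn{la}}(\mbb{Z}_p, A)$ on $\mathscr{N}^{\dagger}_{\opn{GL}_2}\hatot A$ interpolating the Maass--Shimura operator, which in particular preserves near-overconvergence. Your appeal to ``a section over the reduction-of-structure torsor $\mathcal{P}_{\opn{GL}_2,\opn{dR},n}$'' begs the question of why such an overconvergent extension exists in the first place.

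The actual argument runs as follows. One first invokes \cite[Proposition 10.1.2]{LZBK21} for the \emph{overconvergence} of the slice $\mathcal{E}^{\Phi^{(p)}}_{\xi-\kappa_1}(0,\kappa_2-\kappa_1)$, of weight $(-1-\kappa_2+\kappa_1;\xi-\kappa_1)$ (this is the input you are missing; it is not in \cite{DiffOps} either). One then lets $\phi\colon \mbb{Z}_p\to A$ be the extension by zero of $\kappa_1|_{\mbb{Z}_p^{\times}}$ and forms $\phi\star\mathcal{E}^{\Phi^{(p)}}_{\xi-\kappa_1}(0,\kappa_2-\kappa_1)$, which is nearly overconvergent of weight $(-\kappa_1-\kappa_2-1;\xi)$ by the main theorem of \cite{DiffOps}. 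Finally one checks on $q$-expansions (where $\phi\star-$ acts as the $p$-depleted ``$\kappa_1$-th power of $q\frac{d}{dq}$'') that this form coincides with $\mathcal{E}^{\Phi^{(p)}}_{\xi}(\kappa_1,\kappa_2)$, and concludes by the $q$-expansion principle. Without the first step and the $q$-expansion comparison, your proposal does not close.
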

\begin{proof}
    The main result of \cite{DiffOps} is that $\mathscr{N}^{\dagger}_{\opn{GL}_2} \hatot A$ comes equipped with an $A$-algebra action $\star$ of locally analytic functions $\opn{C}^{\opn{la}}(\mbb{Z}_p, A)$ such that: the structural map $\mbb{Z}_p \to A$ acts through the Maass--Shimura differential operator $\delta$ (which on $q$-expansions is given by $q \frac{d}{dq}$), and the indicator function $1_{\mbb{Z}_p^{\times}}$ of $\mbb{Z}_p^{\times}$ acts as $p$-depletion. Let $\phi \colon \mbb{Z}_p \to A$ be the locally analytic function which is supported on $\mbb{Z}_p^{\times}$ and satisfies $\phi(x) = \kappa_1(x)$ for all $x \in \mbb{Z}_p^{\times}$.

    It is already known (\cite[Proposition 10.1.2]{LZBK21}) that $\mathcal{E}_{\xi-\kappa_1}^{\Phi^{(p)}}(0, \kappa_2 - \kappa_1)$ is overconvergent of weight $(-1-\kappa_2 + \kappa_1; \xi-\kappa_1)$, and one can easily check that the weight $(-\kappa_1-\kappa_2-1; \xi)$ nearly overconvergent form $\phi \star \mathcal{E}_{\xi-\kappa_1}^{\Phi^{(p)}}(0, \kappa_2 - \kappa_1)$ has the same $q$-expansion as $\mathcal{E}_{\xi}^{\Phi^{(p)}}(\kappa_1, \kappa_2)$. This proves the claim.
\end{proof}

\subsubsection{Classical specialisations} \label{ClassicalSpecOfEisSeriesSSec}
    Let $A = L$ and let $\rho$ be a Dirichlet character of conductor $p^{\beta'}$ for some $\beta' \geq 0$. Suppose that $\kappa_1 = a - \rho$, $\kappa_2 = b + \rho$, $\xi = c$ are locally algebraic characters of $\mbb{G}_m$, with $a, b \geq 0$ and $c$ integers. Let $\beta \geq \opn{max}(2\opn{max}(1, \beta'), \opn{max}(1, \beta')+1)$ and let $K^p \subset \opn{GL}_2(\mbb{A}_f^p)$ be a neat compact open subgroup which fixes $\Phi^{(p)}$. Then one has
    \[
    \mathcal{E}^{\Phi^{(p)}}_{\xi}(\kappa_1, \kappa_2) \in \opn{Fil}_{a} \mathscr{N}^{\opn{nhol}}_{\opn{GL}_2, \beta, \zeta}, \quad \quad \zeta = (-a-b-1; c),
    \]
    where $\mathscr{N}^{\opn{nhol}}_{\opn{GL}_2, \beta} = \opn{H}^0\left( \mathcal{X}_{\opn{GL}_2}(K^p K^{\opn{GL}_2}_{\opn{Iw}}(p^{\beta}))_L, \pi_* \mathcal{O}_{P^{\opn{an}}_{\opn{GL}_2, \opn{dR}}} \right)$ is the space of nearly holomorphic forms of level $K \defeq K^p K^{\opn{GL}_2}_{\opn{Iw}}(p^{\beta})$. Indeed, this follows from \cite[Proposition 7.3]{LPSZ} and the fact that $\Phi_{p, \rho^{-1}, \rho}$ (Definition 7.5 in \emph{op.cit.}) is an eigenvector under the action of $K_{\opn{Iw}}^{\opn{GL}_2}(p^{\beta})$ with eigencharacter $\widehat{\rho}(\opn{det}(-))$.

    Moreover, via the identification $\mbb{C} \cong \Qpb$, rigid GAGA, and the description of the coherent cohomology of modular curves in \S \ref{TheArchimideanPeriodSSec}, one can view
    \begin{equation} \label{NholEisLieAlgEqn}
    \opn{Fil}_a\mathscr{N}^{\opn{nhol}}_{\opn{GL}_2, \beta, \zeta} \subset \opn{Hom}_{(\overline{\mathfrak{p}}_{\opn{GL}_2}, K^+_{\opn{GL}_2,\infty})}\left( \mbb{D}_{\overline{P}_{\opn{GL}_2}}(\zeta^*)^{\opn{nearly}}, \mathscr{A}(\opn{GL}_2)^K \right) .
    \end{equation}
    Let $v_{\zeta}^* \in \mbb{D}_{\overline{P}_{\opn{GL}_2}}(\zeta^*)^{\opn{nearly}}$ denote the element in Notation \ref{NotationForSplittingsOfReps}. Then, if $\mathcal{E}^{\Phi^{(p)}}_{\xi}(\kappa_1, \kappa_2)$ corresponds to the morphism $\mathcal{E} \colon \mbb{D}_{\overline{P}_{\opn{GL}_2}}(\zeta^*)^{\opn{nearly}} \to \mathscr{A}(\opn{GL}_2)^K$ via the inclusion in (\ref{NholEisLieAlgEqn}), the automorphic form $\mathcal{E}(v_{\zeta}^*)$ is given by
    \[
    \mathcal{E}(v_{\zeta}^*)(g) = |\!|\opn{det}g|\!|^{\tfrac{1+a+b}{2}-c} \widehat{\rho}(\opn{det}g)^{-1} E^{\Phi}(g; \widehat{\chi}, \tfrac{b-a+1}{2}), \quad \quad g \in \opn{GL}_2(\mbb{A}),
    \]
    where $\chi \defeq \chi^{(p)}\rho^{-2}$ and $E^{\Phi}(-; \widehat{\chi}, s)$ is the Eisenstein series in \cite[\S 9.2]{LPSZ} with respect to the Schwartz function $\Phi = \Phi^{(a+b+1)}_{\infty}\Phi^{(p)} \Phi_{p, \rho^{-1}, \rho}$ (with notation as in \cite[Definition 7.5, \S 8.5.1]{LPSZ}).

\begin{remark}
    As explained in \cite[Theorem 7.6]{LPSZ}, the family of Eisenstein series $\mathcal{E}_{\xi}^{\Phi^{(p)}}(\kappa_1, \kappa_2)$ is zero on the component of $\opn{Spec}A$ where $\kappa_1(-1)\kappa_2(-1) \neq -\chi^{(p)}(-1)$.
\end{remark}

\subsection{Coleman families} \label{ColemanFamiliesForBothSection}

In this section, we introduce the Coleman families for $G$ and $\opn{GL}_2$ that will provide the input into the $p$-adic $L$-functions. Our definition differs slightly from the literature since we do not want to fix a ``base-point'' for the family, however we explain in Examples \ref{ExampleOfCFForGL2} and \ref{ExampleOfCFForGSP4} that any good $p$-stabilisation has a Coleman family passing through it (in our sense). 

\subsubsection{Coleman families for \texorpdfstring{$\opn{GL}_2$}{GL(2)}} \label{CFFORGl2SSSEC}

Let $\mathcal{W}(\mbb{Z}_p^{\times})$ denote the adic space over $\mbb{Q}_p$ whose $S$-points parameterise continuous characters $\mbb{Z}_p^{\times} \to \mathcal{O}(S)^{\times}$. Let $N \geq 1$ be an integer prime to $p$, and $L/\mbb{Q}_p$ a finite extension containing $\mu_N$. Let $\mathscr{E}_{\opn{GL}_2} = \mathscr{E}_{\opn{GL}_2}(N)$ denote the Coleman--Mazur--Buzzard cuspidal eigencurve over $\opn{Spa}(L, \mathcal{O}_L)$ of tame level $\Gamma_1(N)$. We normalise the weight map $w \colon \mathscr{E}_{\opn{GL}_2} \to \mathcal{W}(\mbb{Z}_p^{\times})$ such that the weight of a point $x \in \mathscr{E}_{\opn{GL}_2}$ is given by $w(x) + 1$.

We consider the following special locus of points in this eigencurve.

\begin{notation}
    Let $\chi$ be a Dirichlet character of conductor dividing $N$. We let $\mathscr{E}_{\opn{GL}_2}^{\opn{good}, \chi} \subset \mathscr{E}_{\opn{GL}_2}(\mbb{C}_p)$ denote the subset of points corresponding to good $p$-stabilised automorphic representations $\tilde{\sigma}$ of $\opn{GL}_2(\mbb{A})$ of weight $1+t$, level $N$, and character $\chi$, for some $t \geq 0$. We note that the weight map $w$ is \'{e}tale at any point in $\mathscr{E}_{\opn{GL}_2}^{\opn{good}, \chi}$ (see \cite{bellpadic, BDetalewt1}).
\end{notation}

We now introduce the definition of Coleman families for $\opn{GL}_2$.

\begin{definition}
    Let $U = \opn{Spa}(\mathscr{O}_U, \mathscr{O}_U^+)$ be a reduced affinoid adic space over $\opn{Spa}(L, \mathcal{O}_L)$. A Coleman family $\underline{\sigma}$ over $U$, of tame level $N$ and character $\chi$, is a morphism $f \colon U \to \mathscr{E}_{\opn{GL}_2}$ over $\opn{Spa}(L, \mathcal{O}_L)$ such that $\Upsilon(\underline{\sigma}) \defeq f^{-1}(\mathscr{E}_{\opn{GL}_2}^{\opn{good}, \chi})$ is Zariski dense in $U$.
\end{definition}

\begin{example} \label{ExampleOfCFForGL2}
    Suppose that $\tilde{\sigma}_t$ is a good $p$-stabilised automorphic representation of $\opn{GL}_2(\mbb{A})$ of weight $1+t$, level $N$, and character $\chi$, for some $t \geq 0$. After possibly enlarging $L$, let $x_t \in \mathscr{E}_{\opn{GL}_2}(L)$ denote the corresponding point on the eigencurve. Then there exists a sufficiently small open neighbourhood $U \subset \mathcal{W}(\mbb{Z}_p^{\times})$ of $w(x_t) = t$ and a Coleman family $\underline{\sigma}$ over $U$ (of tame level $N$ and character $\chi$) passing through $x_t$, i.e., one has $f(t) = x_t$, where $f \colon U \to \mathscr{E}_{\opn{GL}_2}$ denotes the Coleman family. Indeed, this follows from the fact that $w$ is \'{e}tale at the point $x_t$.
\end{example}

To any Coleman family $\underline{\sigma}$ as above, one can associate a family of cohomology classes. More precisely, let $T$ be a set of rational primes which do not divide $Np$, and let $K_T \subset \opn{GL}_2(\mbb{Z}_T)$ be a normal compact open subgroup. For $\ell^{n_{\ell}} \mid\mid N$ with $n_{\ell} \geq 1$, we let $K_{1}(\ell^{n_{\ell}}) \subset \opn{GL}_2(\mbb{Z}_{\ell})$ denote the subgroup of matrices $\left( \begin{smallmatrix} a & b \\ c & d \end{smallmatrix} \right)$ satisfying $c \equiv 0$ and $d \equiv 1$ modulo $\ell^{n_{\ell}}\mbb{Z}_{\ell}$. We set
\[
K^p \defeq K_T \cdot \prod_{\substack{v \nmid Np \\ v \notin T}} \opn{GL}_2(\mbb{Z}_v) \cdot \prod_{\ell^{n_{\ell}} \mid\mid N} K_{1}(\ell^{n_{\ell}}) \quad \subset \quad \opn{GL}_2(\mbb{A}_f^p)
\]
and assume that $K_T$ is chosen in such a way that $K^p$ is a neat compact open subgroup. Set $K^{\opn{GL}_2}_{\beta} = K^p K^{\opn{GL}_2}_{\opn{Iw}}(p^{\beta})$. Then, associated with $f \colon U \to \mathscr{E}_{\opn{GL}_2}$, there exists a cohomology class
\[
\omega_{\underline{\sigma}} \in \opn{H}^0\left( K^{\opn{GL}_2}_{\beta}, \zeta_U \right)^{(\dagger, n\opn{-an}, +),\opn{GL}_2(\mbb{Z}_T)/K_T} \quad \quad (n \gg 0)
\]
which is an eigenvector for the action of (normalised) Hecke operators at $p$, the Hecke operators $U_{\ell}$ for $\ell | N$, and the Hecke operators $T_{\ell}$ for $\ell \nmid Np$ with eigencharacter determined by the morphism $f$. Here $\zeta_U = (-1-\kappa_U; 0)$ where $\kappa_U$ denotes the pullback of the universal character under the composition $w \circ f \colon U \to \mathcal{W}(\mbb{Z}_p^{\times})$. This cohomology class satisfies the following specialisation property: for any $x \in \Upsilon(\underline{\sigma})$, the specialisation of $\omega_{\underline{\sigma}}$ at $x$ is the restriction of a classical cohomology class 
\[
\omega_{\underline{\sigma}_x} \in \opn{H}^0\left( K^{\opn{GL}_2}_{\beta}, \zeta_x \right)^{\opn{GL}_2(\mbb{Z}_T)/K_T} = \opn{H}^0\left( \mathcal{X}_{\opn{GL}_2}(K^{\opn{GL}_2}_{\beta}), [V_{T_{\opn{GL}_2}}(\zeta_x)] \right)^{\opn{GL}_2(\mbb{Z}_T)/K_T}, \quad \zeta_x = (-1-w(x); 0),
\]
which we can view as a morphism 
\[
\omega_{\underline{\sigma}_x} \in \opn{Hom}_{(\overline{\ide{p}}_{\opn{GL}_2}, K^+_{\opn{GL}_2, \infty})}\left( V_{T_{\opn{GL}_2}}(\zeta_x)^*, \mathscr{A}(\opn{GL}_2)^{K^{\opn{GL}_2}_{\beta}} \right) 
\]
via rigid GAGA, the identification in Proposition \ref{HarrisSuProposition}, and the identification $\mbb{C} \cong \Qpb$. Moreover, the automorphic form $\omega_{\underline{\sigma}_x}(1)$ is equal to the good $p$-stabilisation in the good $p$-stabilised automorphic representation $(\underline{\sigma}_x, \alpha_x)$ corresponding to the point $x$.

\begin{remark}
    In the definition of a Coleman family, one often assumes that $f \colon U \to \mathscr{E}_{\opn{GL}_2}$ is an open immersion; for maximal flexibility, we allow more general morphisms. In particular, we allow the degenerate case of $f \colon \opn{Spa}(\mbb{C}_p, \mathcal{O}_{\mbb{C}_p}) \to \mathscr{E}_{\opn{GL}_2}$ corresponding to a point in $\mathscr{E}_{\opn{GL}_2}^{\opn{good}, \chi}$. 
\end{remark}

\subsubsection{Coleman families for \texorpdfstring{$\opn{GSp}_4$}{GSp(4)}} \label{CFsForGSP4SSSEC}

Let $\mathcal{W}((\mbb{Z}_p^{\times})^{\oplus 2})$ denote the adic space over $\mbb{Q}_p$ whose $S$-points parameterise continuous characters $(\mbb{Z}_p^{\times})^{\oplus 2} \to \mathcal{O}(S)^{\times}$. Let $(N, M)$ be a pair of positive integers prime to $p$ such that $M^2 | N$, and let $L/\mbb{Q}_p$ be a finite extension. Let $T$ be a finite set of primes not dividing $Np$, and let $K_T \subset G(\mbb{Z}_T)$ be a normal compact open subgroup. We set
\[
K^p \defeq K_T \cdot \prod_{\substack{\ell \nmid Np \\ \ell \not\in T}} G(\mbb{Z}_{\ell}) \cdot \prod_{\ell | N} K(N, M)_{\ell} \quad \subset \quad G(\mbb{A}_f^p)
\]
where $K(N, M)_{\ell} \subset G(\mbb{Q}_{\ell})$ denotes the $\ell$-component of the quasi-paramodular subgroup of level $(N, M)$. We assume that $K^p$ is neat. Let $S$ denote the set of primes dividing $N$.

We let $\mathscr{E}_{G} = \mathscr{E}_G(N, M)$ denote the cuspidal eigenvariety\footnote{Obtained from considering only the ``$-$'' versions of the cohomology groups of overconvergent forms.} constructed in \cite[\S 6.9]{BoxerPilloni} of tame quasi-paramodular level $K(N, M)$ -- in particular, there is a weight map $w \colon \mathscr{E}_G \to \mathcal{W}((\mbb{Z}_p^{\times})^{\oplus 2})$ and morphisms $f \colon U \to \mathscr{E}_G$ out of affinoid adic spaces $U = \opn{Spa}(\mathscr{O}_U, \mathscr{O}_U^+)$ correspond to finite-slope Hecke eigensystems $(\lambda^S, \lambda_p) \colon \mathcal{H}_{\mathscr{O}_U}^S \otimes \mathscr{O}_U[T^{G,-}] \to \mathscr{O}_U$ appearing in the cohomology 
\begin{equation} \label{BigGradedCohEqn}
\bigoplus_{\substack{w \in {^MW_G} \\ k \in \mbb{Z}}} \opn{H}^k_{w, \opn{an}}\left( K^p, \nu_U, \opn{cusp} \right)^{-, \leq h, G(\mbb{Z}_T)/K_T} 
\end{equation}
for some $h \in \mbb{Q}$ for which there exist slope $\leq h$ decompositions with respect to $s_{p, \opn{B}} \in T^{G, --}$ (with notation as in \emph{loc.cit.}). Here $\nu_U = (r_{1, U}, r_{2, U}; -(r_{1, U}+r_{2, U}))$ where $(r_{1, U}, r_{2, U})$ denotes the pullback of the universal character under $w \circ f \colon U \to \mathcal{W}((\mbb{Z}_p^{\times})^{\oplus 2})$. We note that
\[
\opn{H}^i_{w_1, \opn{an}}\left( K^p, \nu_U, \opn{cusp} \right)^{-, \leq h} \cong \opn{H}^i(K^G_{\beta}, \kappa_U^*; \opn{cusp})^{(\dagger, n\opn{-an},-), \leq h}, \quad \quad (i \in \mbb{Z}),
\]
for any $\beta \geq 1$ and sufficiently large $n \geq 1$, where $\kappa_U = (r_{2,U}+2, -r_{1,U}; -r_{2,U}-1)$.

\begin{notation}
    Let $\chi$ be a Dirichlet character of conductor $M$. We let $\mathscr{E}_G^{\opn{good}, \chi} \subset \mathscr{E}_G(\mbb{C}_p)$ denote the subset of points corresponding to small slope good $p$-stabilised automorphic representations $\tilde{\pi} = (\pi, \Theta_{\pi, p})$ of $G(\mbb{A})$ of weight $\nu = (r_1, r_2; -(r_1+r_2))$, level $(N, M)$ and character $\chi$, for some $r_1 \geq r_2 \geq 0$. We note that the Hecke eigensystem corresponding such a point in $\mathscr{E}_{G}^{\opn{good}, \chi}$ is given by $(\lambda^S, \lambda_p)$ satisfying $\lambda^S = \Theta^S_{\pi}$ and $\lambda_p|_{T^{G, -}} = w_G^{\opn{max}} \cdot \Theta_{\pi, p}^+$. Let $I \subset \mathcal{H}^S_{\mbb{C}_p} \otimes \mbb{C}_p[T^{G, -}]$ denote the kernel of $\lambda^S \otimes \lambda_p$.. By the classicality and vanishing results of higher Coleman theory (\cite[Corollary 6.8.4, Theorem 5.12.3]{BoxerPilloni} supplemented with \cite[Proposition 2.7.2]{LZBK21} and \cite[Theorem 1.4.10]{HHTBoxerPilloni}), we see that
    \begin{equation} \label{LocalisedHCTcohgroupEqn}
    \opn{H}^i_{w, \opn{an}}\left( K^p, \nu, \opn{cusp} \right)^{-, \opn{fs}, G(\mbb{Z}_T)/K_T}_{I} = 0 \quad \text{ if } \quad i \neq 3-l(w) . 
    \end{equation}
    Furthermore, if $\pi$ is non-endoscopic and $i = 3-l(w)$, then the localised cohomology group in (\ref{LocalisedHCTcohgroupEqn}) is one-dimensional over $\mbb{C}_p$ for all $w \in {^MW_G}$. If $\pi$ is of Yoshida-type and $i=3-l(w)$, then the localised cohomology group in (\ref{LocalisedHCTcohgroupEqn}) is one-dimensional over $\mbb{C}_p$ for $l(w) = 1, 2$, and vanishes when $l(w) = 0,3$. In both cases, a Tor-spectral sequence argument shows that the weight map $w$ is \'{e}tale at points in $\mathscr{E}_G^{\opn{good}, \chi}$ (see \cite[\S 3.4]{LZBK21}).
\end{notation}

We now introduce Coleman families for $G$.

\begin{definition} \label{ColemanFamilyFORGSp4}
    Let $U = \opn{Spa}(\mathscr{O}_U, \mathscr{O}_U^+)$ be a reduced finite-type affinoid adic space over $\opn{Spa}(L, \mathcal{O}_L)$. Let $|\!| \cdot |\!|$ denote the Banach norm on $\mathscr{O}_U$ satisfying $|\!|p|\!| = p^{-1}$. A Coleman family $\underline{\pi}$ over $U$, of tame level $(N, M)$ and character $\chi$, is a morphism $f \colon U \to \mathscr{E}_G$ over $\opn{Spa}(L, \mathcal{O}_L)$ such that:
    \begin{enumerate}
        \item $\Upsilon(\underline{\pi}) \defeq f^{-1}(\mathscr{E}_G^{\opn{good}, \chi})$ is Zariski dense in $U$.
        \item The cohomology groups $\opn{H}^k_{w, \opn{an}}\left( K^p, \nu_U, \opn{cusp} \right)^{-}$ admit slope $\leq h$ decompositions with respect to the action of $s_{p, \opn{B}}$ for some $h$ satisfying $|\!| \lambda_p(s_{p, \opn{B}}) |\!| \geq p^{-h}$. If $I_U$ denotes the kernel of $\lambda^S \otimes \lambda_p$, then 
        \[
        \opn{H}^k_{w, \opn{an}}\left( K^p, \nu_U, \opn{cusp} \right)^{-, \leq h, G(\mbb{Z}_T)/K_T}_{I_U} = 0
        \]
        if $k \neq 3- l(w)$, and either: $S^k(\underline{\pi}) \defeq \opn{H}^k_{w_{3-k}, \opn{an}}\left( K^p, \nu_U, \opn{cusp} \right)^{-, \leq h, G(\mbb{Z}_T)/K_T}_{I_U}$ is free of rank one over $\mathscr{O}_U$ for all $k=0, 1, 2, 3$ (general-type); or $S^0(\underline{\pi}) = S^3(\underline{\pi}) = 0$ and $S^k(\underline{\pi})$ is free of rank one over $\mathscr{O}_U$ for $k=1, 2$ (Yoshida-type).
        \item For $K^G_{\beta} = K^p K^G_{\opn{Iw}}(p^{\beta})$ with $K^p$ as above, $\kappa_G = (r_{2,U}+2, -r_{1,U}; -r_{2,U}-1)$, and $n \geq 1$ sufficiently large, the cohomology groups in (\ref{NearlyH2toOCH2surjectiveEqn}) admit slope $\leq h$ decompositions with respect to $s_{p, \opn{B}}$ and the map is surjective on slope $\leq h$ parts.
    \end{enumerate}
\end{definition}

\begin{example} \label{ExampleOfCFForGSP4}
    Suppose that $\tilde{\pi}$ is a good small slope $p$-stabilised automorphic representation of $G(\mbb{A})$ of weight $\nu = (r_1, r_2; -(r_1+r_2))$, tame level $(N, M)$ and character $\chi$. After possibly enlarging $L$, let $x \in \mathscr{E}_G^{\opn{good}, \chi}$ denote the corresponding point of the eigenvariety. Then since the weight map is \'{e}tale at this point, there exists a Coleman family $\underline{\pi}$ over a sufficiently small open neighbourhood $U$ of $w(x)$ in $\mathcal{W}((\mbb{Z}_p^{\times})^{\oplus 2})$ passing through $\tilde{\pi}$ (one can always shrink $U$ so that properties (2) and (3) of Definition \ref{ColemanFamilyFORGSp4} are satisfied). The degenerate case of the point $\opn{Spa}(\mbb{C}_p, \mathcal{O}_{\mbb{C}_p}) \to \mathscr{E}_G$ corresponding to $x$ is also included in our definition of a Coleman family. 
\end{example}

To any Coleman family $\underline{\pi}$ as above, we can choose $\eta_{\underline{\pi}, \beta}$ to be a $\mathscr{O}_U$-basis of 
\[
\opn{H}^2\left( K^G_{\beta}, \kappa_G^*; \opn{cusp} \right)^{(\dagger, n\opn{-an}, -), \leq h, G(\mbb{Z}_T)/K_T}_{I_U} \cong S^2(\underline{\pi})
\]
which is independent of the choice of $n$, and $\opn{Tr}_{G, \beta}(\eta_{\underline{\pi}, \beta+1}) = \eta_{\underline{\pi}, \beta}$ (where $\opn{Tr}_{G, \beta}$ denotes the unnormalised trace map). Furthermore, if $x \in \Upsilon(\underline{\pi})$ and $(\underline{\pi}_x, \Theta_{\underline{\pi}_x, p})$ denotes the corresponding small slope good $p$-stabilised automorphic representation, then the specialisation of $\eta_{\underline{\pi}, \beta}$ at $x$ is (up to a non-zero scalar multiple) equal to the class $\eta_{\underline{\pi}_x, \beta}$ from \S \ref{CaseALfunctionAndPeriodSSSec} via the classicality results of higher Coleman theory.

\subsection{Interpolation regions}

We now introduce the interpolation regions for the $p$-adic $L$-functions.

\subsubsection{} \label{InterpolationRegionCaseASSSec}

Suppose we are in the setting of Case A (\S \ref{CaseASubSubsection}). Let $\underline{\pi}$ be a Coleman family for $G$ over $U_0$ of tame level $(N_0, M_0)$ and character $\chi_0$. Let $\underline{\sigma}$ be a Coleman family for $\opn{GL}_2$ over $U_2$ of tame level $N_2$ and character $\chi_2$. We let 
\[
\Upsilon^{\opn{crit}} = \left\{ ( x, j + \chi, y) \in \Upsilon(\underline{\pi}) \times \mathcal{W}(\mbb{Z}_p^{\times}) \times \Upsilon(\underline{\sigma}) : \begin{array}{c}  0 \leq t_2 \leq r_1 - r_2, \; 0 \leq j \leq r_1 - r_2 - t_2 \\ w(x) = (r_1, r_2), \; w(y) = t_2 \\ \chi \text{ finite-order } \end{array} \right\} .
\]
This region is precisely the range of specialisations where the $L$-function has critical values, i.e., if $\underline{\pi}_x$ and $\underline{\sigma}_y$ denote the specialisations of the Coleman families and $\underline{\Pi}_x$ and $\underline{\Sigma}_y$ their unitary normalisations, then $L(\underline{\Pi}_x \times \underline{\Sigma}_y, \chi^{-1}, j+\tfrac{1-r_1+r_2+t_2}{2})$ is a critical $L$-value for $(x, j+\chi, y) \in \Upsilon^{\opn{crit}}$. Note that if $\underline{\pi}$ and $\underline{\sigma}$ are Coleman families as in Examples \ref{ColemanFamilyFORGSp4} and \ref{ExampleOfCFForGL2} respectively (over open subsets of weight space), then this critical interpolation region $\Upsilon^{\opn{crit}}$ is Zariski dense.

\begin{remark}
    The inequalities in this definition describe ``region (f)'' in \cite[Figure 2]{LZpadiclfunctionsdiagonalcycles}. 
\end{remark}

\subsubsection{} \label{UpsilonCritForCaseBSSSec}

We also consider the interpolation region (f) in Case B (\S \ref{CaseBSubSubSection}). More precisely, let $\underline{\pi}$ be a Coleman family for $G$ over $U_0$ of tame level $(N_0, M_0)$ and character $\chi_0$. For $i=1, 2$, let $\underline{\sigma}_i$ be Coleman families for $\opn{GL}_2$ over $U_i$ of tame level $N_i$ and character $\chi_i$. We consider the region:
\[
\Upsilon^{\opn{crit}} = \left\{ ( x, y_1, y_2) \in \Upsilon(\underline{\pi}) \times \Upsilon(\underline{\sigma}_1) \times \Upsilon(\underline{\sigma}_2) : \begin{array}{c}  0 \leq d_2 \leq r_1 - r_2, \; 0 \leq d_1 \leq r_1 - r_2 - d_2 \\ w(x) = (r_1, r_2), \; w(y_1) = d_1, \; w(y_2) = d_2 \end{array} \right\} .
\]
In this region, we will consider the automorphic period
\[
\mathscr{P}^{\opn{sph}}(x, y_1, y_2; \gamma_S) \defeq \mathscr{P}( \gamma_{0, S} \cdot \underline{\varphi}^{\opn{sph}}_x, \gamma_{1, S} \cdot \delta^{(r_1-r_2-d_1-d_2)/2}\underline{\lambda}_{1, y_1}^{\opn{sph}}, \underline{\lambda}_{2, y_2}^{\opn{sph}} ), \quad \quad (x, y_1, y_2) \in \Upsilon^{\opn{crit}}
\]
which is a substitute for the central critical $L$-value $L(\underline{\Pi}_x \times \underline{\Sigma}_{1, y_1} \times \underline{\Sigma}_{2, y_2}, 1/2)$. Here the notation is as in \S \ref{CaseBLfunctionAndPeriodSSec} (i.e., $\underline{\varphi}_x^{\opn{sph}}$ is the new vector in $\underline{\Pi}_x$ etc.).

\subsection{\texorpdfstring{$p$}{p}-adic \texorpdfstring{$L$}{L}-functions for \texorpdfstring{$\opn{GSp}_4 \times \opn{GL}_2$}{GSp(4) x GL(2)}}

We now construct the $p$-adic $L$-function in Coleman families. We place ourselves in Case A (\S \ref{CaseASubSubsection}) and let $\underline{\pi}$ and $\underline{\sigma}$ be Coleman families for $G$ and $\opn{GL}_2$ respectively, as in \S \ref{InterpolationRegionCaseASSSec}.

\begin{theorem} \label{FourVarCaseATheorem}
    There exists a locally analytic distribution $\mathscr{L}_p \in \mathcal{O}(U_0 \times \mathcal{W}(\mbb{Z}_p^{\times}) \times U_2)$ such that: for all $( x, j + \chi, y) \in \Upsilon^{\opn{crit}}$ and $\beta \geq \opn{max}(2\opn{max}(1, c(\chi)), \opn{max}(1, c(\chi))+1)$, there exists a non-zero scalar $c_{x} \in \Qpb^{\times}$ (independent of $j$, $\chi$, $\beta$, and $\gamma_S$) such that 
    \[
    \mathscr{L}_p(x, j+\chi, y) = c_{x} \cdot \mathcal{P}_{\mbb{C}}^{\opn{alg}}(\gamma_{0, S} \cdot \tilde{\eta}_{\underline{\pi}_x, \beta}, \omega_{\opn{Eis}, j, \beta}, \omega_{\underline{\sigma}_y, \beta} )
    \]
    where the inputs in the right-hand side are as in \S \ref{CaseALfunctionAndPeriodSSSec}, and we compare both sides via the identification $\mbb{C} \cong \Qpb$.
\end{theorem}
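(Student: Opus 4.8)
\textbf{Proof strategy for Theorem \ref{FourVarCaseATheorem}.} The plan is to build $\mathscr{L}_p$ by interpolating the $p$-adic trilinear pairing $\mathcal{P}^{\ddagger, \opn{la}}_{\lambda}$ over the product of Coleman families, feeding in the family cohomology class $\eta_{\underline{\pi}, \beta}$ on the $\opn{GSp}_4$ side, the family of Eisenstein series $\mathcal{E}^{\Phi^{(p)}}_{\xi_1}(\kappa_1, \kappa_2)$ on one $\opn{GL}_2$ side, and the Coleman family class $\omega_{\underline{\sigma}}$ on the other. First I would fix the affinoid $A = \mathscr{O}_{U_0} \hatot \mathscr{O}(\mathcal{W}(\mbb{Z}_p^{\times})) \hatot \mathscr{O}_{U_2}$ and set up the relevant $n$-analytic weights: $\kappa_G = (r_{2, U_0}+2, -r_{1, U_0}; -r_{2, U_0}-1)$ coming from $\underline{\pi}$, $\lambda = r_{1, U_0}+1 - t_{1}$ with $t_1 = r_{1, U_0} - r_{2, U_0} - w(y)$ (using the universal character on $U_2$), so that $\kappa_H = w^{\opn{max}}_{M_G}\kappa_G + (\lambda, -\lambda; 0)$ and the induced pair $(\zeta_{H_1}, \zeta_{H_2})$ is exactly what the Eisenstein family and $\underline{\sigma}$ respectively have weights. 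Here $\zeta_{H_2} = (-1 - w(y); 0)$ matches $\omega_{\underline{\sigma}}$, and $\zeta_{H_1}$ matches $\mathcal{E}^{\Phi^{(p)}}_{\xi_1}(\cdots)$ with $\xi_1 = 1 - r_{2, U_0} - \xi_2$ and $\xi_2 = 0$; the Eisenstein family lands in $\mathscr{N}^{\dagger}_{H_1, \beta, \zeta_{H_1}}$ by Lemma \ref{TheEisSeriesIsNearlyOCLemma}. I would also choose $\beta \geq 1$ once and for all (the dependence on $\beta$ in the statement is harmless, since trace-compatibility lets us move between levels — see below).

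Next, the main construction: define
\[
\mathscr{L}_p \defeq \mathcal{P}^{\ddagger, \opn{la}}_{\lambda}\left( \eta_{\underline{\pi}, \beta}, \; \gamma_{1, S} \cdot \mathcal{E}^{\Phi^{(p)}}_{\xi_1}(\kappa_1, \kappa_2), \; \gamma_{0, S} \text{-twist of } \omega_{\underline{\sigma}} \right) \in A,
\]
after twisting all inputs by the tame test data $\gamma_S$ exactly as in \S \ref{CaseALfunctionAndPeriodSSSec} (the $\gamma_{0, S}$-translation on the $\opn{GSp}_4$ side, and the conditions on $\widehat{K}_S$ from \S \ref{CaseASubSubsection} guaranteeing the twisted Eisenstein and $\underline{\sigma}$ classes descend to level $\widehat{K}^{H_i}_{\beta}$). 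This $\mathscr{L}_p$ is a well-defined element of $\mathcal{O}(U_0 \times \mathcal{W}(\mbb{Z}_p^{\times}) \times U_2)$ by functoriality of $\mathcal{P}^{\ddagger, \opn{la}}_{\lambda}$ in $(A, A^+)$ (the last Remark before Lemma \ref{DDaggerNanPeriodEqualsDDaggerPerLem}) and the fact that $\eta_{\underline{\pi}, \beta}$ lives in a rank-one $\mathscr{O}_{U_0}$-module $S^2(\underline{\pi})$. The verification of the interpolation formula at a classical point $(x, j+\chi, y) \in \Upsilon^{\opn{crit}}$ then proceeds by specialisation: at such a point the weights become classical, $\eta_{\underline{\pi}, \beta}$ specialises (up to the nonzero scalar $c_x$, absorbing the comparison between the Coleman-family basis and the normalised class $\eta_{\underline{\pi}_x, \beta}$) to $\eta_{\underline{\pi}_x, \beta}$, the Eisenstein family specialises to $\omega_{\opn{Eis}, j, \beta}$ by \S \ref{ClassicalSpecOfEisSeriesSSec}, and $\omega_{\underline{\sigma}}$ specialises to $\omega_{\underline{\sigma}_y, \beta}$. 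Chaining Lemma \ref{DDaggerNanPeriodEqualsDDaggerPerLem} (passing from $\mathcal{P}^{\ddagger, n\opn{-an}}_{\lambda}$ to $\mathcal{P}^{\ddagger}_L$), Lemma \ref{PlambdaIsIndepOfBetaTraceLemma} (trace-independence of $\beta$, allowing us to take $\beta$ large enough that the Eisenstein class is classical-nearly-holomorphic), and Lemma \ref{DDaggerPerEqualsAlgPerLem} (identifying $\mathcal{P}^{\ddagger}_L$ with $\mathcal{P}^{\opn{alg}}_L$ on small-slope parts) gives
\[
\mathscr{L}_p(x, j+\chi, y) = c_x \cdot \mathcal{P}^{\opn{alg}}_{\mbb{C}}\left(\gamma_{0, S} \cdot \tilde{\eta}_{\underline{\pi}_x, \beta}, \; \omega_{\opn{Eis}, j, \beta}, \; \omega_{\underline{\sigma}_y, \beta}\right),
\]
which is the claim.

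\textbf{Remaining points and the main obstacle.} Two technical checks need care. First, for Lemma \ref{DDaggerPerEqualsAlgPerLem} to apply one needs the specialised class $\eta_{\underline{\pi}_x, \beta}$ to lie in the small-slope $\opn{ss}^M_{w_1}(\kappa_G^*)$ part; this is exactly guaranteed by the Remark after the definition of small slope in \S \ref{AutoRepsForGsp4Section} (small slope of $\Theta_{\pi, p}$ implies $w_G^{\opn{max}} \cdot \Theta_{\pi, p}$ satisfies both $\opn{ss}^M_{w_1}(\kappa_G^*)$ and $\opn{ss}_{M, w_1}(\kappa_G^*)$), together with the fact that points of $\mathscr{E}_G^{\opn{good}, \chi_0}$ are small slope by definition. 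Second, and this is the main obstacle, one must ensure that $\eta_{\underline{\pi}, \beta}$ is in fact a \emph{nearly} overconvergent class — i.e. that it lifts from $\opn{H}^2( K^G_{\beta}, \kappa_G^*; \opn{cusp})^{(\dagger, n\opn{-an}, -)}$ to $\opn{H}^2( K^G_{\beta}, \kappa_G^*; \opn{cusp})^{(\ddagger, n\opn{-an}, -)}$ — so that it can be paired via $\mathcal{P}^{\ddagger}_\lambda$ rather than only $\mathcal{P}^{\dagger}_\lambda$. This is precisely the content of condition (3) in Definition \ref{ColemanFamilyFORGSp4} (the surjectivity of (\ref{NearlyH2toOCH2surjectiveEqn}) on slope $\leq h$ parts, which in turn rests on the last Proposition of \S \ref{ResultsFromHCTSSsec}): one picks any slope-$\leq h$ preimage $\tilde\eta_{\underline\pi, \beta}$, and since the relevant cohomology is free of rank one over $\mathscr{O}_{U_0}$ the choice is unique up to an $\mathscr{O}_{U_0}^\times$-scalar, which gets absorbed into $c_x$ and does not affect the interpolation formula (which is itself only asserted up to the scalar $c_x$). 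One also needs to track that all three input classes have been arranged to descend to the common tame level $\widehat{K}^H_\beta$, and that the $\gamma_S$-twisting is compatible with the maps between torsors of \S \ref{MapsBetweenTorsorsSubSubSec}; these are routine given the hypotheses on $\widehat{K}_S$ in \S \ref{CaseASubSubsection}. Finally, trace-compatibility $\opn{Tr}_{G, \beta}(\eta_{\underline{\pi}, \beta+1}) = \eta_{\underline{\pi}, \beta}$ (chosen as part of the Coleman family data) combined with Lemma \ref{PlambdaIsIndepOfBetaTraceLemma} shows $\mathscr{L}_p$ is genuinely independent of $\beta$, so the statement's $\beta$-dependence is only in the auxiliary classical objects $\omega_{\opn{Eis}, j, \beta}$, $\tilde\eta_{\underline\pi_x, \beta}$, $\omega_{\underline\sigma_y, \beta}$ on the right-hand side.
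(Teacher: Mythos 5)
Your proposal follows essentially the same route as the paper: lift $\eta_{\underline{\pi},\beta}$ to a nearly overconvergent, trace-compatible class via Definition \ref{ColemanFamilyFORGSp4}(3), pair it with the nearly overconvergent Eisenstein family and $\omega_{\underline{\sigma}}$ under $\mathcal{P}^{\ddagger,n\opn{-an}}_{\lambda}$, and deduce the interpolation from Lemmas \ref{DDaggerNanPeriodEqualsDDaggerPerLem}, \ref{PlambdaIsIndepOfBetaTraceLemma} and \ref{DDaggerPerEqualsAlgPerLem}; the paper merely makes explicit the exhaustion of $\mathcal{W}(\mbb{Z}_p^{\times})$ by affinoids $U_1^{(i)}$ on which the universal character is $i$-analytic before passing to the inverse limit, which your appeal to $\mathcal{P}^{\ddagger,\opn{la}}_{\lambda}$ implicitly requires. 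The only slip is cosmetic: in Case A the tame datum is $(\gamma_{0,S},\Phi_S)$, so there is no $\gamma_{1,S}$-translation of the Eisenstein class and the $\gamma_{0,S}$-twist acts on the $\opn{GSp}_4$ input, as your surrounding prose correctly indicates.
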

\begin{proof}
    Let $\eta_{\underline{\pi}, \beta}$ and $\omega_{\underline{\sigma}}$ be the cohomology classes from \S \ref{CFsForGSP4SSSEC} and \S \ref{CFFORGl2SSSEC}. By property (3) in Definition \ref{ColemanFamilyFORGSp4} and the fact that $\opn{Tr}_{G, \beta}$ is an isomorphism of finite-slope parts, we can lift $\eta_{\underline{\pi}, \beta}$ to classes
    \[
    \tilde{\eta}_{\underline{\pi}, \beta} \in \opn{H}^2\left( K^G_{\beta}, \kappa_G^*; \opn{cusp} \right)^{(\ddagger, n\opn{-an}, -), \leq h, G(\mbb{Z}_T)/K_T}_{I_U}
    \]
    which satisfy $\opn{Tr}_{G, \beta}(\tilde{\eta}_{\underline{\pi}, \beta+1}) = \tilde{\eta}_{\underline{\pi}, \beta}$. Let $x \in \Upsilon{\underline{\pi}}$. We have a commutative diagram:

    \vspace{0.5cm}
    
    \adjustbox{scale=0.8, center}{%
\begin{tikzcd}
{\opn{H}^2\left( K^G_{\beta}, \kappa_G^*; \opn{cusp} \right)^{(\ddagger, n\opn{-an}, -), \leq h, G(\mbb{Z}_T)/K_T}_{I_U}} \arrow[r, "\opn{sp}_x"] \arrow[d] & {\opn{H}^2\left( K^G_{\beta}, \kappa_x^*; \opn{cusp} \right)^{(\ddagger, -), \leq h, G(\mbb{Z}_T)/K_T}_{I_U}} \arrow[d] \arrow[r] & {\opn{H}^2\left( K^G_{\beta}, \kappa_x^*; \opn{cusp} \right)^{\opn{nearly}, \leq h, G(\mbb{Z}_T)/K_T}_{I_U}} \arrow[d, "f"] \\
{\opn{H}^2\left( K^G_{\beta}, \kappa_G^*; \opn{cusp} \right)^{(\dagger, n\opn{-an}, -), \leq h, G(\mbb{Z}_T)/K_T}_{I_U}} \arrow[r, "\opn{sp}_x"]            & {\opn{H}^2\left( K^G_{\beta}, \kappa_x^*; \opn{cusp} \right)^{(\dagger, -), \leq h, G(\mbb{Z}_T)/K_T}_{I_U}} \arrow[r]            & {\opn{H}^2\left( K^G_{\beta}, \kappa_x^*; \opn{cusp} \right)^{\leq h, G(\mbb{Z}_T)/K_T}_{I_U}}                             
\end{tikzcd}
    }

    \vspace{0.5cm}

    where $\opn{sp}_x$ denotes specialisation at $x$ and projecting to the overconvergent cohomology of the dual of the (nearly) algebraic representation of highest weight $\kappa_x = (r_2+2, -r_1; -r_2-1)$ with $w(x) = (r_1, r_2)$, the left and centre vertical maps are the natural ones, the right-hand horizontal maps denote the classicality morphisms from higher Coleman theory and Proposition \ref{NearlyClassicalityProp}, and $f = (f_1^{\vee})^{-1} \circ f_2^{\vee}$ with notation as in \S \ref{CaseALfunctionAndPeriodSSSec}. This implies that the image of $\tilde{\eta}_{\underline{\pi}, \beta}$ in the right-hand top corner of the above diagram is (up to a scalar $c_x \in \Qpb^{\times}$ independent of $\beta$) equal to the unique lift of $\eta_{\underline{\pi}, \beta}$ under $f$.
    
    Let $U_1^{(0)} \subset U_1^{(1)} \subset \cdots \subset \mathcal{W}(\mbb{Z}_p^{\times})$ be an increasing cover by quasi-compact open affinoid subspaces $U_1^{(i)} = \opn{Spa}(\mathscr{O}_{U_1^{(i)}}, \mathscr{O}_{U_1^{(i)}}^+)$ such that the universal character $\kappa_i$ is $i$-analytic. Set $\lambda = r_{2, U_0} + t_{U_2} + 1$ and $\xi = 1-r_{2, U_0}$, where $t_{U_2}$ denotes the universal character over $U_2$. Let $n \gg i$. We define
    \[
    \Xi_i \defeq \mathcal{P}^{\ddagger, n\opn{-an}}_{\lambda, \beta}\left( \gamma_{0, S} \cdot \tilde{\eta}_{\underline{\pi}, \beta}, \mathcal{E}_{\xi}^{\Phi^{(p)}}(r_{1, U_0} - r_{2, U_0} - t_{U_2} - \kappa_i, \kappa_i), \omega_{\underline{\sigma}} \right) \quad \in \quad \mathscr{O}_{U_0} \hatot \mathscr{O}_{U^{(i)}_1} \hatot \mathscr{O}_{U_2}
    \]
    which is independent of $n$ and $\beta$ (see Lemma \ref{PlambdaIsIndepOfBetaTraceLemma}). Here $\gamma_{0, S} \cdot \tilde{\eta}_{\underline{\pi}, \beta}$ denotes the image of the class $\tilde{\eta}_{\underline{\pi}, \beta}$ under the pullback map $\opn{H}^2(K^G_{\beta}, \kappa_G^*; \opn{cusp})^{(\ddagger, n\opn{-an}, -)} \to \opn{H}^2(\widehat{K}^G_{\beta}, \kappa_G^*; \opn{cusp})^{(\ddagger, n\opn{-an}, -)}$. Furthermore, these classes are compatible with changing $i$, so we can define 
    \[
    \mathscr{L}_p \defeq (\Xi_i)_{i \geq 0} \in \varprojlim_{i \geq 0} \mathscr{O}_{U_0} \hatot \mathscr{O}_{U^{(i)}_1} \hatot \mathscr{O}_{U_2} = \mathcal{O}(U_0 \times \mathcal{W}(\mbb{Z}_p^{\times}) \times U_2) .
    \]
    The interpolation property now follows from Lemma \ref{DDaggerNanPeriodEqualsDDaggerPerLem} and Lemma \ref{DDaggerPerEqualsAlgPerLem}.
\end{proof}

Combining this theorem with Proposition \ref{PropPAlgCEqualsCompletedLfunction} completes the proof of Theorem \ref{ThmAIntro} in the introduction.

\subsection{\texorpdfstring{$p$}{p}-adic \texorpdfstring{$L$}{L}-functions for \texorpdfstring{$\opn{GSp}_4 \times \opn{GL}_2 \times \opn{GL}_2$}{GSp(4) x GL(2) x GL(2)}}

Suppose we are in the setting of Case B (\S \ref{CaseBSubSubSection}) and let $\underline{\pi}$ and $\underline{\sigma}_i$ be as in \S \ref{UpsilonCritForCaseBSSSec}. Recall that we are assuming that $p > 2$.

\begin{theorem} \label{FourVarCaseBTheorem}
    There exists a rigid-analytic function $\mathscr{L}_p \in \mathcal{O}(U_0 \times U_1 \times U_2)$ such that: for all $(x, y_1, y_2) \in \Upsilon^{\opn{crit}}$ and $\beta \geq 2$, there exists a non-zero scalar $c_x \in \Qpb^{\times}$ (independent of $\beta$, $\gamma_S$, $\underline{\sigma}_1$, and $\underline{\sigma}_2$) such that
    \[
    \mathscr{L}_p(x, y_1, y_2) = c_x \cdot \mathcal{P}_{\mbb{C}}^{\opn{alg}}\left(\gamma_{0, S} \cdot \tilde{\eta}_{\underline{\pi}_x, \beta}, \gamma_{1, S} \cdot \omega^{[p], \opn{nearly}}_{\underline{\sigma}_{1, y_1}, \beta}, \omega^{[p]}_{\underline{\sigma}_{2, y_2}, \beta} \right)
    \]
    where the inputs in the right-hand side are as in \S \ref{CaseBLfunctionAndPeriodSSec}, and we compare both sides via the identification $\mbb{C} \cong \Qpb$.
\end{theorem}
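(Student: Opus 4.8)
The plan is to mimic the construction of the four-variable $p$-adic $L$-function for $\opn{GSp}_4 \times \opn{GL}_2$ in Theorem \ref{FourVarCaseATheorem}, but interpolating the triple-product period $\mathscr{P}^{\opn{sph}}$ rather than the Novodvorsky integral involving an Eisenstein series. Concretely, let $\eta_{\underline{\pi}, \beta}$ be the Coleman family of degree-$2$ coherent cohomology classes for $G$ from \S \ref{CFsForGSP4SSSEC}, and for $i = 1, 2$ let $\omega_{\underline{\sigma}_i}$ be the family of overconvergent modular forms from \S \ref{CFFORGl2SSSEC}, associated with the Coleman families $\underline{\sigma}_i$. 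Using property (3) of Definition \ref{ColemanFamilyFORGSp4} together with the fact that the trace maps $\opn{Tr}_{G, \beta}$ are isomorphisms on finite-slope parts, I would lift $\eta_{\underline{\pi}, \beta}$ to a trace-compatible system of \emph{nearly} overconvergent classes $\tilde{\eta}_{\underline{\pi}, \beta}$ in $\opn{H}^2(K^G_{\beta}, \kappa_G^*; \opn{cusp})^{(\ddagger, n\opn{-an}, -), \leq h}_{I_U}$, exactly as in the proof of Theorem \ref{FourVarCaseATheorem}. The key new input on the $\opn{GL}_2$-side is that we do \emph{not} feed in a family of Eisenstein series but instead the families $\omega^{[p]}_{\underline{\sigma}_i}$ of $p$-depleted forms, and on the first $\opn{GL}_2$-factor we apply an integer power of the $p$-adic Maass--Shimura operator (the $\star$-action of \cite{DiffOps} encoded through $\delta^{m}$ with $m = (r_{1, U_0} - r_{2, U_0} - d_1 - d_2)/2$) to land in the correct nearly overconvergent space $\mathscr{N}^{\dagger}_{H_1, \beta, \zeta_{H_1}}$.

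\textbf{Construction of $\mathscr{L}_p$.} I would then define $\mathscr{L}_p$ by applying the $p$-adic trilinear pairing $\mathcal{P}^{\ddagger, n\opn{-an}}_{\lambda, \beta}$ from \S \ref{padicWtNOCsssec} (with $\lambda$ and $\xi$ chosen so that the weights match Situation \ref{situation:Weights} after specialisation), evaluated on the triple $\left(\gamma_{0, S} \cdot \tilde{\eta}_{\underline{\pi}, \beta}, \; \gamma_{1, S} \cdot \opn{det}^{\kappa}\nabla^{m_U}\omega^{[p]}_{\underline{\sigma}_1}, \; \omega^{[p]}_{\underline{\sigma}_2}\right)$, where $m_U$ interpolates $(r_{1, U_0} - r_{2, U_0} - d_1 - d_2)/2$ over $\Upsilon^{\opn{crit}}$ and $\opn{det}^{\kappa}$ is the appropriate normalising twist from Definition \ref{DefOfDetTrivClass}. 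By Lemma \ref{PlambdaIsIndepOfBetaTraceLemma} this element of $\mathscr{O}_{U_0} \hatot \mathscr{O}_{U_1} \hatot \mathscr{O}_{U_2}$ is independent of $\beta$ (once $\beta \geq 2$, so that $p$-depletion kills the relevant Whittaker values) and of $n$; since the factors now live over honest affinoids $U_1, U_2$ rather than all of weight space, no inverse limit over an increasing cover is needed, and $\mathscr{L}_p$ is a genuine rigid-analytic function in $\mathcal{O}(U_0 \times U_1 \times U_2)$, as claimed. The interpolation formula at $(x, y_1, y_2) \in \Upsilon^{\opn{crit}}$ follows by specialising: Lemma \ref{DDaggerNanPeriodEqualsDDaggerPerLem} identifies the $p$-adic-weight pairing with the classical-weight nearly overconvergent pairing $\mathcal{P}^{\ddagger}_L$, Lemma \ref{DDaggerPerEqualsAlgPerLem} identifies that with $\mathcal{P}^{\opn{alg}}_L$ applied to the classical lift $\tilde{\eta}_{\underline{\pi}_x, \beta}$ (using the small-slope classicality of Proposition \ref{NearlyClassicalityProp}), and the scalar $c_x$ measures the discrepancy between the Coleman-family normalisation of $\eta_{\underline{\pi}, \beta}$ and the classical class $\eta_{\underline{\pi}_x, \beta}$ of \S \ref{CaseALfunctionAndPeriodSSSec}. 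One checks $c_x$ is independent of $\beta$, $\gamma_S$, $\underline{\sigma}_1$, $\underline{\sigma}_2$ because the first two are trace-compatible resp. pulled back, and the latter two only enter through the $\opn{GL}_2$-inputs which are separate tensor factors. Combining this with Proposition \ref{TripleProdPalgWithSpherPeriodProp} then yields Theorem \ref{ThmCCaseBIntro}.

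\textbf{Main obstacle.} I expect the genuine difficulty to be verifying that the nearly overconvergent lift $\tilde{\eta}_{\underline{\pi}, \beta}$ pairs correctly against $\opn{det}^{\kappa}\nabla^{m_U}\omega^{[p]}_{\underline{\sigma}_1}$ \emph{in families} — that is, checking that the power of $\nabla$ appearing in $\mathcal{P}^{\opn{alg}}_{\mbb{C}}$ (the archimedean Maass--Shimura operator $\delta^{m}$ in Proposition \ref{TripleProdPalgWithSpherPeriodProp}) is correctly interpolated by the $p$-adic $\nabla$ of \cite{DiffOps} on the entire affinoid $U_1$, and that the $\mathcal{F}_0$-splitting hypothesis of Proposition \ref{PAlgAutPeriodProposition} (i.e., that $\phi_{\eta}$ factors through $\opn{id} \otimes \opn{spl}_{M_G}$) propagates from the classical points to the family. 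This amounts to a careful bookkeeping of the filtration $\opn{Fil}_\bullet$ on $\mathscr{N}^{\dagger}_{H_1, \beta}$ and the compatibility of $\opn{br}^{n\opn{-an}}$ with Griffiths transversality, which is essentially the content already set up in Proposition \ref{BrNearlyBrNanCompatProposition} and Remark \ref{DecompOfDDaggerIntoFactorsRem}, but needs to be combined with the $p$-depletion argument of \cite{LoefflerUnramifiedZeta} (as used in Proposition \ref{TripleProdPalgWithSpherPeriodProp}) to ensure the local zeta integral at $p$ behaves uniformly. The remaining steps — freeness of the relevant eigenspaces over $\mathscr{O}_{U_0}$, Zariski density of $\Upsilon^{\opn{crit}}$, functoriality of $\mathcal{P}^{\ddagger, n\opn{-an}}$ in the affinoid — are routine given the machinery already in place.
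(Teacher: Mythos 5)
Your proposal follows essentially the same route as the paper: the proof of Theorem \ref{FourVarCaseBTheorem} is obtained verbatim from that of Theorem \ref{FourVarCaseATheorem} by replacing the Eisenstein family with $\opn{det}^{\kappa}\nabla^{m_U}\omega^{[p]}_{\underline{\sigma}_1, \beta}$ and replacing $\omega_{\underline{\sigma}_2,\beta}$ by its $p$-depletion, exactly as you describe. The one point you gloss over — and the only genuinely new ingredient beyond Case A — is the meaning of $\nabla^{m_U}$ when $m_U = (r_{1,U_0}-r_{2,U_0}-d_{U_1}-d_{U_2})/2$ is not an integer but a rigid-analytic function on $U_0\times U_1\times U_2$ (your phrase ``integer power of the Maass--Shimura operator'' is only correct after specialisation to $\Upsilon^{\opn{crit}}$). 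In the family one must define $\nabla^{m_U}\omega^{[p]}_{\underline{\sigma}_1,\beta} \defeq (\rho\, 1_{\mbb{Z}_p^{\times}})\star\omega_{\underline{\sigma}_1,\beta}$, where $\rho$ is a square root of the character $r_{1,U_0}-r_{2,U_0}-d_{U_1}-d_{U_2}$ extended by zero to $\mbb{Z}_p$; the existence of this square root is not automatic and uses the parity constraint coming from the assumption $\chi_0\chi_1\chi_2=1$ in \S \ref{CaseBSubSubSection}, which forces the exponent to specialise to an even integer on a Zariski dense set. Your ``main obstacle'' paragraph instead worries about the compatibility of the $p$-adic and archimedean Maass--Shimura operators and the $\mathcal{F}_0$-splitting hypothesis, but these are already discharged pointwise by Proposition \ref{TripleProdPalgWithSpherPeriodProp} (which is invoked only after the interpolation formula of the theorem is established); the family-level issue is precisely the square-root character above.
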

\begin{proof}
    This follows from exactly the same strategy as in Theorem \ref{FourVarCaseATheorem} replacing the family of Eisenstein series with the family of nearly overconvergent modular forms
    \[
    \omega^{[p],\opn{nearly}}_{\underline{\sigma}_1, \beta} \defeq \opn{det}^{(r_{1, U_0}+r_{2,U_0}-d_{U_1} - d_{U_2}-2)/2}\nabla^{(r_{1, U_0}-r_{2,U_0}-d_{U_1} - d_{U_2})/2}\omega_{\underline{\sigma}_1, \beta}^{[p]} \; \in \; \mathscr{N}^{\dagger}_{\opn{GL}_2} \hatot \mathscr{O}_{U_0 \times U_1 \times U_2}
    \]
    and replacing $\omega_{\underline{\sigma}_2, \beta}$ with its $p$-depleted version $\omega_{\underline{\sigma}_2, \beta}^{[p]}$. Here $d_{U_i}$ is the universal character of $U_i$ (determined by its map to one-dimensional weight space). More precisely, we have
    \[
    \omega_{\underline{\sigma}_2, \beta}^{[p]} = 1_{\mbb{Z}_p^{\times}} \star \omega_{\underline{\sigma}_2, \beta}, \quad \nabla^{(r_{1, U_0}-r_{2,U_0}-d_{U_1} - d_{U_2})/2}\omega_{\underline{\sigma}_1, \beta}^{[p]} = (\rho 1_{\mbb{Z}_p^{\times}}) \star \omega_{\underline{\sigma}_1, \beta}
    \]
    where $\rho \colon \mbb{Z}_p^{\times} \to \mathscr{O}_{U_0 \times U_1 \times U_2}^{\times}$ is a square-root of $r_{1, U_0}-r_{2,U_0}-d_{U_1} - d_{U_2}$ (which exists because, by assumption, there exists a Zariski dense subset of points where this quantity specialises to an even integer), $\star$ denotes the action of $C^{\opn{la}}(\mbb{Z}_p, \mathscr{O}_{U_0 \times U_1 \times U_2})$ on nearly overconvergent modular forms in \cite{DiffOps}, and $1_{\mbb{Z}_p^{\times}} \in C^{\opn{la}}(\mbb{Z}_p, \mathscr{O}_{U_0 \times U_1 \times U_2})$ denotes the indicator function of $\mbb{Z}_p^{\times}$. By abuse of notation, $\rho 1_{\mbb{Z}_p^{\times}}$ denotes the locally analytic function on $\mbb{Z}_p$ obtained by extending $\rho$ by zero.
\end{proof}

Combining this theorem with Proposition \ref{TripleProdPalgWithSpherPeriodProp}, we therefore obtain a locally analytic distribution interpolating $\mathscr{P}^{\opn{sph}}(x, y_1, y_2; \gamma_S)$ for $(x, y_1, y_2) \in \Upsilon^{\opn{crit}}$, completing the proof of Theorem \ref{ThmCCaseBIntro} in the introduction.

\section{Temperedness of the distribution}

In this section we prove Theorem \ref{ThmBTemperedIntro}; namely, if we assume we're in Case A (\S \ref{CaseASubSubsection}) and fix a small slope good $p$-stabilised automorphic representation $\tilde{\pi} = (\pi, \Theta_{\pi, p})$ (resp. good $p$-stabilised automorphic representation $\tilde{\sigma}$) of $G(\mbb{A})$ (resp. $\opn{GL}_2(\mbb{A})$) of weight $\nu = (r_1, r_2; -(r_1+r_2))$, level $(N_0, M_0)$ and character $\chi_0$ (resp. of weight $1+t_2$, level $N_2$, and character $\chi_2$) such that $t_2 \leq r_1-r_2$, then we prove the following theorem:

\begin{theorem} \label{ThmBInMainBody}
    Let $L_p(\pi \times \sigma, -) \in \mathscr{D}^{\opn{la}}(\mbb{Z}_p^{\times}, L)$ denote the locally analytic distribution constructed in Theorem \ref{FourVarCaseATheorem}, and let $h = v_p\left(\Theta_{\pi, p}(U_{p, \opn{Kl}}^{\circ})\right)$. Then $L_p(\pi \times \sigma, -)$ is tempered of growth $\leq h+2$ in the sense of \cite[Definition 3.10]{barrera2021p}. Moreover, if $h+2 < r_1-r_2-t_2+1$, then $L_p(\pi \times \sigma, -)$ is uniquely determined (as a tempered distribution of growth $\leq h+2$) by its values $L_p(\pi \times \sigma, j+\chi)$ for $0 \leq j \leq r_1-r_2-t_2$ and $\chi$ finite-order.
\end{theorem}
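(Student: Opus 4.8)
\textbf{Proof proposal for Theorem \ref{ThmBInMainBody}.} The plan is to control the growth of the distribution $L_p(\pi \times \sigma, -)$ by exhibiting it as a bounded-denominators combination of a measure coming from higher Hida theory and the nearly overconvergent Eisenstein distribution, then tracking how the denominators grow with the level $p^\beta$. First I would unwind the construction in Theorem \ref{FourVarCaseATheorem}: the distribution $L_p(\pi \times \sigma, -)$ is (up to the fixed nonzero scalar $c_x$) obtained by feeding the trace-compatible system $\tilde\eta_{\underline\pi, \beta}$, the nearly overconvergent Eisenstein family $\mathcal{E}_\xi^{\Phi^{(p)}}$, and the $\opn{GL}_2$-class $\omega_{\underline\sigma}$ into the pairing $\mathcal{P}^{\ddagger, \opn{la}}_\lambda$. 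The key input is that, by \cite{HHTBoxerPilloni}, the $\opn{H}^2$ cohomology of the $\mathcal{Z}_{G, <1, \beta}$-strata carries a natural $\mathcal{O}_L$-lattice (coming from higher Hida theory for the Siegel $U_p$-operators), and that on this lattice the action of the Klingen operator $U_{p, \opn{Kl}}^\circ$ satisfies the optimal slope bound. This means that after clearing denominators, the class $\tilde\eta_{\underline\pi, \beta}$ can be written as $\Theta_{\pi, p}(U_{p, \opn{Kl}}^\circ)^{-\beta}$ (equivalently $p^{-h\beta}$ up to units) times an integral class, and this is the only source of unboundedness as $\beta \to \infty$.

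Next I would make precise the meaning of temperedness of growth $\leq h+2$ following \cite[Definition 3.10]{barrera2021p}: one must bound, as $\beta \to \infty$, the $p$-adic valuations of the evaluations of $L_p(\pi\times\sigma,-)$ against characters of conductor $p^\beta$, showing they decay no faster than $p^{-(h+2)\beta}$ (up to an additive constant). The evaluation against such a character is exactly $\mathcal{P}^{\ddagger, \opn{la}}_\lambda$ applied to the triple $(\gamma_{0,S}\cdot\tilde\eta_{\underline\pi,\beta}, \omega_{\opn{Eis}, \chi, \beta}, \omega_{\underline\sigma, \beta})$. The factor $p^{-h\beta}$ comes from $\tilde\eta_{\underline\pi, \beta}$ as above. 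The extra $+2$ comes from the integrality properties of the nearly overconvergent Eisenstein family $\mathcal{E}_\xi^{\Phi^{(p)}}(\kappa_1, \kappa_2)$: using Lemma \ref{TheEisSeriesIsNearlyOCLemma} and the description of the $\star$-action from \cite{DiffOps}, the specialisation $\omega_{\opn{Eis}, \chi, \beta}$ at a character of conductor $p^\beta$ has a denominator bounded by $p^{2\beta}$ (roughly: two factors of $p^\beta$ coming from the Gauss sum / normalising factors in $\Phi_{p, \rho^{-1}, \rho}$ and the $p$-depletion procedure applied to the overconvergent projector), whereas a true measure would contribute no denominator. The remaining inputs — the $\opn{GL}_2$-class $\omega_{\underline\sigma, \beta}$, the trace maps, and the pairing $\mathcal{P}^{\ddagger, n\text{-an}}_\lambda$ itself (which factors through integral structures by Lemma \ref{PlambdaIsIndepOfBetaTraceLemma} and the trace compatibility) — contribute bounded denominators independent of $\beta$. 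Assembling these estimates via the standard criterion (e.g.\ as in \cite[\S 3]{barrera2021p}, or the admissibility/growth criterion phrased in terms of the Amice transform) yields that $L_p(\pi\times\sigma,-)$ has growth $\leq h+2$.

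For the final assertion, if $h+2 < r_1-r_2-t_2+1$, i.e.\ the order of growth is strictly smaller than the number of critical integers $j \in \{0, \dots, r_1-r_2-t_2\}$ plus one, then the interpolation property in Theorem \ref{FourVarCaseATheorem} (specialised at the fixed point $x, y$) pins down enough moments of $L_p(\pi\times\sigma,-)$ on each component of weight space: a tempered distribution of growth $\leq m$ restricted to the characters $z \mapsto z^j$ for $0 \le j \le m$ and their twists by finite-order characters determines it uniquely, by the classical interpolation/uniqueness statement for admissible distributions (this is the content of \cite[Proposition 3.?]{barrera2021p} or the Amice--Vélu--Vishik uniqueness principle). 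Here I would just cite the relevant uniqueness result and check the inequality $h+2 < r_1-r_2-t_2+1$ against the small-slope hypothesis on $\tilde\pi$.

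The main obstacle is the integrality bound for the nearly overconvergent Eisenstein distribution — establishing the $p^{2\beta}$ denominator bound rather than a worse one. This requires a careful analysis of the local Schwartz data $\Phi_{p, \rho^{-1}, \rho}$ at $p$, the normalisation of the $q$-expansion of the Eisenstein family, and the interaction with the $\star$-action of $C^{\opn{la}}(\mbb{Z}_p, A)$ on $\mathscr{N}^\dagger_{\opn{GL}_2}$ from \cite{DiffOps}; in particular, one must track how the overconvergent projector (which is not integral) and the depletion operator interact when extracting the weight-$\zeta_{H_1}$ component, and confirm that no further powers of $p$ are lost in the nearly overconvergent-to-overconvergent comparison used in Lemma \ref{TheEisSeriesIsNearlyOCLemma}. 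This is precisely the point flagged in \S\ref{MethodOfProofIntroSec} as the source of the suboptimal $+2$: if one could show the Eisenstein distribution is a measure the bound would improve to $h$, but for the stated theorem the weaker $p^{2\beta}$ estimate suffices and should follow from a direct (if somewhat involved) computation with the explicit Schwartz functions of \cite[\S 7--8]{LPSZ}.
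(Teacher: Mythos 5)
Your overall shape (temperedness from integrality of the three inputs into the pairing, plus Amice--V\'elu/Colmez for uniqueness) matches the paper, and the uniqueness step is handled exactly as you say (via \cite[Proposition I.4.5]{colmez1998theorie}). But the core of your temperedness argument has a genuine gap, and the mechanism you propose is not the one that works. First, verifying growth $\leq h+2$ in the sense of \cite[Definition 3.10]{barrera2021p} requires bounding the sup-norm $|\!|\mu_n|\!|_n$ of the restriction of the distribution to affinoids $U_n$ in the $(n-1)$-accessible weight space $\mathcal{W}_n$; bounding the evaluations at the locally algebraic characters $j+\chi$ of conductor $p^{\beta}$ does not give this, since the sup over a Zariski-dense set of special points of a reduced affinoid can be strictly smaller than the sup-norm, and one cannot deduce admissibility of a locally analytic distribution from its values at such points alone. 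The paper instead runs the entire construction \emph{integrally over each $U_n$}: it fixes the level $\beta=1$ throughout and lets the analyticity radius $n$ grow, showing $\mu_n \in p^{-(h+2)n-C}\mathcal{O}^+(U_n)$ directly (Lemmas \ref{M1M2EisLemma}, \ref{QnIntegralityLemma}, \ref{FactorisationOfKlingenUpLemma}).

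Second, your sources for the two growth contributions are off. The factor $p^{hn}$ does not come from denominators of the trace-compatible system $\tilde{\eta}_{\underline{\pi},\beta}$ as $\beta\to\infty$ (that trace system involves the Borel eigenvalue $\Theta_{\pi,p}(t_{p,\opn{B}})^{-\beta}$, and in any case the pairing is independent of $\beta$ by Lemma \ref{PlambdaIsIndepOfBetaTraceLemma}, so the paper never deepens the level here). It comes from lifting the fixed class $\eta^{(1)}$ to $m$-analytic nearly overconvergent cohomology: the Klingen correspondence $\psi_{s_{\opn{Kl}}}$ improves analyticity by one step and is \emph{integral} after the higher-Hida normalisation, so $\eta^{(m)} = \alpha_{\opn{Kl}}^{-(m-1)}\rho_{m-1,m}\cdots\rho_{1,2}(\eta^{(1)})$ up to a bounded constant --- this is precisely why the Klingen eigenvalue, and not the Siegel or Borel one, governs the growth. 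Likewise the $+2$ has nothing to do with Gauss sums or $\Phi_{p,\rho^{-1},\rho}$: it arises from the operator norms of the divided powers $\binom{\nabla}{k}$ in the $\star$-action of the $\varepsilon$-analytic function extending $t_1-\kappa_n$ on the \emph{overconvergent} Eisenstein family (Proposition \ref{CepsilonIntegralityAppendixProp}), which costs $p^{2n}$ on the lattice of $(n+M_1)$-analytic integral nearly overconvergent forms. Your proposed ``direct computation with the explicit Schwartz functions'' would not locate this loss, and without the factorisation lemma for $s_{\opn{Kl}}$ your argument cannot produce the stated exponent $h$.
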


\begin{remark}
    This is not the optimal growth bound. We expect that $L_p(\pi \times \sigma, -)$ is tempered of growth $h$, however we were unable to show this.
\end{remark}

We note that the last part of this theorem follows from \cite[Proposition I.4.5]{colmez1998theorie}, so we will focus on proving that the $p$-adic $L$-function is tempered of growth $\leq h+2$ for the rest of this section. Concretely this amounts to checking the following property. For $n \geq 1$, let $\mathcal{W}_n \subset \mathcal{W}(\mbb{Z}_p^{\times})_L$ denote the quasi-Stein open subspace of ``$(n-1)$-accessible weights'' as in \cite[Definition 4.1.1]{lz-coleman}. By \cite[Lemma 4.1.5]{lz-coleman}, for any open affinoid $U \subset \mathcal{W}_n$, the universal character $\kappa_U \colon \mbb{Z}_p^{\times} \to \mathcal{O}(U)^{\times}$ is $n$-analytic (i.e., given by a power series on cosets of the form $a + p^n \mbb{Z}_p \subset \mbb{Z}_p^{\times}$). If $U_n \subset \mathcal{W}_n$ is open affinoid, then the sections $\mathcal{O}(U_n)$ naturally form a $L$-Banach algebra with norm $|\!| - |\!|_n$ such that the unit ball is $\mathcal{O}^+(U_n)$. Let $\mu_n \in \mathcal{O}(U_n)$ denote the restriction of $L_p(\pi \times \sigma, -) \in \mathcal{O}(\mathcal{W}(\mbb{Z}_p^{\times})_L)$. Then we need to show that there exists a constant $C \in \mbb{R}_{>0}$ such that $|\!| \mu_n |\!|_n \leq p^{(h+2)n}C$ for all $n \geq 1$ and $U_n \subset \mathcal{W}_n$.

\subsubsection{} 

We first need to understand integrality properties of the family of nearly overconvergent Eisenstein series. Recall that the spaces $\mathscr{N}^{\dagger, n\opn{-an}}_{\opn{GL}_2,\zeta_{H_i}} \defeq \opn{H}^0\left( \widehat{K}^{H_i}_{\beta}, \zeta_{H_i} \right)^{(\ddagger, n\opn{-an}, +)}$ are independent of $\beta$, so we may take $\beta = 1$ from now on (and often drop it from the notation). We define an integral version of this space as
\[
\mathscr{N}^{\dagger, n\opn{-an}, \circ}_{\opn{GL}_2,\zeta_{H_i}} \defeq \varinjlim_V \opn{H}^0\left( V, [\mbb{I}_{\overline{\mathcal{P}}_{\opn{GL}_2, n}^{\square}}(\zeta_{H_i})^{\circ}] \right)
\]
which is again independent of $\beta$. Here the colimit is over all strict neighbourhoods of $\mathcal{X}_{H_i, \opn{id}, \beta}$, one defines the $\overline{\mathcal{P}}_{\opn{GL}_2, n}^{\square}$-representation $\mbb{I}_{\overline{\mathcal{P}}_{\opn{GL}_2, n}^{\square}}(\zeta_{H_i})^{\circ}$ in exactly the same way as for $\mbb{I}_{\overline{\mathcal{P}}_{\opn{GL}_2, n}^{\square}}(\zeta_{H_i})$ but replacing $\mbb{A}^{1, \opn{an}}$ with $\mbb{G}_a^+ = \opn{Spa}(\mbb{Q}_p\langle t \rangle, \mbb{Z}_p\langle t \rangle)$, and the sheaf $[\mbb{I}_{\overline{\mathcal{P}}_{\opn{GL}_2, n}^{\square}}(\zeta_{H_i})^{\circ}]$ is given by 
\[
\left(\pi_*\mathcal{O}^+_{\mathcal{P}_{H_i, \opn{dR}, n}} \hatot \; \mbb{I}_{\overline{\mathcal{P}}_{\opn{GL}_2, n}^{\square}}(\zeta_{H_i})^{\circ}\right)^{\overline{\mathcal{P}}_{\opn{GL}_2, n}^{\square}}.
\]
We will need a strengthening of Lemma \ref{TheEisSeriesIsNearlyOCLemma}.

\begin{lemma} \label{M1M2EisLemma}
    Let $\kappa_n \colon \mbb{Z}_p^{\times} \to \mathcal{O}(U_n)^{\times}$ denote the universal character, $\xi = 1-r_2$, and $\zeta_{H_1} = (-1-t_1; \xi)$. Let $\zeta_{H_1, n}$ denote $\zeta_{H_1}$ viewed as a character valued in $\mathcal{O}(U_n)^{\times}$. Then there exist integers $M_1, M_2 \geq 1$ (independent of $n$ and $U_n$) such that
    \[
    \mathcal{E}^{\Phi^{(p)}}_{\xi}(t_1 - \kappa_n, \kappa_n) \in p^{-2n-M_2} \mathscr{N}^{\dagger, (n + M_1)\opn{-an}, \circ}_{\opn{GL}_2,\zeta_{H_1, n}}.
    \]
\end{lemma}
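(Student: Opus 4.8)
\textbf{Proof strategy for Lemma \ref{M1M2EisLemma}.} The plan is to leverage the description of $\mathcal{E}^{\Phi^{(p)}}_{\xi}(t_1 - \kappa_n, \kappa_n)$ in the proof of Lemma \ref{TheEisSeriesIsNearlyOCLemma}, namely that it equals $\phi_n \star \mathcal{E}_{\xi - (t_1 - \kappa_n)}^{\Phi^{(p)}}(0, \kappa_n - (t_1-\kappa_n))$, where $\phi_n \colon \mbb{Z}_p \to \mathcal{O}(U_n)$ is the locally analytic function supported on $\mbb{Z}_p^{\times}$ with $\phi_n(x) = (t_1-\kappa_n)(x)$, and $\star$ is the action of $C^{\opn{la}}(\mbb{Z}_p, \mathcal{O}(U_n))$ on nearly overconvergent modular forms from \cite{DiffOps}. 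So the proof breaks into two halves: (i) an integrality statement for the overconvergent family $\mathcal{E}_{\xi - t_1 + \kappa_n}^{\Phi^{(p)}}(0, 2\kappa_n - t_1)$ of weight $(-1-t_1+2\kappa_n - 2\kappa_n; \ldots)$ — wait, more precisely the \emph{overconvergent} (degree $0$) Eisenstein family appearing in \cite[Proposition 10.1.2]{LZBK21}, whose $q$-expansion coefficients have bounded denominators as the weight varies over $U_n$ — together with a bound on how badly it fails to be integral in terms of $n$; and (ii) a bound on the norm of the operator $\phi_n \star (-)$ on the Banach modules of nearly overconvergent forms, and on how the radius of analyticity degrades under this operator.

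For step (i), I would invoke the construction of the Katz/Eisenstein measure: the family $\mathcal{E}^{\Phi^{(p)}}(0, \kappa - t_1 + \kappa; \chi^{(p)})$ arises by integrating against a $p$-adic measure (up to the $\opn{det}^{\bullet}$ twist), so its $q$-expansion coefficients lie in $\mathcal{O}^+(U_n)$ up to a universal bounded denominator coming from the Schwartz function $\Phi^{(p)}$ and the constant term; this accounts for the integer $M_2$ (independent of $n$). The $\opn{det}^{\kappa_1 + \kappa_2 - \xi}$ factor is a unit on the Igusa tower by Definition \ref{DefOfDetTrivClass}, hence contributes nothing to denominators. The overconvergence radius here is controlled by \cite[Proposition 10.1.2]{LZBK21} and is independent of $n$ (or shifts $n$ by a universal constant), contributing to $M_1$.

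For step (ii), the key input is the explicit description of $\star$ in \cite{DiffOps}: the operator $g \star (-)$ for $g \in C^{\opn{la}}(\mbb{Z}_p, \mathcal{O}(U_n))$ is built from the Maass--Shimura operator $\delta$ (acting as $q\frac{d}{dq}$ on $q$-expansions) and $p$-depletion, assembled via the decomposition of $g$ into its restrictions to cosets $a + p^m \mbb{Z}_p$. The function $\phi_n$ is $n$-analytic (being built from the $n$-analytic universal character $\kappa_n$ restricted to $\mbb{Z}_p^\times$), so $\phi_n \star (-)$ is a sum of at most $\sim p^n$ terms, each of the form (scalar of norm $\leq 1$)$\times$(a power series in $\delta$ applied after $p$-depletion), and each such operator expands the radius of analyticity by a bounded amount while introducing a denominator bounded by $p^{cn}$ for a universal constant $c$. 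Tracking the powers of $\delta$ carefully — the Maass--Shimura operator sends $p^{-k}$-integral forms to $p^{-k-1}$-integral forms roughly speaking, but the crucial gain is that on $p$-depleted forms and over $(n{+}M_1)$-analytic neighbourhoods the denominators are controlled by $p^{2n}$ — is what yields the exponent $-2n-M_2$; this is essentially the same bookkeeping as in the proof that the nearly overconvergent $\star$-action preserves the lattices up to $p^{2n}$ in \cite{DiffOps}. The main obstacle is precisely this last point: obtaining the \emph{sharp} power $p^{-2n}$ rather than a cruder $p^{-Cn}$ requires carefully matching the $p$-depletion (which kills the problematic $\mbb{Z}_p^\times$-part exactly) against the $\delta$-iteration, and keeping the shift $M_1$ in the analyticity radius uniform in $n$; I would do this by reducing, via the $q$-expansion principle, to an estimate on $q$-expansions, where $\delta^j$ acts by multiplication of the $m$-th coefficient by $m^j$ and the relevant denominators come only from the finitely many ``bad'' coefficients with $v_p(m) < n$. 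Once these two estimates are in hand, composing them and absorbing all universal constants into $M_1, M_2$ gives the claim.
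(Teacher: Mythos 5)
Your overall architecture coincides with the paper's: following the proof of Lemma \ref{TheEisSeriesIsNearlyOCLemma}, one writes $\mathcal{E}^{\Phi^{(p)}}_{\xi}(t_1-\kappa_n,\kappa_n)=\phi\star\mathcal{E}^{\Phi^{(p)}}_{\xi+\kappa_n-t_1}(0,2\kappa_n-t_1)$ with $\phi$ the extension by zero of the character $t_1-\kappa_n$, checks that the \emph{overconvergent} family on the right lies in $p^{-M_3}\mathscr{N}^{\dagger,n\opn{-an},\circ}_{\opn{GL}_2,\lambda}$ for a universal $M_3$ (integrality read off from $q$-expansions, depending only on $\Phi^{(p)}$; analyticity inherited from the weight), and then bounds the operator $\phi\star(-)$. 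Your step (i) is exactly the paper's argument and is fine.

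The gap is in step (ii), and it is precisely at the point you flag as "the main obstacle". You propose to obtain the bound $p^{-2n-M_2}$ by reducing, via a $q$-expansion principle, to the action of $\delta^j$ on coefficients. This cannot work as stated, for two reasons. First, the lattice $\mathscr{N}^{\dagger,(n+M_1)\opn{-an},\circ}_{\opn{GL}_2,\zeta}$ is defined by integrality of sections of $\mathcal{O}^+$ on the torsor $\mathcal{P}_{\opn{GL}_2,\opn{dR},n+M_1}$ over a strict neighbourhood of the ordinary locus; there is no $q$-expansion principle identifying this lattice with the forms having integral $q$-expansion, and establishing the comparison between the two integral structures is exactly the hard content one is trying to prove. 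Second, the $q$-expansion heuristic actually predicts the \emph{wrong} answer: since $\phi$ is supported on $\mbb{Z}_p^{\times}$ and takes unit values there, on $q$-expansions $\phi\star F$ has $m$-th coefficient $\phi(m)a_m$ for $p\nmid m$ and $0$ otherwise, so no denominators appear at all --- the computation is blind to the source of the $p^{-2n}$. The true source is the operator norm of the binomial operators $\binom{\nabla}{k}$ on the integral structure of the torsor: writing the $n$-analytic function $\phi$ in terms of these operators forces one to sum terms whose individual norms grow like $p^{k/(p^n(p-1))}\cdot C_{\varepsilon}$, and the resulting bound $C_{\varepsilon/2}^2\approx\varepsilon^{-2}\approx p^{2n}$ is what produces the exponent. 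This is the content of Proposition \ref{CepsilonIntegralityAppendixProp} (applied with $\varepsilon=(p^n(p-1))^{-1}$), whose proof in the appendix runs through the estimates on $\bigl\|\binom{\nabla'}{k}\bigr\|$ and $\bigl\|\binom{\nabla}{k}\bigr\|$ from \cite{DiffOps} over the Igusa tower, the congruence $\nabla\equiv\nabla'$ modulo $p^m$, and a descent from the ordinary locus to strict neighbourhoods; none of this is replaceable by coefficient-wise bookkeeping. Your proof therefore needs this operator-norm input supplied as an independent lemma; with it in hand, the rest of your argument assembles correctly.
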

\begin{proof}
    Set $(A, A^+) = (\mathcal{O}(U_n), \mathcal{O}^+(U_n))$ for ease of notation. Set $\lambda_{H_1, n} = \zeta_{H_1,n}+(t_1-\kappa_n)(2;-1)$ and consider the family of Eisenstein series $\mathcal{E}^{\Phi^{(p)}}_{\xi+\kappa_n-t_1}(0, 2\kappa_n-t_1) \in \mathscr{N}^{\dagger}_{\opn{GL}_2, A} \defeq \mathscr{N}^{\dagger}_{\opn{GL}_2} \hatot_L A$. We claim that there exists an integer $M_3 \geq 0$ (independent of $n$ and $U_n$) such that 
    \[
    \mathcal{E}^{\Phi^{(p)}}_{\xi+\kappa_n-t_1}(0, 2\kappa_n-t_1) \in p^{-M_3}\mathscr{N}^{\dagger, n\opn{-an}, \circ}_{\opn{GL}_2,\lambda_{H_1,n}}.
    \]
    Indeed, we note that this family of Eisenstein series is \emph{overconvergent}, hence the analyticity is determined by the analyticity of the weight. Furthermore, integrality (for overconvergent forms) can be checked on the level of $q$-expansions, and it is easy to see that the integrality of the $q$-expansion of this family of Eisenstein series only depends on $\Phi^{(p)}$ (see \cite[Theorem 7.6]{LPSZ}).

    Recall from Lemma \ref{TheEisSeriesIsNearlyOCLemma} that $\mathscr{N}^{\dagger}_{\opn{GL}_2, A}$ comes equipped with an action $\star$ of $C^{\opn{la}}(\mbb{Z}_p, A)$. The image of $\mathcal{E}^{\Phi^{(p)}}_{\xi}(t_1 - \kappa_n, \kappa_n)$ in $\mathscr{N}^{\dagger}_{\opn{GL}_2, A}$ can be described as
    \[
    \phi \star \mathcal{E}^{\Phi^{(p)}}_{\xi+\kappa_n-t_1}(0, 2\kappa_n-t_1)
    \]
    where $\phi \colon \mbb{Z}_p \to A$ is the extension-by-$0$ of the character $t_1 - \kappa_n$. It therefore suffices to show that there exist integers $M_1, M_2 \geq 1$ (independent of $n$ and $U_n$) such that $\phi \star -$ maps $p^{-M_3}\mathscr{N}^{\dagger, (n+M_1)\opn{-an}, \circ}_{\opn{GL}_2,\lambda_{H_1,n}}$ into $p^{-2n-M_2}\mathscr{N}^{\dagger, (n+M_1)\opn{-an}, \circ}_{\opn{GL}_2,\zeta_{H_1,n}}$. But this follows from Proposition \ref{CepsilonIntegralityAppendixProp} by taking $\varepsilon = (p^n(p-1))^{-1}$.
\end{proof}

\subsubsection{}

We now study the integrality of the trilinear period in \S \ref{padicWtNOCsssec}. We continue to work at level $\beta = 1$ and omit $\beta$ from the notation. For brevity, we set $\mathcal{Q}_n \defeq \mathcal{P}^{\ddagger, n\opn{-an}}_{\lambda}$ with $\lambda = r_1+1-t_1$. We let $\mbb{D}_{\overline{\mathcal{P}}^{\square}_{G,n+1}}(\kappa_G^*)^{\circ}$ denote the integral version of $\mbb{D}_{\overline{\mathcal{P}}^{\square}_{G,n+1}}(\kappa_G^*)$ defined in exactly the same way, but replacing $\mbb{A}^{1, \opn{an}}$ with $\mbb{G}_a^+$ (and taking continuous linear functionals valued in $\mbb{Z}_p$). We use similar notation for the other $p$-adic nearly representations. Note that the branching law $\opn{br}^{n\opn{-an}}$ preserves integrality. We let $[\mbb{D}_{\overline{\mathcal{P}}^{\square}_{G,n+1}}(\kappa_G^*)^{\circ}]$ denote the sheaf associated with the representation $\mbb{D}_{\overline{\mathcal{P}}^{\square}_{G,n+1}}(\kappa_G^*)^{\circ}$ using $+$-sheaves. We use similar notation for the other $p$-adic nearly representations. We can also twist these sheaves by $\mathcal{O}^+(-D_G)$ and $\mathcal{O}^+(-D_H)$.

We define $\mathscr{N}^{\dagger, n\opn{-an}, \circ}_{G, \kappa_G^*}$ to be the image of the map
\[
\varinjlim_V \opn{H}^2_{\mathcal{Z}_{G, <1} \cap V}\left( V, [\mbb{D}_{\overline{\mathcal{P}}^{\square}_{G,n+1}}(\kappa_G^*)^{\circ}](-D_G) \right) \to \opn{H}^2\left( K^G_{1}, \kappa_G^*; \opn{cusp} \right)^{(\ddagger, n\opn{-an}, -)}
\]
where the colimit is over all strict neighbourhoods of $\mathcal{X}_{G, w_1}$.

\begin{lemma} \label{QnIntegralityLemma}
    With notation as in Lemma \ref{M1M2EisLemma}, there exists an integer $M_3 \geq 0$ (independent $n$ and $U_n$) such that 
    \[
    \mathcal{Q}_m\left( \mathscr{N}^{\dagger, m\opn{-an}, \circ}_{G, \kappa_G^*}, \mathscr{N}^{\dagger, m\opn{-an}, \circ}_{\opn{GL}_2, \zeta_{H_1, n}}, \mathscr{N}^{\dagger, m\opn{-an}, \circ}_{\opn{GL}_2, \zeta_{H_2}} \right) \subset p^{-M_3}\mathcal{O}^+(U_n)
    \]
    for any $m \geq 1$, where $\zeta_{H_2} = (-1-t_2; 0)$.
\end{lemma}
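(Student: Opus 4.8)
The plan is to bootstrap this from Lemma \ref{M1M2EisLemma} by tracking integrality through the three operations that define $\mathcal{Q}_m = \mathcal{P}^{\ddagger, m\opn{-an}}_{\lambda}$: pullback $\hat{\iota}^*$ along the finite morphism $\hat{\iota} \colon X_H(K^H_{\beta}) \to X_G(K^G_{\beta})$, the exterior cup product $\sqcup$ on the modular-curve factors, and the Serre-duality pairing $\langle \cdot, \cdot \rangle$ on $X_H(K^H_{\beta})$. First I would unwind the definition at integral level: the input $\eta \in \mathscr{N}^{\dagger, m\opn{-an}, \circ}_{G, \kappa_G^*}$ lives in the image of the $+$-cohomology $\varinjlim_V \opn{H}^2_{\mathcal{Z}_{G, <1} \cap V}(V, [\mbb{D}_{\overline{\mathcal{P}}^{\square}_{G,m+1}}(\kappa_G^*)^{\circ}](-D_G))$, and since the branching map $\opn{br}^{m\opn{-an}}$ of Proposition \ref{DefPropOfbrnan} preserves integral structures (its defining formula involves only $\lambda(1+z)$ with $\lambda$ algebraic and the substitution $(b-a, a/(1+z))$, all of which have $\mathcal{O}^+$-coefficients on the relevant charts), the pullback $\hat{\iota}^*\eta$ lies in the analogous integral $+$-cohomology group $\varinjlim_V \opn{H}^2_{\mathcal{Z}_{H,\opn{id}} \cap V}(V, [\mbb{D}_{\overline{\mathcal{P}}^{\diamondsuit}_{H,m+1}}(\kappa_H^*)^{\circ}](-D_H))$ up to a bounded denominator coming from the fact that $\hat{\iota}$ is finite flat (its generic degree is $p^4$, independent of $\beta$ since we have fixed $\beta = 1$). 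Similarly, by Remark \ref{DecompOfDDaggerIntoFactorsRem} the exterior cup product of the two integral modular-curve classes $\omega_i \in \mathscr{N}^{\dagger, m\opn{-an}, \circ}_{\opn{GL}_2, \zeta_{H_i}}$ lands in the integral version of $\opn{H}^0(K^H_{\beta}, \kappa_H - 2\rho_H)^{(\ddagger, m\opn{-an}, +)}$, again up to a uniformly bounded denominator (the isomorphism in Remark \ref{DecompOfDDaggerIntoFactorsRem} involves the torsion-free twist $V_{M_H}(-2\rho_H)$ and the base change $\mathcal{X}_{H, \opn{id}, \beta}$ to the fibre product of the two modular curves, which is finite étale).

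Next I would handle the Serre-duality pairing. The pairing $\langle \cdot, \cdot \rangle$ between $\opn{H}^2(K^H_{\beta}, \kappa_H^*; \opn{cusp})^{(\ddagger, m\opn{-an}, -)}$ and $\opn{H}^0(K^H_{\beta}, \kappa_H - 2\rho_H)^{(\ddagger, m\opn{-an}, +)}$ arises from the tautological evaluation $\mbb{D}_{\overline{\mathcal{P}}^{\diamondsuit}_{H, m+1}}(\kappa_H^*) \otimes \mbb{I}_{\overline{\mathcal{P}}^{\diamondsuit, \circ}_{H, m}}(\kappa_H) \to \mbb{Q}_p$ together with the trace map $\opn{H}^2(X_H, \Omega^2_{X_H}(\log D_H)) \to \mbb{Q}_p$; the former is integral by construction of the continuous dual (this is exactly why the dual representation is indexed at level $m+1$), and the latter is integral up to a fixed denominator depending only on the integral model of $X_H$ and the fixed tame level $\widehat{K}^p$ (not on $m$ or $U_n$). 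I would assemble these: combining the bounded denominators from $\hat{\iota}^*$, from $\sqcup$, and from the trace map, plus the crucial denominator $p^{-2n - M_2}$ that Lemma \ref{M1M2EisLemma} attaches to $\mathcal{E}^{\Phi^{(p)}}_{\xi}(t_1 - \kappa_n, \kappa_n)$ itself, gives the claim — except that I should be careful about the statement of the lemma, which is claiming $\mathcal{Q}_m$ of the \emph{integral} classes, so in fact the $p^{-2n-M_2}$ factor of Lemma \ref{M1M2EisLemma} is \emph{not} yet incorporated, and $M_3$ here should be a genuine absolute constant independent of $n$. That is: the only source of $n$-dependence is the analyticity radius $m$ (through $\mathscr{N}^{\dagger, m\opn{-an}, \circ}$), and I must verify that the denominators introduced by the pairing do not grow with $m$. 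This is plausible because the pairing is ultimately evaluation-then-integrate, and increasing $m$ only refines the covering and enlarges the space of test functions without worsening integral bounds.

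The main obstacle I expect is controlling the denominator in the Serre-duality step uniformly in $m$ (and independently of the affinoid $U_n$), i.e. showing that passing from $\opn{H}^2$ with support condition $\mathcal{Z}_{H, \opn{id}, \beta}$ on a strict neighbourhood to an actual $\mathbb{Z}_p$-valued (up to fixed denominator) number does not lose $p$-adic integrality as the analyticity radius deepens. For this I would invoke the $+$-structure formalism of \cite[\S 6]{BoxerPilloni} and \cite[\S 3.4, \S 6]{HHTBoxerPilloni}: the integral Banach sheaves $[\mbb{D}_{\overline{\mathcal{P}}^{\diamondsuit}_{H, m+1}}(\kappa_H^*)^{\circ}]$ are defined using $+$-torsors over the formal model, their cohomology is computed by a complex of $\mathcal{O}^+$-modules bounded independently of $m$ (the partially compactified Igusa variety $\mathcal{IG}_H^{\opn{tor}}$ and the strict neighbourhoods are fixed), and the trace/duality maps are morphisms of such complexes. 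A subtle point to get right is that the support-condition cohomology is only \emph{almost} $\mathcal{O}^+$-integral in general, so one genuinely needs the uniform bounded-torsion statement — this is where I would lean on \cite[Proposition 6.1.11]{HHTBoxerPilloni} and the open/closed support-condition formalism of \cite[\S 6.1.1]{HHTBoxerPilloni} that were already used in Lemma \ref{OpenClosedSupportLemma} and Proposition \ref{NearlyClassicalityProp}, extracting from them that the relevant cohomology has bounded $p$-power torsion with a bound independent of the analyticity radius. Once that uniform bound is in hand, $M_3$ is simply the sum of the finitely many absolute contributions, and the lemma follows.
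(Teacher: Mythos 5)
Your first half is essentially right and matches the paper: the branching map $\opn{br}^{m\opn{-an}}$ preserves the integral structures, and the tautological evaluation $\mbb{D}_{\overline{\mathcal{P}}^{\diamondsuit}_{H,m+1}}(\kappa_H^*)^{\circ} \otimes \mbb{I}_{\overline{\mathcal{P}}^{\diamondsuit,\circ}_{H,m}}(\kappa_H)^{\circ} \to A^+$ is integral by the very definition of the dual, so the contracted class $\hat{\iota}^*(x) \smile (y \sqcup z)$ lands in $\opn{H}^2_{\mathcal{Z}_{H,\opn{id}}}$ of the \emph{fixed} finite-rank sheaf $[V_{M_H}(-2\rho_{H,\opn{nc}})^{\circ}]$, up to a single lattice comparison $[V_{M_H}(-2\rho_{H,\opn{nc}})]^{\circ} \subset \Omega^2_{\mathcal{X}_H(\widehat{K}_H)}$ costing one power of $p$ independent of everything. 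In particular, the uniformity in $m$ that you single out as the main obstacle is already disposed of at this stage: once the Banach coefficients have been contracted away, the analyticity radius no longer appears anywhere in the problem.

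The genuine gap is in the step you then propose to attack with that worry, namely the integrality of the trace. You suggest extracting ``bounded $p$-power torsion independent of the analyticity radius'' from the open/closed support-condition formalism and from the dynamical results of higher Hida theory; but those statements concern the action of Hecke correspondences on flag-variety strata and supply no integral structure on the duality pairing. What is actually needed --- and what the paper's proof provides --- is that $\opn{H}^2_{\mathcal{Z}_{H,\opn{id}}}(\mathcal{X}_H(\widehat{K}_H), \Omega^2)$ is, by excision, the compactly supported cohomology of the dagger space $\mathcal{X}^{\dagger}_{H,\opn{id}}$, which admits a \emph{smooth weak formal model} $\mathfrak{X}^{\dagger}_{H,\opn{id}}$; the trace map $\opn{H}^2_c(\mathfrak{X}^{\dagger}_{H,\opn{id},A^+}, \Omega^2) \to A^+$ on that model is integral on the nose and compatible with the generic-fibre trace. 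Without this (or a genuine substitute) your argument does not close. A minor additional slip: pullback along the finite flat $\hat{\iota}$ does not introduce denominators related to its degree --- that would be the pushforward/trace in the other direction --- though since you only claim a bounded denominator there, this is harmless.
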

\begin{proof}
    Set $\widehat{K}_H = (\widehat{K}^p \cap H(\mbb{A}_f^p)) K^H_{\diamondsuit}(p)$. Due to the functoriality of the torsors $\mathcal{P}_{?, \opn{dR}, m}$, one can show (by the same construction for $\mathcal{Q}_m$) that for any $x \in \mathscr{N}^{\dagger, m\opn{-an}, \circ}_{G, \kappa_G^*}$, $y \in \mathscr{N}^{\dagger, m\opn{-an}, \circ}_{\opn{GL}_2, \zeta_{H_1, n}}$, and $z \in \mathscr{N}^{\dagger, m\opn{-an}, \circ}_{\opn{GL}_2, \zeta_{H_2}}$, the element $\hat{\iota}^*(x) \smile (y \sqcup z)$ is in the image of the map
    \begin{equation} \label{H2cdaggerOmegaEqn}
    \opn{H}^2_{\mathcal{Z}_{H, \opn{id}}}\left( \mathcal{X}_H(\widehat{K}_H)_A, [V_{M_H}(-2\rho_{H, \opn{nc}})^{\circ}] \right) \to \opn{H}^2_{\mathcal{Z}_{H, \opn{id}}}\left( \mathcal{X}_H(\widehat{K}_H)_A, [V_{M_H}(-2\rho_{H, \opn{nc}})] \right) .
    \end{equation}
    Here $A = \mathcal{O}(U_n)$, we have used excision to write the cohomology groups in this particular way, and $\rho_{H, \opn{nc}}$ denotes the half-sum of the positive roots of $H$ not lying in $M_H$. Note that $[V_{M_H}(-2\rho_{H, \opn{nc}})] \cong \Omega^2_{\mathcal{X}_H(\widehat{K}_H)}$ and $[V_{M_H}(-2\rho_{H, \opn{nc}})]^{\circ} \subset \Omega^2_{\mathcal{X}_H(\widehat{K}_H)}$ is an $\mathcal{O}^+_{\mathcal{X}_H(\widehat{K}_H)}$-lattice. Hence, up to multiplying by a power of $p$ which is independent of $n$ and $U_n$, we may assume without loss of generality that $[V_{M_H}(-2\rho_{H, \opn{nc}})]^{\circ} \cong \Omega^{2, +}_{\mathcal{X}_H(\widehat{K}_H)}$, where $\Omega^{2, +}_{\mathcal{X}_H(\widehat{K}_H)}$ denotes the wedge square of the sheaf of integral differentials. Let $\mathcal{X}^{\dagger}_{H, \opn{id}}$ denote the dagger space associated with $\mathcal{X}_{H, \opn{id}}$. Then the cohomology groups in (\ref{H2cdaggerOmegaEqn}) can be identified with the compactly supported cohomology of $\mathcal{X}^{\dagger}_{H, \opn{id}}$. But $\mathcal{X}^{\dagger}_{H, \opn{id}}$ admits a smooth weak formal model $\mathfrak{X}^{\dagger}_{H, \opn{id}}$, hence the map in (\ref{H2cdaggerOmegaEqn}) is identified with
    \[
    \opn{H}^2_c\left(\mathfrak{X}^{\dagger}_{H, \opn{id}, A^+}, \Omega^2_{\mathfrak{X}^{\dagger}_{H, \opn{id},A^+}} \right) \to \opn{H}^2_c\left(\mathcal{X}^{\dagger}_{H, \opn{id}, A}, \Omega^2_{\mathcal{X}^{\dagger}_{H, \opn{id},A}} \right) .
    \]
    In particular, there is a trace map $\opn{tr} \colon \opn{H}^2_c\left(\mathfrak{X}^{\dagger}_{H, \opn{id}, A^+}, \Omega^2_{\mathfrak{X}^{\dagger}_{H, \opn{id},A^+}} \right) \to A^+$ compatible with the trace map on the generic fibre. This implies the required integrality.
\end{proof}

We almost have all the ingredients needed to prove Theorem \ref{ThmBInMainBody}. More precisely, the distribution $\mu_n$ can be described as 
\[
\mu_n = \mathcal{Q}_{(n+M_1)}\left( \eta^{(n+M_1)}, \mathcal{E}^{\Phi^{(p)}}_{\xi}( t_1 - \kappa_n, \kappa_n ), \omega_{\sigma} \right)
\]
where $\eta^{(n+M_1)}$ denotes the $(n+M_1)$-analytic class $\gamma_{0, S} \cdot \tilde{\eta}_{\pi}$. Note that there exists an integer $M_4 \geq 0$ (independent of $n$ and $U_n$) such that $p^{M_4}\omega_{\sigma}$ is integral. Let $\alpha_{\opn{Kl}} \defeq \Theta_{\pi, p}(U_{p, \opn{Kl}}^{\circ})$. Suppose that we can find an integer $M_5 \geq 0$ (independent of $n$ and $U_n$) such that $p^{M_5}\alpha_{\opn{Kl}}^m \eta^{(m)}$ is integral for all $m \geq 1$. Then Lemma \ref{M1M2EisLemma} and Lemma \ref{QnIntegralityLemma} imply that
\[
\mu_n \in p^{-M_5} \alpha^{-(n+M_1)}_{\opn{Kl}} p^{-2n-M_2} p^{-M_4-M_3} \mathcal{O}^+(U_n)
\]
hence we have 
\[
|\!| \mu_n |\!|_n \leq p^{n(h+2)} C
\]
where $C = p^{M_5+M_1h+M_2+M_4+M_3}$ is independent of $n$ and $U_n$. Here we have used that, by definition, $|\alpha_{\opn{Kl}}| = p^{-h}$. It therefore remains to understand the potential denominators of the classes $\{\eta^{(m)}\}_{m \geq 1}$.

\subsubsection{}

We keep the notation of the previous two sections. We will study the correspondence associated with $s_{\opn{Kl}} = w_G^{\opn{max}} t_{\opn{Kl}} w_G^{\opn{max}}$. 

\begin{lemma} \label{FactorisationOfKlingenUpLemma}
    For any $m \geq 1$, one has a factorisation
    \[
\begin{tikzcd}
{\opn{H}^2\left(K^G_1, \kappa_G^*; \opn{cusp}\right)^{(\ddagger, (m+1)\opn{-an}, -)}} \arrow[d] \arrow[r, "s_{\opn{Kl}}"]                         & {\opn{H}^2\left(K^G_1, \kappa_G^*; \opn{cusp}\right)^{(\ddagger, (m+1)\opn{-an}, -)}} \arrow[d] \\
{\opn{H}^2\left(K^G_1, \kappa_G^*; \opn{cusp}\right)^{(\ddagger, m\opn{-an}, -)}} \arrow[r, "s_{\opn{Kl}}"] \arrow[ru, "{\rho_{m, m+1}}", dashed] & {\opn{H}^2\left(K^G_1, \kappa_G^*; \opn{cusp}\right)^{(\ddagger, m\opn{-an}, -)}}              
\end{tikzcd}
    \]
    and the morphism $\rho_{m, m+1}$ maps $\mathscr{N}^{\dagger, m\opn{-an}, \circ}_{G, \kappa_G^*}$ into $\mathscr{N}^{\dagger, (m+1)\opn{-an}, \circ}_{G, \kappa_G^*}$.
\end{lemma}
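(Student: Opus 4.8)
\textbf{Proof proposal for Lemma~\ref{FactorisationOfKlingenUpLemma}.} The plan is to exploit the fact that the Klingen Hecke operator at $p$, being built from the element $s_{\opn{Kl}} = w_G^{\opn{max}} t_{\opn{Kl}} w_G^{\opn{max}}$, acts on the nearly overconvergent cohomology through a cohomological correspondence whose source is naturally a \emph{deeper} analytic neighbourhood than its target. Concretely, the operator $s_{\opn{Kl}}$ corresponds (up to normalisation) to the cohomological correspondence $\psi_{t}$ of Definition~\ref{PsitCohCorrespDef} for a suitable $t \in T^{G,-}$ with $\opn{max}(t) = 1$; the key structural input is that in the construction of $\psi_t$ one passes through the torsor $\mathcal{P}_{G, \opn{dR}, n+d, K_p'}$ with $d \geq \opn{max}(t)$, so that the ``$+1$'' in the analyticity radius on the source is absorbed by the shift $n \mapsto n+d$ coming from the right-translation by $w_1 t w_1^{-1}$. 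First I would recall, from the discussion following Definition~\ref{PsitCohCorrespDef} and Convention~\ref{ConventionOnUnnormalisedAction}, the precise formula for the action of $s_{\opn{Kl}}$ on $R\Gamma(K^G_1, \kappa_G^*; \opn{cusp})^{(\ddagger, m\opn{-an}, -)}$, and observe that by construction it factors as a composite $p_{1,*} \circ (\text{correspondence}) \circ p_2^*$ where $p_2^*$ lands in the $(m+1)$-analytic (indeed deeper) cohomology and $p_{1,*}$ is the trace; the dashed arrow $\rho_{m,m+1}$ is then the composite of everything except the final change-of-analyticity-radius map $\opn{H}^2(\cdots)^{(\ddagger, (m+1)\opn{-an}, -)} \to \opn{H}^2(\cdots)^{(\ddagger, m\opn{-an}, -)}$.

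The second step is the integrality claim for $\rho_{m, m+1}$ restricted to $\mathscr{N}^{\dagger, m\opn{-an}, \circ}_{G, \kappa_G^*}$. Here I would argue as in the proof of Lemma~\ref{QnIntegralityLemma}: the class $\rho_{m, m+1}(x)$ for $x \in \mathscr{N}^{\dagger, m\opn{-an}, \circ}_{G, \kappa_G^*}$ lies in the image of the cohomology of the $+$-sheaf $[\mbb{D}_{\overline{\mathcal{P}}^{\square}_{G, m+2}}(\kappa_G^*)^{\circ}](-D_G)$ over an appropriate strict neighbourhood of $\mathcal{X}_{G, w_1}$, provided one checks that (a) the pullback maps $p_1^*$, $p_2^*$ and the equivariant-structure isomorphism $\phi_t$ of $\S\ref{MapsBetweenTorsorsSubSubSec}$ all preserve the integral $+$-structures on the de~Rham torsors, and (b) the trace map $\opn{Tr}_{p_1}$, suitably normalised, is integral on neighbourhoods of $\mathcal{IG}_G(K^pJ_p)^{\opn{tor}}$ — this last point is precisely the content of the remark following Definition~\ref{PsitCohCorrespDef} about the optimal normalisation $(w_1^{-1}\star\mbf{1})(t)^{-1}\opn{Tr}_{p_1}$, combined with the fact that $p_1$ is finite flat between (weak formal models of) smooth spaces so that a genuine trace on the integral level exists. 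The unnormalised action in Convention~\ref{ConventionOnUnnormalisedAction} introduces exactly the factor $(w_1^{-1}\star\mbf{1})(t)\kappa_G^*(w_1 t w_1^{-1})$, and one should check these scalars are $p$-integral for the specific $t$ attached to $s_{\opn{Kl}}$ — this follows from the explicit weight $\kappa_G^* = (r_1, -(r_2+2); r_2+1)$ and the shape of $t_{\opn{Kl}}$.

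I expect the main obstacle to be step~(b): making the integrality of the trace map completely rigorous requires descending to integral (weak formal) models of the ordinary Igusa tower and its correspondence, analogous to the use of $\mathfrak{X}^{\dagger}_{H, \opn{id}}$ in the proof of Lemma~\ref{QnIntegralityLemma}. The subtlety is that $\mathscr{N}^{\dagger, m\opn{-an}, \circ}_{G, \kappa_G^*}$ is defined as the \emph{image} of an integral cohomology group, so one must track that the integral structure is genuinely preserved under the correspondence — not merely up to a bounded power of $p$ — in order to land in the \emph{un}-scaled lattice $\mathscr{N}^{\dagger, (m+1)\opn{-an}, \circ}_{G, \kappa_G^*}$. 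The cleanest route is probably to invoke the higher Hida theory machinery of \cite{HHTBoxerPilloni}: the integral torsors $\mathfrak{IG}_G(K^pU(J_p))^{\opn{tor}}$ and the lift of Frobenius give integral models for all the spaces in sight, and the cohomological correspondence for $U_{p, \opn{Kl}}$ is defined integrally there (indeed this is how the optimal slope bounds in \emph{loc.cit.} are proved); the factorisation through a deeper analytic radius is then just the statement that the correspondence on the generic fibre overconverges by one extra step, which follows from Lemma~\ref{p1p2inverseimageLemma} together with the discussion preceding Definition~\ref{PsitCohCorrespDef}. Once this is set up, the commutativity of the square is a formal consequence of the compatibility of $\psi_t$ with change of analyticity radius (the maps $\mbb{D}_{\overline{\mathcal{P}}^{\square}_{G, m+2}}(\kappa_G^*) \to \mbb{D}_{\overline{\mathcal{P}}^{\square}_{G, m+1}}(\kappa_G^*)$ are equivariant for all the relevant group actions).
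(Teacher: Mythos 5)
Your proposal is correct and follows essentially the same route as the paper: the factorisation is extracted from the cohomological correspondence of Definition \ref{PsitCohCorrespDef} for the Klingen element, and the integrality of $\rho_{m,m+1}$ on the lattice $\mathscr{N}^{\dagger, m\opn{-an}, \circ}_{G, \kappa_G^*}$ comes from the optimal normalisation of the operator together with the integrality of the trace map along $p_1$, which the paper likewise deduces from the comparison with higher Hida theory in \cite[\S 6]{HHTBoxerPilloni}. The one point where the paper is sharper is the mechanism for the analyticity gain: it is pinned on the coefficient representation — the action of $w_1 t_{\opn{Kl}} w_1^{-1} = t_{\opn{Kl}}$ maps $\mbb{I}_{\overline{\mathcal{P}}^{\square, \circ}_{G, m}}(\kappa_G)$ into $\mbb{I}_{\overline{\mathcal{P}}^{\square, \circ}_{G, m+1}}(\kappa_G)$ because conjugation by $t_{\opn{Kl}}^{-1}$ contracts the relevant root subgroups — rather than on the depth $n+d$ of the torsor $\mathcal{P}_{G,\opn{dR},n+d,K_p'}$ (and note $\opn{max}(t_{\opn{Kl}})=2$, not $1$, so the torsor-depth bookkeeping alone would not give the stated $m\mapsto m+1$ factorisation).
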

\begin{proof}
    The factorisation follows from the fact that action of $w_1 t_{\opn{Kl}} w_1^{-1} = t_{\opn{Kl}}$ maps $\mbb{I}_{\overline{\mathcal{P}}^{\square, \circ}_{G, m}}(\kappa_G)$ into $\mbb{I}_{\overline{\mathcal{P}}^{\square, \circ}_{G, m+1}}(\kappa_G)$ (because conjugation by $t_{\opn{Kl}}^{-1}$ contracts the roots $\left( \begin{smallmatrix} 1 & & & \\ * & 1 & & \\ * & & 1 & \\ * & * & * & 1 \end{smallmatrix} \right)$). The integrality of the map $\rho_{m, m+1}$ follows from the fact that the Hecke operator corresponding to $s_{\opn{Kl}}$ is optimally normalised. Indeed, the cohomological correspondence $(w_1^{-1} \star \mbf{1})(s_{\opn{Kl}})\psi_{s_{\opn{Kl}}} = \psi_{s_{\opn{Kl}}}$ in Definition \ref{PsitCohCorrespDef} is integral because $t_{\opn{Kl}}$ maps $\mbb{I}_{\overline{\mathcal{P}}^{\square, \circ}_{G, m}}(\kappa_G)^{\circ}$ into $\mbb{I}_{\overline{\mathcal{P}}^{\square, \circ}_{G, m+1}}(\kappa_G)^{\circ}$, and the trace map associated with the morphism $p_1$ in the Hecke correspondence is integral by the comparison between higher Coleman theory and higher Hida theory in \cite[\S 6]{HHTBoxerPilloni}. 
\end{proof}

We now let $M_5 \geq 0$ be an integer such that $p^{M_5}\eta^{(1)}$ is integral. From the factorisation in Lemma \ref{FactorisationOfKlingenUpLemma}, we see that
\[
p^{M_5}\alpha_{\opn{Kl}}^{m}\eta^{(m)} = \alpha_{\opn{Kl}} s_{\opn{Kl}}^{m-1} \cdot (p^{M_5} \eta^{(m)}) = \alpha_{\opn{Kl}} \rho_{m-1, m} \rho_{m-2, m-1} \cdots \rho_{1, 2} (p^{M_5} \eta^{(1)}) \; \in \; \mathscr{N}^{\dagger, m\opn{-an}, \circ}_{G, \kappa_G^*}
\]
for any $m \geq 1$, where we have used the fact that $\eta^{(m)}$ is an eigenvector for the action of $s_{\opn{Kl}}$ with eigenvalue $\alpha_{\opn{Kl}}$. Combining this with the discussion in the previous section completes the proof of Theorem \ref{ThmBInMainBody}.

\appendix

\section{Local zeta integral computations} \label{LocalZetaSSecAppendix}

Let $p$ be a prime. In this section, we explain how the results in \cite[\S 8]{LPSZ} can be applied to compute certain local zeta integrals at Iwahori level (rather than at Klingen level). To be consistent, we will (mostly) use the same notation as in \emph{loc.cit.} in this section; the notation will therefore differ slightly from the main body of this article.  

Let $\psi \colon \mbb{Q}_p \to \mbb{C}^{\times}$ denote the standard additive character which is trivial on $\mbb{Z}_p$ (but not $p^{-1}\mbb{Z}_p$). Let $\pi$ be an unramified smooth irreducible representation of $\opn{GSp}_4(\mbb{Q}_{p})$ which is unitary. Let $\sigma$ be a unitary unramified principal series representation of $\opn{GL}_2(\mbb{Q}_p)$, and let $\chi \colon \mbb{Q}_p^{\times} \to \mbb{C}^{\times}$ be a finite-order character of conductor $p^r$ ($r \geq 0$). We assume that both $\pi$ and $\sigma$ are generic, and we fix a Whittaker model $W_{\pi}(-)$ (resp. $W_{\sigma}(-)$) with respect to $\psi$ (resp. $\psi^{-1}$) of $\pi$ (resp. $\sigma$). Following \cite[\S 8.4]{LPSZ}, we write $\pi = I_{P_{\opn{Kl}}}^G(\tau \boxtimes \theta)$ as the normalised induction from the upper-triangular Klingen parabolic subgroup to $G(\mbb{Q}_p)$ of a smooth irreducible unramified representation $\tau \boxtimes \theta$ of $\opn{GL}_2 \times \opn{GL}_1$ (which we identify with the Levi of $P_{\opn{Kl}}$ in the same way as in \emph{loc.cit.}). We identify the generic representation $\tau$ with the normalised induction $I_{B_{\opn{GL}_2}}^{\opn{GL}_2}(\alpha)$, where $\alpha \colon T_{\opn{GL}_2}(\mbb{Q}_p) \to \mbb{C}^{\times}$ is an unramified character. Let $\alpha_1 = \alpha(p, 1)$ and $\alpha_2 = \alpha(1, p)$. We will consider the following twisted local zeta integral:
\[
Z(\varphi, \lambda, \Phi; \chi, s) \defeq \int_{Z_G(\mbb{Q}_p)N_{H}(\mbb{Q}_p) \backslash H(\mbb{Q}_p)} W_{\varphi}(h) f^{\Phi}(h_1; \chi^{2} \chi_{\pi}\chi_{\sigma}, s) W_{\lambda}(h_2) \chi(\opn{det}h_2) dh
\]
where $\varphi \in \pi$, $\lambda \in \sigma$, $\Phi \in C_c^{\infty}(\mbb{Q}_p^{\oplus 2})$ and $\chi_{\pi}$ and $\chi_{\sigma}$ denote the central characters of $\pi$ and $\sigma$.

Recall that $K_{\opn{B}} \subset \opn{GSp}_4(\mbb{Q}_p)$ denotes the standard upper-triangular Iwahori subgroup, and $K_{\opn{B}, \beta} \defeq K^G_{\opn{Iw}}(p^{\beta})$ is the depth $p^{\beta}$ upper-triangular Iwahori subgroup. Fix an element $\varphi \in \pi^{K_{\opn{B}}}$ which is an eigenvector for the actions of $U_{p, \opn{Si}}$ and $U_{p, \opn{Kl}}$ with eigenvalues $\alpha_1$ and $p^2 \alpha_1 \alpha_2$ respectively (note that $\{ \alpha_1, \alpha_2, \theta(p) \alpha_1, \theta(p) \alpha_2 \}$ are the Satake parameters of $\pi$). 

\begin{definition}
    For $\beta \geq 1$, we define $\varphi'_{\beta} \in \pi^{K_{\opn{B}, \beta}}$ to be the vector
    \[
     \varphi'_{\beta} \defeq (p^{7/2} \alpha_1^2 \alpha_2)^{-\beta} [K_{\opn{B}, \beta} s_{p, \opn{B}}^{\beta} w_G^{\opn{max}} K_{\opn{B}}] \cdot \varphi, \quad t_{p, \opn{B}} = \left( \begin{smallmatrix} p^3 & & & \\ & p^2 & & \\ & & p & \\ & & & 1 \end{smallmatrix} \right), \; s_{p, \opn{B}} = w_{G}^{\opn{max}} t_{p, \opn{B}} (w_{G}^{\opn{max}})^{-1}
    \]
    which is an eigenvector for $U_{p, \opn{Si}}'$ and $U_{p, \opn{Kl}}'$ with eigenvalues $p^{3/2}\alpha_1$ and $p^2 \alpha_1 \alpha_2$ respectively. Moreover, one has $\opn{Tr}_{\beta}(\varphi'_{\beta+1}) = \varphi_{\beta}'$ where $\opn{Tr}_{\beta} \colon \pi^{K_{\opn{B}, \beta+1}} \to \pi^{K_{\opn{B}, \beta}}$ denotes the (unnormalised) trace map. Under the identifications above, we can view $\varphi_{\beta}' \colon G(\mbb{Q}_p) \to \left( \tau \boxtimes \theta \right) \otimes \delta_{P_{\opn{Kl}}}^{1/2}$ as the (unique up to scalar) function which is invariant under $K_{\opn{B}, \beta}$, supported on $P_{\opn{Kl}}(\mbb{Q}_p) K_{\opn{B}, \beta}$, and satisfies $\varphi_{\beta}'(1) = \delta_{P_{\opn{Kl}}}^{1/2} \cdot ( \xi_{\beta} \otimes \theta)$, where $\xi_{\beta} \in \tau^{K^{\opn{GL}_2}_{\opn{Iw}}(p^{\beta})}$ is supported on $B_{\opn{GL}_2}(\mbb{Q}_p) K^{\opn{GL}_2}_{\opn{Iw}}(p^{\beta})$. The functions $\xi_{\beta}$ are compatible under the unnormalised trace maps from level $K^{\opn{GL}_2}_{\opn{Iw}}(p^{\beta+1})$ to $K^{\opn{GL}_2}_{\opn{Iw}}(p^{\beta})$.
\end{definition}

We have the following result: 

\begin{proposition} \label{PrelimFormOfZetapProp}
    Let $\lambda \in \sigma$ be any element which is fixed by $B_{\opn{GL}_2}(\mbb{Z}_p)$ and suppose that $\Phi = \Phi_{p, \chi^{-1}, \chi}$ with notation as in \cite[Definition 7.5]{LPSZ}. For $\beta \gg 2$ sufficiently large (depending on $\lambda$ and $\chi$): 
    \begin{enumerate}
        \item $Z(\gamma \cdot \varphi_{\beta}', \lambda, \Phi; \chi, s) = 0$;
        \item One has
        \[
        Z(\hat{\gamma} \cdot \varphi_{\beta}', \lambda, \Phi; \chi, s) = p^{-4\beta} \cdot C \cdot \frac{L(\tau \times \sigma, \chi, s)}{L(\tau^{\vee} \times \sigma^{\vee}, \chi^{-1}, 1-s) \varepsilon(\tau \times \sigma, \chi, s)} W_{\lambda}(1)
    \]
    where $C \in \mbb{C}^{\times}$ is a non-zero scalar independent of $\chi$, $\beta$, $s$, and $\lambda$. Here $L(\tau \times \sigma, \chi, s)$ (resp. $\varepsilon(\tau \times \sigma, \chi, s)$) denotes the Rankin--Selberg local $L$-factor (resp. Langlands--Deligne local factor) associated with the representation $\tau \boxtimes (\sigma \otimes \chi)$ of $\opn{GL}_2(\mbb{Q}_p) \times \opn{GL}_2(\mbb{Q}_p)$.
    \end{enumerate}
\end{proposition}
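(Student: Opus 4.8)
The plan is to reduce everything to the Klingen-level computation carried out in \cite[\S 8]{LPSZ}. The key observation is that the vector $\varphi_{\beta}'$ is, under the identification $\pi = I_{P_{\opn{Kl}}}^G(\tau \boxtimes \theta)$, supported on $P_{\opn{Kl}}(\mbb{Q}_p) K_{\opn{B}, \beta}$ and equals (up to normalising scalar) the ``translate to depth $p^{\beta}$'' of the Klingen-level vector appearing in \emph{loc.cit.}, but with the $\opn{GL}_2$-component replaced by $\xi_{\beta} \in \tau^{K^{\opn{GL}_2}_{\opn{Iw}}(p^{\beta})}$ rather than the spherical vector of $\tau$. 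So the first step is to unwind the zeta integral over $Z_G N_H \backslash H$ using the Iwasawa-type decomposition of $H(\mbb{Q}_p)$ relative to the supports of $f^{\Phi}$, $W_{\varphi_{\beta}'}$ and $W_{\lambda}$, exactly as in \cite[\S 8.4, \S 8.5]{LPSZ}. After this unwinding, the integral factors (as in \emph{op.cit.}) into a product of a $\opn{GL}_1$-type integral involving $\Phi_{p, \chi^{-1}, \chi}$ and $\chi$, and an inner $\opn{GL}_2 \times \opn{GL}_2$ Rankin--Selberg integral involving the Whittaker function of $\xi_{\beta}$ (twisted by $\theta$) against $W_{\lambda}$.

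For part (1), the vanishing $Z(\gamma \cdot \varphi_{\beta}', \lambda, \Phi; \chi, s) = 0$: here $\gamma \in M_G$ is the open-orbit representative for $M_H \subset M_G$, and the point is that $\gamma \cdot \varphi_{\beta}'$ has Whittaker function vanishing on the relevant cell. Concretely, one argues as in \cite[Proposition 8.5]{LPSZ} (or the analogous computation underlying \cite[Proposition 5.8]{LoefflerUnramifiedZeta}) that translating $\varphi_{\beta}'$ by $\gamma$ rather than $\hat{\gamma} = \gamma w_1$ moves the support off the cell that contributes to the integral — equivalently, the Whittaker function $W_{\gamma \cdot \varphi_{\beta}'}$ restricted to the image of $H(\mbb{Q}_p)$ under the relevant parametrisation is identically zero for $\beta$ large. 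This is a purely local support computation on $G(\mbb{Q}_p)$; I would phrase it by noting that $\gamma$ lies in the ``wrong'' Bruhat cell with respect to the big cell supporting $W_{\varphi_{\beta}'}$, whereas $\hat{\gamma}$ does not, because of the extra Weyl element $w_1$.

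For part (2), one proceeds as in \cite[\S 8.5, Proposition 8.6]{LPSZ}: the translate $\hat{\gamma} \cdot \varphi_{\beta}'$ does meet the supporting cell, the $\opn{GL}_1$-integral produces (after the standard local functional-equation manipulation) the ratio $L(\tau \times \sigma, \chi, s) / \big( L(\tau^{\vee} \times \sigma^{\vee}, \chi^{-1}, 1-s)\, \varepsilon(\tau \times \sigma, \chi, s) \big)$, and the inner Rankin--Selberg integral with the Iwahori-fixed vector $\xi_{\beta}$ against the $B_{\opn{GL}_2}(\mbb{Z}_p)$-fixed $\lambda$ evaluates (for $\beta$ sufficiently large) to $W_{\lambda}(1)$ times a power of $p$. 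The power of $p$ is tracked by keeping careful count of the volume of $K_{\opn{B}, \beta}$-cosets and the normalising factor $(p^{7/2}\alpha_1^2 \alpha_2)^{-\beta}$ built into $\varphi_{\beta}'$; the claim is that all the $\alpha_i$-dependence cancels and one is left with $p^{-4\beta}$, and that the remaining constant $C \in \mbb{C}^{\times}$ — coming from the archimedean-free local zeta normalisations — is independent of $\chi, \beta, s, \lambda$. The main obstacle I anticipate is precisely this bookkeeping: ensuring that the various normalisation conventions (the eigenvalue twists in the definition of $\varphi_{\beta}'$, the measure normalisations with $\opn{vol}(H(\mbb{Z}_p)) = 1$, the Siegel-section conventions of \cite[\S 8.1]{LPSZ}, and the definition of $\Phi_{p, \chi^{-1}, \chi}$) combine to give exactly $p^{-4\beta}$ with a $\chi$- and $s$-independent constant, rather than some spurious extra factor; getting the exact exponent $4\beta$ (and not, say, $4\beta$ shifted by a bounded amount) requires care but is a routine — if lengthy — computation once the support analysis in parts (1) and the first half of (2) is in place.
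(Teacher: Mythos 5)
Your reduction to the Klingen-level computations of \cite[\S 8]{LPSZ} is the right starting point, but both halves of your argument miss the actual mechanism of the proof, and in one place the stated reason is wrong. For part (1), the vanishing of $Z(\gamma \cdot \varphi_{\beta}', \lambda, \Phi; \chi, s)$ is \emph{not} a Bruhat-cell support phenomenon: the Whittaker function of $\gamma \cdot \varphi_{\beta}'$ restricted to $H(\mbb{Q}_p)$ is not identically zero, and the unwinding (following \cite[Proposition 8.15]{LPSZ}, with the observation that $k_a(u,v,w)$ lies in the depth-$p^{\beta}$ Klingen parahoric iff it lies in $K_{\opn{B},\beta}$) produces a genuinely nonzero-looking expression $(\star)\, W_{\xi_{\beta}}(1) W_{\lambda}(1)$, where $\xi_{\beta}$ is the Iwahori-eigenvector in $\tau$ supported on $B_{\opn{GL}_2}(\mbb{Q}_p)K^{\opn{GL}_2}_{\opn{Iw}}(p^{\beta})$. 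The vanishing comes entirely from the single scalar $W_{\xi_{\beta}}(1) = 0$, which is an explicit computation: writing $W_{\xi_{\beta}}(1)$ as a sum over $h \geq \beta$ of $\xi_{\beta}$-values times $\int_{p^{-h}\mbb{Z}_p - p^{-h+1}\mbb{Z}_p}\psi^{-1}(u)\,du$, every term dies once $\beta \geq 2$. If you tried to push your cell-support argument through, you would find it fails.

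For part (2), the paper does not unwind the Iwahori-level integral for $\hat{\gamma}\cdot\varphi_{\beta}'$ directly (which is where your ``routine but lengthy bookkeeping'' would live, and where the extra Weyl element $w_1$ in $\hat{\gamma}$ makes the support analysis genuinely delicate). Instead it uses a trace trick: letting $T \colon \pi^{K_{\opn{B},\beta}} \to \pi^{K_{\opn{Kl},\beta}}$ be the unnormalised trace, the double coset computation $(K_{\opn{Kl},\beta}\cap\gamma^{-1}H\gamma)\backslash K_{\opn{Kl},\beta}/K_{\opn{B},\beta} = \{1, w_1\}$ together with the index count $p^{\beta}$ gives $\gamma\cdot T(\varphi_{\beta}') = \gamma\cdot\varphi_{\beta}' + \sum_{a\in\mbb{Z}/p^{\beta}}\delta_a\hat{\gamma}\cdot\varphi_{\beta}'$. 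Part (1) kills the first term, each $\delta_a$-translate contributes equally, and the Klingen-level formula \cite[Proposition 8.15]{LPSZ} applies verbatim to $T(\varphi_{\beta}')$ (which is supported on $P_{\opn{Kl}}(\mbb{Q}_p)K_{\opn{Kl},\beta}$ with the same value at the identity). The factor $p^{-4\beta}$ then falls out as $p^{-3\beta}$ from the Klingen formula divided by the $p^{\beta}$ coset count — no delicate cancellation of normalisations is needed. So the logical structure is different from what you envisage: part (1) is an essential input to part (2), not a parallel computation, and the step you defer as routine is precisely the one the paper's argument is designed to avoid.
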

\begin{proof}
    Note that the support of $\Phi$ is contained in $\mbb{Z}_p^{\times} \oplus \mbb{Q}_p$. Following exactly the same proof as in \cite[Proposition 8.15]{LPSZ}, with the only modification being that one can substitute the conclusion of \cite[Lemma 8.18]{LPSZ} with ``$k_a(u, v, w) \in K_{\opn{B}, \beta}$'' (since $k_a(u, v, w)$ lies in the upper-triangular depth $p^{\beta}$ Klingen parahoric subgroup if and only if it lies in $K_{\opn{B}, \beta}$ because the $(3, 2)$-th entry of $k_a(u, v, w)$ is zero), one has the formula
    \[
    Z(\gamma \cdot \varphi_{\beta}', \lambda, \Phi; \chi, s) = (\star) \cdot \int_{x \in \mbb{Q}_p^{\times}} W_{\xi_{\beta}}\left( \begin{smallmatrix} x & \\ & 1 \end{smallmatrix} \right) W^{\Phi}\left( \begin{smallmatrix} x & \\ & 1 \end{smallmatrix} \right) W_{\lambda}\left( \begin{smallmatrix} x & \\ & 1 \end{smallmatrix} \right) \chi(x) \tfrac{\theta(x)}{|x|} d^{\times} x
    \]
    where $(\star)$ is a non-zero scalar multiplied by the inverse of the $\gamma$-factor associated with $\tau \boxtimes (\sigma \otimes \chi)$. Note that from \cite[Remark 8.2]{LPSZ} and the definition of $\Phi$, one can easily compute that:
    \[
    W^{\Phi}\left( \begin{smallmatrix} x & \\ & 1 \end{smallmatrix} \right) = |x|^s \int_{a \in \mbb{Q}_p^{\times}} \Phi'(xa, a^{-1})\chi^2\chi_{\pi}\chi_{\sigma}(a) |a|^{2s-1} d^{\times}a = \left\{ \begin{array}{cc} \chi^{-1}(x) & \text{ if } x \in \mbb{Z}_p^{\times} \\ 0 & \text{ otherwise } \end{array} \right.  .
    \]
    We therefore see that $Z(\gamma \cdot \varphi_{\beta}', \lambda, \Phi; \chi, s) = (\star) \cdot W_{\xi_{\beta}}(1) W_{\lambda}(1)$. To conclude the proof of part (1), it suffices to show that $W_{\xi_{\beta}}(1) = 0$. But this can be explicitly computed:
    \begin{align} \label{WxiBZeroEqn}
        W_{\xi_{\beta}}(1) = \int_{u \in \mbb{Q}_p} \xi_{\beta}\left(\eta \left( \begin{smallmatrix} 1 & u \\ & 1 \end{smallmatrix} \right) \right) \psi^{-1}(u) du = \sum_{h \geq \beta} \xi_{\beta}\left( \begin{smallmatrix} -p^{h} & 1 \\ & -p^{-h} \end{smallmatrix} \right) \int_{p^{-h}\mbb{Z}_p - p^{-h+1}\mbb{Z}_p} \psi^{-1}(u) du = 0
    \end{align}
    using the equality $\left( \begin{smallmatrix} -1/m & 1 \\ & -m \end{smallmatrix} \right) \left( \begin{smallmatrix} 1 &  \\ 1/m & 1 \end{smallmatrix} \right) = \left( \begin{smallmatrix}  & 1 \\ -1 &  \end{smallmatrix} \right) \left( \begin{smallmatrix} 1 & m \\ & 1 \end{smallmatrix} \right)$ for $m \in \mbb{Q}_p^{\times}$ (recall that $\beta \geq 2$ so the integral over $p^{-h}\mbb{Z}_p - p^{-h+1}\mbb{Z}_p$ in (\ref{WxiBZeroEqn}) vanishes).

    For the proof of part (2), let $K_{\opn{Kl}, \beta} \subset G(\mbb{Z}_p)$ denote the subgroup of elements which land in $P_{\opn{Kl}}$ modulo $p^{\beta}$. Then one can easily compute that 
    \[
    \left( K_{\opn{Kl}, \beta} \cap \gamma^{-1}H(\mbb{Q}_p)\gamma \right) \backslash K_{\opn{Kl}, \beta} / K_{\opn{B}, \beta} = \{ 1, w_1 \}
    \]
    i.e., this double coset space has two representatives given by $1$ and $w_1$. One can check that $K_{\opn{B}, \beta} \cap \gamma^{-1}H(\mbb{Q}_p)\gamma = K_{\opn{Kl}, \beta} \cap \gamma^{-1}H(\mbb{Q}_p)\gamma$, and 
    \[
    [K_{\opn{Kl}, \beta} \cap \gamma^{-1}H(\mbb{Q}_p) \gamma : w_1 K_{\opn{B}, \beta} w_1^{-1} \cap K_{\opn{Kl}, \beta} \cap \gamma^{-1}H(\mbb{Q}_p) \gamma] = p^{\beta}
    \]
    with a set of representatives of $\left( \gamma K_{\opn{Kl}, \beta} \gamma^{-1} \cap H(\mbb{Q}_p) \right) / \left( \hat{\gamma} K_{\opn{B}, \beta} \hat{\gamma}^{-1} \cap H(\mbb{Q}_p) \right)$ given by
    \[
    \left\{ \delta_a \defeq \left( \begin{smallmatrix} 1 & & & \\ & 1 & a & \\ & & 1 & \\ & & & 1 \end{smallmatrix} \right) : \text{ for } a \in \mbb{Z}/p^{\beta}\mbb{Z}  \right\} .
    \]
    Let $T \colon \pi^{K_{\opn{B}, \beta}} \to \pi^{K_{\opn{Kl}, \beta}}$ denote the unnormalised trace map. Then the above discussion implies that 
    \[
    \gamma \cdot T(\varphi_{\beta}') = \gamma \cdot \varphi_{\beta}' + \sum_{a \in \mbb{Z}/p^{\beta}\mbb{Z}} \delta_a \cdot \hat{\gamma} \cdot \varphi_{\beta}' 
    \]
    hence we see that 
    \[
    Z(\gamma \cdot T(\varphi_{\beta}'), \lambda, \Phi; \chi, s) = Z(\gamma \cdot \varphi_{\beta}', \lambda, \Phi; \chi, s) + \sum_{a \in \mbb{Z}/p^{\beta}\mbb{Z}} Z(\delta_a\hat{\gamma} \cdot \varphi_{\beta}', \lambda, \Phi; \chi, s) = p^{\beta} Z(\hat{\gamma} \cdot \varphi_{\beta}', \lambda, \Phi; \chi, s) .
    \]
    Noting that $T(\varphi_{\beta}')$ is supported on $P_{\opn{Kl}}(\mbb{Q}_p)K_{\opn{Kl}, \beta}$ and its value at the identity is equal to $\varphi_{\beta}'$ evaluated at the identity (because $B(\mbb{Q}_p)K_{\opn{B}, \beta} \cap K_{\opn{Kl}, \beta} = K_{\opn{B}, \beta}$), we can apply \cite[Proposition 8.15]{LPSZ} to find that
    \[
    Z(\gamma \cdot T(\varphi_{\beta}'), \lambda, \Phi; \chi, s) = \frac{p^{3-3\beta}\xi_{\beta}(1)}{(p+1)^2(p-1) \gamma(\tau \times \sigma_{\chi}, s)} W_{\xi}(1) W_{\lambda}(1)
    \]
    where $\gamma(\tau \times \sigma_{\chi}, s)$ is the $\gamma$-factor associated with the representation $\tau \boxtimes \sigma_{\chi} = \tau \boxtimes (\sigma \otimes \chi)$, and $\xi \in \tau$ is the spherical vector normalised so that $\xi(1) = 1$. We now take $C = \tfrac{p^{3}\xi_{\beta}(1)}{(p+1)^2(p-1)} W_{\xi}(1)$ which is independent of $\beta$, $\chi$, $s$, and $\lambda$.
\end{proof}

Let $\{\mu_1, \mu_2\}$ denote the Satake parameters of $\sigma$. 

\begin{corollary} \label{FinalZetapFormulaCorollary}
    Suppose that $\lambda$ is fixed by $K^{\opn{GL}_2}_{\opn{Iw}}(p)$ and that $W_{\lambda}(1) \neq 0$ (e.g., one can take $\lambda$ to be an eigenvector under the action of $[K^{\opn{GL}_2}_{\opn{Iw}}(p) \left( \begin{smallmatrix} p & \\ & 1 \end{smallmatrix}\right) K^{\opn{GL}_2}_{\opn{Iw}}(p)]$). Suppose that $\Phi = \Phi_{p, \chi^{-1}, \chi}$, and set $m \defeq \opn{max}(1, r)$. Then for any $\beta \geq \opn{max}(2m, m+1)$, one has
    \[
    Z(\hat{\gamma} \cdot \varphi_{\beta}', \lambda, \Phi; \chi, s) \doteq p^{-4\beta} \cdot \left\{ \begin{array}{cc} \prod_{i,j=1}^2 \frac{(1-p^{s-1}\alpha_i^{-1}\mu_j^{-1})}{(1-p^{-s}\alpha_i\mu_j)} & \text{ if } r=0 \\ G(\chi^{-1})^{-4} p^{4rs} (\alpha_1 \alpha_2 \mu_1 \mu_2 )^{-2r} & \text{ if } r \geq 1 \end{array} \right.
    \]
    where $\doteq$ means up to a non-zero scalar which is independent of $\beta$, $\chi$, and $s$.
\end{corollary}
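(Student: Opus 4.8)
The plan is to deduce Corollary \ref{FinalZetapFormulaCorollary} from Proposition \ref{PrelimFormOfZetapProp}(2) by making the $L$- and $\varepsilon$-factors on the right-hand side fully explicit in terms of the Satake parameters. By Proposition \ref{PrelimFormOfZetapProp}(2), for $\beta$ large we have
\[
Z(\hat\gamma \cdot \varphi_{\beta}', \lambda, \Phi; \chi, s) = p^{-4\beta} \cdot C \cdot \frac{L(\tau \times \sigma, \chi, s)}{L(\tau^{\vee} \times \sigma^{\vee}, \chi^{-1}, 1-s)\,\varepsilon(\tau \times \sigma, \chi, s)} W_{\lambda}(1),
\]
and $W_\lambda(1) \neq 0$ by hypothesis, so $W_\lambda(1)$ may be absorbed into the $\doteq$. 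The whole content is therefore the identification
\[
\frac{L(\tau \times \sigma, \chi, s)}{L(\tau^{\vee} \times \sigma^{\vee}, \chi^{-1}, 1-s)\,\varepsilon(\tau \times \sigma, \chi, s)} \;\doteq\; \begin{cases} \prod_{i,j=1}^2 \dfrac{1-p^{s-1}\alpha_i^{-1}\mu_j^{-1}}{1-p^{-s}\alpha_i\mu_j} & r=0,\\[2ex] G(\chi^{-1})^{-4} p^{4rs}(\alpha_1\alpha_2\mu_1\mu_2)^{-2r} & r\ge 1, \end{cases}
\]
up to a nonzero constant independent of $\beta$, $\chi$, $s$. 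Note the minor bookkeeping point that the statement of Proposition \ref{PrelimFormOfZetapProp} requires $\beta \gg 2$ ``depending on $\lambda$ and $\chi$''; since $\lambda$ is fixed by $K^{\opn{GL}_2}_{\opn{Iw}}(p)$ and $\chi$ has conductor $p^r$, one checks that $\beta \ge \opn{max}(2m, m+1)$ with $m = \opn{max}(1,r)$ suffices (this is exactly the dependence tracked through the proof of Proposition \ref{PrelimFormOfZetapProp}, where the condition $\beta \ge 2$ is used for \eqref{WxiBZeroEqn} and $\beta$ must dominate the conductor for the section $\Phi_{p,\chi^{-1},\chi}$ computations).

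The first case, $r = 0$ (so $\chi$ unramified), is the cleanest. Here $\tau = I_{B_{\opn{GL}_2}}^{\opn{GL}_2}(\alpha)$ has Satake parameters $\{\alpha_1, \alpha_2\}$ and $\sigma$ has Satake parameters $\{\mu_1, \mu_2\}$, both unramified, so the Rankin--Selberg $L$-factor is
\[
L(\tau \times \sigma, \chi, s) = \prod_{i,j=1}^2 \bigl(1 - \alpha_i \mu_j p^{-s}\bigr)^{-1},
\]
and similarly $L(\tau^\vee \times \sigma^\vee, \chi^{-1}, 1-s) = \prod_{i,j}(1 - \alpha_i^{-1}\mu_j^{-1} p^{-(1-s)})^{-1} = \prod_{i,j}(1 - \alpha_i^{-1}\mu_j^{-1} p^{s-1})^{-1}$. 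The $\varepsilon$-factor of an unramified representation (with our fixed unramified additive character $\psi$) is identically $1$. Substituting gives precisely $\prod_{i,j}(1-p^{s-1}\alpha_i^{-1}\mu_j^{-1})/(1-p^{-s}\alpha_i\mu_j)$, as required. For the second case, $r \ge 1$, the twist $\sigma \otimes \chi$ is ramified (of conductor $p^{2r}$ since each of the two characters cutting out $\sigma$ gets multiplied by $\chi$), hence both Rankin--Selberg $L$-factors $L(\tau \times \sigma, \chi, s)$ and $L(\tau^\vee \times \sigma^\vee, \chi^{-1}, 1-s)$ are equal to $1$ (a ramified $\opn{GL}_2 \times \opn{GL}_2$ Rankin--Selberg $L$-factor with one unramified and one totally ramified factor is trivial). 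So the ratio collapses to $\varepsilon(\tau \times \sigma, \chi, s)^{-1}$, and one computes this $\varepsilon$-factor via its factorisation over the four characters: $\varepsilon(\tau \times \sigma_\chi, s) = \prod_{i=1}^2 \varepsilon(\alpha_i \otimes \sigma_\chi, s)$ and each $\varepsilon(\alpha_i \otimes \sigma_\chi, s)$ factors further into two Tate $\varepsilon$-factors of the ramified characters $\alpha_i \mu_j \chi$. Each such Tate local constant is, up to the standard normalisation, $\chi(p)^{?}\cdot(\text{Satake factor})\cdot G(\chi)$ times a power of $p^{s}$; collecting the four contributions produces $G(\chi)^{4}\,p^{-4rs}\,(\alpha_1\alpha_2\mu_1\mu_2)^{2r}$ up to a nonzero constant, and inverting (together with $G(\chi)^{-1} \doteq G(\chi^{-1})$, valid up to the constant $\chi(-1)$ absorbed into $\doteq$) yields $G(\chi^{-1})^{-4} p^{4rs}(\alpha_1\alpha_2\mu_1\mu_2)^{-2r}$. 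I would carry this out by quoting the standard formula for Tate's local $\varepsilon$-factor of a ramified character (e.g.\ as in Tate's Corvallis article or Bushnell--Henniart), being careful that the conductor of each $\alpha_i\mu_j\chi$ is exactly $p^r$ so that the Gauss sum appearing is $G(\chi)$ rather than a Gauss sum of a twist.

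The main obstacle is purely bookkeeping rather than conceptual: tracking the precise powers of $p$, the central-character and sign factors $\chi(-1)$, $\chi_\pi(p)$, etc., and the normalisation conventions relating the analytic variable $s$ to the arithmetic normalisation, so that everything genuinely lands inside ``$\doteq$'' (a constant independent of $\beta, \chi, s$). In particular one must verify that the $p^{-4\beta}$ prefactor from Proposition \ref{PrelimFormOfZetapProp}(2) is exactly what appears in the corollary (it is, since $C$ and $W_\lambda(1)$ are $\beta$-independent), and that no hidden $\chi$- or $s$-dependence leaks out of the constant $C$. I expect the unramified case to be essentially immediate and the ramified case to require one careful invocation of the explicit Tate local constant formula, after which assembling the four Gauss sums into $G(\chi^{-1})^{-4}$ and the four Satake factors into $(\alpha_1\alpha_2\mu_1\mu_2)^{-2r}$ is routine.
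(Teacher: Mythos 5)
Your proposal follows the paper's own (two-sentence) proof exactly: the paper likewise observes that the support of $\Phi_{p,\chi^{-1},\chi}$ in $\mbb{Z}_p^{\times}\oplus p^{-m}\mbb{Z}_p$ makes the bound $\beta\geq\opn{max}(2m,m+1)$ in Proposition \ref{PrelimFormOfZetapProp}(2) effective, and then appeals to the standard formulas for Rankin--Selberg $L$- and $\varepsilon$-factors of principal series to evaluate the ratio in the two cases $r=0$ and $r\geq 1$. Your write-up is in fact more detailed than the paper's on the $\varepsilon$-factor computation, and correctly flags the only delicate point (that no $\chi$-dependence, e.g.\ powers of $\chi(p)$, survives into the implied constant).
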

\begin{proof}
    By analysing the proof of \cite[Proposition 8.15]{LPSZ} and using the fact that $\Phi$ has support in $\mbb{Z}_p^{\times} \oplus p^{-m}\mbb{Z}_p$, we see that the conclusion of Proposition \ref{PrelimFormOfZetapProp}(2) holds for $\beta \geq \opn{max}(2m, m+1)$. The result now follows from the definition of local Rankin--Selberg $L$-factors and $\varepsilon$-factors for principal series representations.
\end{proof}


\section{Nearly overconvergent modular forms}

In this appendix, we study some integrality properties of nearly overconvergent modular forms which arise from analysing the various bounds in \cite[\S 4]{DiffOps}. We begin with some notation. Let $\mathscr{N}^{\dagger}$ denote the space of nearly overconvergent modular forms as in \emph{op.cit.}; explicitly it can be described as $\mathscr{N}^{\dagger} = \varinjlim_U\opn{H}^0(U, \mathcal{O}_U)$ as $U$ varies over all open subsets of $P^{\opn{an}}_{\opn{GL}_2, \opn{dR}}$ (viewed as a torsor over $\mathcal{X}_{\opn{GL}_2}(K^p K^{\opn{GL}_2}_{\opn{Iw}}(p^{\beta}))$) containing the closure of $\mathcal{IG}^{\opn{tor}}_{\opn{GL}_2, \opn{id}, \beta}$. Note that both the Igusa tower and $\mathscr{N}^{\dagger}$ are independent of $\beta \geq 1$ (see \cite[Lemma 2.4.1]{DiffOps}). 

For any $n \geq 1$, one has a reduction of structure $\pi_n \colon \mathcal{P}_{\opn{GL}_2, \opn{dR}, n} \to U_n$ of $P^{\opn{an}}_{\opn{GL}_2, \opn{dR}}$ to a $\overline{\mathcal{P}}^{\square}_{\opn{GL}_2, n}$-torsor, where $U_n$ is a sufficiently small strict neighbourhood of $\mathcal{X}_{\opn{GL}_2, \opn{id}, \beta}$ in $\mathcal{X}_{\opn{GL}_2}(K^p K^{\opn{GL}_2}_{\opn{Iw}}(p^{\beta}))$. We set:
\begin{align*}
    \mathscr{N}^{\dagger, n\opn{-an}} &\defeq \varinjlim_{V \subset U_n} \opn{H}^0\left(V, (\pi_n)_* \mathcal{O}_{\mathcal{P}_{\opn{GL}_2, \opn{dR}, n}} \right) \\
    \mathscr{N}^{\dagger, n\opn{-an}, \circ} &\defeq \varinjlim_{V \subset U_n} \opn{H}^0\left(V, (\pi_n)_* \mathcal{O}^+_{\mathcal{P}_{\opn{GL}_2, \opn{dR}, n}} \right)
\end{align*}
where both colimits are over all strict neighbourhoods of $\mathcal{X}_{\opn{GL}_2, \opn{id}} = \mathcal{X}_{\opn{GL}_2, \opn{id}, \beta}$ contained in $U_n$. Both $\mathscr{N}^{\dagger, n\opn{-an}}$ and $\mathscr{N}^{\dagger, n\opn{-an}, \circ}$ are independent of $\beta$ (and the choice of reduction of structure). Furthermore, we have $\mathscr{N}^{\dagger, n\opn{-an}} = \mathscr{N}^{\dagger, n\opn{-an}, \circ}[1/p]$. Finally, we set $\mathscr{N}^{n\opn{-an}}_{\opn{HT}} \defeq \opn{H}^0\left( \mathcal{X}_{\opn{GL}_2, \opn{id}}, (\pi_n)_* \mathcal{O}_{\mathcal{P}_{\opn{GL}_2, \opn{dR}, n}} \right)$ and similarly for $\mathscr{N}^{n\opn{-an}, \circ}_{\opn{HT}}$.

\begin{proposition} \label{CepsilonIntegralityAppendixProp}
    Let $0 < \varepsilon \leq 1/(p-1)$ and let $C_{\varepsilon}(\mbb{Z}_p, \mbb{Q}_p)$ denote the Banach space of $\varepsilon$-analytic functions $\mbb{Z}_p \to \mbb{Q}_p$ as in \cite[\S 4.1]{DiffOps}. Set $\upsilon(\varepsilon) \defeq -\opn{log}_p(\varepsilon)$ and let $[\upsilon(\varepsilon)] \in \mbb{Z}$ denote its integer part. Let $\star$ denote the action of $C^{\opn{la}}(\mbb{Z}_p, \mbb{Q}_p)$ on $\mathscr{N}^{\dagger}$ as in Theorem A of \emph{op.cit.}. Then there exist integers $M_1, M_2 \geq 1$ (which are independent of $\varepsilon$) such that:
    \[
    C_{\varepsilon}(\mbb{Z}_p, \mbb{Q}_p) \star \left( \mathscr{N}^{\dagger, ([\upsilon(\varepsilon)]+M_1)\opn{-an}, \circ} \right) \subset p^{-2[\upsilon(\varepsilon)]-M_2} \mathscr{N}^{\dagger, ([\upsilon(\varepsilon)]+M_1)\opn{-an}, \circ}
    \]
    for all $\varepsilon$.
\end{proposition}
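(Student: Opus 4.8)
\textbf{Proof plan for Proposition~\ref{CepsilonIntegralityAppendixProp}.}
The plan is to trace through the construction of the action $\star$ in \cite{DiffOps} and keep track of which powers of $p$ appear, all while fixing attention on the subgroups $\overline{\mathcal{P}}^{\square}_{\opn{GL}_2, n}$ and their integral models. Recall that in \emph{op.cit.}\ the operator $\star$ is built from two ingredients: the Maass--Shimura operator $\delta$ (acting through the structural character $\mbb{Z}_p \to \mbb{Q}_p$), whose effect on the torsor $P^{\opn{an}}_{\opn{GL}_2, \opn{dR}}$ is the vector field corresponding to $\left(\begin{smallmatrix} 0 & 1 \\ 0 & 0 \end{smallmatrix}\right) \in \opn{Lie}(\opn{GL}_2)$, and the $p$-depletion operator (acting through the indicator function $1_{\mbb{Z}_p^{\times}}$). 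The first step is therefore to recall, from \cite[\S4]{DiffOps}, the precise formula expressing $f \star -$ for $f \in C_{\varepsilon}(\mbb{Z}_p, \mbb{Q}_p)$ in terms of the Amice--type expansion of $f$ on cosets $a + p^{[\upsilon(\varepsilon)]}\mbb{Z}_p$, together with the ``kraken''-type integral kernel built from $\delta$; this reduces everything to a statement about how $\delta$ and the Atkin--Lehner--type twists move the integral structure $\mathcal{O}^+_{\mathcal{P}_{\opn{GL}_2, \opn{dR}, n}}$ around.

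\textbf{Key steps.} First I would fix $\varepsilon$, set $n_0 = [\upsilon(\varepsilon)]$, and recall that an $\varepsilon$-analytic function, after a rescaling of the variable by $p^{n_0}$ on each coset, becomes a bounded power series with $\mbb{Z}_p$-coefficients; this is where the $p^{-2n_0}$-type denominator will ultimately come from, the factor $2$ arising because the Maass--Shimura operator contributes a first-order pole per unit of analyticity radius in \emph{both} the fibral direction of the torsor and the base direction (cf.\ the factor-of-two loss already visible in Griffiths transversality and in the bounds of \cite[\S4]{DiffOps}). Second, I would invoke the reduction-of-structure picture: over a sufficiently small strict neighbourhood $V \subset U_{n_0 + M_1}$ one has the \'etale $\overline{\mathcal{P}}^{\square}_{\opn{GL}_2, n_0+M_1}$-torsor $\mathcal{P}_{\opn{GL}_2, \opn{dR}, n_0+M_1}$, and the vector field $\delta$ preserves $\mathcal{O}^+$ after multiplying by a fixed power of $p$ (independent of $n_0$) coming from the bounded geometry of the Igusa tower near $\mathcal{X}_{\opn{GL}_2, \opn{id}}$ — this contributes the fixed constant $M_2$ and the fixed shift $M_1$ in analyticity. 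Third, I would combine: the $q$-expansion / $\star$-module structure from \cite[Theorem A]{DiffOps} shows $f \star g$ only involves $\delta$-iterates of $g$ weighted by the (bounded, after rescaling) coefficients of $f$, and $p$-depletion is manifestly integral (it is a sum over a finite coset of translates, each integral); assembling the powers of $p$ gives the exponent $-2n_0 - M_2$ and the analyticity $n_0 + M_1$. Finally I would check uniformity: none of $M_1, M_2$ depend on $\varepsilon$ because they are extracted from fixed geometric data — the choice of strict neighbourhood, the canonical subgroup, and the comparison with higher Hida theory in \cite[\S6]{HHTBoxerPilloni} — none of which see $\varepsilon$.

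\textbf{Main obstacle.} The delicate point is controlling the analyticity radius $n_0 + M_1$ uniformly: a priori, applying $\delta$ many times could force one to shrink the neighbourhood $V$ (equivalently, increase $n$) without bound as the order of near-overconvergence grows, which would ruin the uniform constant $M_1$. The resolution, which is exactly the content I would need to extract carefully from \cite[\S4]{DiffOps}, is that the $\star$-action is defined on the \emph{full} $\mathscr{N}^{\dagger}$ in a way compatible with the finite-level pieces $\mathscr{N}^{\dagger, n\opn{-an}}$ up to a \emph{bounded} shift in $n$ — i.e.\ $C_{\varepsilon}$ acts from $\mathscr{N}^{\dagger, m\opn{-an}}$ to $\mathscr{N}^{\dagger, (m')\opn{-an}}$ with $m' - m$ depending only on $\varepsilon$ through $[\upsilon(\varepsilon)]$ and an absolute constant. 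Establishing this bounded-shift property with an explicit absolute $M_1$, and then tracking that the integral structure is preserved up to $p^{-2[\upsilon(\varepsilon)] - M_2}$ with absolute $M_2$, is the crux; everything else is bookkeeping with the torsor $\mathcal{P}_{\opn{GL}_2, \opn{dR}, n}$ and the already-established integrality of $p$-depletion and of the optimally-normalised Hecke/Atkin--Lehner operators.
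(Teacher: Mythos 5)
There is a genuine gap: the quantitative heart of the argument is missing, and the one place where you do commit to a mechanism — the source of the factor $2$ in the exponent $-2[\upsilon(\varepsilon)]$ — is not the right one. You attribute it to the Maass--Shimura operator "contributing a first-order pole per unit of analyticity radius in both the fibral direction of the torsor and the base direction". In the actual proof the factor of $2$ has nothing to do with two geometric directions: it is the square of a single constant $C_{\varepsilon/2} \approx \varepsilon^{-1}(2\opn{ln}(p)^{-1})$, which arises twice from the same Mahler-type trade-off $\sup_k k\,p^{-k\varepsilon/2} \leq C_{\varepsilon/2}$ — once in bounding $\| \binom{\nabla'}{k} \|$ for the \emph{nilpotent} operator $\nabla'$ (the Atkin--Serre approximation to $\nabla$ over the infinite-level Igusa tower, \cite[Proposition 4.3.4]{DiffOps}), and once more in passing from $\nabla'$ back to $\nabla$ via $\nabla \equiv \nabla' \bmod p^m$ (\cite[Proposition 4.2.4]{DiffOps}). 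Since $C_{\varepsilon/2}^2 \leq p^{2[\upsilon(\varepsilon)]+M_2'}$, this is exactly where $-2[\upsilon(\varepsilon)]-M_2$ comes from. Relatedly, your claim that the denominator "ultimately comes from" rescaling the variable by $p^{n_0}$ to get integral power-series coefficients is not how the estimate works: one expands $f \star g = \sum_k a_k \binom{\nabla}{k} g$ and must control the operator norms $\|\binom{\nabla}{k}\|$ against the decay $|a_k| \lesssim p^{k\varepsilon}$; plain $\delta$-iterates weighted by "bounded coefficients" would not converge.

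The step you correctly flag as the "main obstacle" — why $M_1$ can be taken absolute — is precisely the step you leave as a black box to be "extracted carefully" from \cite[\S 4]{DiffOps}, but it is the content of the proof, not a citation: one must choose $M_1'$ so that $N_{\varepsilon} = [\upsilon(\varepsilon)] + M_1'$ satisfies $p^{-N_{\varepsilon}+1/(p-1)} \leq C_{\varepsilon/2}^{-1}$, i.e.\ the depth of the congruence $\nabla \equiv \nabla'$ must beat the constant $C_{\varepsilon/2}$; since $C_{\varepsilon/2} \approx \varepsilon^{-1} = p^{\upsilon(\varepsilon)}$, the required shift $M_1'$ is bounded independently of $\varepsilon$. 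Finally, your plan says nothing about how to descend from the ordinary locus (where $\nabla'$ lives) to the overconvergent spaces $\mathscr{N}^{\dagger, n\opn{-an}, \circ}$: the paper does this by writing $\mathscr{N}^{\dagger, ([\upsilon(\varepsilon)]+M_1)\opn{-an}}$ as a colimit of Banach spaces injecting into $\mathscr{N}^{([\upsilon(\varepsilon)]+M_1)\opn{-an}}_{\opn{HT}}$ with the property of \cite[Proposition 4.4.1]{DiffOps}, and by running the ordinary-locus estimate at an auxiliary radius $\gamma < \varepsilon$ with $[\upsilon(\gamma)] = [\upsilon(\varepsilon)]$. Without these three ingredients — the nilpotent approximation with its explicit constant, the inequality tying $M_1$ to $C_{\varepsilon/2}$, and the colimit descent — the proposal does not yet constitute a proof.
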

\begin{proof}
    We first claim a similar statement holds over the ordinary locus, namely there exist integers $M_1', M_2' \geq 1$ (independent of $\varepsilon$) such that 
    \[
    C_{\varepsilon}(\mbb{Z}_p, \mbb{Q}_p) \star \left( \mathscr{N}_{\opn{HT}}^{([\upsilon(\varepsilon)]+M_1')\opn{-an}, \circ} \right) \subset p^{-2[\upsilon(\varepsilon)]-M_2'} \mathscr{N}_{\opn{HT}}^{([\upsilon(\varepsilon)]+M_1')\opn{-an}, \circ} .
    \]
    For this, let $\nabla$ denote the operator corresponding to the identity function $\mbb{Z}_p \hookrightarrow \mbb{Q}_p$. One has an isometry
    \[
    \mathscr{N}^{m\opn{-an}}_{\opn{HT}} \hookrightarrow \mathscr{N}^{m\opn{-an}}_{\opn{HT}, \infty} \defeq \opn{H}^0\left( \mathcal{P}_{\opn{GL}_2, \opn{dR}, m}^{\infty\opn{-ord}}, \mathcal{O}_{\mathcal{P}_{\opn{GL}_2, \opn{dR}, m}^{\infty\opn{-ord}}} \right)
    \]
    where $\mathcal{P}_{\opn{GL}_2, \opn{dR}, m}^{\infty\opn{-ord}} \defeq \mathcal{P}_{\opn{GL}_2, \opn{dR}, m} \times_{U_m} \mathcal{IG}_{\opn{GL}_2, \opn{id}}^{\opn{tor}}$. Let $|\!|-|\!|_{m}$ denote the Banach norm on $\mathscr{N}^{m\opn{-an}}_{\opn{HT}, \infty}$ with unit ball given by $\mathscr{N}^{m\opn{-an}, \circ}_{\opn{HT}, \infty} \defeq \opn{H}^0\left( \mathcal{P}_{\opn{GL}_2, \opn{dR}, m}^{\infty\opn{-ord}}, \mathcal{O}^+_{\mathcal{P}_{\opn{GL}_2, \opn{dR}, m}^{\infty\opn{-ord}}} \right)$. It suffices to prove a similar statement for these spaces.

    Note that, on $\mathscr{N}^{m\opn{-an}, \circ}_{\opn{HT}, \infty}$ (or more precisely on the versions defined locally with respect to opens in $\mathfrak{IG}_{\opn{GL}_2, \opn{id}}$), the operator $\nabla$ (which is integral) is congruent modulo $p^m$ to a ``nilpotent operator'' $\nabla'$ (as in \cite[\S 4.3]{DiffOps}) extending the Atkin--Serre operator. We now analyse the construction of the $\varepsilon$-analytic action as in Proposition 5.2.3 of \emph{op.cit.}. The construction of this action relies on two key results (Proposition 4.3.4 and Proposition 4.2.4 in \emph{op.cit.}).

    Let $0 < \varepsilon \leq 1/(p-1)$, and let $C_{\varepsilon/2} \in \mbb{R}_{>0}$ be such that 
    \[
    p^{-k\varepsilon/2} \left| \! \left| \binom{\nabla'}{k} \right| \! \right|_m \leq C_{\varepsilon/2} .
    \]
    for all $k \geq 0$. We know that this exists by \cite[Proposition 4.3.4]{DiffOps} -- in fact analysing the proof further (and using the fact that 
    \[
    \binom{k-r}{k_1, \dots, k_\ell} \binom{k}{r} = \binom{k}{k_1, \dots, k_{\ell}, r}
    \]
    so its $p$-adic valuation is bounded by $\opn{log}_p(k)$), we can take $C_{\varepsilon/2}$ to be such that $p^{-k\varepsilon/2+\opn{log}_p(k)} = kp^{-k\varepsilon/2} \leq C_{\varepsilon/2}$ for all $k \geq 1$. It is easy to show that $C_{\varepsilon/2} = \varepsilon^{-1} (2 \opn{ln}(p)^{-1})$ satisfies this property, where $\opn{ln}(-)$ denotes the natural logarithm.

    Take $M_1' \geq 1/(p-1)+\opn{log}_p(2 \opn{ln}(p)^{-1})+1$, then we note that $N_{\varepsilon} \defeq [\upsilon(\varepsilon)] + M_1'$ satisfies $p^{-N_{\varepsilon} + 1/(p-1)} \leq C_{\varepsilon/2}^{-1}$. By analysing the proof of \cite[Proposition 4.2.4]{DiffOps} (and again using the fact that 
    \[
    \binom{k}{a} \binom{a}{k_1, \dots, k_r} = \binom{k}{k-a, k_1, \dots, k_{r}}
    \]
    so its $p$-adic valuation is bounded by $\opn{log}_p(k)$), we see that
    \[
    p^{-k \varepsilon} \left|\! \left| \binom{\nabla}{k} \right|\! \right|_{N_{\varepsilon}} \leq p^{-k\varepsilon/2+\opn{log}_p(k)} C_{\varepsilon/2} \leq C_{\varepsilon/2}^2
    \]
    for all $k \geq 1$. Take $M_2' \geq 2 + 2\opn{log}_p(2\opn{ln}(p)^{-1})$. One can then calculate that
    \[
    C_{\varepsilon/2}^2 \leq p^{2[\upsilon(\varepsilon)] + M_2'} .
    \]
    Putting this all together, we see that for any $f \in C_{\varepsilon}(\mbb{Z}_p, \mbb{Q}_p)$, one has
    \[
    |\!| f \star - |\!|_{[\upsilon(\varepsilon)] + M_1'} \leq p^{2[\upsilon(\varepsilon)] + M_2'}
    \]
    which proves the claim over the ordinary locus.

    Let $M_1 \geq M_1'+1$ and $M_2 \geq M_2'+1$. We can write $\mathscr{N}^{\dagger, ([\upsilon(\varepsilon)]+M_1)\opn{-an}}$ as an injective direct limit of Banach spaces $V_0 \to V_1 \to \cdots \to V_{\infty} = \mathscr{N}^{([\upsilon(\varepsilon)]+M_1)\opn{-an}}_{\opn{HT}}$ satisfying the property in \cite[Proposition 4.4.1]{DiffOps}. Let $\gamma < \varepsilon$ be such that $\upsilon(\gamma) + M_1' \leq \upsilon(\varepsilon) + M_1$ and $[\upsilon(\gamma)] = [\upsilon(\varepsilon)]$. Then we know that we have an action of $C_{\gamma}(\mbb{Z}_p, \mbb{Q}_p)$ on $\mathscr{N}^{([\upsilon(\varepsilon)] + M_1)\opn{-an}}_{\opn{HT}} = V_{\infty}$, hence Proposition 4.4.1 in \emph{op.cit.} implies that $\mathscr{N}^{\dagger, ([\upsilon(\varepsilon)] + M_1)\opn{-an}}$ carries an action of $C_{\varepsilon}(\mbb{Z}_p, \mbb{Q}_p)$. Furthermore, given $x \in \mathscr{N}^{\dagger, ([\upsilon(\varepsilon)] + M_1)\opn{-an}, \circ}$ and $f \in C_{\varepsilon}(\mbb{Z}_p, \mbb{Q}_p)$, we have that $f \star x$ lies in $p^{-(2[\upsilon(\gamma)] + M_2')}\mathscr{N}^{([\upsilon(\varepsilon)]+M_1)\opn{-an}, \circ}_{\opn{HT}} = p^{-(2[\upsilon(\varepsilon)] + M_2')}\mathscr{N}^{([\upsilon(\varepsilon)]+M_1)\opn{-an}, \circ}_{\opn{HT}}$, hence we must have $f \star x \in p^{-(2[\upsilon(\varepsilon)] + M_2)}\mathscr{N}^{\dagger, ([\upsilon(\varepsilon)] + M_1)\opn{-an}, \circ}$. This concludes the proof.
\end{proof}


\newcommand{\etalchar}[1]{$^{#1}$}
\renewcommand{\MR}[1]{}
\providecommand{\bysame}{\leavevmode\hbox to3em{\hrulefill}\thinspace}
\providecommand{\MR}{\relax\ifhmode\unskip\space\fi MR }
\providecommand{\MRhref}[2]{%
  \href{http://www.ams.org/mathscinet-getitem?mr=#1}{#2}
}
\providecommand{\href}[2]{#2}

\Addresses

\end{document}